\theoremstyle{definition}
\newtheorem{defi*}{Definition}
\newtheorem{defi}{Definition}[section]
\newtheorem{rema}[defi]{Remark}
\newtheorem*{ques*}{Question}
\theoremstyle{plain}
\newtheorem{thm}[defi]{Theorem}
\newtheorem{lem}[defi]{Lemma}
\newtheorem{cor}[defi]{Corollary}
\newtheorem{prop}[defi]{Proposition}
\newtheorem*{thm*}{Theorem}
\crefname{defi}{Definition}{Definitions}
\crefname{rema}{Remark}{Remarks}
\crefname{exam}{Example}{Examples}
\crefname{thm}{Theorem}{Theorems}
\crefname{lem}{Lemma}{Lemmata}
\crefname{cor}{Corollary}{Corollaries}
\crefname{prop}{Proposition}{Propositions}
\crefname{con}{Conjecture}{Conjectures}
\crefname{ques}{Question}{Questions}
\crefname{section}{Section}{Sections}
\DeclareMathOperator{\Cent}{Cent}
\DeclareMathOperator{\Z}{\mathbb{Z}}
\DeclareMathOperator{\len}{len}
\DeclareMathOperator{\Lat}{Lat}
\DeclareMathOperator{\Soc}{Soc}
\DeclareMathOperator{\Rad}{Rad}
\newcommand{\Ccal}{\mathcal{C}}
\newcommand{\longiso}{\overset{\sim}{\longrightarrow}}
\begin{document}

\title[Decomposition and Structure theorems for modular Garside-like groups]{Decomposition and Structure theorems for Garside-like groups with modular lattice structure}
\author{Carsten Dietzel}
\address{Carsten Dietzel, Department of Mathematics and Data Science, Vrije Universiteit Brussel, Pleinlaan 2, 1050 Brussel, Belgium}
\email{Carsten.Dietzel@vub.be}
\date{\today}

\begin{abstract}
Despite being a vast generalization of Garside groups, right $\ell$-groups with noetherian lattice structure and strong order unit share a lot of the properties of Garside groups. In the present work, we prove that every modular noetherian right $\ell$-group with strong order unit decomposes as a direct product of \emph{beams}, which are sublattices that correspond to the directly indecomposable factors of the strong order interval. Furthermore, we show that the beams of dimension $\delta \geq 4$ can be coordinatized by the $R$-lattices in $Q^{\delta}$, where $Q$ is a noncommutative discrete valuation field with valuation ring $R$. In particular, this gives a precise description of a very big family of modular Garside groups.
\end{abstract}

\maketitle

\section{Introduction}

The theory of Garside group arose out of a work of Garside \cite{garside_braid}, as an approach to an algorithmic solution to the conjugacy problem in the braid groups
\[
B_n = \left\langle \sigma_1,\ldots,\sigma_{n-1} \quad \vline \quad  \begin{matrix}
\sigma_i \sigma_j = \sigma_j \sigma_i & |i - j| > 1 \\
\sigma_i \sigma_{i+1} \sigma_i = \sigma_{i+1} \sigma_i \sigma_{i+1} & 1 \leq i \leq n-2
\end{matrix} \right\rangle
\]

Garside's approach makes use of a special element, nowadays called a \emph{Garside element}, that is - in this case - the least common multiple of all generators $\sigma_i$ ($1 \leq i \leq n-1$) with respect to, say, right-divisibility within the submonoid $B_n^+ \subseteq B_n$ generated by the generators $\sigma_i$.

Garside's methods have soon been generalized by Saito and Brieskorn \cite{brieskorn_saito} to all Artin-Tits groups of spherical type - these groups have the property of being \emph{Garside groups} in the following sense:

\medskip

\textbf{Definition.} \cite[Chapter I, Section 2]{Garside_Foundations}
A \emph{Garside group} is a group $G$ with a distinguished submonoid $G^+$ and a special element $\Delta \in G^+$ - the \emph{Garside element} - such that the following properties hold:
\begin{enumerate}
\item $G^+$ is left- and right-cancellative,
\item $G^+$ is a lattice both under left-divisibility as under right-divisibility
\item there is a length function $\lambda:G^+ \to \Z_0^+$ such that $\lambda(gh) \geq \lambda(g) + \lambda(h)$,
\item in $G^+$, the left- and right-divisors of $\Delta$ coincide and form a finite generating set for the monoid $G^+$
\item $G$ is a group of left fractions for $G^+$, i.e. $G = \{ a^{-1}b : a,b \in G^+ \}$
\end{enumerate}

\medskip

This definition goes back to Dehornoy and Paris \cite{dehornoy_paris}.

All groups of this type are extraordinarily well-approachable by computational methods, as they share the following properties (see \cite{Garside_Foundations}):

\begin{enumerate}
\item All Garside groups have a solvable word problem
\item All Garside groups have a solvable conjugacy problem
\end{enumerate}

Furthermore, the algebraic properties of Garside groups also turn out to be pleasant \cite{charney}:
\begin{enumerate}
\item All Garside groups have a finite-dimensional $K(G,1)$-space.
\item Therefore, each Garside group has finite cohomological dimension.
\end{enumerate}

Despite knowing quite a lot about Garside groups in general, it turns out to be a difficult question which groups are Garside and which are not. Therefore, it (maybe) is a hopeless question to ask which groups exactly \emph{are} Garside groups, meaning that they can be equipped with a Garside structure. We remark here that even the Garside groups that are generated, as groups, by two elements, are not classified yet \cite[p.25, Question 1.]{Garside_Foundations}.

It turns out that a bit more can be said when going over to the divisibility structure, seen as a lattice!

The following result, obtained by Chouraqui and Rump, is the first of this type:

\begin{thm*}
The Garside groups with a distributive lattice structure are exactly the structure groups of finite, non-degenerate, involutive, set-theoretic solutions to the Yang-Baxter equation.
\end{thm*}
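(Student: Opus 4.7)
The plan is to establish both directions of the equivalence separately, since the statement is an ``if and only if''.

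For the ``if'' direction, I would start with a finite non-degenerate involutive set-theoretic solution $(X,r)$ with $r(x,y) = (\sigma_x(y), \tau_y(x))$, and consider its structure monoid
\[ M(X,r) = \langle X \mid xy = \sigma_x(y)\tau_y(x) \text{ for } x,y \in X \rangle. \]
Since the defining relations preserve length, $M(X,r)$ carries a natural length function, which supplies property (3) of the Garside definition. To verify cancellativity (1) and the lattice property (2), I would show that the defining relations form a complete (confluent and terminating) rewriting system, where confluence of critical pairs at length $3$ is precisely the Yang-Baxter equation; involutivity together with non-degeneracy ensures that no pair of atoms has more than one reduced right-common-multiple of length $2$. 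The Garside element $\Delta$ would then be realized as the least common multiple of all atoms $x \in X$, whose existence and finiteness follow from $|X| < \infty$ together with the orbit structure of $r$. Distributivity of the resulting divisibility lattice stems from the fact that intervals $[1,w]$ are controlled pointwise by iterations of $r$ on the underlying set $X$.

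For the ``only if'' direction, let $G$ be a Garside group with distributive divisibility lattice, and let $X \subseteq G^+$ denote the set of atoms. By the Garside axioms, $X$ is finite and generates $G^+$. For distinct $a,b \in X$, distributivity forces $a \vee b$ to have length exactly $2$, so there are unique atoms $\sigma_a(b), \tau_b(a) \in X$ with $a \vee b = a\sigma_a(b) = b\tau_b(a)$. Setting $r(a,b) = (\sigma_a(b), \tau_b(a))$ (and $r(a,a) = (a,a)$) yields a map $X \times X \to X \times X$, and the relations $a\sigma_a(b) = b\tau_b(a)$ recover the defining relations of $M(X,r)$; a standard argument using that Garside monoids are presented by their atoms and length-$2$ relations gives $G^+ \cong M(X,r)$. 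Involutivity of $r$ follows from the symmetry of the construction, while non-degeneracy is a consequence of left- and right-cancellativity in $G^+$.

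The main obstacle will be verifying that $r$ satisfies the Yang-Baxter equation. This reduces to coherence at length $3$: for atoms $a,b,c \in X$, the two ways of normalizing the triple product $abc$ by iterating $r$ must yield the same element, equivalently, the join $a \vee b \vee c$ must admit a length-$3$ factorization uniquely determined by pairwise joins. Distributivity (rather than mere modularity) is essential at exactly this point, since a non-distributive modular lattice contains the diamond $M_3$, in which three distinct atoms share a common join but cannot be recovered from pairwise joins alone --- obstructing any well-defined definition of $r$ from the two-atom data. Once coherence is settled, the isomorphism $G \cong G(X,r)$ between $G$ and the structure group of the recovered solution is a formal consequence of having matched the presentations.
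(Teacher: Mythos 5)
First, a remark on provenance: the paper does not prove this statement at all --- it is quoted in the introduction as the Chouraqui--Rump theorem and used as a black box, with the actual proofs deferred to \cite{chouraqui} and \cite{Rump-Geometric-Garside}. So your proposal is measured against the literature, not an in-paper argument. Your ``if'' direction follows the standard route (completeness of the quadratic rewriting system, $\Delta=\bigvee X$), but the single sentence asserting distributivity of the divisibility lattice is precisely the nontrivial part and is not argued.

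The ``only if'' direction contains a genuine error: the map you extract from the lattice is not the $R$-map of the solution. If $a\vee b = a\,\sigma_a(b) = b\,\tau_b(a)$, the relation this encodes is $a\,\sigma_a(b)=b\,\tau_b(a)$; in the convention $R(x,y)=({}^xy,x^y)$ with defining relations $xy={}^xy\,x^y$, this says $R(a,\sigma_a(b))=(b,\tau_b(a))$ --- the solution swaps the two length-two factorizations of a length-two element, it does not send the pair of atoms $(a,b)$ to the pair of their complements. With your definition $R(a,b):=(\sigma_a(b),\tau_b(a))$ and $R(a,a):=(a,a)$, the construction already fails for the Klein bottle group $G=\langle x,y\mid x^2=y^2\rangle$, which is the distributive Garside group of the two-element permutation solution $r(u,v)=(f(v),f^{-1}(u))$, $f=(x\,y)$: there $x\vee y=x^2=y^2$, so $\sigma_x(y)=x$ and your $R$ becomes the identity on $X^2$, which is degenerate and whose structure monoid is free --- the relation $x^2=y^2$ is lost, and the correct diagonal value $R(x,x)=(y,y)$ cannot be prescribed as $(x,x)$. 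Two further steps are asserted but not available: the claim that ``Garside monoids are presented by their atoms and length-$2$ relations'' is false in general (braid monoids require length-$3$ relations), and in the distributive case it is not a standard fact to invoke but the substance of the theorem, established via the coherence condition you postpone to the end; and non-degeneracy does not follow from cancellativity alone --- injectivity of $\sigma_a$ needs the observation that a length-two element has at most two atomic left divisors (where distributivity, via join-irreducibility of atoms, genuinely enters), and surjectivity then needs finiteness of $X$ or the Garside element.
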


Some explanation about the terminology used here:

A \emph{set-theoretic solution} to the \emph{Yang-Baxter equation} is given by a finite set $X$, together with a bijective map $R: X^2 \to X^2$; $R(x,y) =: ({}^xy,x^y)$ - the \emph{$R$-map} - such that the following equation holds for mappings $X^3 \to X^3$:

\[
R_{12}\circ R_{23} \circ R_{12} = R_{23} \circ R_{12} \circ R_{23}.
\]
Here we define $R_{12}(x,y,z) = ({}^xy,x^y,z)$ and $R_{23}(x,y,z) = (x,{}^yz,y^z)$. If the mappings $y \mapsto {}^xy$ are bijective for all $x \in X$ and the mappings $x \mapsto x^y$ are bijective for all $y \in X$, we call the mapping $R$ \emph{non-degenerate}. If $R^2 = \mathrm{id}_{X^2}$, we call the map $R$ \emph{involutive}.

Writing a set-theoretic solution as a pair $(X,R)$, consisting of a set and an $R$-map, we can define a \emph{structure monoid} $M(X,R)$ and a \emph{structure group} as the monoid resp. group generated by the set $X$ under the defining relations $wx = yz$ for all quartuples $w,x,y,z$ with $R(w,x) = (y,z)$.

It turns out that the canonical monoid homomorphism $M(X,R) \to G(X,R)$ is injective and is the Garside monoid of a Garside structure on $G(X,R)$. The respective lattice structure, in turn, is a \emph{distributive} lattice structure. Therefore, we actually \emph{know} distributive Garside groups as well as we know set-theoretic solutions to the Yang-Baxter equation.

Let us now introduce a more general terminology used by Rump in order to get rid of issues related to the finiteness of the set of generators:
\medskip

\textbf{Definition.} \cite{Rump-Geometric-Garside}
A \emph{right $\ell$-group} is a group $G$ that is equipped with a partial order relation $\leq$ that is \emph{right-invariant}, meaning that
\[
x \leq y \Rightarrow xz \leq yz
\]
holds for all $x,y,z \in G$ and furthermore is a lattice under this order.

\medskip

Let $G$ be a right $\ell$-group. An element $s \in G$ is called \emph{normal}, if the equivalence $x \leq y \Leftrightarrow sx \leq sy$ is valid for all $x,y \in G$. An element $s \in G$ is called a \emph{strong order unit} if $s > e$ and for all $g \in G$ there is an integer $n$ such that $s^n \geq g$.

Furthermore, call a right $\ell$-group \emph{noetherian}, if every ascending sequence $g_1 \leq g_2 \leq \ldots$ where all $g_i \leq h$ for some $h \in G$ becomes stationary at some index, and every descending sequence $g_1 \geq g_2 \geq \ldots$ where all $g_i \geq h$ for some $h \in G$ becomes stationary at some index.

The theory of noetherian right $\ell$-groups with strong order unit can be seen as a proper generalization of the theory of Garside groups and is even more fruitful, as these groups can, for example, be used to explain the factorization theory within paraunitary groups \cite{dietzel}, a class of groups that are relevant in signal processing \cite[Chapter 14.]{Paraunitary2}.
Without the noetherianity condition, Garsidean behavior can even be obtained in pure paraunitary groups associated to von Neumann algebras \cite{RD}.

Rump's observation that the Garside groups with distributive lattice structure coincide with certain structure groups of set-theoretic solutions to the Yang-Baxter equation, went hand-in-hand with the development of a \emph{local} theory of \emph{modular} noetherian right $\ell$-groups with strong order unit. He proved that in such a group, a strong order unit $s$ can be chosed in such a way that the strong order interval $[s^{-1},e]$ is a modular \emph{geometric} lattice and obtained a graphical way to describe the relations within $G$ by a labelling of the Hasse-diagram of $[s^{-1},e]$ \cite[Sections 7+8]{Rump-Geometric-Garside}.

In the present work, we attempt to lay the foundations for a \emph{global} structure theory of modular noetherian right $\ell$-groups with strong order unit. In his dissertation, the author proved the following result:
\begin{thm*}
Let $G$ be a modular noetherian right $\ell$-group with strong order unit $s$, such that the strong order interval $[s^{-1},e]$ is a directly indecomposable geometric lattice of length $\delta \geq 4$. Then there exists a noncommutative discrete valuation field $Q$ with valuation ring $R$, such that $G$ is isomorphic, as a lattice, to the lattice of $R$-lattices in $Q^{\delta}$.
\end{thm*}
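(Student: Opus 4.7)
The plan is in three stages: coordinatize the strong order interval locally, build the noncommutative DVR from the action of $s$, then globalize. Since $[s^{-1},e]$ is a directly indecomposable complemented modular lattice of length $\delta \geq 4$, the classical coordinatization theorem (Birkhoff-Frink, the lattice form of Veblen-Young) produces a division ring $D$, unique up to isomorphism, and a lattice isomorphism $\varphi_0:[s^{-1},e] \longiso \Lat(D^{\delta})$ onto the subspace lattice of $D^\delta$. This gives the residue field $D$ of the DVR to be constructed.

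Second, I would bootstrap $\varphi_0$ to all of $G$. Right-translation by $s^{-n}$ is a lattice automorphism of $G$ sending $[s^{-1},e]$ onto the shifted interval $[s^{-n-1},s^{-n}]$, so the uniqueness of $D$ forces every such shifted interval to be coordinatized by $D^{\delta}$ as well. Using Rump's labelling of the Hasse diagram from the local theory, one makes the gluings along the shared boundary elements $s^{-n}$ coherent, assembling a directed system of projective geometries over $D$ whose direct limit is a $D$-vector space $V$ equipped with an injective endomorphism $\pi$ induced by right-translation by $s$. The candidate $R$ is then defined as the ring of $D$-linear endomorphisms of $V$ that commute with $\pi$ and preserve the tower $\pi^{n}L_0 \subseteq V$ for a fixed reference lattice $L_0$; this comes out to be a noetherian local ring with maximal ideal $\pi R$ and residue field $D$, hence a noncommutative DVR. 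Noetherianity of $G$ together with the strong-order-unit property forces the Ore condition on $R$, so the skew field of fractions $Q$ exists and $V$ gets identified with $Q^{\delta}$.

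Third, the global lattice isomorphism $G \longiso \Lat_R(Q^{\delta})$ is obtained by sending $g \in G$ to the $R$-submodule of $Q^{\delta}$ arising from the coordinatization of $g$ inside any interval $[s^{-n},s^{n}]$ containing it (such $n$ exists because $s$ is a strong order unit). Well-definedness is the compatibility of the $\pi$-shifts built into the tower; injectivity is inherited from each local $\varphi_n$; surjectivity reduces to the statement that every full $R$-lattice in $Q^{\delta}$ is sandwiched between $\pi^{n}L_0$ and $\pi^{-n}L_0$ for some $n$, a standard finite-generation fact once $Q$ is the Ore localization of $R$ at $\pi$. The main obstacle is the second stage: producing $R$ as a single globally consistent ring from purely lattice-theoretic data, and in particular controlling the $\mathrm{GL}_{\delta}(D)$-ambiguity in the successive local coordinatizations so that consecutive levels glue to a genuine DVR rather than a more general chain ring with nontrivial extension data. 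Here the directly-indecomposable hypothesis is indispensable since it pins down $D$ and hence each level of the tower rigidly, while the modular and noetherian hypotheses are what allow this rigidity to propagate along the infinite tower.
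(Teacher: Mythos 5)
There is a genuine gap in your second stage, and it is precisely the step that carries the weight of the theorem. You propose to coordinatize each quotient interval $[s^{-n-1},s^{-n}]$ by the projective geometry over a division ring $D$ (obtained from Veblen--Young applied to $[s^{-1},e]$) and then to assemble these into a $D$-vector space $V$ with an endomorphism $\pi$, defining $R$ as a ring of $D$-linear endomorphisms commuting with $\pi$. This framework silently assumes that the discrete valuation ring to be constructed is an algebra over its residue field $D$, i.e.\ that we are in the equal-characteristic case. That is false in general: a cpu ring of length $n$ with residue field $\mathbb{F}_p$ can be $\mathbb{Z}/p^n\mathbb{Z}$ rather than $\mathbb{F}_p[t]/(t^n)$, and the corresponding DVR is $\mathbb{Z}_p$, whose lattice $\Lat({}_{\mathbb{Z}_p}\mathbb{Q}_p^{\delta})$ is emphatically not the lattice of $\pi$-stable subspaces of any $\mathbb{F}_p$-vector space. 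The interval $[s^{-n},e]$ is not determined by its chain of length-one subquotients together with the shift by $s$; the consecutive intervals $[s^{-n-1},s^{-n}]$ and $[s^{-n},s^{-n+1}]$ overlap in a single point, so there is nothing to ``glue along,'' and the extension data between levels is exactly the nontrivial content. You correctly flag this as ``the main obstacle'' but offer no mechanism to resolve it, and the mechanism you do set up (a single $D$-vector space with a shift) cannot produce the mixed-characteristic or skew-twisted rings that the general statement must accommodate.

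The paper's route avoids this trap by coordinatizing each whole interval $[s^{-n},e]$ at once as $L(R_n,\delta)$ via Inaba's coordinatization theorem for \emph{primary} lattices (which requires first proving that $\beth(z)$ is primary and that $[s^{-n},e]$ admits a dual basis of $\delta$ dual chains all of length $n$ --- itself a nontrivial step you do not address, resting on the monotonicity of the index sequence and the $1$-homogeneity of meet-irreducibles). The rings $R_n$ are cpu rings of length $n$, carrying the full extension data. Camillo's theorem (the fundamental theorem of projective geometry for module lattices) then upgrades the lattice embeddings $[s^{-n},e]\hookrightarrow[s^{-(n+1)},e]$ to semilinear surjections $R_{n+1}^{\delta}\to R_n^{\delta}$, and the inverse limit of the resulting surjective system of cpu rings is shown to be a noncommutative DVR. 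Your stages one and three are reasonable in outline (the final extension from $\Lat(R,\delta)$ to $\Lat({}_RQ^{\delta})$ via the socle/radical automorphisms matches the paper), but without replacing stage two by an argument that captures the ring structure of the full intervals $[s^{-n},e]$ rather than only their graded pieces, the proof does not go through.
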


Here, by an \emph{$R$-lattice}, we mean an essential $R$-submodule, that is finitely generated. From this \emph{coordinatization}, one can furthermore derive an arithmetic description of $G$ as a subgroup of a generalized projective semilinear group.

We continue the analysis of modular, noetherian right $\ell$-groups with strong order unit here. Rump already noted that with a suitably chosen strong order unit $s$, the strong order interval $[s^{-1},e]$ decomposes as a direct product $[s^{-1},e] = \prod_{i=1}^k [z_i,e]$, where each interval $[z_i,e]$ is a directly indecomposable modular geometric lattice.

It was shown by Picantin \cite{picantin} that each Garside group decomposes as an iterated crossed product of Garside groups with cyclic center. This result has been generalized by Rump \cite{rump_garside}, who showed that each noetherian right $\ell$-group admits an appropriate decomposition according to its quasi-center.

For modular noetherian right $\ell$-groups with strong order unit, we produce another decomposition, the \emph{beam decomposition}: this will be a decomposition $G \cong \prod_{i=1}^k \beth(z_i)$, a direct product of lattices associated with the irreducible components $[z_i,e]$ in a direct decomposition of $[s^{-1},e]$ as a lattice. It turns out that a beam $\beth(z_i)$ admits a coordinatization similar to the one presented above as soon as $\delta_{z_i} := \len([z_i,e]) \geq 4$. However, it is notable, that the beams itself are not subgroups but only sublattices of $G$. This has the obvious drawback that we leave the realm of groups by decomposing; the advantage, however, lies in the fact, that for many modular noetherian right $\ell$-groups with strong order unit, the beam decomposition provides an embedding of $G$ into a product of wreath products of semilinear projective groups.

We give an outline of this work:

\begin{itemize}
\item[Section 1.] Introduction
\item[Section 2.] In the preliminaries, we provide basic theorems, notation and definitions about modular lattices (\cref{subs:lattice_theory}) and right $\ell$-groups (\cref{subs:right_l_groups}) that are needed later.
\item[Section 3.] Here, we prove several results regarding factorizations in a (modular) noetherian right $\ell$-group. In particular, we prove the following:

Let $G^- = \{ g \in G : g \leq e \}$ be the \emph{negative cone} of $G$ and $X(G^-) = \{ g \in G^- : g \prec e \}$ the set of elements lying directly underneath $e$, the \emph{dual atoms} of $G^-$. If $G$ is modular, then every factorization of an element $g \in G^-$ as
\[
g = x_k x_{k-1} \ldots x_1
\]
has the same length $k$, so the degree $\deg(g) := k$ is well-defined and can be extended to a homomorphism $\deg : G \to \Z$ (\cref{prop:degree_homomorphism_exists}).

We furthermore give the construction of \emph{right-normal factorizations} in Garside groups in terms of right $\ell$-groups with strong order unit. If $s$ is a strong order unit, then we prove that for all $g \in G^-$ there is a unique factorization
\[
g = g_k g_{k-1} \ldots g_1
\]
with all $g_i \in [s^{-1},e] \setminus \{ e \}$ such that no $g_i$ ($i > 1$) admits a factorization $g_i = h^{\prime}h$ with $h,h^{\prime} \in G^-$, $h \neq e$ and $hg_{i-1} \in [s^{-1},e]$ (\cref{thm:right_normal_facs_exist_and_unique}). Additionally, we prove a useful description of the right-normal factors $g_i$ and also provide a left-handed counterpart (\cref{thm:left_normal_facs_exist_and_unique}).

It turns out that in a modular, noetherian right $\ell$-group with strong order unit, the degree sequence $(\deg(g_i))_{1 \leq i \leq k}$ is nonincreasing (\cref{thm:iota_is_nonincreasing}) when $g = g_k g_{k-1} \ldots g_1$ is a right-normal factorization, which provides a strong tool for understanding factorizations in $G^-$.

Furthermore, we prove that if the strong order interval $[s^{-1},e]$ in a modular noetherian right $\ell$-group is a geometric lattice (which can always be achieved by chosing $s$ suitably, if a strong order unit exists, at all), the meet-irreducible elements in $G^-$ are exactly the elements with a right-normal factorization $g = g_k g_{k-1} \ldots g_1$ such that $\deg(g_i) = 1$ ($1 \leq i \leq k$) (\cref{prop:meet_irreducibles_are_1_homogeneous}). This has the implication that these elements are dual chains (\cref{prop:meet_irreducibles_are chains}).

\item[Section 4.] From here, we will consider modular, noetherian right $\ell$-groups with a modular geometric strong interval $[s^{-1},e]$ only. In this section, we analyse the lattice-theoretic center $\mathrm{Cent}([s^{-1},e]) =: \Ccal(G^-)$ which consists of the uniquely complemented elements $z \in [s^{-1},e]$ that provide a direct product decomposition $[s^{-1},e] \cong [z,e] \times [z^{\prime},e]$. We prove that the center is a sublattice of $[s^{-1},e]$ that is closed under the \emph{arrow-operation} $g \to h := hg^{-1} \vee e$ (\cref{thm:center_closed_under_arrow}).
\item[Section 5.] We show that the dual atoms $z_1,\ldots,z_k$ of the center $\Ccal(G^-)$ generate a subgroup of $G$ that is also a distributive sublattice of $G$ - the \emph{distributive scaffold} of $G$ (\cref{thm:the_distributive_scaffold}).
Therefore, by the Chouraqui-Rump theorem, $G$ contains the structure group of a non-degenerate involutive solution to the Yang-Baxter equation. In order to prove this result, we make use of Rump's \emph{$L$-algebras}.

Afterwards, we construct for each dual atom $z \in X(\Ccal(G^-))$ a strictly decreasing series of elements $\Phi_n(z) \in G^-$ - the \emph{frozen powers} of $z$ - such that the sublattice $\beth^-(z) = \bigcup_{n \geq 1} \Phi_n(z)^{\uparrow}$  - the \emph{semi-beam} associated with $z$ - is a directly irreducible sublattice of $G^-$. The semi-beams provide a factorization of $G^-$ as a product $\prod_{i=1}^k \beth^-(z_i)$, where the $z_i$'s run over all of $X(\Ccal(G^-))$ (\cref{thm:G-_is_product_of_indecomposable_semibeams}). Also, the semi-beams are rigid in the sense that right-multiplication by an arbitrary element in $G^-$ results in a permutation of the set of semi-beams (\cref{prop:rigidity_of_semibeams}).

Finally, we extend the semibeam decomposition $G^- \cong \prod_{i=1}^k \beth^-(z_i)$ to a \emph{beam decomposition} $G \cong \prod_{i=1}^k \beth(z_i)$, where each \emph{beam} $\beth(z_i)$ extends the semibeam $\beth^-(z_i)$ (\cref{thm:G_is_product_of_beams}). We show that the pieces of the beam decomposition are also rigid in the sense that right-multiplication by an arbitrary element of $G$ results in a permutation of the beams (\cref{thm:rigidity_of_beams}).

\item[Section 6.] In this section, we associate with each beam $\beth(z)$ its \emph{dimension}, which is the integer $\delta_z := \deg(z)$. We prove that all beams $\beth(z)$ with $\delta_z \geq 4$ are \emph{coordinatizable} in the sense that there exists a discrete valuation field $Q_z$ with valuation ring $R_z$ such that there is an isomorphism of lattices $\beth(z) \cong \Lat({}_{R_z}Q_z^{\delta_z})$, where $\Lat({}_RQ^{\delta})$ is, as described above, defined as the lattice of all $R$-lattices in $Q^{\delta}$ (\cref{thm:large_enough_beams_are_coordinatizable}).

In order to obtain this result, we show that each $\beth(z)$ is a primary lattice (\cref{prop:beams_are_primary}) and construct a piecewise coordinatization of the intervals $[\Phi_n(z),e]$ by submodule lattices of modules over completely primary uniserial rings, where we make use of Inaba's coordinatization theorem (\cref{prop:piecewise_coordinatization}). We then show that these single coordinatizations can be put together to a coordinatization of $\beth^-(z)$ by $\Lat({}_{R_z}R_z^{\delta_z}) =: \Lat(R_z,\delta_z)$, where $R_z$ is a noncommutative discrete valuation ring (\cref{thm:large_enough_beams_are_coordinatizable}).

We conclude the proof by extending this coordinatization to a lattice isomorphism $\beth(z) \cong \Lat({}_{R_z}Q_z^{\delta_z})$.
\item[Section 7.] In the last section, we briefly demonstrate the implications of the preceding results on the group-theoretic structure of modular noetherian right $\ell$-groups with strong order unit.
\end{itemize}

We emphasize that there is \emph{some} textual overlap with the author's dissertation \cite{dietzel_dissertation}. These are results that have not appeared anywhere else, yet.

\section{Preliminaries}

\subsection{Lattice theory} \label{subs:lattice_theory}

For the basic theory of lattices, see \cite{Graetzer-Lattice}.

In what follows, let $(L, \wedge, \vee)$ be a lattice with its \emph{meet}- and \emph{join}-operation denoted by $\wedge$ resp. $\vee$. Furthermore, when referring to the partial order relation $\leq$ on $L$, we will always (except when explicitly stated otherwise) mean the one defined by $a \leq b : \Leftrightarrow a \wedge b = a \Leftrightarrow a \vee b = b$. In most cases, we will just refer to a set $L$ as a lattice and implicitly assume that it comes with the respective operations $\wedge, \vee$.

We will occasionally use meets and joins over empty or possibly non-finite subsets of lattices: if $L$ is a lattice and $A \subseteq L$ a subset thereof, we define the \emph{meet} of $A$ as the unique element $z \in L$ - if existent - such that
\[
\forall x \in L:  \quad (x \leq z) \Leftrightarrow \left( \forall y \in A: x \leq y \right). 
\]
This element will be denoted as $z =: \bigwedge A$. Dually, we defined the \emph{join} of $A$ as the unique element $z \in L$ - if existent - such that
\[
\forall x \in L:  \quad (x \geq z) \Leftrightarrow \left( \forall y \in A: x \geq y \right). 
\]
If $0_L := \bigvee \emptyset$ resp.  $1_L := \bigwedge \emptyset$ exists in $L$, we call $L$ \emph{bounded from below} resp. \emph{bounded from above}. When both of these elements exist, then we plainly call $L$ \emph{bounded}.

For $a,b \in L$, we denote $[a,b] = \{ x \in L : a \leq x \leq b \}$ and $a^{\uparrow} = \{ x \in L : x \geq a \}$.

If $L_1,L_2$ are lattices, a map $\varphi: L_1 \to L_2$ is called a \emph{homomorphism} (of bounded lattices) if it respects the meet and join operations (and $0_L,1_L$) of $L_1,L_2$ in the obvious way. If this homomorphism is bijective, it is called an \emph{isomorphism}. Clearly, the inverse of an isomorphism of lattices is an isomorphism as well. 

If $L$ is a lattice, a subset $C \subseteq L$ is called a \emph{chain} whenever $x \leq y$ or $y \leq x$ holds for any $x,y \in C$. For a chain $C$, the quantity $\len(C) = |C|-1 \in \Z_{\geq -1} \cup \{ + \infty \}$ is called its \emph{length}. Furthermore, for a lattice $L$, its length is defined as
\[
\len(L) := \sup \{ |C|-1 : C \subseteq L \textnormal{ is a chain} \}.
\]

If $L$ is a lattice, one says that $L$ fulfills the \emph{ascending chain condition} if every infinite ascending sequence $x_1 \leq x_2 \leq \ldots$ with all $x_i \in L$ becomes stationary at some point; on the other side, $L$ fulfills the \emph{descending chain condition} if every infinite descending sequence $x_1 \geq x_2 \geq \ldots$ becomes stationary at some point.

In this article, will mainly be concerned about \emph{modular} and \emph{distributive} lattices:

\begin{defi}
\begin{enumerate}[1)]
\item A lattice $L$ is called \emph{modular}, if for all $a,b,c \in L$ with $a \leq c$ the equation $a \vee (b \wedge c) = (a \vee b) \wedge c$ is valid.
\item A lattice $L$ is called \emph{distributive}, if for all $a,b,c \in L$ the equation $a \vee (b \wedge c) = (a \vee b) \wedge (a \vee c)$ is valid.
\end{enumerate}
\end{defi}

It is well-known that modular lattices make up an equational class and that the distributive lattices are a proper subclass thereof.

An important property characterizing modular lattices is the following one:

\begin{lem}[\textbf{Diamond isomorphism lemma}] \label{lem:diamond_lemma}
A lattice $L$ is modular, if and only if for all $a,b \in L$, the map
\begin{align*}
\varphi_b : [a, a \vee b] & \to [a \wedge b, b] \\
x & \mapsto x \wedge b
\end{align*}
is an isomorphism of lattices. If this is the case, the inverse map is given by $\psi_a(y) = a \vee y$.
\end{lem}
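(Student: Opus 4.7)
The plan is to prove the two directions separately, with the forward direction ($L$ modular $\Rightarrow$ $\varphi_b$ is an isomorphism) being essentially a direct computation, and the converse being extracted from the bijectivity part of the conclusion.

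For the forward direction, I would first check that $\varphi_b$ and $\psi_a$ are well-defined maps between the claimed intervals: if $a \leq x \leq a \vee b$, then $a \wedge b \leq x \wedge b \leq (a \vee b) \wedge b = b$, and dually, if $a \wedge b \leq y \leq b$, then $a = a \vee (a\wedge b) \leq a \vee y \leq a \vee b$. Next I would verify that $\psi_a \circ \varphi_b = \mathrm{id}_{[a, a\vee b]}$ and $\varphi_b \circ \psi_a = \mathrm{id}_{[a\wedge b, b]}$. For $x \in [a,a\vee b]$, since $a \leq x$, modularity gives
\[
\psi_a(\varphi_b(x)) = a \vee (b \wedge x) = (a \vee b) \wedge x = x,
\]
using $x \leq a \vee b$ in the last step. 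For $y \in [a \wedge b, b]$, since $y \leq b$, modularity (applied with $y \leq b$ in the role of the small element) yields
\[
\varphi_b(\psi_a(y)) = (a \vee y) \wedge b = y \vee (a \wedge b) = y,
\]
using $a \wedge b \leq y$. Finally, both $\varphi_b$ (meet with $b$) and $\psi_a$ (join with $a$) are order-preserving, so $\varphi_b$ is an order isomorphism, hence automatically a lattice isomorphism, with inverse $\psi_a$.

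For the converse, assume that for all $a, b$, the map $\varphi_b$ is a bijection with inverse $\psi_a$ -- equivalently, that $a \vee (x \wedge b) = x$ for every $x \in [a, a \vee b]$. Take arbitrary $a, b, c \in L$ with $a \leq c$; I need to show $a \vee (b \wedge c) = (a \vee b) \wedge c$. The inequality $a \vee (b \wedge c) \leq (a \vee b) \wedge c$ holds in any lattice. For the other direction, set $x := (a \vee b) \wedge c$. Then $a \leq x$ (as $a \leq a \vee b$ and $a \leq c$) and $x \leq a \vee b$, so $x \in [a, a \vee b]$. Applying the hypothesis,
\[
x = a \vee (x \wedge b) = a \vee \bigl( (a \vee b) \wedge c \wedge b \bigr) = a \vee (b \wedge c),
\]
because $b \leq a \vee b$ forces $(a \vee b) \wedge b = b$. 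This shows $(a \vee b) \wedge c \leq a \vee (b \wedge c)$, completing the modular law.

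The main obstacle -- such as it is -- is purely bookkeeping: making sure the modular law is applied in the correct orientation in each of the two key computations (once with $a \leq x$ as the small element, once with $y \leq b$), and recognising that the converse needs only the identity $\psi_a \circ \varphi_b = \mathrm{id}$ rather than the full isomorphism statement. No deeper technical difficulty appears.
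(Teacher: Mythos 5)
Your proof is correct. Note that the paper gives no argument of its own here --- it simply cites \cite[Theorem 348]{Graetzer-Lattice} --- so your write-up is a self-contained replacement rather than an alternative to anything in the text, and the argument you give is the standard one. The only point worth tightening is at the start of the converse: the hypothesis of the lemma is that $\varphi_b$ is an isomorphism, not that its inverse is $\psi_a$, so the word \emph{equivalently} deserves a line of justification before you may use the identity $a \vee (x \wedge b) = x$. It does hold, and from injectivity alone: for $x \in [a, a\vee b]$ one always has $\psi_a(\varphi_b(x)) = a \vee (x \wedge b) \leq x$ (since $a \leq x$ and $x \wedge b \leq x$), whence $\varphi_b(\psi_a(\varphi_b(x))) \leq \varphi_b(x)$; on the other hand $a \vee (x \wedge b) \geq x \wedge b$ gives $\varphi_b(\psi_a(\varphi_b(x))) \geq \varphi_b(x)$. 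So $\varphi_b(\psi_a(\varphi_b(x))) = \varphi_b(x)$, and injectivity of $\varphi_b$ forces $\psi_a \circ \varphi_b = \mathrm{id}_{[a,a\vee b]}$, which is exactly the identity your computation uses. With that sentence inserted the proof is complete; everything else (the well-definedness checks, the two applications of the modular law in the forward direction, and the choice $x := (a\vee b)\wedge c$ in the converse) is accurate.
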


\begin{proof}
See \cite[Theorem 348]{Graetzer-Lattice}.
\end{proof}

Given two elements $x,y$ in a lattice $L$, we write $x \prec y$ if $x < y$ and if there is no element $z \in L$ such that $x < z < y$. If the lattice $L$ is bounded from above with top element $1_L$, we write
\[
X(L) := \{ x \in L : x \prec 1_L \}
\]
and call the elements of this set the \emph{dual atoms} of $L$. For convenience, we write $\tilde{X}(L) = X(L) \cup \{ 1_L \}$.

An element $x \in L$ is called \emph{meet-irreducible} if there is no representation $x = a \wedge b$ with $a,b \neq x$. Dually, $x$ is called \emph{join-irreducible} if no representation $x = a \vee b$ with $a,b \neq x$ exists. Clearly, each dual atom is meet-irreducible.

A central role will be played by (dually) geometric lattices. For the sake of simplicity, we will specialize the definition to bounded modular lattices, which are the only geometric lattices used in this article.

\begin{defi}[\cite{Graetzer-Lattice}]
A modular lattice $L$ of finite length is called \emph{geometric} if it is \emph{atomistic}, meaning that each element $a \in L$ is of the form $a = \bigvee A$ for some - possibly empty - finite subset $A \subseteq L$ where $x \succ 0$ for all $x \in L$.
\end{defi}

For the sake of completeness we constrast this definition with its dual notion, which explicitly involves the relevant set of dual atoms $X(L)$.

\begin{defi}
A modular lattice $L$ of finite length is called \emph{dually geometric} if it is \emph{dually atomistic}, meaning that each element $a \in L$ is of the form $a = \bigwedge A$ for some - possibly empty - finite subset $A \subseteq X(L)$.
\end{defi}

Note that for a modular lattice of finite length, the notions of geometricity and dual geometricity actually coincide; this can, for example, be seen from the characterization of directly irreducible geometric lattices \cite[Chapter V, Section 5]{Graetzer-Lattice}.

\begin{rema}
The reader may have already noted our emphasis on dual notions (with regard to atoms, frames, bases, geometricity, \ldots). This is due to the fact that this work builds on results from \cite{Rump-Geometric-Garside} and \cite{dietzel_dissertation}, where heavy use is made of \emph{$L$-algebras}; as these objects are derived from algebraic logic, their \emph{logical unit} - which coincides with the identity in the case of cones in right $\ell$-groups - should be the top element.

Also the reader may convince himself that it makes more sense for us to restrict to bounded modular lattices, as all geometric lattices that turn up in this work appear as strong order intervals in modular right $\ell$-groups, which are necessarily bounded and modular.
\end{rema}

\begin{defi}
A bounded lattice $L$ is called \emph{complemented}, if for each $x \in L$ there is an element $x^{\prime}$ - its \emph{complement} - such that $x \vee x^{\prime} = 1_L$ and $x \wedge x^{\prime} = 0_L$.
\end{defi}

An important property of dually geometric lattices is the following one:

\begin{prop}
Each dually geometric lattice is complemented.
\end{prop}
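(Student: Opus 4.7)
The plan is to dualize the classical argument that geometric lattices are complemented, working directly with dual atoms rather than atoms. Given $x \in L$, I would build the complement as a minimal element with the right join property. Since $L$ has finite length, the ascending chain condition holds, so there exists an element $y \in L$ that is minimal with respect to $x \vee y = 1_L$ (the set of such $y$ is nonempty, containing $1_L$). I then claim that $y$ is a complement of $x$; the only thing to verify is $x \wedge y = 0_L$.

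Suppose for contradiction that $x \wedge y > 0_L$. Because $L$ is dually atomistic, $0_L = \bigwedge A$ for some finite $A \subseteq X(L)$, so if $x \wedge y$ were below every dual atom in $A$, it would be below $0_L$. Hence there is a dual atom $m \in X(L)$ with $x \wedge y \not\leq m$. This immediately forces $x \not\leq m$ and $y \not\leq m$, and since $m$ is a dual atom, $x \vee m = y \vee m = 1_L$. The diamond isomorphism lemma (\cref{lem:diamond_lemma}) then yields $[y \wedge m, y] \cong [m, y \vee m] = [m, 1_L]$, so $y$ covers $y \wedge m$; in particular $y \wedge m < y$, which is the element I will use to contradict the minimality of $y$.

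It remains to show that $x \vee (y \wedge m) = 1_L$. Applying the modular law with $y \wedge m \leq y$, I get
\[
(x \vee (y \wedge m)) \wedge y = (x \wedge y) \vee (y \wedge m).
\]
Since $x \wedge y \not\leq m$ and $y \wedge m \leq m$, one has $x \wedge y \not\leq y \wedge m$, so $(x \wedge y) \vee (y \wedge m) > y \wedge m$. But $y$ covers $y \wedge m$, so this join must equal $y$. Therefore $(x \vee (y \wedge m)) \wedge y = y$, i.e. $y \leq x \vee (y \wedge m)$, whence $x \vee (y \wedge m) \geq x \vee y = 1_L$. This contradicts the minimality of $y$, and completes the proof.

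The main obstacle is really just finding the right dual atom: one must use dual atomisticity in the form ``positive elements are not contained in every dual atom'' to produce an $m$ that separates $x \wedge y$ from $0_L$, while simultaneously ensuring $m$ sees both $x$ and $y$. Once that $m$ is in hand, modularity takes over, and both the cover relation above $y \wedge m$ and the crucial equation $(x \vee (y \wedge m)) \wedge y = y$ fall out of the diamond isomorphism lemma together with one application of the modular identity.
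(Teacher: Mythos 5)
Your proof is correct, but it takes a genuinely different route from the paper. The paper disposes of this proposition in one line: complementedness is a self-dual property, and geometric lattices are complemented by \cite[Theorem 392]{Graetzer-Lattice}, so the statement follows by duality from a cited result. You instead give a self-contained argument: take $y$ minimal with $x \vee y = 1_L$ (finite length guarantees such a minimal element exists --- note that this is the \emph{descending} chain condition you are invoking here, not the ascending one, though finite length gives both), and if $x \wedge y > 0_L$, use dual atomisticity of $0_L$ to produce a dual atom $m$ with $x \wedge y \not\leq m$, then apply \cref{lem:diamond_lemma} to get $y \wedge m \prec y$ and the modular identity to force $x \vee (y \wedge m) = 1_L$, contradicting minimality. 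Every step checks out: $x \not\leq m$ and $y \not\leq m$ follow since $x \wedge y \leq x, y$; the cover $y \wedge m \prec y$ comes from transposing the two-element interval $[m,1_L]$; and the chain $(x\wedge y)\vee(y\wedge m) > y\wedge m$ together with $(x\wedge y)\vee(y\wedge m) \leq y$ pins the join at $y$. What your approach buys is transparency --- it exhibits exactly where modularity, finite length, and dual atomisticity enter, and it avoids any appeal to the (nontrivial, though true) fact that geometric and dually geometric coincide for bounded modular lattices or to an external theorem. What the paper's approach buys is brevity. Only the ACC/DCC slip should be fixed; otherwise the argument is sound.
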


\begin{proof}
As the property of being complemented is clearly self-dual, this is just \cite[Theorem 392]{Graetzer-Lattice}.
\end{proof}

\begin{defi}
Let $L$ be a bounded distributive lattice. Then $L$ is called \emph{Boolean} if every element is uniquely complemented.
\end{defi}

\begin{defi}

Let $L$ be a bounded lattice. Then a subset $X \subseteq L$ is called \emph{(dually) independent} if for any subsets $A,B \subseteq X$, we have  $\left( \bigvee A \right) \wedge \left( \bigvee B \right) = \bigvee (A \cap B)$ (resp. $\left( \bigwedge A \right) \vee \left( \bigwedge B \right) = \bigwedge (A \cup B)$).

A subset $X \subseteq L$ is called \emph{(dually) spanning} if $\bigvee X = 1_L$ (resp. $\bigwedge X = 0_L$).

A subset $X \subseteq L$ is called a \emph{(dual) frame}, if $X$ is (dually) independent and (dually) spanning and a \emph{(dual)} \emph{basis}, if every element of $X$ is meet-irreducible (resp. join-irreducible).
\end{defi}

For a bounded lattice $L$, call an element $x \in L$ a \emph{chain} (resp. \emph{dual chain}) if the interval $[0_L,x]$ (resp. $[x,1_L]$) is a chain.

We we will later need the following easy lemma.

\begin{lem} \label{lem:reflection_of_frames}
Let $L$ be a bounded modular lattice. Then $L$ has a frame $x_1, \ldots , x_{\delta}$ if and only if $L$ has a dual frame $x_1^{\prime}, \ldots, x_{\delta}^{\prime}$. In particular, the frames can be chosen in such a way that $[0_L,x_i] \cong [x_i^{\prime},1_L]$.
\end{lem}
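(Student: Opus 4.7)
The plan is to construct the dual frame explicitly from the given frame $x_1, \ldots, x_{\delta}$ by setting
$$x_i' := \bigvee_{j \neq i} x_j \qquad (1 \leq i \leq \delta),$$
and to handle the reverse implication via self-duality. The interval isomorphism $[0_L, x_i] \cong [x_i', 1_L]$ will then follow immediately from the diamond isomorphism lemma (\cref{lem:diamond_lemma}) once one verifies the two identities $x_i \vee x_i' = 1_L$ and $x_i \wedge x_i' = 0_L$. The first is the spanning property of the original frame. The second is obtained by applying the independence of $\{x_1, \ldots, x_{\delta}\}$ to the subsets $A = \{x_i\}$ and $B = \{x_j : j \neq i\}$, whose intersection is empty, so that $(\bigvee A) \wedge (\bigvee B) = \bigvee \emptyset = 0_L$.

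The main step is checking that $\{x_1', \ldots, x_{\delta}'\}$ actually constitutes a dual frame. To that end, I would prove by induction on $|I|$ the formula
$$\bigwedge_{i \in I} x_i' = \bigvee_{j \notin I} x_j$$
for every subset $I \subseteq \{1, \ldots, \delta\}$. The inductive step, with $I = I' \sqcup \{i_0\}$, uses the independence of the given frame applied to $A = \{x_j : j \neq i_0\}$ and $B = \{x_j : j \notin I'\}$, whose intersection is precisely $\{x_j : j \notin I\}$; combined with the inductive hypothesis on $I'$, this produces the claimed identity. Specializing to $I = \{1, \ldots, \delta\}$ yields $\bigwedge_i x_i' = 0_L$, i.e.\ dual spanning, while substituting the formula into an arbitrary expression $(\bigwedge_{i \in I} x_i') \vee (\bigwedge_{i \in J} x_i')$ and using the set-theoretic identity $\{j \notin I\} \cup \{j \notin J\} = \{j \notin I \cap J\}$ reduces dual independence to a straightforward manipulation of joins.

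The converse implication is then a consequence of self-duality: modularity is preserved under passage to the opposite lattice, and the notion of a frame is by design self-dual (with $\wedge$ and $\vee$ interchanged together with $0_L$ and $1_L$), so the same construction applied in $L^{\mathrm{op}}$ yields a frame $x_i := \bigwedge_{j \neq i} x_j'$ together with the transposed interval isomorphism whenever one starts from a dual frame. The entire argument is essentially elementary; the only point requiring mild care is picking the right subsets $A, B$ in the inductive step, but once they are identified correctly the verification is pure bookkeeping.
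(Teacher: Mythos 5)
Your proposal is correct and follows essentially the same route as the paper: the same construction $x_i' = \bigvee_{j\neq i} x_j$, the same key identity $\bigwedge_{i\in I} x_i' = \bigvee_{j\notin I} x_j$, the diamond isomorphism for the interval statement, and duality for the converse. Your induction on $|I|$ in fact supplies a cleaner justification of that key identity than the paper's rather terse displayed computation, so nothing is missing.
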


\begin{proof}
Assume that $L$ has the frame $X = \{ x_1,\ldots , x_{\delta} \}$. We construct the elements
\[
x_i^{\prime} = \bigvee (X \setminus \{ x_i \})
\]
Using the diamond lemma \cref{lem:diamond_lemma}, we see that
\[
[x_i^{\prime},1_L] = [x_i^{\prime}, x_i^{\prime} \wedge x_i] \cong [x_i^{\prime} \vee x_i, x_i] = [0_L,x_i].
\]
We set $X^{\prime} = \{ x_1^{\prime}, \ldots , x_{\delta}^{\prime} \}$. For $A \subseteq X$, write $A^{\prime} = \{ x_i^{\prime} : x_i \in A \}$. Under this notation, we see that
\begin{align*}
\bigwedge A^{\prime} & = \bigwedge_{x_i \in A} \left( \bigcap (X \setminus \{ x_i \}) \right) \\
& = \bigvee \left( \bigwedge_{x_i \in A} (X \setminus \{ x_i \}) \right) \\
& = \bigvee (X \setminus  A ).
\end{align*}
Therefore, we can calculate for arbitrary $A^{\prime},B^{\prime} \subseteq X^{\prime}$:
\begin{align*}
\left( \bigwedge A \right) \vee \left( \bigwedge B \right) & = \left(\bigvee (X \setminus A) \right) \vee \left( \bigvee (X \setminus B) \right) \\
& = \bigvee \left( X \setminus (A \cap B) \right) \\
& = \bigwedge (A \cap B)^{\prime} \\
& = \bigwedge (A^{\prime} \cap B^{\prime}). 
\end{align*}
Therefore, the elements of $X^{\prime}$ are dually independent. They are also dually spanning, as $\bigwedge X^{\prime} = \bigvee (X \setminus X) = \bigvee \emptyset = 0_L$.

The reverse direction follows by duality.
\end{proof}

If $L$ is a modular geometric lattice, then the join- resp. meet- irreducible elements of $L$ coincide with the atoms resp. dual atoms in $L$. We define the \emph{dimension} of $L$ as the size of a basis for $L$; note that this is an invariant, as the dimension of $L$ must then coincide with $\len(L)$.

By \cref{lem:reflection_of_frames}, we easily see:

\begin{prop} \label{prop:dimension_of_geometric_lattice_is_size of_dual_basis}
Let $L$ be a modular geometric lattice. Then the dimension of $L$ coincides with the size of an arbitrary dual basis of $L$.
\end{prop}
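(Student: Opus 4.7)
The plan is to pass from a dual basis to a frame of atoms using \cref{lem:reflection_of_frames} and then invoke the definition of dimension.

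First I would fix a dual basis $X' = \{x_1', \ldots, x_d'\}$ of $L$. By definition, $X'$ is a dual frame consisting of meet-irreducible elements. Since $L$ is a modular geometric lattice (hence also dually geometric by the remark in the text), the meet-irreducibles of $L$ coincide with the dual atoms of $L$, so every $x_i'$ lies in $X(L)$. In particular each interval $[x_i', 1_L]$ has length exactly $1$.

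Next I would apply the reverse direction of \cref{lem:reflection_of_frames}: since $X'$ is a dual frame, there exists a frame $X = \{x_1, \ldots, x_d\}$ of $L$ with $|X| = |X'| = d$ which can be chosen so that $[0_L, x_i] \cong [x_i', 1_L]$ for each $i$. The crucial observation is then that $[0_L, x_i]$ is a two-element chain, so each $x_i$ is an atom of $L$, hence join-irreducible. Therefore $X$ is in fact a basis of $L$, and so $d = |X|$ is the dimension of $L$ by definition (noting, as the paper already remarks, that the dimension is an invariant equal to $\len(L)$).

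The argument is essentially a routine transport via \cref{lem:reflection_of_frames}; the only step that genuinely requires thought is the identification meet-irreducible $=$ dual atom in the geometric setting, which allows the length-$1$ interval $[x_i', 1_L]$ to force $x_i$ to be an atom. No separate obstacle arises.
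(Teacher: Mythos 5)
Your proof is correct and is exactly the argument the paper intends: the paper's entire justification is ``by \cref{lem:reflection_of_frames}, we easily see,'' and you have simply filled in the details, transporting the dual basis (of dual atoms, via the meet-irreducible $=$ dual atom identification) to a frame of atoms of the same size and invoking the definition of dimension. No gaps.
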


We assume that the reader is familiar with the notion of a direct product of lattices, but we recall it nevertheless, as it will later be necessary to distinguish between \emph{external} and \emph{internal} direct products of lattices.

We recall the definition of an external direct product:

Let $(L_i)_{i \in I}$ be a collection of lattices; then the \emph{external direct product} is given by the cartesian product $\prod_{i \in I} L_i$ which becomes a lattice under the coordinatewise order or, which is equivalent, under the coordinatewise lattice operations.

Accordingly, an \emph{external decomposition} of a lattice $L$ is simply given by an isomorphism $\varphi: L \overset{\sim}{\to} \prod_{i \in I} L_i$ where $(L_i)_{i \in I}$ is a family of nontrivial lattices.

We will later also need the notion of a finitary \emph{internal} product of bounded lattices that the author has not found in explicit form in the standard literature.

If $L$ is a lattice and $S \subseteq L$ a subset thereof, we call $S$ \emph{upwards-closed} if $x \in S$ and $y \geq x$ imply $y \in S$.

\begin{defi}
Let $L$ be a lattice that is bounded from above. Let $L_1,\ldots,L_k \subseteq L$ be sublattices that are upwards closed. We call $L$ an \emph{internal product} of the lattices $L_i$ ($1 \leq i \leq k$) if every element $x \in L$ can \emph{uniquely}	be presented as $x = \bigwedge_{i=1}^k x_i$ with $x_i \in L_i$.
\end{defi}

Note that each collection of elements $(x_i)_{1 \leq i \leq k}$ with $x_i \in L_i$ is necessarily dually independent, when the $L_i$ are the factors of an internal product of lattices.

It will be important later that, in case of a bounded from above lattice, internal products are equivalent to external products:

\begin{prop} \label{prop:external_products_are_internal_products}
Let $L,L_1, \ldots , L_k$ be lattices that are bounded from above. Then there is an isomorphism $\varphi : L \to \prod_{i = 1}^kL_i$ if and only if $L$ is an internal product of sublattices $L_i^{\prime} \subseteq L$ with $L_i^{\prime} \cong L_i$.  
\end{prop}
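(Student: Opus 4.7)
The proof splits naturally into the two directions; the forward direction is a bookkeeping exercise about coordinate subsets of the product, while the reverse direction requires one non-trivial check (preservation of joins) which is most cleanly done by transporting the problem to the order structure.

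For the direction $(\Rightarrow)$, given an isomorphism $\varphi: L \longiso \prod_{i=1}^k L_i$, I would set
\[
L_i' := \varphi^{-1}\bigl( \{ (1_{L_1}, \ldots, 1_{L_{i-1}}, a_i, 1_{L_{i+1}}, \ldots, 1_{L_k}) : a_i \in L_i \} \bigr) \subseteq L.
\]
Since the subset inside the product is visibly a sublattice isomorphic to $L_i$ and is upwards-closed in $\prod L_i$ (the ``other'' coordinates are already at the top), the isomorphism $\varphi$ transports these properties to $L_i'$. For a given $x \in L$ with $\varphi(x) = (a_1, \ldots, a_k)$, the elements $x_i := \varphi^{-1}(1_{L_1}, \ldots, a_i, \ldots, 1_{L_k}) \in L_i'$ satisfy $x = \bigwedge_i x_i$, because the meet in $\prod L_i$ is coordinate-wise. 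Uniqueness is immediate: any other decomposition $x = \bigwedge_i y_i$ with $y_i \in L_i'$ would give $\varphi(y_i) = (1, \ldots, c_i, \ldots, 1)$, and taking the meet forces $c_i = a_i$, hence $y_i = x_i$.

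For $(\Leftarrow)$, given the internal decomposition, I would define $\varphi : L \to \prod_{i=1}^k L_i'$ by sending $x$ to its unique tuple $(x_1, \ldots, x_k)$ of factors, then compose with the isomorphisms $L_i' \cong L_i$. Existence of decompositions gives surjectivity, uniqueness gives injectivity. To see that $\varphi$ preserves meets, observe that $x \wedge y = \bigwedge_i (x_i \wedge y_i)$: the inequality ``$\leq$'' is clear, and ``$\geq$'' follows because $\bigwedge_i (x_i \wedge y_i) \leq \bigwedge_i x_i = x$ and similarly for $y$. Since each $L_i'$ is a sublattice, $x_i \wedge y_i \in L_i'$, and uniqueness of decompositions yields $\varphi(x \wedge y) = \varphi(x) \wedge \varphi(y)$.

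The main technical point is the preservation of joins, which I would \emph{not} attack by trying to show directly that $x \vee y = \bigwedge_i (x_i \vee y_i)$ (which requires dual independence and some modular-style manipulation). Instead I would argue that $\varphi$ is an order-isomorphism: if $x \leq y$, then $x = x \wedge y = \bigwedge_i (x_i \wedge y_i)$, so by uniqueness $x_i = x_i \wedge y_i$, i.e. $x_i \leq y_i$ for every $i$; conversely, $x_i \leq y_i$ for all $i$ plainly gives $x = \bigwedge x_i \leq \bigwedge y_i = y$. A bijection between lattices that preserves the order in both directions automatically preserves joins, so $\varphi$ is a lattice isomorphism. Composing with the fixed isomorphisms $L_i' \cong L_i$ then produces the desired isomorphism $L \cong \prod_{i=1}^k L_i$. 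The ``upwards-closedness'' of each $L_i'$ is not needed in this route, but it is already guaranteed by the definition of internal product, which is consistent with the construction in the forward direction.
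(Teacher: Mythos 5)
Your proof is correct and follows essentially the same route as the paper: the forward direction uses the same coordinate-slice preimages $L_i'$, and the reverse direction, like the paper's, comes down to showing that the decomposition map and the meet map are mutually inverse bijections preserving order in both directions, hence lattice isomorphisms. Your separate verification that meets are preserved is redundant once the order-isomorphism argument is in place, but it is harmless.
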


\begin{proof}
Note that under an isomorphism $\varphi: L \overset{\sim}{\to} \prod_{i =1}^kL_i$, a factor $L_i$ can be identified with the sublattice
\[
L_i^{\prime} := \varphi^{-1} \left( \prod_{j=1}^{i-1}\{ 1_{L_j} \} \times L_i \times \prod_{j=i+1}^{k}\{ 1_{L_j} \} \right),
\]
therefore, an isomorphism with an external product always implies an internal product decomposition.

On the other hand, let $L = \prod_{i=1}^k L_i$ be an internal product decomposition; then the map $\psi: \prod_{i=1}^k L_i \to L ; (x_i)_{1 \leq i \leq k} \mapsto \bigwedge x_i$ is clearly order-preserving and a bijection, the latter fact coming from the definition of an internal direct product.

We claim that $\varphi: L \to \prod_{i=1}^kL_i$ where $\varphi(x) = (x_i)_{1 \leq i \leq k}$ with $x_i = \min (x^{\uparrow} \cap L_i)$ is well-defined and inverse to $\psi$:

Let $x \in L$ and write it as $x = x_1^{\prime} \wedge \ldots x_k^{\prime}$ with $x_i^{\prime} \in L_i$. For an arbitrary index $i$, take an element $y \in L_i$ with $x \leq y$, then
\[
x = x_1^{\prime} \wedge \ldots \wedge (x_i^{\prime} \wedge y) \wedge \ldots x_k^{\prime}.
\]
By uniqueness, this shows $x_i^{\prime} \wedge y = x_i^{\prime}$, that is, $x_i^{\prime} \leq y$. Therefore, $\varphi$ is indeed the inverse to $\psi$. As $\varphi$ and $\psi$ are bijections that are easily seen to be order-preserving, they are isomorphisms of lattices.
\end{proof}

If $L$ is the internal direct product of upsets $z_1^{\uparrow},\ldots, z_k^{\uparrow} \subseteq L$, the above proof shows that the respective isomorphism can be written as:
\begin{align*}
\varphi: L & \to \prod_{i=1}^k z_i^{\uparrow} ; \\
x & \mapsto (x \vee z_i)_{1 \leq i \leq k}.
\end{align*}

If it is clear from the context, if a direct decomposition of a lattice is internal or external, or if it does not matter (because of \cref{prop:external_products_are_internal_products}), we will often not emphasize the \emph{type} of product that is considered.

Let the lattice $K$ be bounded from above and assume that we are given an internal decomposition $K = \prod_{i=1}^kK_i$, where each piece $K_i$ is directly indecomposable, i.e. cannot be decomposed as a direct product of two non-trivial sublattices. We will later face the situation that $K$ is a downset in a bigger lattice, i.e. that $K \subseteq L$ is such that $K = x^{\downarrow}$ for some $x \in L$.

Assume we are given an internal decomposition $L = \prod_{i=1}^kL_i$ and we have a sublattice $K = 1_K^{\downarrow}$ for some element $1_K \in L$, we can write $1_K = \bigwedge_{i=1}^k \varepsilon_i^{L,K}$ for some uniquely determined elements $\varepsilon_i^{L,K} \in K_i$.

Therefore, we have an external (!) decomposition $K \cong \prod_{i=1}^k (\varepsilon_i^{L,K} )^{\downarrow}$ which is, however, not internal, as the factors in the decomposition potentially lie in $L \setminus K$. As $K$ is a bounded lattice, there is of course a canonical way of associating an internal decomposition with the external one that is forced upon by $L$. However, we will encounter the following problem:

\medskip

\textbf{Problem.} \textit{Given a chain $L^{(1)} \subseteq L^{(2)} \subseteq \ldots$ of lattices, whose union $L^{(\infty)}$ can be unbounded, assume that each lattice $L^{(n)}$ has an internal factorization $L^{(n)} = \prod_{i=1}^k L_i^{(n)}$, for fixed $k$, that induces the respective factorizations of $L^{(m)}$ ($m < n$). Find an external (!) factorization $L^{(\infty)} = \prod_{i=1}^k L_i^{(\infty)}$, where each factor $L_i^{(\infty)} \subseteq L^{(\infty)}$ is a convex sublattice such that $L_i^{(\infty)} \cap L^{(1)} = L_i^{(1)}$ for all factors.}
\medskip

We will use the following construction to \emph{extend} factorizations of $L^{(1)}$ to $L^{(n)}$ in a compatible way by a shifting procedure.

For this sake, let $K \subseteq L$ be two lattices such that $K$ is a downset in $L$. Assume that we are given an internal decomposition $L = \prod_{i=1}^k L_i$. Writing $1_K = \bigwedge_{i=1}^k \varepsilon_i^{L,K}$, we clearly have an external decomposition $K \cong \prod_{i=1}^k (\varepsilon^{L,K}_i)^{\downarrow} =: \prod_{i=1}^k K_i$.

Now define elements $\overline{\varepsilon}_i^{L,K} = \bigwedge_{j \neq i} \varepsilon_j^{L,K}$.

We now make the following definitions: given a factor $K_i \subseteq K$, we define its \emph{extension to $L$} as the sublattice
\[
K_i^L = \{ \overline{\varepsilon}_i^{L,K} \wedge y : y \in L_i \} \subseteq L.
\]
Representing $L$ externally as $\prod_{i = 1}^k L_i$, it becomes clear that $K_i^L$ is represented by the $k$-tuples $(x_j)_{1 \leq j \leq k}$ where $x_j = \varepsilon_j^{L,K}$ for $j \neq i$, so $K_i^L$ is a sublattice of $L$ that is isomorphic to $L_i$. It will become clear soon that $K_i \subseteq K_i^L$

Let $L = \prod_{i=1}^k L_i$ be an internal decomposition and let $\varphi: L \overset{\sim}{\to} \prod_{i=1}^k L_i$ be the corresponding isomorphism with an external direct product, with coordinates $\varphi_i(x) = x_i^L$ whenever $x = \bigwedge_{i =1}^k x_i^L$ is the (unique) representation as a meet of elements $x_i^L \in L_i$. We fix the notation $x_i^L$ for the $L_i$-component of a meet-representation of $x$ with regard to $(L_i)_{1 \leq i \leq k}$.

As $\varphi$ is an isomorphism and the maps $L_i \overset{\sim}{\to} K_i^L$ given by $x \mapsto x \wedge \overline{\varepsilon}_i^{L,K}$ are also isomorphisms which can be seen best within the external decomposition of $L$, we can now define the isomorphism:
\begin{align*}
\psi^{L,K} : L  & \overset{\sim}{\to} \prod_{i=1}^k K_i^L \\
x & \mapsto \left( x_i^L \wedge \overline{\varepsilon}_i^{L,K} \right)_{1 \leq i \leq k}. 
\end{align*}

However, it is not clear, yet, if these isomorphisms are compatible with each other. It turns out that they indeed are, as is shown by the following lemma:

\begin{lem} \label{lem:factor_extensions_are_compatible}
Let $K \subseteq L \subseteq M$ be lattices, where each inclusion embeds a lattice as a downset into the next one. Then for all $x \in L$ we have the equality:
\[
\psi^{L,K}(x) = \psi^{M,K}(x).
\]
In particular, $K_i \subseteq K_i^L$.
\end{lem}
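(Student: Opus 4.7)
The plan is to pass everything into a single external realization of $M$ and verify the equalities coordinatewise. By \cref{prop:external_products_are_internal_products}, identify $M$ with the external product $\prod_{i=1}^k M_i$; since $L$ is a downset in $M$ and the internal decomposition of $L$ is the one induced by that of $M$, the lattice $L$ corresponds to $\prod_{i=1}^k \ell_i^{\downarrow}$ inside $\prod M_i$, where $(\ell_1,\ldots,\ell_k)$ is the tuple representation of $1_L$ and $\ell_i = 1_{L_i}$. Analogously, $K$ corresponds to $\prod_{i=1}^k \kappa_i^{\downarrow}$ with $(\kappa_1,\ldots,\kappa_k) = 1_K$ and $\kappa_i \leq \ell_i$ for each $i$.

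In these coordinates one reads off $\varepsilon_i^{L,K} = (\ell_1,\ldots,\ell_{i-1},\kappa_i,\ell_{i+1},\ldots,\ell_k)$ and $\varepsilon_i^{M,K} = (1_{M_1},\ldots,1_{M_{i-1}},\kappa_i,1_{M_{i+1}},\ldots,1_{M_k})$; taking meets over $j \neq i$ then shows that both $\overline{\varepsilon}_i^{L,K}$ and $\overline{\varepsilon}_i^{M,K}$ carry $\kappa_j$ in every coordinate $j \neq i$, differing only in position $i$ (where they equal $\ell_i$ and $1_{M_i}$ respectively). For $x = (x_1,\ldots,x_k) \in L$, the components $x_i^L$ and $x_i^M$ both have $x_i$ in position $i$; after meeting with the respective $\overline{\varepsilon}_i$, the $j \neq i$ slots collapse to $\kappa_j$ in both cases (because $\kappa_j \leq \ell_j \leq 1_{M_j}$), while the $i$-th slot stays $x_i$ in both cases (because $x_i \leq \ell_i \leq 1_{M_i}$). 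Hence $\psi^{L,K}(x)$ and $\psi^{M,K}(x)$ are literally the same tuple.

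The inclusion $K_i \subseteq K_i^L$, where $K_i$ is the $i$-th internal factor of $K$, becomes transparent in tuple form: this factor consists of the vectors $(\kappa_1,\ldots,\kappa_{i-1},a,\kappa_{i+1},\ldots,\kappa_k)$ with $a \leq \kappa_i$, which all lie in $K_i^L = \{(\kappa_1,\ldots,\kappa_{i-1},y,\kappa_{i+1},\ldots,\kappa_k) : y \leq \ell_i\}$ simply because $\kappa_i \leq \ell_i$. The only real obstacle is notational, namely keeping straight which $\varepsilon$ and which component lives in which lattice; once the external coordinates are in place, every claim reduces to elementary inequalities, and no tool beyond \cref{prop:external_products_are_internal_products} enters.
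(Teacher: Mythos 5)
Your argument is correct and is essentially the paper's own proof: both pass to the external product representation of $M$ (with the decompositions of $L$ and $K$ induced from it) and verify the identity there, the paper compressing the computation into the meet identities $x_i^L = x_i^M \wedge 1_L$ and $\varepsilon_i^{L,K} = \varepsilon_i^{M,K} \wedge 1_L$ while you write out the tuples coordinatewise. Your reading of $K_i$ as the $i$-th \emph{internal} factor of $K$ in the final claim is also the intended one (it is what makes \cref{lem:restriction_of_factor_extension} come out right), so nothing is missing.
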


\begin{proof}
We have the coordinates $\psi_i^{L,K}(x) = x_i^L \wedge \overline{\varepsilon}_i^{L,K}$ and $\psi_i^{M,K}(x) = x_i^M \wedge \overline{\varepsilon}_i^{M,K}$.

In the product representation of $M$ we see that $x_i^M \leq \varepsilon_i^{M,L}$ and $\varepsilon_i^{M,K} \leq \varepsilon_i^{M,L}$. Also we see that $x_i^L = x_i^M \wedge 1_L$ and, similarly $\varepsilon_i^{L,K} = \varepsilon_i^{M,K} \wedge 1_L$. Using this, we calculate:
\begin{align*}
\psi_i^{L,K}(x) & = x_i^L \wedge \overline{\varepsilon}_i^{L,K} \\
& = x_i^M \wedge 1_L \wedge \overline{\varepsilon}_i^{M,K} \\
& = x_i^M \wedge \varepsilon_i^{M,L} \wedge \overline{\varepsilon}_i^{M,L} \wedge  \overline{\varepsilon}_i^{M,K} \\
& = x_i^M \wedge \overline{\varepsilon}_i^{M,K} = \psi_i^{M,K}(x).
\end{align*}
The fact that $K_i \subseteq K_i^L$ now follows after replacing $L$ by $K$ and $M$ by $L$.
\end{proof}

\begin{lem} \label{lem:restriction_of_factor_extension}
Let $K \subseteq L$ be an inclusion of a lattice $K$ as a downset of a lattice $L$, with a factorization $K = \prod_{i=1}^k K_i$ that is induced from a factorization $L = \prod_{i=1}^k L_i$. Then $K_i^L \cap K = K_i$ for all $1 \leq i \leq k$.
\end{lem}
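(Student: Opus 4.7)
The plan is to pass to the external representation $L \cong \prod_{j=1}^k L_j$ provided by \cref{prop:external_products_are_internal_products}, in which all sublattices that appear become explicit Cartesian subsets and the identity $K_i^L \cap K = K_i$ reduces to a coordinate-wise verification.

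The inclusion $K_i \subseteq K_i^L \cap K$ is immediate: \cref{lem:factor_extensions_are_compatible} already gives $K_i \subseteq K_i^L$, and $K_i \subseteq K$ holds trivially since $K_i$ is a factor of the induced decomposition of $K$. So the substance of the lemma lies in the reverse inclusion.

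For this, I would fix the external isomorphism $L \cong \prod_j L_j$ and write $1_K = (\gamma_1, \ldots, \gamma_k)$ with $\gamma_j \in L_j$; by uniqueness of the meet representation of $1_K$ (coming from the internal decomposition of $L$), the element $\gamma_j$ corresponds to $\varepsilon_j^{L,K}$. Under this coordinatization, $K = 1_K^{\downarrow}$ corresponds to $\prod_j [0_{L_j}, \gamma_j]$, the element $\overline{\varepsilon}_i^{L,K}$ corresponds to the tuple with entry $\gamma_j$ in position $j \neq i$ and entry $1_{L_i}$ in position $i$, and any $y \in L_i$ corresponds to a tuple that is $1_{L_j}$ in all positions except possibly the $i$-th, where it takes some value $\tilde{y} \in L_i$. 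Consequently, $\overline{\varepsilon}_i^{L,K} \wedge y$ corresponds to $(\gamma_1, \ldots, \gamma_{i-1}, \tilde{y}, \gamma_{i+1}, \ldots, \gamma_k)$, and therefore $K_i^L$ is identified with $\{\gamma_1\} \times \cdots \times \{\gamma_{i-1}\} \times L_i \times \{\gamma_{i+1}\} \times \cdots \times \{\gamma_k\}$. Intersecting with $K$ imposes the single additional constraint $\tilde{y} \leq \gamma_i$, which yields exactly the internal factor $K_i$ of $K$ under the induced decomposition $K \cong \prod_j [0_{L_j}, \gamma_j]$.

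The main obstacle here is purely notational: one has to unwind that the $K_i$ appearing in the statement is to be read as the internal factor of $K$ sitting inside $L$ as an upwards-closed sublattice of $K$, not as the abstract downset $(\varepsilon_i^{L,K})^{\downarrow}$ detached from its embedding. Once this identification is made explicit through \cref{prop:external_products_are_internal_products}, the claim collapses to matching two explicit Cartesian subsets of $\prod_j L_j$, and no further lattice-theoretic ingredient is needed.
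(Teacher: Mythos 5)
Your proof is correct and follows essentially the same route as the paper: both arguments pass to an external product representation of $L$ and verify $K_i^L \cap K = K_i$ coordinatewise (the paper works in $\prod_j K_j^L$ via $\psi^{L,K}$ rather than in $\prod_j L_j$, but this is cosmetic), and your explicit remark that $K_i$ must be read as the internal factor of $K$ is exactly the right disambiguation. The only nitpick is the notation $[0_{L_j},\gamma_j]$, which presumes bottom elements the $L_j$ need not have; writing the principal downset $\gamma_j^{\downarrow}$ inside $L_j$ avoids this.
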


\begin{proof}
The elements of $K_i^L$ are represented within $\prod_{i=1}^k K_i^L$ as $(x_j)_{1 \leq j \leq k}$ where $x_j = 1_K$ for $j \neq i$. Therefore, any element in $x \in K_i^L \cap K$ is represented by a tuple $\psi^{L,K}(x) = (x_j)_{1 \leq j \leq k}$ with $x_j = 1_K$ for $j \neq i$ and $x_i \in K_i$ which implies $\psi(x) = x_i \in K$.
\end{proof}

\begin{prop} \label{prop:glueing_factorizations}
Assume that we are given a lattice $L^{(\infty)} = \bigcup_{n \geq 1} L^{(n)}$, such that  $\left( L^{(n)} \right)_{n \geq 1}$ is an increasing chain of principal ideals in $L^{(\infty)}$ - meaning that $L^{(n)} = x_n^{\downarrow}$ for some $x_n \in L^{(\infty)}$. Furthermore, assume that each ideal internally factorizes as
\[
L^{(n)} = \prod_{i=1}^k L_i^{(n)}
\]
in a way that the factorization of $L^{(m)}$ ($m < n$) is the one induced from $L^{(n)}$. Then there exists an external factorization $L^{(\infty)} = \prod_{i = 1}^k L_i$, where $L_i \subseteq L^{(\infty)}$ are sublattices such that $L_i \cap L^{(1)} = L_i^{(1)}$.
\end{prop}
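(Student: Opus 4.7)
The plan is to propagate the given factorization of $L^{(1)}$ upwards through the tower $(L^{(n)})_{n \geq 1}$ using the extension construction introduced just before \cref{lem:factor_extensions_are_compatible}, and then take the direct limit. For each $n$, the sublattice $L^{(1)}$ embeds as a principal downset in $L^{(n)}$, and the given factorization $L^{(1)} = \prod_i L_i^{(1)}$ is by assumption the one induced from $L^{(n)} = \prod_i L_i^{(n)}$. Hence the extensions $\bigl( L_i^{(1)} \bigr)^{L^{(n)}} \subseteq L^{(n)}$ are well-defined, and I would set
\[
L_i := \bigcup_{n \geq 1} \bigl( L_i^{(1)} \bigr)^{L^{(n)}}.
\]

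The first step is to verify that the family $\bigl\{ ( L_i^{(1)} )^{L^{(n)}} \bigr\}_n$ is ascending in $n$ for each fixed $i$. Applying \cref{lem:factor_extensions_are_compatible} to $L^{(1)} \subseteq L^{(m)} \subseteq L^{(n)}$ for $m \leq n$ yields $\psi^{L^{(m)},L^{(1)}} = \psi^{L^{(n)},L^{(1)}}|_{L^{(m)}}$; consequently an element $x \in L^{(m)}$ whose projection onto every $j \neq i$-th factor of $\prod_j (L_j^{(1)})^{L^{(m)}}$ equals $\varepsilon_j^{L^{(m)},L^{(1)}}$ also has its $j$-th projection onto $\prod_j (L_j^{(1)})^{L^{(n)}}$ equal to $\varepsilon_j^{L^{(n)},L^{(1)}}$. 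Thus $L_i$ is a directed union of sublattices, hence itself a sublattice of $L^{(\infty)}$.

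The second step is to produce the isomorphism $\Psi: L^{(\infty)} \overset{\sim}{\to} \prod_{i=1}^k L_i$ with coordinates $\Psi_i(x) := \psi^{L^{(n)},L^{(1)}}_i(x)$ for any $n$ with $x \in L^{(n)}$; \cref{lem:factor_extensions_are_compatible} guarantees this is independent of $n$. Injectivity is immediate since each $\psi^{L^{(n)},L^{(1)}}$ is injective. For surjectivity, a tuple $(y_i)_i \in \prod_i L_i$ has components $y_i \in (L_i^{(1)})^{L^{(n_i)}}$, so picking $n = \max_i n_i$ places the tuple inside $\prod_i (L_i^{(1)})^{L^{(n)}}$ where the bijection $\psi^{L^{(n)},L^{(1)}}$ hits it. For compatibility with the lattice structure, note that if $x, y \in L^{(n)}$, then both $x \wedge y$ and $x \vee y$ remain in $L^{(n)} = x_n^{\downarrow}$ (the join because $x, y \leq x_n$), so meets and joins in $L^{(\infty)}$ can be computed locally, and the isomorphism property descends from each $\psi^{L^{(n)},L^{(1)}}$.

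Finally, the equality $L_i \cap L^{(1)} = L_i^{(1)}$ follows by taking unions in the equality $(L_i^{(1)})^{L^{(n)}} \cap L^{(1)} = L_i^{(1)}$ provided by \cref{lem:restriction_of_factor_extension}. The main obstacle I anticipate is purely notational bookkeeping involving the auxiliary elements $\varepsilon_i^{L^{(n)},L^{(1)}}$ and $\overline{\varepsilon}_i^{L^{(n)},L^{(1)}}$ across consecutive levels of the tower; all the substantive work has already been absorbed into \cref{lem:factor_extensions_are_compatible,lem:restriction_of_factor_extension}, so no new lattice-theoretic input should be required.
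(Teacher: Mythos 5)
Your proposal is correct and follows essentially the same route as the paper's proof: define $L_i$ as the union of the extensions $(L_i^{(1)})^{L^{(n)}}$, use \cref{lem:factor_extensions_are_compatible} to glue the maps $\psi^{L^{(n)},L^{(1)}}$ into a single isomorphism, and deduce $L_i \cap L^{(1)} = L_i^{(1)}$ from \cref{lem:restriction_of_factor_extension}. Your write-up just spells out the directedness, surjectivity, and locality of the lattice operations that the paper leaves implicit.
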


\begin{proof}
For $j \geq i$, abbreviate $L_i^n := \left( L_i^{(1)} \right) ^{L_n}$. We define the sublattices $L_i = \bigcup_{n \geq 1} L_i^n \subseteq L$.

By \cref{lem:factor_extensions_are_compatible}, we have a well-defined homomorphism $\psi: L^{(\infty)} \to \prod_{i=1}^k L_i$ given by $\psi(x) = \psi^{L^{(n)},L^{(1)}}(x)$ if $x \in L^{(n)}$. As the restriction to $L^{(n)}$ is the isomorphism $\psi^{L^{(n)},L^{(1)}}: L^{(n)} \overset{\sim}{\to} \prod_{i=1}^k L_i^n$, the map $\psi$ is an isomorphism.

$L_i \cap L^{(1)} = L_i^{(1)}$ follows from \cref{lem:restriction_of_factor_extension}.
\end{proof}

\subsection{Right \texorpdfstring{$\ell$}{l}-groups} \label{subs:right_l_groups}

A \emph{right-ordered group} is a pair $(G,\leq)$ where $G$ is a group and $\leq$ a partial order relation on $G$ that is \emph{right-invariant}, meaning that for all $x,y,z \in G$, we have the implication
\[
x \leq y \Rightarrow xz \leq yz.
\]
In fact, this is an equivalence. If in a right-ordered group $G$ the partial order $\leq$ comes from a lattice structure on $G$, we will call $G$ a \emph{right lattice-ordered group}, or \emph{right $\ell$-group}, for short. It is easily seen that in a right $\ell$-group, the lattice operations are right-invariant as well, meaning that
\[
(x \wedge y)z = xz \wedge yz \quad ; \quad (x \vee y)z = xz \vee yz
\]
for any $x,y,z \in G$.

As usual, when talking about a right-ordered group or a right $\ell$-group, we will in most cases just refer to the set $G$; the relations resp. operations are then implicit.

Two important subsets in a right-ordered group are the \emph{negative cone} $G^- := \{ g \in G : g \leq e \}$ and the \emph{positive cone} $G^+ := \{ g \in G : g \geq e \}$. Note that both $G^+$ and $G^-$ are closed under multiplication and contain the neutral element $e \in G$. Therefore, they can be considered as submonoids of $G$. An important property of the negative cone is its \emph{purity}, which means that:

\begin{equation}
\forall x \in G: \left((x \in G^-) \ \& \ (x^{-1} \in G^-) \right) \Rightarrow (x = e). \label{eq:purity_of_cone}
\end{equation}
On the other hand, each pure submonoid of a group gives a unique right-invariant order on the group, as, by right-invariance, the partial order $\leq$ is determined by $G^-$ through $g \leq h \Leftrightarrow gh^{-1} \leq e \Leftrightarrow gh^{-1} \in G^-$ \cite[Theorem 1.5.1.]{kopytov}.\footnote{Note that we emphasize the role of the \emph{negative} cone instead of the \emph{positive} cone here. The reason for this is that we will often make use of the $L$-algebra structure of $G^-$. As an $L$-algebra is a logical algebra, it is bounded from above by an element that algebraically represents tautological truth.}

From the given order $\leq$ we can also construct the \emph{dual order} $\lesssim$ on $G$, defined by $g \lesssim h : \Leftrightarrow h^{-1} \leq g^{-1}$. This is easily seen to be the unique \emph{left}-invariant order on $G$ that has the same negative (i.e. $\leq e$) elements as $\leq$. If $(G,\leq)$ is a right $\ell$-group, then $(G,\lesssim)$ is also lattice-ordered with lattice operations
\[
g \curlyvee h = (g^{-1} \wedge h^{-1})^{-1} \quad ; \quad g \curlywedge h = (g^{-1} \vee h^{-1})^{-1}.
\]
The order relation and operations are left-invariant, which amounts to saying that $(G^{\mathrm{op}}, \lesssim)$ - $G^{\mathrm{op}}$ being the group $G$ with the \emph{opposite} multiplication $g \cdot_{\mathrm{op}} h := hg$ - is a \emph{right} $\ell$-group again.

In itself, a right $\ell$-group already has some important Garsidean properties, namely that elements can be written as a right- resp. left fractions of elements in $G^-$:

\begin{prop} \label{prop:group_of_fractions}
A right $\ell$-group $G$ is a group of left fractions for $G^-$, that is, every $g \in G$ can be written as $g = g_1^{-1}g_2$ with $g_1, g_2 \in G^-$. Similarly, $G$ is a group of right fractions for $G^-$.
\end{prop}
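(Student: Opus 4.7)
My plan is to use right-invariance of the lattice operations to directly produce the required fractions. Given an arbitrary $g \in G$, the natural candidates for negative elements associated to $g$ are $g \wedge e$ and $g^{-1} \wedge e$, both of which visibly lie in $G^-$. The key observation is that right-invariance gives
\[
(g^{-1} \wedge e) g = g^{-1}g \wedge eg = e \wedge g = g \wedge e,
\]
so if we set $g_1 := g^{-1} \wedge e \in G^-$ and $g_2 := g \wedge e \in G^-$, then $g_1 g = g_2$, i.e.\ $g = g_1^{-1} g_2$. This establishes the left fraction part.

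For the right fraction statement, the cleanest route is to invoke the duality already laid out in the preliminaries: the paper records that $(G^{\mathrm{op}},\lesssim)$ is again a right $\ell$-group, and explicitly notes that its negative cone coincides with $G^-$. Applying the left fraction result already proved to $(G^{\mathrm{op}},\lesssim)$ yields, for every $g \in G$, a decomposition $g = h_1 \cdot_{\mathrm{op}} h_2^{-1}$ with $h_1,h_2 \in G^-$, which read in $G$ is exactly $g = h_2^{-1} h_1$ read backwards, i.e.\ a right fraction $g = h_1 h_2^{-1}$ with $h_1,h_2 \in G^-$. Alternatively, one can repeat the computation with the dual lattice operations $\curlywedge,\curlyvee$ (which are left-invariant), setting $g_1' := g \wedge e$ and $g_2' := g^{-1} \wedge e$; then left-invariance under $\lesssim$ yields $g g_2' = g_1'$, hence $g = g_1' (g_2')^{-1}$.

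There is really no main obstacle here; the whole content is the one-line application of right-invariance to the meet $g^{-1} \wedge e$. The only thing to watch is that the second half must not naively mimic the first half by multiplying on the left inside $(G,\leq)$, since left multiplication is not assumed to distribute over $\wedge,\vee$ in a right $\ell$-group — this is precisely why one passes to the dual structure, where left-invariance is available for free.
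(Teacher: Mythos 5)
Your left-fraction argument is correct and is in substance the paper's own proof: the paper takes $g_2 := g \wedge e$ and $g_1 := (gg_2^{-1})^{-1}$, and since right-invariance gives $(g\wedge e)g^{-1} = e \wedge g^{-1}$, this $g_1$ is exactly your $g^{-1}\wedge e$. Your primary route to the right-fraction half --- apply the left-fraction result to $(G^{\mathrm{op}},\lesssim)$, whose negative cone is again $G^-$ --- is also sound in principle, though your formula is garbled: the left fraction in $G^{\mathrm{op}}$ reads $g = h_1^{-1}\cdot_{\mathrm{op}} h_2 = h_2h_1^{-1}$, not $h_1\cdot_{\mathrm{op}} h_2^{-1}$; the intended conclusion $g = h_2h_1^{-1}$ with $h_1,h_2\in G^-$ is what comes out.

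The ``alternative'' computation at the end, however, is wrong as written. You set $g_2' := g^{-1}\wedge e$ and claim $gg_2' = g\wedge e$ by ``left-invariance under $\lesssim$,'' but what is left-invariant is $\curlywedge$, not $\wedge$: the product $g(g^{-1}\wedge e)$ cannot be expanded as $gg^{-1}\wedge ge$ in a right $\ell$-group, which is exactly the pitfall you warn against in your last sentence. Running your own first computation honestly inside $(G^{\mathrm{op}},\lesssim)$ produces $h_1 = g^{-1}\curlywedge e = (g\vee e)^{-1}$ and $h_2 = g\curlywedge e = (g^{-1}\vee e)^{-1}$, with $gh_1 = h_2$ and hence $g = h_2h_1^{-1}$; note that $(g\vee e)^{-1}$ is precisely the factor the paper writes down for its right fraction. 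So either delete the alternative or replace $\wedge$ by $\curlywedge$ throughout it; the rest of the argument stands.
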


\begin{proof}
Let $g \in G$. We can express $g$ as a left fraction $g_1^{-1}g_2$ with $g_2 := g \wedge e \leq e$ and $g_1 := (gg_2^{-1})^{-1} \leq e$. A possible right fraction $g = g_1^{\prime -1}g_2^{\prime}$ is given by $g_2^{\prime} := (g \vee e)^{-1} \leq e$ and $g_1^{\prime} := gg_2^{-1} \leq e$.
\end{proof}

The following - regularly useful - result essentially says that the relations in $G$ are determined by those in $G^-$:

\begin{prop} \label{prop:extending_homomorphisms}
If $G$ is a right $\ell$-group and $H$ is an arbitrary group, then any monoid homomorphism $f^{\prime}: G^- \to H$ can uniquely be extended to a group homomorphism $f: G \to H$ with $f|_{G^-} = f^{\prime}$ by setting $f(g_2^{-1}g_1) := f^{\prime}(g_2)^{-1} f^{\prime}(g_1)$ for all $g_1,g_2 \in G^-$.
\end{prop}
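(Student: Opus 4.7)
The plan is to define $f(g) := f'(g_2)^{-1} f'(g_1)$ whenever $g = g_2^{-1}g_1$ is a left-fraction representation with $g_1,g_2 \in G^-$, which exists for every $g \in G$ by \cref{prop:group_of_fractions}. Three things need to be checked: well-definedness, the homomorphism property, and uniqueness. Uniqueness is immediate, since any group homomorphism $f$ extending $f'$ must satisfy $f(g_2^{-1}g_1) = f(g_2)^{-1}f(g_1) = f'(g_2)^{-1}f'(g_1)$, so the formula is forced; in particular this also gives the explicit shape of $f$ claimed in the statement.

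For well-definedness, suppose $g_2^{-1}g_1 = h_2^{-1}h_1$ with all four elements in $G^-$. The strategy is to bring both fractions to a common denominator using the meet structure on $G^-$. Setting $k := g_2 \wedge h_2 \in G^-$, right-invariance gives $kg_2^{-1}, kh_2^{-1} \in G^-$ since $k \leq g_2, h_2$. Left-multiplying the equality $g_2^{-1}g_1 = h_2^{-1}h_1$ by $k$ yields $(kg_2^{-1})g_1 = (kh_2^{-1})h_1$, an identity between two elements of $G^-$. Applying the monoid homomorphism $f'$ and using that $f'(k) = f'(kg_2^{-1})f'(g_2) = f'(kh_2^{-1})f'(h_2)$, one rearranges in the group $H$ to conclude $f'(g_2)^{-1}f'(g_1) = f'(h_2)^{-1}f'(h_1)$, as required.

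The homomorphism property reduces to a similar common-denominator manipulation. Given left fractions $g = g_2^{-1}g_1$ and $h = h_2^{-1}h_1$, invoke \cref{prop:group_of_fractions} again to rewrite the middle element $g_1 h_2^{-1} \in G$ as $a^{-1}b$ with $a,b \in G^-$. From this representation one reads off the identity $ag_1 = bh_2$ in $G^-$, and $gh$ admits the left-fraction representation $gh = (ag_2)^{-1}(bh_1)$ with $ag_2, bh_1 \in G^-$. Computing $f(gh) = f'(g_2)^{-1}f'(a)^{-1}f'(b)f'(h_1)$ and $f(g)f(h) = f'(g_2)^{-1}f'(g_1)f'(h_2)^{-1}f'(h_1)$, the equality of the two expressions is exactly the image under $f'$ of $ag_1 = bh_2$.

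The main obstacle is the well-definedness step, because the left-fraction representation of an element of $G$ is highly non-unique. The key insight that makes everything work is that any two such representations can be reconciled by left-multiplying with a common lower bound of $g_2$ and $h_2$ in $G^-$, which exists because $G^-$ is closed under the lattice meet in $G$. Beyond this observation, the proof is formal bookkeeping in $H$ together with repeated use of \cref{prop:group_of_fractions}.
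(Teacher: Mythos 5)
Your proof is correct and captures exactly the content of the paper's (sketched) argument: the paper observes that $G^-$ is a left-Ore monoid --- left-cancellative, with common left-multiples supplied by the existence of meets --- and then cites the universal property of the group of left fractions, whereas your common-denominator step with $k = g_2 \wedge h_2$ is precisely the verification of that Ore condition, and your well-definedness and homomorphism computations reprove the cited universal property by hand. So the approach is essentially the same, just fully self-contained instead of delegated to a reference.
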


\begin{proof}
We only give a sketch of the proof: it is easily seen that $G^-$ and $H$ are \emph{left-Ore monoids}, meaning that they are left-cancellative and any pair of elements has a common left-multiple. For both monoids, the first property follows from being submonoids of a group; the second property for $G^-$ follows from the existence of meets in $G^-$, whereas for the group $H$, it is trivially fulfilled. Now the statement can be immediately derived from \cite[Chapter II, Lemma 3.13]{Garside_Foundations}.
\end{proof}

An operation that will be important later is the $\to$-operation, which makes $G^-$ into a residuated monoid, a special kind of \emph{$L$-algebra}, an algebraic structure introduced by Rump \cite{self_similarity}. This operation is defined as:
\begin{align*}
\to : G^- \times G^- & \to G^- \\
g \to h & := hg^{-1} \wedge e = (h \wedge g)g^{-1}.
\end{align*}

As the $\to$-operation is often used in calculations involving group multiplication as well, we use the following convention:

\textbf{Convention: } \textit{In the absence of brackets, multiplications are evaluated before arrows!}

We will often make use of the following identities for the $\to$-operation.

\begin{lem} \label{lem:equations_for_arrow}
For all $x,y,z \in G^-$ we have:
\begin{align}
x \rightarrow x & = x \rightarrow e = e \tag{S1a} \label{eq:s1a} \\
e \rightarrow x & = x \tag{S1b} \label{eq:s1b} \\
(x \rightarrow y) x & = x \wedge y \label{eq:s2} \tag{S2} \\
(x \wedge y) \rightarrow z & = (x \rightarrow y) \rightarrow (x \rightarrow z) \label{eq:s3} \tag{S3} \\
x \rightarrow (y \wedge z) & = (x \rightarrow y) \wedge (x \rightarrow z) \tag{S4} \label{eq:l_monotone} \\
xy \rightarrow z & = x \rightarrow(y \rightarrow z) \tag{S5} \label{eq:l_structure_left} \\
x \rightarrow yz & = ((z \rightarrow x) \rightarrow y)(x \rightarrow z) \tag{S6} \label{eq:l_structure_right} \\
x \rightarrow y = e & \Leftrightarrow x \leq y. \tag{S7} \label{eq:l_inequality}
\end{align}
\end{lem}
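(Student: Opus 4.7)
The plan is to establish each identity by direct computation from the definition $x \to y = (x \wedge y) x^{-1}$, systematically exploiting right-invariance of the lattice operations, i.e.\ $(a \wedge b) c = ac \wedge bc$ and its counterpart for $\vee$. Most of the identities reduce to one-line verifications.

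First I would dispatch the easy cases. For \eqref{eq:s1a} and \eqref{eq:s1b}, observe $x \to x = (x \wedge x) x^{-1} = e$, $x \to e = (x \wedge e) x^{-1} = x x^{-1} = e$ since $x \in G^-$, and similarly $e \to x = (e \wedge x) e^{-1} = x$. For \eqref{eq:s2}, multiply the definition on the right by $x$. For \eqref{eq:l_inequality}, note $(x \wedge y)x^{-1} = e$ iff $x \wedge y = x$ iff $x \leq y$. For \eqref{eq:l_monotone}, apply right-invariance:
\[
(x \to y) \wedge (x \to z) = ((x \wedge y) \wedge (x \wedge z)) x^{-1} = (x \wedge y \wedge z) x^{-1} = x \to (y \wedge z).
\]
For \eqref{eq:s3}, expand both sides using the definition and right-invariance: with $a = x \to y$ and $b = x \to z$, one computes
\[
a \to b = (a \wedge b) a^{-1} = (x \wedge y \wedge z) x^{-1} \cdot x (x \wedge y)^{-1} = (x \wedge y \wedge z)(x \wedge y)^{-1},
\]
which equals $(x \wedge y) \to z$ directly from the definition.

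The substantive identities are \eqref{eq:l_structure_left} and \eqref{eq:l_structure_right}. The key auxiliary fact I would isolate first is that for $x \in G^-$ one has $xy \leq y$ for all $y \in G^-$ (by right-invariance applied to $x \leq e$). For \eqref{eq:l_structure_left}, I compute
\[
x \to (y \to z) = ((y \wedge z) y^{-1} \wedge x) x^{-1} = ((y \wedge z) \wedge xy) y^{-1} x^{-1},
\]
using right-invariance to pull $y$ inside the meet; then since $xy \leq y$, the right factor $y$ in the meet becomes redundant, leaving $(z \wedge xy)(xy)^{-1} = xy \to z$. For \eqref{eq:l_structure_right}, set $u = z \to x$ and $v = x \to z$; expanding $((u \to y) v)$ and again pulling factors inside the meet via right-invariance gives
\[
((z \to x) \to y)(x \to z) = (y \wedge (x \wedge z) z^{-1}) \cdot z x^{-1} = (yz \wedge x) x^{-1} = x \to yz,
\]
where again $yz \leq z$ is used to absorb the redundant $z$ in the meet $yz \wedge x \wedge z$.

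The main obstacle is purely notational: \eqref{eq:l_structure_right} is the only identity that genuinely mixes meets with a product of two cone elements on the right of $\to$, and it is easy to get lost in the bookkeeping. The trick that makes everything work cleanly is the systematic use of the identity $(a \wedge b) c = ac \wedge bc$ to move group-theoretic factors across meets, combined with the observation that $G^-$ is closed under multiplication with $G^- \cdot G^- \subseteq G^-$ being compatible with the order ($x \leq e$ implies $xy \leq y$). Once this pattern is established, each verification is essentially one well-chosen application of right-invariance followed by an absorption.
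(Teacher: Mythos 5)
Your proof is correct: every identity checks out by direct computation from $x \to y = (y \wedge x)x^{-1}$ together with right-invariance of the lattice operations and the absorption $xy \leq y$ for $x,y \in G^-$. The paper itself gives no argument here (it merely declares the identities trivial or well-known and cites the author's dissertation), and your direct verification is exactly the intended routine proof.
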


\begin{proof}
Essentially, these statements are all trivial or well-known. For a worked-out proof of all of these statements, see \cite[Proposition 2.1.11.]{dietzel_dissertation}, for example.
\end{proof}

\section{Factorization theory} \label{sec:factorization_theory}

A large part of the following material, including most proofs, can also be found in the dissertation of the author (\cite{dietzel_dissertation}); however, these results have not been published anywhere else. Also, several basic notions that we work with - such as the concept of strong order units in the case of right $\ell$-groups - are due to Rump (\cite{Rump-Geometric-Garside}).

Let $G$ be a right $\ell$-group. Furthermore, let $s \in G$. One calls $s$ \emph{normal} if left-multiplication by $s$ is a lattice-isomorphism, that is, if for all $x,y \in G$ we have
\[
s(x \wedge y) = sx \wedge sy \quad , \quad s(x \vee y) = sx \vee sy.
\]
Furthermore, if $s > e$ is normal and every $g \in G$ is dominated by some power of $s$ in the sense that $g \leq s^k$ for some $k \in \Z$, the element $s$ is called a \emph{strong order unit}. By normality, this easily implies that every $g \in G$ also dominates some power of $g$, i.e. $g \geq s^l$ for some $l \in \Z$.

We note, for later, the following useful observation:

\begin{lem} \label{lem:conjugation_by_s_is_automorphism}
Let $s \in G^+$ be normal, then the map $[s^{-1},e] \to [s^{-1},e]; \ x \mapsto s^{-1}xs$ is an isomorphism of lattices.
\end{lem}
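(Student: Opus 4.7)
The plan is to write the map $\varphi: x \mapsto s^{-1}xs$ as a composition of two lattice-theoretically well-behaved operations on $G$: left-multiplication by $s^{-1}$ and right-multiplication by $s$. I first argue that both of these are lattice automorphisms of $G$; restricted appropriately, they will give an isomorphism of $[s^{-1},e]$ onto itself.

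First, I observe that $s^{-1}$ is also normal: applying the identity $s(x \wedge y) = sx \wedge sy$ to $s^{-1}a, s^{-1}b$ and cancelling $s$ on the left yields $s^{-1}(a \wedge b) = s^{-1}a \wedge s^{-1}b$, and symmetrically for joins. Hence left-multiplication by $s^{-1}$ is a lattice automorphism of $G$. On the other hand, right-multiplication by $s$ is automatically a lattice automorphism by right-invariance of the operations in any right $\ell$-group. Composing, $\varphi$ is a lattice automorphism of $G$.

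Next I check that $\varphi$ sends $[s^{-1},e]$ into itself. Given $s^{-1} \leq x \leq e$, right-multiplying $x \leq e$ by $s$ gives $xs \leq s$, and then left-multiplying by the order-preserving $s^{-1}$ yields $s^{-1}xs \leq e$. Symmetrically, $s^{-1} \leq x$ gives $e \leq xs$ and then $s^{-1} \leq s^{-1}xs$. The same argument with the roles of $s$ and $s^{-1}$ swapped shows that $x \mapsto sxs^{-1}$ also maps $[s^{-1},e]$ into itself; since this is the two-sided inverse of $\varphi$, the restriction of $\varphi$ to $[s^{-1},e]$ is a bijection onto itself.

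Finally, because meets and joins of elements of $[s^{-1},e]$ coincide whether computed in $G$ or in the sublattice $[s^{-1},e]$, and $\varphi$ preserves the operations of $G$, the restricted bijection preserves $\wedge$ and $\vee$ — that is, it is an isomorphism of lattices. There is no real obstacle here; the only step deserving care is confirming that the interval is stable under $\varphi$, and that reduces in each direction to a single application of right-invariance followed by normality.
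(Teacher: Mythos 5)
Your proof is correct and is exactly the argument the paper treats as immediate (it states the lemma without proof): factor $x \mapsto s^{-1}xs$ as right-multiplication by $s$ (a lattice automorphism by right-invariance) followed by left-multiplication by $s^{-1}$ (a lattice automorphism since normality of $s$ transfers to $s^{-1}$ by cancellation), and check stability of the interval in both directions. All steps check out, including the slightly less obvious verification that $s^{-1} \leq sxs^{-1}$ follows from $s^{-1} \leq x$.
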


Note that although we are not allowed to multiply arbitrary inequalities in a group with only right-invariant order, we can do so when normal elements are involved:

\begin{lem} \label{lem:multipl_inequalities_with_normal_elements}
If $s_1,s_2$ are normal elements in $G$ and $g_1,g_2 \in G$, then $g_i \geq s_i$ for $i \in \{1,2 \}$ implies $g_1g_2 \geq s_1s_2$.
\end{lem}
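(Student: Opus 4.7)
The plan is to chain two one-sided multiplications: right-invariance of the order, which is always available, and left-invariance, which becomes available for the particular element $s_1$ because it is normal.

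First I would apply right-invariance of $\leq$ to the inequality $g_1 \geq s_1$, multiplying by $g_2$ on the right to obtain $g_1 g_2 \geq s_1 g_2$. This step uses only the defining property of a right $\ell$-group and needs no normality assumption.

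Next I would exploit the normality of $s_1$. By definition, left-multiplication by $s_1$ is a lattice isomorphism on $G$; in particular it preserves order, since $x \leq y$ is equivalent to $x \wedge y = x$ and this equation is preserved by the lattice isomorphism. Applying this to $g_2 \geq s_2$ gives $s_1 g_2 \geq s_1 s_2$. Combining by transitivity with the previous inequality yields $g_1 g_2 \geq s_1 s_2$, which is the desired conclusion.

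There is no real obstacle here — the proof is a two-line chain — but it is worth remarking that normality of $s_2$ is not actually used; the statement only needs $s_1$ normal. The symmetric hypothesis is presumably recorded because in the intended applications both $s_1$ and $s_2$ will be powers of a strong order unit, hence automatically normal.
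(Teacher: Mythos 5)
Your proof is correct and is essentially identical to the paper's one-line argument, which chains $s_1s_2 \leq s_1g_2 \leq g_1g_2$ using normality of $s_1$ for the first step and right-invariance for the second. Your remark that normality of $s_2$ is not needed is also accurate.
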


\begin{proof}
Under the given conditions, $s_1s_2 \leq s_1g_2 \leq g_1g_2$.
\end{proof}

Even in absence of finiteness or chain conditions, a strong order unit has enough of the properties of a Garside element, so we can work with certain Garside-theoretical concepts.

One of the most important ones is the existence of right-normal factorizations in $G^-$:

\begin{defi}
Let $s$ be a strong order unit in $G$, and let $g_1,\ldots,g_k$ be a finite sequence of elements in $[s^{-1},e]$ for some $k \in \Z_{\geq 0}$. For an element $g \in G^-$, we call the sequence $g_1,\ldots,g_k$ a \emph{right-normal factorization} of $g$ if:
\begin{enumerate}[1)]
\item $g = g_k\ldots g_2g_1$, i.e. the sequence is indeed a factorization of $g$,
\item $g_i \neq e$ for all $1 \leq i \leq k$, and
\item for all $1 \leq i < k$, there do \emph{not} exist $h,h' \in G^-$ with $h \neq e$, $h'h = g_{i+1}$ and $hg_i \in [s^{-1},e]$ (\emph{right-maximality}).
\end{enumerate}
\end{defi}
Note that right-maximality of the factorization means that no factor can be \glqq enlarged\grqq\ with a split off non-trivial right-divisor of its left neighbor. That means that $s^{-1}g_i^{-1}$ and $g_{i+1}$ have no common right divisor in $G^-$, therefore the right-maximal condition can also be written as:
\begin{equation}\label{eq:right_maximality_lattice_condition}
s^{-1}g_i^{-1} \vee g_{i+1} = e \textnormal{ for } 1 \leq i < k \tag{RM}
\end{equation}

In right $\ell$-groups with a strong order unit, right-normal factorizations exist and are unique:

\begin{thm} \label{thm:right_normal_facs_exist_and_unique}
Each element $g \in G^-$ has a unique right-normal factorization given by $g = g_kg_{k-1} \ldots g_1$, where $k = \min \{ i \geq 0: g \geq s^{-i} \}$ and
\begin{equation} \label{eq:right_normal_facs}
g_i = (g \vee s^{-i})(g \vee s^{-(i-1)})^{-1}.
\end{equation}
\end{thm}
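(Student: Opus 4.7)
The plan is to take the explicit formula as the candidate factorization, verify the defining conditions of a right-normal factorization using the normality of $s$, and then establish uniqueness by pinpointing the rightmost factor $g_1$ inductively. The set-up is this: since $s$ is a strong order unit, some power $s^n$ dominates $g^{-1}$, giving $g \geq s^{-n}$, so $k := \min\{i \geq 0 : g \geq s^{-i}\}$ is finite. Writing $h_i := g \vee s^{-i}$ one has $h_0 = e$, $h_k = g$, and $h_i \leq h_{i-1}$ for $1 \leq i \leq k$.

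For existence, I would verify the four properties of the candidates $g_i := h_i h_{i-1}^{-1}$ in turn. Telescoping gives $g_k \cdots g_1 = h_k h_0^{-1} = g$. The upper bound $g_i \leq e$ is immediate from $h_i \leq h_{i-1}$. For $g_i \geq s^{-1}$, I would use normality of $s^{-1}$: expanding $s^{-1} h_{i-1} = s^{-1}g \vee s^{-i} \leq g \vee s^{-i} = h_i$ and right-multiplying by $h_{i-1}^{-1}$ yields $s^{-1} \leq g_i$. The right-maximality condition \eqref{eq:right_maximality_lattice_condition}, after right-multiplication by $h_i$, reduces to $s^{-1} h_{i-1} \vee h_{i+1} = h_i$; expanding both joins by normality of $s$ and absorbing $s^{-1}g \leq g$ and $s^{-(i+1)} \leq s^{-i}$ collapses this to $g \vee s^{-i} = h_i$. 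The remaining condition $g_i \neq e$ translates into strict descent $h_i < h_{i-1}$ for $1 \leq i \leq k$; this is the most delicate point, since equality would give $s^{-(i-1)} \leq g \vee s^{-i}$, and I would have to show, using the minimality of $k$ together with normality of $s$ and the $L$-algebra identities of \cref{lem:equations_for_arrow}, that this forces $g \geq s^{-(i-1)}$, contradicting $i - 1 < k$.

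For uniqueness, let $g = g'_{k'} g'_{k'-1} \cdots g'_1$ be an arbitrary right-normal factorization. The key claim is $g'_1 = g \vee s^{-1} = h_1$. The inequality $g'_1 \geq g \vee s^{-1}$ is immediate from $g'_1 \geq g$ (because the prefix $g'_{k'} \cdots g'_2$ lies in $G^-$) together with $g'_1 \geq s^{-1}$. For the reverse inequality I would establish by induction on $i$ the stronger auxiliary identity
\[
g'_i g'_{i-1} \cdots g'_1 \vee s^{-1} = g'_1 \qquad (1 \leq i \leq k').
\]
The base case $i = 1$ is trivial. For the inductive step, the right-maximality condition $g'_{i+1} g'_i \vee s^{-1} = g'_i$ is right-multiplied by the tail $g'_{i-1} \cdots g'_1$ and joined with $s^{-1}$; the cross term $s^{-1}(g'_{i-1} \cdots g'_1) \leq s^{-1}$ is absorbed, and the inductive hypothesis yields the claim. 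Evaluating at $i = k'$ gives $g \vee s^{-1} = g'_1$.

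The argument then closes by an outer induction on $k$. Setting $\tilde g := g h_1^{-1}$, a direct computation using normality of $s$ shows that the $h$-sequence of $\tilde g$ is the shifted sequence $\tilde h_j = h_{j+1} h_1^{-1}$, so the explicit formula for $\tilde g$ returns $\tilde g_j = g_{j+1}$; simultaneously $(g'_2, \ldots, g'_{k'})$ is a right-normal factorization of $\tilde g$. The inductive hypothesis applied to $\tilde g$ forces $k' - 1 = k - 1$ and $g'_i = g_i$ for all $i \geq 2$. As already noted, the genuine difficulty is the strict-descent step needed for non-triviality of the factors; once it is settled, the remaining verifications are essentially mechanical consequences of the normality of $s$ together with \cref{lem:equations_for_arrow}.
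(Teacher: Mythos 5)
Your overall strategy is sound, and two of its components genuinely diverge from the paper's proof. For right-maximality, you verify the lattice condition \eqref{eq:right_maximality_lattice_condition} by direct computation (reducing it to $s^{-1}h_{i-1} \vee h_{i+1} = h_i$ and expanding with normality), whereas the paper argues by contradiction: a violation would yield an alternative factorization whose partial product $H = hg_ig_{i-1}\cdots g_1$ satisfies $g \leq H < g \vee s^{-i}$ while $H \geq s^{-i}$ by \cref{lem:multipl_inequalities_with_normal_elements}. For uniqueness, the paper supposes $g_1 \not\leq h_1$ and propagates $(g_kg_{k-1}\cdots g_1) \to h_1 \neq e$ through the arrow operation via \eqref{eq:l_structure_left}, contradicting $g \leq h_1$; your telescoping identity $g'_i g'_{i-1}\cdots g'_1 \vee s^{-1} = g'_1$ instead pins down $g'_1 = g \vee s^{-1}$ positively, which is arguably cleaner and more informative, since it re-derives the closed formula for the first factor of an \emph{arbitrary} right-normal factorization. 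Your computations check out: the cross term $s^{-1}(g'_{i-1}\cdots g'_1) \leq s^{-1}$ is absorbed because $s^{-1}$ is normal, and the shift $\tilde h_j = h_{j+1}h_1^{-1}$ follows from $s^{-j}h_1 \vee g = s^{-(j+1)} \vee g$.

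The one place you stop short is the strict descent $h_i < h_{i-1}$, which you rightly flag as the delicate point but do not finish. It closes in one line: normality gives the recursion $h_{j+1} = g \vee s^{-1}h_j$, so if $h_i = h_{i-1}$ for some $1 \leq i \leq k$ then the sequence stabilizes from index $i-1$ onward, whence $g = h_k = h_{i-1} = g \vee s^{-(i-1)}$, i.e.\ $g \geq s^{-(i-1)}$ with $i-1 < k$, contradicting the minimality of $k$. The paper sidesteps this computation by deriving non-triviality from right-maximality instead: if some $g_i = e$, right-maximality forces $g_j = e$ for all $j > i$, so $g = g_{i-1}\cdots g_1 \geq s^{-(i-1)}$ by \cref{lem:multipl_inequalities_with_normal_elements}, again contradicting minimality. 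Either patch completes your argument.
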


\begin{proof}
Due to $s$ being a strong order unit, the integer $k$ exists. As $g \vee s^{-k} = g$, it is easily seen that with our choice of the factors $g_i$, we have
\[
g_j g_{j-1} \ldots g_1 = g \vee s^{-j}.
\]
In particular, $g_k g_{k-1} \ldots g_1 = g \vee s^{-k} = g$.

We now show that $g_i \in [s^{-1},e]$ ($1 \leq i \leq k$) which means that
\[
s^{-1} \leq (g \vee s^{-i})(g \vee s^{-(i-1)})^{-1} \leq e
\]
which by right-invariance is equivalent to
\[
s^{-1}(g \vee s^{-(i-1)}) \leq g \vee s^{-i} \leq g \vee s^{-(i-1)}.
\]
Using normality, the left term is seen to be $s^{-1}g \vee s^{-i}$; the first inequality now simply follows from the observation that $s^{-1}g \leq g$, whereas the second inequality follows from $s^{-i} \leq s^{-(i-1)}$.

We now show that the expression is indeed right-maximal. Assume otherwise, then we could find an index $1 \leq i < k$ and a decomposition $g_{i+1} = h^{\prime}h$ with $h^{\prime},h \in [s^{-1},e]$ and $h \neq e$ such that $hg_i \in [s^{-1},e]$. We could therefore get another factorization $g = h_kh_{k-1} \ldots h_1$ by definining
\[
h_j = \begin{cases}
g_j & i \not\in \{ i;i+1 \} \\
hg_i & j = i \\
h^{\prime} & j = i+1.
\end{cases}
\]
As all $h_j \geq s^{-1}$ ($1 \leq j \leq k$), a repeated application of \cref{lem:multipl_inequalities_with_normal_elements} would imply that $H := h_ih_{i-1} \ldots h_1 \geq s^{-i}$. But also $H \geq g$, which implied that $g \vee s^{-i} \leq H < g_i g_{i-1} \ldots g_1 = g \vee s^{-i}$, a contradiction!

Observe that right-maximality contains that $g_i = e$ for some index $1 \leq i \leq k$ implies $g_j = e$ for all indices $j > i$. When that occurs, $g = g_{i-1}g_{i-2} \ldots g_1$ which implies $g \geq s^{-(i-1)}$, also by \cref{lem:multipl_inequalities_with_normal_elements}. As that would contradict our choice of $k$, we deduce that all $g_i \neq e$ ($1 \leq i \leq k$).

Now it remains to prove that the right-normal factorization is indeed unique. Therefore, let $g = g_kg_{k-1} \ldots g_1 = h_l h_{l-1}\ldots h_1$ be two right-normal factorizations of $g$. It is sufficient to prove that $g_1 = h_1$, the rest will follow by induction.

So assume that $g_1 \neq h_1$. We may assume that $g_1 \not \leq h_1$. Then $g_1 \to h_1 \neq e$. Also, since $g_1 \to h_1 \geq g_1 \to s^{-1} = s^{-1}g_1^{-1}$ and $g_2 \vee s^{-1}g_1^{-1} = e$ (\cref{eq:right_maximality_lattice_condition}), we get $g_2 \not \leq g_1 \to h_1$. An inductive continuation of this argument, together with an application of \cref{eq:l_structure_left}, gives:
\[
(g_k g_{k-1} \ldots g_1) \to h_1 = g_k \to (g_{k-1} \to ( \ldots \to (g_1 \to h_1) )) \neq e,
\]
a contradiction to $g \leq h_1$.
\end{proof}

For an element $g \in G^-$, we call the quantity $k = \min \{ i \geq 0 : g \geq s^{-i} \}$ the \emph{length} of $g$ and denote it by $\lambda(g)$.

\begin{cor} \label{cor:all_dual_atoms_are_above_s'}
If $x \in X(G^-)$, then $\lambda(x) = 1$.
\end{cor}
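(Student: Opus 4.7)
My plan is to apply the right-normal factorization theorem \cref{thm:right_normal_facs_exist_and_unique} directly. Unpacking the definition, $\lambda(x) = 1$ amounts to showing both $x \not\geq e$ (i.e.\ $x \neq e$) and $x \geq s^{-1}$. The first is immediate since $x \prec e$ forces $x < e$, which already gives $\lambda(x) \geq 1$.

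To prove $x \geq s^{-1}$, I would examine the first factor of the right-normal factorization $x = x_k x_{k-1} \ldots x_1$ provided by \cref{thm:right_normal_facs_exist_and_unique}, where $k = \lambda(x) \geq 1$ and each $x_i \in [s^{-1},e] \setminus \{e\}$. The key observation is that $x \leq x_1$: indeed, $xx_1^{-1} = x_k \ldots x_2$ is a product of elements of $G^-$ and hence lies in $G^-$, so right-invariance of $\leq$ yields $x \leq x_1$. Combining this with $x_1 < e$ (strict, since $x_1 \neq e$), we get $x_1 \in [x,e)$. Because $x \prec e$ means $[x,e) = \{x\}$, we must have $x_1 = x$, so $x = x_1 \in [s^{-1},e]$, which gives $\lambda(x) \leq 1$.

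There is no real obstacle here; the argument is essentially immediate once one unpacks the definition of $\lambda$ and invokes the factorization theorem. The only slightly non-obvious point is noticing that the partial product $x_k \ldots x_2$ automatically lies in $G^-$ because the negative cone is a submonoid of $G$.
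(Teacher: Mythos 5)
Your proof is correct, but it takes a different route from the paper's. The paper argues via the degree function: since $x \prec e$ one has $\deg(x) = 1$, and as every factor of the right-normal factorization is $\neq e$ and hence has degree at least $1$, the number of factors $k = \lambda(x)$ can only be $1$. Your argument instead extracts the inequality $x \leq x_1$ from the factorization (using that $x_k \cdots x_2 \in G^-$ and right-invariance) and then uses the covering relation $x \prec e$ to force $x = x_1 \in [s^{-1},e]$. Both are valid, but yours is arguably preferable in context: the degree homomorphism is only introduced later in the section and its well-definedness rests on modularity and noetherianity (\cref{prop:irred_facts_have_same_length}), whereas your argument uses nothing beyond \cref{thm:right_normal_facs_exist_and_unique} and so works in any right $\ell$-group with strong order unit, with no forward reference. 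The one small point worth stating explicitly is the case $k=0$ (excluded anyway since $x \neq e$), so that the factor $x_1$ you inspect actually exists; you do handle this by first noting $\lambda(x) \geq 1$.
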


\begin{proof}
Let $x = g_kg_{k-1} \ldots g_1$ be the right-normal factorization. As $\deg(x) = 1$, it is only possible that $\lambda(x) = k=1$.
\end{proof}

Below we will see that $s$ is also a strong order unit in the group $(G^{\mathrm{op}}, \lesssim)$, therefore one can also define the \emph{left-normal factorization} of an element $G^-$ as one that translates to a right-normal factorization in $(G^{\mathrm{op}}, \lesssim)$. We give a precise definition:

\begin{defi}
For an element $g \in G^-$, a \emph{left-normal factorization} is a sequence of elements $h_1,h_2, \ldots,h_k \in [s^{-1},e]$ such that:
\begin{enumerate}[1)]
\item $g = h_1 \ldots h_{k-1}h_k$,
\item $h_i \neq e$ for all $1 \leq i \leq k$, and
\item for all $1 \leq i < k$, there do \emph{not} exist $h,h^{\prime} \in G^-$ with $h \neq e$, $hh^{\prime} = h_{i+1}$ and $h_ih \in [s^{-1},e]$ (\emph{left-maximality}).
\end{enumerate}
\end{defi}

Note that left-maximality can also be expressed lattice-theoretically, in the spirit of \cref{eq:right_maximality_lattice_condition}. An expression $h_1 \ldots h_{k-1}h_k$ is therefore left-maximal if and only if for all $1 \leq i < k$,

\begin{equation} \label{eq:left_maximality_lattice_condition}
h_{i+1} \curlyvee h_i^{-1}s^{-1} = e. \tag{LM}
\end{equation}

This is also equivalent to $h_{i+1}^{-1} \wedge sh_i = e$ which - after left-multiplication by $s^{-1}$ - can be rewritten as

\begin{equation} \label{eq:left_maximality_lattice_condition_alt}
h_i \wedge s^{-1}h_{i+1}^{-1} = s^{-1}. \tag{LM'}
\end{equation}

In order to guarantee that this definition is indeed meaningful, one must prove that $s$ is a strong order unit with regard to the opposite order $\lesssim$ and that the respective strong order intervals coincide as well.

\begin{lem} \label{lem:normal_element_less_and_lesssim}
Let $t \in G$ be a normal element, then $t \leq g \Leftrightarrow t \lesssim g$.
\end{lem}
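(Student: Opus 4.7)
The proof should be a short three-step equivalence chain, with the only nontrivial ingredient being the normality of $t$. My plan is to translate both sides of the desired equivalence into statements of the form ``something $\leq e$'' by using the right-invariance of $\leq$ (and the definition of $\lesssim$), and then use normality of $t$ to move between the two.

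Concretely, I would proceed as follows. First, by right-invariance of $\leq$, multiplying by $g^{-1}$ on the right yields
\[
t \leq g \quad \Longleftrightarrow \quad t g^{-1} \leq e.
\]
Second, by the definition of $\lesssim$ together with right-invariance applied to the opposite side, multiplying $g^{-1} \leq t^{-1}$ on the right by $t$ gives
\[
t \lesssim g \quad \Longleftrightarrow \quad g^{-1} \leq t^{-1} \quad \Longleftrightarrow \quad g^{-1} t \leq e.
\]
So it suffices to show $t g^{-1} \leq e \Leftrightarrow g^{-1} t \leq e$, or equivalently $t g^{-1} \leq e \Leftrightarrow g^{-1} \leq t^{-1}$.

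This last equivalence is exactly an instance of the normality of $t$: applying the defining equivalence $x \leq y \Leftrightarrow t x \leq t y$ with $x := g^{-1}$ and $y := t^{-1}$ gives
\[
g^{-1} \leq t^{-1} \quad \Longleftrightarrow \quad t g^{-1} \leq t t^{-1} = e,
\]
which closes the loop. Chaining the three equivalences produces $t \leq g \Leftrightarrow t \lesssim g$.

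There is really no substantial obstacle here; the only thing to be careful about is not conflating normality (which concerns \emph{left}-multiplication being order-preserving) with right-invariance, and to correctly unfold the definition $t \lesssim g :\Leftrightarrow g^{-1} \leq t^{-1}$ before applying right-invariance on the inverted inequality. Once both sides are rewritten as negativity conditions on $tg^{-1}$ and $g^{-1}t$ respectively, a single application of normality makes them equivalent.
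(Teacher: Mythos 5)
Your proof is correct and is essentially the paper's own argument, just written out in more detail: the paper compresses the same chain into ``$t \leq g \Leftrightarrow g^{-1} \leq t^{-1} \Leftrightarrow t \lesssim g$'', where the first equivalence is exactly your normality step $g^{-1} \leq t^{-1} \Leftrightarrow tg^{-1} \leq e \Leftrightarrow t \leq g$. Nothing to add.
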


\begin{proof}
Since $t$ is normal, $t \leq g \Leftrightarrow g^{-1} \leq t^{-1} \Leftrightarrow t \lesssim g$.
\end{proof}

\begin{prop} \label{prop:strong_order_intervals_coincide}
If $s$ is a strong order unit, then for all $k \geq 0$, the strong order intervals with respect to $\leq$ and $\lesssim$ coincide, i.e.
\[
[s^{-k},e]_{\leq} = [s^{-k},e]_{\lesssim}.
\]
\end{prop}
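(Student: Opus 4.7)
The plan is to reduce the claim to \cref{lem:normal_element_less_and_lesssim} by observing that both endpoints defining the interval $[s^{-k},e]$ are normal elements of $G$. Unpacking the definitions, we have $[s^{-k},e]_{\leq} = \{g \in G : s^{-k} \leq g \leq e\}$ and $[s^{-k},e]_{\lesssim} = \{g \in G : s^{-k} \lesssim g \lesssim e\}$, so it suffices to establish the equivalence $s^{-k} \leq g \leq e \iff s^{-k} \lesssim g \lesssim e$ for every $g \in G$.

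First I would verify that $s^{-k}$ is normal. Since $s$ is a strong order unit, it is by definition normal, and it is routine to check that the normal elements of $G$ form a subgroup: closedness under multiplication follows directly from the definition, while closedness under inversion follows because left-multiplication by $s^{-1}$ is the inverse bijection of the lattice isomorphism induced by $s$ and is therefore itself a lattice isomorphism. In particular, $s^{-k}$ is normal for every $k \in \Z_{\geq 0}$, and $e$ is trivially normal.

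Now I would invoke \cref{lem:normal_element_less_and_lesssim} applied to the normal element $s^{-k}$, which immediately gives $s^{-k} \leq g \iff s^{-k} \lesssim g$. For the upper endpoint, I would argue directly from the definitions: $g \leq e \iff g \in G^-$, whereas $g \lesssim e$ unwinds as $e^{-1} \leq g^{-1}$, i.e.\ $g^{-1} \in G^+$, which is again equivalent to $g \in G^-$. Combining these two equivalences yields the desired coincidence $[s^{-k},e]_{\leq} = [s^{-k},e]_{\lesssim}$.

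I do not expect a substantial obstacle: the proposition is a three-line corollary of \cref{lem:normal_element_less_and_lesssim} once the normality of $s^{-k}$ is noted. The only mildly subtle point is the closure of normal elements under inversion, and this is handled by the one-line observation above.
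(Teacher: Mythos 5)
Your proposal is correct and follows essentially the same route as the paper: the lower bound is handled by \cref{lem:normal_element_less_and_lesssim} applied to the normal element $s^{-k}$, and the upper bound by right-invariance ($g \leq e \Leftrightarrow e \leq g^{-1} \Leftrightarrow g \lesssim e$). The only difference is that you spell out why $s^{-k}$ is normal (closure of normal elements under products and inverses), which the paper leaves implicit; that verification is accurate.
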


\begin{proof}
$s^{-k} \leq g \Leftrightarrow s^{-k} \lesssim g$ holds by the lemma above, as $s^{-k}$ is a normal element. By right-invariance, we also get $g \leq e \Leftrightarrow e \leq g^{-1} \Leftrightarrow g \lesssim e$.
\end{proof}

From \cref{thm:right_normal_facs_exist_and_unique}, we can now easily deduce:

\begin{thm} \label{thm:left_normal_facs_exist_and_unique}
Let $g \in G^-$, then $g$ has a unique left-maximal factorization $g = h_1 \ldots h_{k-1}h_k$ with $k = \lambda(g)$, whose factors are given by
\begin{equation} \label{eq:left_normal_facs}
h_i = (g \curlyvee s^{-(i-1)})^{-1}(g \curlyvee s^{-i}).
\end{equation}
\end{thm}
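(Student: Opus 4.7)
The plan is to deduce the theorem from \cref{thm:right_normal_facs_exist_and_unique} by passing to the opposite right $\ell$-group $(G^{\mathrm{op}}, \lesssim)$, where $G^{\mathrm{op}}$ carries the reversed multiplication $g \cdot_{\mathrm{op}} h = hg$. Under this anti-isomorphism, a product $h_1 h_2 \cdots h_k$ in $G$ is rewritten as $h_k \cdot_{\mathrm{op}} h_{k-1} \cdot_{\mathrm{op}} \cdots \cdot_{\mathrm{op}} h_1$ in $G^{\mathrm{op}}$, so a left-normal factorization of $g \in G^-$ in $G$ corresponds exactly to a right-normal factorization of $g$ in $(G^{\mathrm{op}}, \lesssim)$. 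Using commutativity of $\curlyvee$, the left-maximality condition \cref{eq:left_maximality_lattice_condition} rewrites as $s^{-1} \cdot_{\mathrm{op}} h_i^{-1} \curlyvee h_{i+1} = e$, which is precisely the right-maximality condition \cref{eq:right_maximality_lattice_condition} instantiated in $(G^{\mathrm{op}}, \lesssim)$.

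First I would verify that \cref{thm:right_normal_facs_exist_and_unique} is applicable to $(G^{\mathrm{op}}, \lesssim)$: this is a right $\ell$-group by the general remarks in \cref{subs:right_l_groups}, and $s$ remains a normal strong order unit here with $[s^{-1}, e]_{\lesssim} = [s^{-1}, e]_{\leq}$ by \cref{lem:normal_element_less_and_lesssim} and \cref{prop:strong_order_intervals_coincide}. In particular, using normality of $s^{-i}$, the dominance condition $g \gtrsim s^{-i}$ is equivalent to $g \geq s^{-i}$, so the $(G^{\mathrm{op}}, \lesssim)$-length of $g$ coincides with $\lambda(g)$ and the integer $k$ comes out as stated.

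Applying \cref{thm:right_normal_facs_exist_and_unique} in $(G^{\mathrm{op}}, \lesssim)$ then delivers the unique right-normal factorization of $g$ there, with factors formally $(g \curlyvee s^{-i}) \cdot_{\mathrm{op}} (g \curlyvee s^{-(i-1)})^{-1}$. Reading this product back in $G$ reverses the order of its two terms and yields $h_i = (g \curlyvee s^{-(i-1)})^{-1}(g \curlyvee s^{-i})$, matching \cref{eq:left_normal_facs}. The only genuine obstacle is bookkeeping: one must carefully transfer between $(\vee, \wedge, \leq, \cdot)$ in $G$ and $(\curlyvee, \curlywedge, \lesssim, \cdot_{\mathrm{op}})$ in $G^{\mathrm{op}}$ and confirm that the two maximality conditions are genuine mirror images. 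Thanks to the explicit lattice reformulation \cref{eq:left_maximality_lattice_condition}, this is a short calculation rather than a delicate argument, so the whole theorem reduces cleanly to \cref{thm:right_normal_facs_exist_and_unique} applied in the opposite group.
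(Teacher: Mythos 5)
Your proposal is correct and follows essentially the same route as the paper: the paper's proof likewise invokes \cref{prop:strong_order_intervals_coincide} to see that the factors lie in $[s^{-1},e]$ and that the length is preserved, and then obtains \cref{eq:left_normal_facs} by applying \cref{thm:right_normal_facs_exist_and_unique} to the opposite right $\ell$-group $(G^{\mathrm{op}},\lesssim)$. Your additional bookkeeping verifying that \cref{eq:left_maximality_lattice_condition} is the mirror image of \cref{eq:right_maximality_lattice_condition} is exactly the detail the paper leaves implicit.
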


\begin{proof}
By \cref{prop:strong_order_intervals_coincide}, the factors $h_i$ are indeed in the interval $[s^{-1},e]$. As a left-normal factorization is just a right-normal factorization in the opposite group, \cref{eq:left_normal_facs} simply follows from applying \cref{eq:right_normal_facs} to the right $\ell$-group $(G^{\mathrm{op}},\lesssim)$. As \cref{prop:strong_order_intervals_coincide} also implies that $g \geq s^{-k} \Leftrightarrow g \gtrsim s^{-k}$, the factorizations must be of the same length.
\end{proof}

Introducing chain conditions, there also exists a factorization into dual atoms.

Call a right $\ell$-group $G$ \emph{noetherian}, if:
\begin{enumerate}[1)]
\item For each $g \in G$, the sublattice $g^{\uparrow}$ fulfills the descending chain condition,
\item For each $g \in G$, the sublattice $g^{\downarrow}$ fulfills the ascending chain condition
\end{enumerate}
We remark that these two conditions are \emph{not} equivalent; in general, one condition can only be deduced from the other under left- \emph{and} right-invariance.

Recall that $X(G^-) := \{ x \in G: x \prec  e \}$. The proof of the next proposition is quite standard:

\begin{prop} \label{prop:irred_facts_exist}
If $G$ is a noetherian right $\ell$-group and $g \in G^-$, then there is a finite sequence $x_1, \ldots, x_k$ in $X(G^-)$ for some $k \geq 0$ such that $g = x_k x_{k-1} \ldots x_1$.
\end{prop}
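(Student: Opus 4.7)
The strategy is to peel dual atoms off of $g$ from the right, using condition 2 of noetherianity to ensure the process terminates. Factoring from the right is natural here, because the order on $G$ is right-invariant.

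The crucial intermediate step I would isolate is: \emph{every $g \in G^-$ with $g < e$ admits some $x \in X(G^-)$ with $g \leq x$.} To prove this, consider the set $S := \{ h \in G^- : g \leq h < e \}$, which is nonempty (it contains $g$) and is a subset of $e^{\downarrow}$. By noetherianity, $e^{\downarrow}$ satisfies the ascending chain condition, so every ascending sequence in $S$ becomes stationary; consequently $S$ contains a maximal element $x$. Any $y \in G$ with $x < y \leq e$ lies automatically in $G^-$, and either $y = e$ or $y \in S$; by maximality of $x$, we must have $y = e$. Hence $x \prec e$, i.e., $x \in X(G^-)$.

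With this in hand, I would build the factorization iteratively. If $g = e$, the empty product works. Otherwise, choose $x_1 \in X(G^-)$ with $g \leq x_1$ and set $g^{(1)} := g x_1^{-1}$. Right-invariance gives $g^{(1)} \leq e$ (from $g \leq x_1$) and $g^{(1)} > g$ (from $e < x_1^{-1}$, which follows from $x_1 < e$ by right-multiplying by $x_1^{-1}$). Iterating, as long as $g^{(i)} < e$, the key step produces $x_{i+1} \in X(G^-)$ with $g^{(i)} \leq x_{i+1}$, and we set $g^{(i+1)} := g^{(i)} x_{i+1}^{-1}$. This yields a strictly ascending chain $g < g^{(1)} < g^{(2)} < \cdots$ in $e^{\downarrow}$, which must stabilize by noetherianity. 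Stabilization forces $g^{(k)} = e$ for some $k$, whence $g = x_k x_{k-1} \cdots x_1$ is the desired factorization.

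The main conceptual pitfall worth flagging is the choice of side. If one instead attempted $g = x_1 g'$ with $g' := x_1^{-1} g$, the conclusion $g' \in G^-$ would require $x_1^{-1} g \leq e$, which does not follow from $g \leq x_1$ without normality of $x_1$, since left-multiplication need not preserve the order. Right-factorization avoids this obstacle entirely, since $g \leq x_1$ directly yields $g x_1^{-1} \leq e$ by right-invariance; everything else is a straightforward two-step application of the noetherian hypothesis.
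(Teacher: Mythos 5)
Your key intermediate step (every $g < e$ in $G^-$ lies below some dual atom, via the ascending chain condition on $e^{\downarrow}$) is correct, and so is the observation that $g \leq x_1$ yields $g^{(1)} := gx_1^{-1} \in G^-$ by right-invariance. The gap is in the termination argument: the inequality $g < g^{(1)}$ does not follow from $e < x_1^{-1}$. By right-invariance it is \emph{equivalent} to $gx_1 < g$, i.e.\ to left-multiplying $x_1 < e$ by $g$ --- exactly the operation you correctly flag as illegitimate in your final paragraph. It genuinely fails: in $B_3$ ordered as a right $\ell$-group with $G^- = (B_3^+)^{-1}$, take $g = \sigma_1^{-1}\sigma_2^{-1}$. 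The only dual atom above $g$ is $x_1 = \sigma_2^{-1}$ (since $\sigma_1^{-1}\sigma_2\sigma_1 = \sigma_2\sigma_1\sigma_2^{-1} \notin B_3^+$), so $g^{(1)} = \sigma_1^{-1}$, and $g \not\leq g^{(1)}$ for the same reason. Hence the elements $g^{(i)}$ need not form an ascending chain, and the appeal to the ascending chain condition on $e^{\downarrow}$ to force termination collapses.

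The repair is to run the chain argument on the partial products rather than on the quotients: set $a_i := x_i x_{i-1}\cdots x_1$, so that $g = g^{(i)}a_i$. Right-multiplying $g^{(i)} \leq e$ by $a_i$ gives $g \leq a_i$, and right-multiplying $x_{i+1} \prec e$ by $a_i$ gives $a_{i+1} = x_{i+1}a_i \prec a_i$. Thus $e \succ a_1 \succ a_2 \succ \cdots$ is a strictly descending chain contained in $g^{\uparrow}$, and it is condition 1 of noetherianity (the descending chain condition on $g^{\uparrow}$), not condition 2, that forces the process to stop; it stops precisely when $a_k = g$, i.e.\ $g = x_k\cdots x_1$. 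Put differently, your construction is the construction of a finite maximal chain from $g$ up to $e$, and both halves of the noetherian hypothesis are needed: the ascending chain condition to find each covering element, the descending chain condition to terminate.
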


\begin{proof}
Easy.
\end{proof}

Note that a product with $k = 0$ is an empty product and therefore equal to $e$.

A factorization of this type will be denoted as an \emph{atomic factorization} and call the integer $k$ its \emph{length}. As with left- or right-normal factorizations, we will simply refer to an expression of the type $(g=)x_kx_{k-1}\ldots x_1$ as an atomic factorization (of $g$) if the factors are dual atoms.

In a \emph{modular} noetherian right $\ell$-group, we have the following theorem:

\begin{prop} \label{prop:irred_facts_have_same_length}
If $G$ is a modular noetherian right $\ell$-group and $g \in G^-$. For two atomic factorizations $g = x_kx_{k-1}\ldots x_1 = y_ly_{l-1}\ldots y_1$, there holds the equality $k=l$.
\end{prop}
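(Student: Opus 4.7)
The strategy is to convert each atomic factorization into a finite maximal chain in the interval $[g,e]$ and then invoke the Jordan-Hölder-Dedekind theorem for modular lattices.

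First I would observe that for any $z \in G$ the map $G \to G$, $x \mapsto xz$ is an order-automorphism, by the right-invariance axiom $x \leq y \Leftrightarrow xz \leq yz$ and its inverse supplied by multiplication by $z^{-1}$. In particular, right multiplication preserves the covering relation $\prec$.

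Next, starting from an atomic factorization $g = x_k x_{k-1} \ldots x_1$, I would set $g_0 := e$ and $g_i := x_i x_{i-1} \ldots x_1$ for $1 \leq i \leq k$, so that $g_k = g$. From $x_i \prec e$, right multiplication by $g_{i-1}$ yields $g_i = x_i g_{i-1} \prec g_{i-1}$. Each $g_i$ lies in $[g,e]$, and we obtain a finite maximal chain
\[
g = g_k \prec g_{k-1} \prec \ldots \prec g_1 \prec g_0 = e
\]
in the interval $[g,e]$, of length exactly $k$. The same construction applied to the second factorization $g = y_l y_{l-1} \ldots y_1$ produces a finite maximal chain of length $l$ in $[g,e]$.

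Finally, $[g,e]$ is modular as a sublattice of the modular lattice $G$, and by the noetherianity of $G$ the chain conditions hold in $[g,e]$. Having already exhibited a finite maximal chain of length $k$ between $g$ and $e$, the Jordan-Hölder-Dedekind theorem for modular lattices (see e.g.\ \cite[Theorem 374]{Graetzer-Lattice}) forces every maximal chain in $[g,e]$ to have length $\len([g,e])$; in particular $k = l$. The only non-routine step is the observation that right multiplication preserves covers, and that is immediate from right-invariance, so this proposition is essentially a direct packaging of Jordan-Hölder-Dedekind.
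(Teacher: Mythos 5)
Your proof is correct and follows essentially the same route as the paper: both convert the two atomic factorizations into maximal chains $g = x_i\cdots x_1 \prec x_{i-1}\cdots x_1 \prec \ldots \prec e$ in the modular interval $[g,e]$ and conclude by the Jordan--H\"older theorem for modular lattices (\cite[Theorem 374]{Graetzer-Lattice}). The only difference is that you spell out the (immediate) fact that right multiplication preserves the covering relation, which the paper leaves implicit.
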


\begin{proof}
For $0 \leq i \leq k$, set $a_i = x_i x_{i-1} \ldots x_1$ and, similarly, set $b_j^{\prime} := y_jy_{j-1}\ldots y_1$ for $0 \leq j \leq l$.

Then both $g = a_k \prec a_{k-1} \prec \ldots \prec a_1 \prec a_0 = e $ and $g = b_l \prec b_{l-1} \prec \ldots \prec b_1 \prec b_0 = e$ are maximal chains in the modular lattice $[g,e]$. Therefore, by the Jordan-Hölder-theorem for modular lattices (\cite[Theorem 374]{Graetzer-Lattice}), their length is the same.
\end{proof}

\begin{prop}
The function $\deg^{\prime} : G^- \to (\Z_{\geq 0},+)$, defined by $\deg^{\prime}(g) = k$ whenever there is an atomic factorization $g = x_kx_{k-1}\ldots x_1$, is a homomorphism of monoids.
\end{prop}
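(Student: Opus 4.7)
The plan is to reduce everything to the previous two propositions, since they already do the real work. First, I note that $\deg'$ is well-defined on $G^-$: existence of an atomic factorization for every $g \in G^-$ is guaranteed by \cref{prop:irred_facts_exist}, and the fact that any two such factorizations have the same length is exactly \cref{prop:irred_facts_have_same_length}. So the value $\deg'(g)$ does not depend on the choice of atomic factorization.

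Next, to verify that $\deg'$ is a monoid homomorphism, I would check the two required properties. For the unit, the empty product $e = \prod_{i \in \emptyset} x_i$ is an atomic factorization of length $0$, so $\deg'(e) = 0$. For multiplicativity, let $g, h \in G^-$ and pick atomic factorizations $g = x_k x_{k-1} \ldots x_1$ and $h = y_l y_{l-1} \ldots y_1$ with all $x_i, y_j \in X(G^-)$. Then the concatenation
\[
gh = x_k x_{k-1} \ldots x_1 y_l y_{l-1} \ldots y_1
\]
is again a product of dual atoms in $G^-$, hence is itself an atomic factorization of $gh$, of length $k + l$. By well-definedness, $\deg'(gh) = k + l = \deg'(g) + \deg'(h)$.

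There is no real obstacle here; the only thing one needs to be careful about is not reading extra conditions (like right-maximality) into the definition of an atomic factorization — the definition only requires the factors to lie in $X(G^-)$, so concatenation of atomic factorizations is again an atomic factorization, with additive length. The substance of the statement is entirely contained in \cref{prop:irred_facts_exist,prop:irred_facts_have_same_length}.
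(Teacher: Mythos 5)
Your proof is correct and is essentially identical to the paper's: well-definedness via \cref{prop:irred_facts_exist} and \cref{prop:irred_facts_have_same_length}, the empty product for the unit, and concatenation of atomic factorizations for additivity. No differences worth noting.
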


\begin{proof}
By \cref{prop:irred_facts_exist} and \cref{prop:irred_facts_have_same_length}, the map $\deg^{\prime}$ is well-defined. It is clear that $\deg^{\prime}(e) = 0$. Let $g,h \in G^-$. If $\deg^{\prime}(g) = k$ and $\deg^{\prime}(h) = l$, then there are atomic factorizations $g = x_kx_{k-1} \ldots x_1$ and $h = y_l y_{l_1} \ldots y_1$, so $gh = x_kx_{k-1} \ldots x_1y_l y_{l_1} \ldots y_1$ is an atomic factorization with $k+l$ factors. Therefore, $\deg^{\prime}(gh) = k+l = \deg^{\prime}(g) + \deg^{\prime}(h)$. This proves that $\deg^{\prime}$ is a monoid homomorphism.
\end{proof}

\begin{prop} \label{prop:degree_homomorphism_exists}
There is a unique group homomorphism $\deg: G \to \Z$ that extends $\deg^{\prime}$ in the sense that $\deg(g) = \deg^{\prime}(g)$ for all $g \in G^-$. This group homomorphism is defined by $\deg(g_2^{-1}g_1) := \deg^{\prime}(g_1) - \deg^{\prime}(g_2)$ for all $g_1,g_2 \in G^-$.
\end{prop}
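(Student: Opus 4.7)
The plan is to recognize this as a direct application of \cref{prop:extending_homomorphisms}. Since $\deg^{\prime} : G^- \to \Z_{\geq 0}$ is a monoid homomorphism and $(\Z,+)$ is a group containing $(\Z_{\geq 0},+)$ as a submonoid, composing $\deg^{\prime}$ with the inclusion $\Z_{\geq 0} \hookrightarrow \Z$ yields a monoid homomorphism $\deg^{\prime} : G^- \to \Z$. By \cref{prop:extending_homomorphisms}, this extends \emph{uniquely} to a group homomorphism $\deg : G \to \Z$, given explicitly by $\deg(g_2^{-1} g_1) = \deg^{\prime}(g_1) - \deg^{\prime}(g_2)$ for any $g_1, g_2 \in G^-$.

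The only thing left to verify is that this is indeed the unique such extension, but this is automatic: if $\widetilde{\deg} : G \to \Z$ is any other group homomorphism agreeing with $\deg^{\prime}$ on $G^-$, then for all $g \in G$ we may use \cref{prop:group_of_fractions} to write $g = g_2^{-1}g_1$ with $g_1,g_2 \in G^-$, whence $\widetilde{\deg}(g) = \widetilde{\deg}(g_2)^{-1} \widetilde{\deg}(g_1) = -\deg^{\prime}(g_2) + \deg^{\prime}(g_1) = \deg(g)$.

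There is no real obstacle here; the substantive work has already been done in \cref{prop:extending_homomorphisms} (which relies on the left-Ore property of $G^-$) and in establishing that $\deg^{\prime}$ is a monoid homomorphism. The proof therefore reduces to a one-line invocation of the extension principle, plus the remark that the resulting map has the claimed explicit form on fractions.
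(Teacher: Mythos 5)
Your proposal is correct and follows essentially the same route as the paper: enlarge the codomain of $\deg^{\prime}$ to $\Z$ and invoke \cref{prop:extending_homomorphisms}, which already supplies both the uniqueness and the explicit formula on left fractions. The extra uniqueness verification you add is harmless but redundant, since \cref{prop:extending_homomorphisms} asserts uniqueness of the extension.
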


\begin{proof}
Enlarging the codomain, we can consider $\deg^{\prime}$ as a monoid homomorphism $G^-$ to $\Z$. By \cref{prop:extending_homomorphisms}, there is a unique extension to a group homomorphism $\deg: G \to \Z$ that is of the stated form.
\end{proof}

\begin{rema}
Note that the existence of a degree homomorphism $\deg : G \to \Z$ with $\deg(x) = 1$ for all $x \in X(G^-)$ does \emph{not} imply the modularity of $G$. The braid groups $B_n$ also admit such a degree homomorphism despite being far from modular, simply because they have a length-balanced presentation.
\end{rema}

Modularity easily implies the following useful \emph{parallelogram identity} for the degree homomorphism:

\begin{lem} \label{lem:parallelogram_identity}
For all $g,h \in G$, we have the equality
\[
\deg(g) + \deg(h) = \deg(g \vee h) + \deg(g \wedge h).
\]
\end{lem}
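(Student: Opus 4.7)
The plan is to reduce the general statement to the case $g,h\in G^-$ via right-invariance, and then deduce the identity from the Jordan--H\"older theorem together with the diamond isomorphism lemma (\cref{lem:diamond_lemma}).

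\textbf{Step 1: Reduction to the negative cone.} Given arbitrary $g,h\in G$, pick any $z\in G$ with $z\geq g\vee h\vee e$ (for instance $z=g\vee h\vee e$ itself). Then by right-invariance, $g':=gz^{-1}$ and $h':=hz^{-1}$ both lie in $G^-$, and moreover $g'\vee h'=(g\vee h)z^{-1}$ and $g'\wedge h'=(g\wedge h)z^{-1}$ lie in $G^-$ as well. Since $\deg$ is a group homomorphism, we have $\deg(g)=\deg(g')+\deg(z)$ and similarly for $h$, $g\vee h$ and $g\wedge h$. Subtracting $2\deg(z)$ from both sides, the identity for $g,h$ reduces to the identity for $g',h'\in G^-$.

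\textbf{Step 2: Interpretation of $\deg$ on $G^-$.} For $a\in G^-$, combining \cref{prop:irred_facts_exist} and \cref{prop:irred_facts_have_same_length}, the integer $\deg(a)=\deg^{\prime}(a)$ is precisely the common length of every maximal chain in the interval $[a,e]$ of the modular lattice $G^-$. By the Jordan--H\"older theorem for modular lattices of finite length (as used already in the proof of \cref{prop:irred_facts_have_same_length}), lengths are additive along intervals: whenever $a\leq b\leq e$ in $G^-$, one has
\[
\deg(a) \;=\; \operatorname{len}([a,b]) + \deg(b),
\]
since any maximal chain from $a$ to $e$ can be refined through $b$.

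\textbf{Step 3: Apply the diamond lemma.} Assume now $g,h\in G^-$. We have $g\wedge h\leq g\leq g\vee h\leq e$ and also $g\wedge h\leq h\leq g\vee h\leq e$. Step~2 yields
\[
\deg(g\wedge h)-\deg(g)=\operatorname{len}([g\wedge h,g]),\qquad \deg(h)-\deg(g\vee h)=\operatorname{len}([h,g\vee h]).
\]
By the diamond isomorphism lemma (\cref{lem:diamond_lemma}) applied to the pair $(h,g)$, the intervals $[g\wedge h,g]$ and $[h,g\vee h]$ are isomorphic and therefore have the same length. Equating the two expressions above and rearranging gives $\deg(g)+\deg(h)=\deg(g\vee h)+\deg(g\wedge h)$, as required.

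The only mildly delicate step is Step~1, which ensures all relevant meets and joins remain in $G^-$ after shifting; Steps~2 and 3 are then an essentially routine combination of Jordan--H\"older additivity and the diamond isomorphism, both of which have already been invoked earlier in the section.
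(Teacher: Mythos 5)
Your proof is correct and follows essentially the same route as the paper: both identify $\deg$ of a negative element with the Jordan--H\"older length of the interval up to $e$ and then apply the diamond isomorphism lemma to equate the lengths of the two transposed intervals. The only cosmetic difference is that you first shift $g,h$ globally into $G^-$ by right-multiplying with $(g\vee h\vee e)^{-1}$, whereas the paper shifts each interval individually (e.g.\ $[g\wedge h,h]\cong[(g\wedge h)h^{-1},e]$) by right-invariance.
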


\begin{proof}
By the diamond lemma (\cref{lem:diamond_lemma}), we have an isomorphism $[g \wedge h, h] \cong [g, g \vee h]$, due to which $\len([g \wedge h, h]) = \len([g, g \vee h])$. Therefore, 
\[
\len([g \wedge h, h]) = \len([(g \wedge h)h^{-1},e]) = \deg((g \wedge h)h^{-1}) = \deg(g \wedge h) - \deg(h).
\]
Similarly, $\len([g, g \vee h]) = \deg(g)- \deg(g \vee h)$. Therefore $\deg(g)- \deg(g \vee h) = \deg(g \wedge h) - \deg(h)$, which proves the equation.
\end{proof}

From now on, we will always implicitly assume that $G$ is a modular, noetherian right $\ell$-group with strong order unit $s^{-1}$ and respective degree function denoted by $\deg$.

We now define a very important sequence, the \emph{index sequence} of an element $g \in G^-$:

\begin{defi}
For an element $g \in G^-$, we define its \emph{index sequence} as
\begin{align*}
\iota_{\ast}(g) : \Z_{> 0} & \to \Z_{\geq 0} ; \\
i & \mapsto \deg(g \vee s^{-i}) - \deg(g \vee s^{-(i-1)}).
\end{align*}
\end{defi}

\begin{rema}
Recall that by \cref{thm:right_normal_facs_exist_and_unique} the right-normal factors of $g$ are given by 
\[
g_i = (g \vee s^{-i})(g \vee s^{-(i-1)})^{-1} \quad (1 \leq i \leq \lambda(g)).
\]
As $g \vee s^{-i} = g$ for $i \leq \lambda(g)$, we get that
\[
\iota_i(g) = \begin{cases}
\deg(g_i) & i \leq \lambda(g) \\
0 & i > \lambda(g).
\end{cases}
\] 
\end{rema}

An extremely useful property of the sequence $\iota_i(g)$ is the following:

\begin{thm} \label{thm:iota_is_nonincreasing}
For any $g \in G^-$, the sequence $\iota_i(g)$ is non-increasing.
\end{thm}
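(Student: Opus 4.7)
The plan is to reduce the statement to a comparison of degrees of successive right-normal factors, then apply the parallelogram identity (\cref{lem:parallelogram_identity}) to the right-maximality relation (\cref{eq:right_maximality_lattice_condition}).

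First I would handle the trivial tail. If $i \geq \lambda(g)$, then $g \vee s^{-i} = g = g \vee s^{-(i-1)}$, so $\iota_i(g) = 0$, and similarly $\iota_{i+1}(g) = 0$. So we may assume $i+1 \leq \lambda(g)$ and write the right-normal factorization $g = g_k g_{k-1} \ldots g_1$ (with $k = \lambda(g)$). By the remark following the definition of $\iota_\ast$, the inequality $\iota_{i+1}(g) \leq \iota_i(g)$ reads $\deg(g_{i+1}) \leq \deg(g_i)$, which is what remains to be shown. Set $\delta := \deg(s^{-1}) = \len([s^{-1},e])$ for brevity.

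The key computation is the parallelogram identity applied to $g_{i+1}$ and $s^{-1} g_i^{-1}$:
\[
\deg(g_{i+1}) + \deg(s^{-1} g_i^{-1}) \;=\; \deg(g_{i+1} \vee s^{-1} g_i^{-1}) + \deg(g_{i+1} \wedge s^{-1} g_i^{-1}).
\]
Right-maximality of the factorization (\cref{eq:right_maximality_lattice_condition}) gives $g_{i+1} \vee s^{-1} g_i^{-1} = e$, whose degree is $0$, while the homomorphism property of $\deg$ yields $\deg(s^{-1} g_i^{-1}) = \delta - \deg(g_i)$. Hence
\[
\deg(g_{i+1}) = \deg(g_i) - \delta + \deg(g_{i+1} \wedge s^{-1} g_i^{-1}).
\]

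To finish, I would observe that both meetands lie in the interval $[s^{-1},e]$: the factor $g_{i+1}$ by definition, and $s^{-1} g_i^{-1}$ because $g_i \in [s^{-1},e]$ gives $g_i^{-1} \in [e,s]$ (by right-invariance), whence $s^{-1} g_i^{-1} \in [s^{-1},e]$. Since $[s^{-1},e]$ has length $\delta$, any element in it has degree at most $\delta$. Therefore $\deg(g_{i+1} \wedge s^{-1}g_i^{-1}) \leq \delta$, and substituting back yields $\deg(g_{i+1}) \leq \deg(g_i)$.

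I do not expect a genuine obstacle here: the proof is a direct marriage of the modular parallelogram identity with the lattice form of right-maximality. The only point requiring care is the verification that $s^{-1} g_i^{-1}$ is indeed in the strong order interval, so that the upper bound $\delta$ on the degree of the meet can be invoked.
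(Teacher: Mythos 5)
Your proposal is correct and follows essentially the same route as the paper: reduce to $\deg(g_{i+1})\leq\deg(g_i)$, apply the parallelogram identity (\cref{lem:parallelogram_identity}) to $g_{i+1}$ and $s^{-1}g_i^{-1}$ together with the right-maximality relation $g_{i+1}\vee s^{-1}g_i^{-1}=e$, and bound $\deg(g_{i+1}\wedge s^{-1}g_i^{-1})$ by $\deg(s^{-1})$ using that this meet lies above $s^{-1}$. The only cosmetic difference is that the paper phrases the bound directly as $g_{i+1}\wedge s^{-1}g_i^{-1}\geq s^{-1}$ rather than via membership of both meetands in $[s^{-1},e]$; the argument is the same.
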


\begin{proof}
Let $g = g_kg_{k-1} \ldots g_1$ be the right-normal factorization of $g$. By the preceding remark, it is sufficient to show that $\deg(g_{i+1}) \leq \deg(g_i)$ for $1 \leq i < k$. By \cref{eq:right_maximality_lattice_condition}
we have
\[
g_{i+1} \vee s^{-1}g_i^{-1} = e
\]
We now calculate
\begin{align*}
\deg(s^{-1}) & \geq \deg(g_{i+1} \wedge s^{-1}g_i^{-1}) \\
& = \deg(g_{i+1}) + \deg(s^{-1}g_i^{-1}) - \deg(g_{i+1} \vee s^{-1}g_i^{-1}) & \textnormal{(\cref{lem:parallelogram_identity})} \\
& = \deg(g_{i+1}) + \deg(s^{-1}) - \deg(g_i),
\end{align*}
from which it follows that $\deg(g_i) \geq \deg(g_{i+1})$.
\end{proof}

We now need to define a special class of elements in $G^-$ that is particularly useful for lattice-theoretic considerations in $G^-$:

\begin{defi}
Let $g \in G^-$ have the right-normal factorization $g = g_k g_{k-1} \ldots g_1$. Then $g$ is called \emph{$d$-homogeneous} for an integer $d \geq 1$ if $\deg(g_i) = d$ for all $1 \leq i \leq k$.
\end{defi}

We now show a remarkable symmetry property that holds in modular right $\ell$-groups. Not only do right- and left-normal factorizations have the same number of factors, their factors even have the same size!

\begin{prop} \label{prop:left_right_symmetry_of_degrees}
Let $g \in G^-$ and let $g = g_kg_{k-1} \ldots g_1$ and $g = h_1 \ldots h_{k-1}h_k$ be the right-normal resp. the left-normal factorization. Then $\deg(h_i) = \deg(g_i)$ ($1 \leq i \leq k$).
\end{prop}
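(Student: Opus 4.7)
I would reduce the claim, through a cascade of cancellation and symmetry steps, to a general identity $\deg(x \vee e) = \deg(x \curlyvee e)$ on all of $G$, and then dispatch that by a parallelogram argument. Specifically, \cref{eq:right_normal_facs} and \cref{eq:left_normal_facs} together with the fact that $\deg$ is a group homomorphism give
\[
\deg(g_i) = \deg(g \vee s^{-i}) - \deg(g \vee s^{-(i-1)}), \qquad \deg(h_i) = \deg(g \curlyvee s^{-i}) - \deg(g \curlyvee s^{-(i-1)}),
\]
so it is enough to show $\deg(g \vee s^{-i}) = \deg(g \curlyvee s^{-i})$ for every $0 \leq i \leq k$. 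The right-invariance of $(\wedge, \vee)$ gives $g \vee s^{-i} = (gs^i \vee e)s^{-i}$, and the left-invariance of $(\curlywedge, \curlyvee)$ gives $g \curlyvee s^{-i} = s^{-i}(s^ig \curlyvee e)$, so the outer $s^{\pm i}$-factors disappear under $\deg$. Since $s^i$ is normal, conjugation $y \mapsto s^{-i}ys^i$ is a lattice automorphism of both $(G, \leq)$ and $(G, \lesssim)$ that preserves $\deg$, and applied to the identity $gs^i = s^{-i}(s^ig)s^i$ it yields $\deg(gs^i \vee e) = \deg(s^ig \vee e)$. Hence the problem reduces to proving
\[
\deg(x \vee e) = \deg(x \curlyvee e) \qquad (x \in G),
\]
to be applied with $x = s^ig$.

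For the reduced identity, I apply \cref{lem:parallelogram_identity} in the form $\deg(x \vee e) = \deg(x) - \deg(x \wedge e)$. The same parallelogram identity $\deg(a \curlyvee b) + \deg(a \curlywedge b) = \deg(a) + \deg(b)$ holds for $\deg$ in the dual lattice structure: apply \cref{lem:parallelogram_identity} to $a^{-1}, b^{-1}$ and use the inversion formulas $(a \curlyvee b)^{-1} = a^{-1} \wedge b^{-1}$ and $(a \curlywedge b)^{-1} = a^{-1} \vee b^{-1}$ together with $\deg$ being a group homomorphism. Specialising to $a = x, b = e$ then gives $\deg(x \curlyvee e) = \deg(x) - \deg(x \curlywedge e)$, and the desired identity becomes $\deg(x \wedge e) = \deg(x \curlywedge e)$.

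The main obstacle is this last step, since $x \wedge e$ and $x \curlywedge e$ are in general distinct elements of $G^-$. As a preparatory observation I would first establish the dual of \cref{thm:iota_is_nonincreasing}: applying \cref{lem:parallelogram_identity} to the left-maximality condition \cref{eq:left_maximality_lattice_condition_alt} $h_i \wedge s^{-1}h_{i+1}^{-1} = s^{-1}$ gives $\deg(h_i \vee s^{-1}h_{i+1}^{-1}) = \deg(h_i) - \deg(h_{i+1})$, which is $\geq 0$ because $h_i$ and $s^{-1}h_{i+1}^{-1}$ both lie in $G^-$; hence $(\deg(h_i))_{1 \leq i \leq k}$ is non-increasing, just like $(\deg(g_i))$. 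This reduces termwise equality to the comparison of two non-increasing sequences of positive integers both summing to $\deg(g)$. To force termwise agreement, I expect to prove $\deg(x \wedge e) = \deg(x \curlywedge e)$ directly by induction on $\lambda(x \wedge e)$, using the normality of $s$ to descend the problem one level at a time; a Schreier--Zassenhaus-style refinement argument comparing the chains $(g \vee s^{-i})_i$ and $(g \curlyvee s^{-i})_i$ inside the modular lattice $[s^{-k}, e]$ is a natural alternative route I would explore.
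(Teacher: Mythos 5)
Your reduction to $\deg(g \vee s^{-i}) = \deg(g \curlyvee s^{-i})$ is exactly the paper's first step, and your subsequent manipulations (peeling off the $s^{\pm i}$ factors, the dual parallelogram identity) are all correct. But the proof is not complete: you arrive at the identity $\deg(x \wedge e) = \deg(x \curlywedge e)$, correctly flag it as the crux, and then leave it unproven, offering only a prospective induction on $\lambda(x \wedge e)$ and a possible Schreier--Zassenhaus-style refinement. Neither is carried out, and the preparatory observation that both degree sequences are non-increasing and sum to $\deg(g)$ cannot close the gap on its own (two distinct non-increasing positive integer sequences can share a common sum). As written, this is a genuine gap.

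The frustrating part is that the missing step follows in one line from tools you have already used. From $x \curlywedge e = (x^{-1} \vee e)^{-1}$ and right-invariance, $x^{-1} \vee e = (e \vee x)x^{-1}$, so since $\deg$ is a group homomorphism, $\deg(x \curlywedge e) = -\deg(x^{-1}\vee e) = \deg(x) - \deg(x \vee e)$; and \cref{lem:parallelogram_identity} gives $\deg(x \wedge e) = \deg(x) + \deg(e) - \deg(x \vee e)$, which is the same quantity. This is essentially the paper's entire proof: it writes $\deg(g \curlyvee s^{-i}) = -\deg(g^{-1} \wedge s^i)$, applies the parallelogram identity once, and converts $\deg(g^{-1} \vee s^i)$ into $\deg(s^i) + \deg(s^{-i} \vee g) + \deg(g^{-1})$ via right-invariance and normality of $s$. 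So the correct strategy is the one you set up; you simply needed to finish the computation rather than reach for an induction that is not needed.
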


\begin{proof}
By the expressions provided by \cref{thm:right_normal_facs_exist_and_unique} and \cref{thm:left_normal_facs_exist_and_unique}, it is sufficient to show that $\deg(g \vee s^{-i}) = \deg(g \curlyvee s^{-i})$ for all $i$. We calculate
\begin{align*}
\deg(g \curlyvee s^{-i}) & = \deg( (g^{-1} \wedge s^i)^{-1} ) \\
& = - \deg(g^{-1} \wedge s^i) \\
& = \deg(g^{-1} \vee s^i) - \deg(g^{-1}) - \deg(s^i) & \textnormal{(\cref{lem:parallelogram_identity})} \\
& = \deg(s^i) + \deg(s^{-i} \vee g) + \deg(g^{-1}) - \deg(g^{-1}) - \deg(s^i) \\
& = \deg(s^{-i} \vee g).
\end{align*}
\end{proof}

Despite being only in need of the case $d=1$, we state and prove the following propositions in their full generality:

\begin{prop} \label{prop:homogeneous_rightnormal_implies_leftnormal}
Let $g \in G^-$ be $d$-homogeneous with the right-normal factorization $g = g_kg_{k-1} \ldots g_1$. Then this factorization is left-normal as well.
\end{prop}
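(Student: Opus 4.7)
The plan is to reverse the right-normal factorization to get the candidate left-normal factorization and verify left-maximality via a degree count. Concretely, setting $h_i := g_{k+1-i}$ for $1 \leq i \leq k$ gives a sequence of nonidentity elements of $[s^{-1},e]$ whose product (in the order $h_1 h_2 \ldots h_k$) equals $g$. By \cref{thm:left_normal_facs_exist_and_unique}, once left-maximality is checked this sequence must be \emph{the} left-normal factorization. Using the reformulation \cref{eq:left_maximality_lattice_condition_alt}, left-maximality of $h_1, \ldots, h_k$ is equivalent to
\[
g_{j+1} \wedge s^{-1}g_j^{-1} = s^{-1} \qquad (1 \leq j \leq k-1),
\]
since $h_i \wedge s^{-1}h_{i+1}^{-1} = g_{k+1-i} \wedge s^{-1}g_{k-i}^{-1}$ and $j := k-i$ runs over $1,\ldots,k-1$.

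First I would establish the inequality $g_{j+1} \wedge s^{-1}g_j^{-1} \geq s^{-1}$: since $g_{j+1} \geq s^{-1}$ by definition of a right-normal factor, and $g_j^{-1} \geq e$ implies $s^{-1}g_j^{-1} \geq s^{-1}$ via \cref{lem:multipl_inequalities_with_normal_elements} (with the normal element $s^{-1}$), the meet lies above $s^{-1}$.

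Next I would compute the degree of the meet, where $d$-homogeneity enters decisively. By right-maximality \cref{eq:right_maximality_lattice_condition} we have $g_{j+1} \vee s^{-1}g_j^{-1} = e$, so $\deg(g_{j+1} \vee s^{-1}g_j^{-1}) = 0$. The parallelogram identity \cref{lem:parallelogram_identity} then yields
\[
\deg\!\left(g_{j+1} \wedge s^{-1}g_j^{-1}\right) = \deg(g_{j+1}) + \deg(s^{-1}g_j^{-1}) = d + \deg(s^{-1}) - d = \deg(s^{-1}),
\]
where the crucial cancellation $\deg(g_{j+1}) = \deg(g_j) = d$ uses exactly that $g$ is $d$-homogeneous.

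Finally, since $\deg(a) = \len([a,e])$ for any $a \in G^-$, an element $a \in G^-$ with $a \geq s^{-1}$ and $\deg(a) = \deg(s^{-1})$ gives a sublattice $[a,e] \subseteq [s^{-1},e]$ of full length, forcing $a = s^{-1}$ by Jordan–Hölder in the modular lattice $[s^{-1},e]$. Applied to $a = g_{j+1} \wedge s^{-1}g_j^{-1}$ this closes the argument. The only real content is step three; the rest is bookkeeping. The main obstacle, if any, is remembering that the comparison with $s^{-1}$ and the degree identity together pin down the meet uniquely—something that would fail without homogeneity, since otherwise the degree computation would not land on $\deg(s^{-1})$.
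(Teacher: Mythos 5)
Your proof is correct and follows essentially the same route as the paper's: reduce left-maximality to the condition $g_{j+1} \wedge s^{-1}g_j^{-1} = s^{-1}$ via \cref{eq:left_maximality_lattice_condition_alt}, compute the degree of that meet with the parallelogram identity and right-maximality, and use $d$-homogeneity for the cancellation $\deg(g_{j+1}) = \deg(g_j)$. Your explicit reindexing $h_i = g_{k+1-i}$ just spells out the paper's parenthetical warning about the index shift, and the final "degree plus comparability pins down the element" step is identical in substance.
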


\begin{proof}
By \cref{eq:right_maximality_lattice_condition}, we have $g_{i+1} \vee s^{-1}g_i^{-1} = e$ ($1 \leq i < k$). We can can now calculate
\begin{align*}
\deg(g_{i+1} \wedge s^{-1}g_i^{-1}) & = \deg(g_{i+1}) + \deg(s^{-1}g_i^{-1}) - \deg(g_{i+1} \vee s^{-1}g_i^{-1}) \textnormal{(\cref{lem:parallelogram_identity})} \\
& = \deg(g_{i+1}) + \deg(s^{-1}) - \deg(g_i^{-1}) - 0 \\
& =  \deg(s^{-1}).
\end{align*}
As $g_{i+1} \wedge s^{-1}g_i^{-1} \geq s^{-1}$, this implies $g_{i+1} \wedge s^{-1}g_i^{-1} = s^{-1}$, i.e. the factorization is left-maximal by \cref{eq:left_maximality_lattice_condition_alt} (Watch out here! The $g_i$'s are indexed differently from the factors in the equation!).
\end{proof}

The following analysis builds on a result of Rump.

\begin{thm} \label{thm:strong_order_interval_is_geometric}
Let $G$ be a modular noetherian right $\ell$-group. If the element
\[
s^{-1} := \bigwedge X(G^-)
\]
exists, then $s$ is a strong order unit and the strong order interval $[s^{-1},e]$ is a modular geometric lattice.
\end{thm}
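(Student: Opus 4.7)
The plan separates into three parts: elementary setup, establishing that $[s^{-1},e]$ is modular geometric, and the group-theoretic conclusion that $s$ is a strong order unit.

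\emph{Setup.} Since $s^{-1}$ is a meet of elements of $G^-$, we have $s^{-1} \leq e$. If $G$ is nontrivial, \cref{prop:irred_facts_exist} applied to any $g \in G^- \setminus \{e\}$ produces some $x \in X(G^-)$, so $s^{-1} \leq x < e$ and thus $s > e$. The inclusion $X(G^-) \subseteq [s^{-1},e]$ is immediate, and a dual atom of $G^-$ is automatically a dual atom of the sublattice, so $X([s^{-1},e]) = X(G^-)$. Applying \cref{prop:irred_facts_exist} to $s^{-1}$ itself yields $\deg(s^{-1}) < \infty$, so $[s^{-1},e]$ has finite length; modularity is inherited from $G$.

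\emph{Geometricity.} I plan to prove $[s^{-1},e]$ is dually atomistic by showing, by induction on $\len([y,e])$, that every $y \in [s^{-1},e]$ satisfies $y = \bigwedge\{x \in X(G^-) : y \leq x\}$. The base case $y = s^{-1}$ is the hypothesis and $y = e$ is the empty meet. For the inductive step, it suffices to show that for any cover $y \prec z$ in $[s^{-1},e]$ there exists $x \in X(G^-)$ with $y \leq x$ and $z \not\leq x$: by the diamond lemma (\cref{lem:diamond_lemma}) such an $x$ satisfies $x \wedge z = y$ and $x \vee z = e$, yielding the atomistic description of $y$ once the inductive hypothesis is applied to $z$. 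The production of such $x$ is the crux: if no dual atom above $y$ failed to also lie above $z$, then the sets $\{x : y \leq x\}$ and $\{x : z \leq x\}$ would coincide, and one derives a contradiction by propagating the relation through the sub-interval $[s^{-1},z]$ using modularity together with the hypothesis $\bigwedge X(G^-) = s^{-1}$; a key auxiliary observation is that for any dual atom $c \in X(G^-)$ the sub-interval $[s^{-1},c]$ inherits an analogous radical hypothesis because $c \wedge \bigwedge(X(G^-) \setminus \{c\}) \leq \bigwedge X(G^-) = s^{-1}$.

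\emph{Normality and domination.} Once $[s^{-1},e]$ is established to be geometric, we use the cover correspondence $y' \prec y \iff y'y^{-1} \in X(G^-)$ (a consequence of right-multiplication being a lattice automorphism) together with the atomistic description to force left multiplication by $s^{-1}$ to preserve the order on $G^-$; this is the normality of $s$. Domination is then immediate: for $g \in G^-$ with atomic factorization $g = x_k \cdots x_1$ (\cref{prop:irred_facts_exist}), each $x_i \geq s^{-1}$, and repeated application of \cref{lem:multipl_inequalities_with_normal_elements} (now valid since $s^{-1}$ is normal) yields $g \geq s^{-k}$, so $s$ dominates $G^-$ and hence, via left/right fractions (\cref{prop:group_of_fractions}), all of $G$.

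The principal obstacle is the geometricity step — specifically, producing the cover-discriminating dual atom at each inductive stage, where modularity and the meet hypothesis must be carefully woven together. Once geometricity is in place, both normality and domination follow in a direct manner from the cover correspondence and the theory of atomic factorizations developed earlier in the section.
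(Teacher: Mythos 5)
The paper itself does not prove this theorem at all: its ``proof'' is the citation \cite[Propositions 14+15]{Rump-Geometric-Garside}. Your attempt at a self-contained argument is therefore more ambitious than the text, but both of the genuinely hard steps are left as placeholders. For geometricity, the reduction to ``every cover $y \prec z$ in $[s^{-1},e]$ admits a dual atom $x$ with $y \leq x$ and $z \not\leq x$'' is sound, but that claim \emph{is} the lattice-theoretic content of the theorem, and the sentence about ``propagating the relation through the sub-interval $[s^{-1},z]$'' contains no argument. The standard way to produce the discriminating dual atom is to first prove that $[s^{-1},e]$ is complemented (the dual of the classical maximal-disjoint-element argument showing that a modular lattice of finite length whose top is a join of atoms is complemented), and then intersect $y$ with a complement of $z$; complementation is where modularity and the hypothesis $\bigwedge X(G^-) = s^{-1}$ actually get used. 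Your auxiliary observation that $[s^{-1},c]$ inherits the hypothesis is correct, but the induction it suggests does not close as stated: it produces a dual atom $w$ of $[s^{-1},c]$ separating $y$ from $z$, and you would still need $w = c \wedge x$ for some $x \in X(G^-)$, which is again an instance of what you are trying to prove.

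The more serious gap is normality, which is asserted rather than proven. Normality is the two-sided condition $x \leq y \Leftrightarrow sx \leq sy$, equivalently $sG^-s^{-1} = G^-$. One inclusion, $s^{-1}G^-s \subseteq G^-$, does follow roughly as you indicate: for distinct $x,z \in X(G^-)$ one has $x \to z \in X(G^-)$ and $(x \to z)x = x \wedge z \leq z$ by \cref{eq:s2}, so $s^{-1}x \leq (x\to z)x \leq z$ for every $z$, hence $s^{-1}x \leq s^{-1}$; iterating over an atomic factorization gives $s^{-1}g \leq s^{-1}$ for all $g \in G^-$. But the reverse inclusion $sG^-s^{-1} \subseteq G^-$ is the hard direction, and your sketch does not address it. In particular it cannot be extracted from the observation that $g \mapsto s^{-1}gs$ is an injective, degree-preserving self-map of $X(G^-)$, because $X(G^-)$ may well be infinite (this happens for beams coordinatized over infinite division rings, cf.\ \cref{thm:large_enough_beams_are_coordinatizable}), so injectivity does not force surjectivity. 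A workable route is to first use the geometricity of $[s^{-1},e]$ to establish $s = \bigvee \{ a \in G : a \succ e \}$ (essentially \cref{prop:join_of_atoms_is_strong_order_unit}) and then rerun the easy inclusion in the opposite group $(G^{\mathrm{op}},\lesssim)$; some additional input of this kind is indispensable, and without it the domination step and the extension to all of $G$ (both of which you correctly note require normality) do not get off the ground.
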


\begin{proof}
These statements are \cite[Propositions 14+15.]{Rump-Geometric-Garside}.
\end{proof}

We later need to make use of the following fact:

\begin{prop} \label{prop:join_of_atoms_is_strong_order_unit}
Let $s$ be as in \cref{thm:strong_order_interval_is_geometric}, then
\[
s = \bigvee \{ g \in G : g \succ e \}
\]
and the interval $[e,s]$ is a modular geometric lattice.
\end{prop}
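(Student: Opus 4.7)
The plan is to transfer the content of \cref{thm:strong_order_interval_is_geometric} across right-multiplication by $s$. Since the order on $G$ is right-invariant and right-multiplication is a bijection, the map $\mu_s : G \to G$, $g \mapsto gs$, is a lattice automorphism of $G$; its restriction yields a lattice isomorphism $[s^{-1}, e] \overset{\sim}{\to} [e, s]$, because $\mu_s(s^{-1}) = e$ and $\mu_s(e) = s$. By \cref{thm:strong_order_interval_is_geometric}, the domain is a modular geometric lattice, hence so is $[e, s]$. In particular $[e, s]$ is atomistic, so its top element $s$ is the join of its atoms.

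It therefore suffices to identify the atoms of $[e, s]$ with $\{g \in G : g \succ e\}$. The interval $[e, s]$ is a convex sublattice of $G$, so cover relations in $[e, s]$ coincide with cover relations in $G$, and joins computed in $[e, s]$ agree with the joins in $G$. Thus the atoms of $[e, s]$ are precisely the elements $g \in G$ with $g \succ e$ that additionally satisfy $g \leq s$, and the desired formula will follow once we establish the inclusion $\{g \in G : g \succ e\} \subseteq [e, s]$.

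This last inclusion is the only real obstacle, because the order on $G$ is merely right-invariant, so inversion is not automatically order-reversing. My plan is as follows: if $g \succ e$, then applying the order-automorphism $\mu_{g^{-1}}$ to the covering $g \succ e$ yields $e \succ g^{-1}$, placing $g^{-1} \in X(G^-)$. The hypothesis $s^{-1} = \bigwedge X(G^-)$ then gives $s^{-1} \leq g^{-1}$. Since $s$, and hence $s^{-1}$, is normal, \cref{lem:normal_element_less_and_lesssim} applied with $t = s^{-1}$ upgrades this to $s^{-1} \lesssim g^{-1}$, which by definition of the dual order is the same as $g \leq s$. This closes the argument.
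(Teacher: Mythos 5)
Your proof is correct, but it reaches the key inclusion $\{g \in G : g \succ e\} \subseteq [e,s]$ by a genuinely different route than the paper. The paper works on the translated statement $e = \bigvee\{g : g \succ s^{-1}\}$: it picks a basis $Y$ of atoms of $[s^{-1},e]$ with $\bigvee Y = e$ and argues by contradiction that an extra cover $z \succ s^{-1}$ with $z \not\leq e$ would produce $\delta+1$ independent atoms in $[s^{-1}, z \vee e]$, which \cref{lem:reflection_of_frames} converts into $\delta+1$ dual atoms of $G^-$ meeting strictly below $s^{-1} = \bigwedge X(G^-)$ --- a dimension-counting argument that leans on modularity and the geometric structure of the strong order interval. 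You instead observe that the right-translation automorphism $x \mapsto xg^{-1}$ sends the cover $g \succ e$ to $e \succ g^{-1}$, so $g^{-1} \in X(G^-)$, whence $s^{-1} \leq g^{-1}$, and normality of $s$ (via \cref{lem:normal_element_less_and_lesssim}, or just by left-multiplying by $s$ and then right-multiplying by $g$) flips this to $g \leq s$. Your argument is more elementary: it needs only that $s^{-1} = \bigwedge X(G^-)$ and that $s$ is normal, and avoids the frame machinery entirely; modularity enters only through \cref{thm:strong_order_interval_is_geometric} when you invoke atomisticity of $[s^{-1},e] \cong [e,s]$ to write $s$ as a join of atoms. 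Both proofs then finish the same way, transporting the geometric structure along $x \mapsto xs$.
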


\begin{proof}
As soon as we has proven the first statement, the second will be clear. By right-invariance, the first statement is equivalent to $e = \bigvee \{ g \in G : g \succ s^{-1} \}$.

Let $\delta$ be the dimension of $[s^{-1},e]$ and let  $Y = \{y_1,\ldots,y_{\delta} \}$ be a basis of $[s^{-1},e]$. As $[s^{-1},e]$ is geometric, $Y$ consists of atoms in $[s^{-1},e]$. It is clear that $e = \bigvee Y \leq \bigvee \{ g \in G : g \succ s^{-1} \}$. If we assumed that $>$ holds, this would imply the existence of a $z \succ s^{-1}$ such that $h := z \vee \bigvee Y > e$ and $Y \cup \{ z \}$ was a basis of $[s^{-1},h]$ consisting of $\delta + 1$ atoms in this lattice.

This would imply by \cref{lem:reflection_of_frames} that $[s^{-1}h^{-1},e] \cong [s^{-1},h]$ had a basis consisting of $\delta + 1$ dual atoms of $G^-$, so
\[
s^{-1} = \bigwedge X(G^-) \leq s^{-1}h^{-1} < s^{-1},
\]
a contradiction! Therefore, we have equality, i.e. $e = \bigvee \{ g \in G : g \succ s^{-1} \}$.
\end{proof}

We can now understand the structure of meet-irreducibles better.

\begin{prop} \label{prop:meet_irreducibles_are_1_homogeneous}
Assume that $G$ is a modular noetherian right $\ell$-group that contains the strong order unit $s$ defined by $s^{-1} := \bigwedge X(G^-)$. An element $g \in G^-$ is meet-irreducible in $G^-$ if and only if it is $1$-homogeneous with respect to $s$.
\end{prop}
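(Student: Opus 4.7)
The plan is to prove both implications separately, using the standard fact that in the noetherian lattice $G^-$ an element is meet-irreducible if and only if it admits at most one upper cover. The central ingredients will be the non-increasing property of the index sequence (\cref{thm:iota_is_nonincreasing}), the left-right symmetry of degrees (\cref{prop:left_right_symmetry_of_degrees}), and the modular geometric structure of $[s^{-1}, e]$ (\cref{thm:strong_order_interval_is_geometric}).

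For the direction \emph{not $1$-homogeneous $\Rightarrow$ meet-reducible}, I would work with the left-normal factorization $g = h_1 h_2 \cdots h_k$ of \cref{thm:left_normal_facs_exist_and_unique}. If $g$ fails to be $1$-homogeneous, then the combination of \cref{thm:iota_is_nonincreasing} and \cref{prop:left_right_symmetry_of_degrees} forces $\deg(h_1) \geq 2$, so the interval $[h_1, e]$ inside the modular geometric lattice $[s^{-1}, e]$ has length at least two and hence admits two distinct elements $u_1, u_2$ covering $h_1$. Setting $q := h_2 \cdots h_k \in G^-$, right-invariance gives $u_i q > h_1 q = g$, and since $\deg(u_i q) = \deg(g)-1$ an application of Jordan--H\"older in $G^-$ makes $u_i q$ into an upper cover of $g$. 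Right-cancellation yields $u_1 q \neq u_2 q$, so $g = (u_1 q) \wedge (u_2 q)$ exhibits $g$ as meet-reducible.

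For the direction \emph{$1$-homogeneous $\Rightarrow$ meet-irreducible}, I would induct on $k = \lambda(g)$. The cases $k \leq 1$ are immediate, as $g$ is then either $e$ or a dual atom. In the inductive step write $g = g_k \cdots g_1$ with all $g_i$ dual atoms and set $p := g_k \cdots g_2$; the right-maximality condition passes directly from $g$ to $p$, so $p$ is $1$-homogeneous of length $k-1$ and has a unique upper cover by the inductive hypothesis. The key step, where I expect the main technical care is needed, is to build a bijection between upper covers of $g$ in $G^-$ and upper covers of $p$ in $G^-$. Given $h \succ g$, monotonicity of $\vee$ yields $h \vee s^{-1} \geq g \vee s^{-1} = g_1$; since $g_1$ is a dual atom and the first right-normal factor of $h$ lies in $[s^{-1},e] \setminus \{e\}$ (as $\deg(h) = k-1 \geq 1$), this forces $h \vee s^{-1} = g_1$ and hence $h \leq g_1$. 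Consequently $hg_1^{-1} \in G^-$ with $\deg(hg_1^{-1}) = \deg(p)-1$ and $hg_1^{-1} > p$, so it covers $p$; the inverse map $h' \mapsto h'g_1$ is immediate. Uniqueness of the upper cover thus transfers from $p$ to $g$, completing the induction.
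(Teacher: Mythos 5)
Your proof is correct, and while your first direction coincides with the paper's, your second direction takes a genuinely different route. For \emph{not $1$-homogeneous $\Rightarrow$ meet-reducible} you do essentially what the paper does: reduce to $\deg(h_1)\geq 2$ via \cref{thm:iota_is_nonincreasing} and \cref{prop:left_right_symmetry_of_degrees}, split $h_1$ as a meet of two strictly larger elements of $[s^{-1},e]$ (the paper invokes dual atomisticity where you count covers of $h_1$ -- equivalent here, since an element of $[s^{-1},e]$ with a unique cover would be meet-irreducible there and hence a dual atom or $e$), and push the splitting through by right-invariance. For \emph{$1$-homogeneous $\Rightarrow$ meet-irreducible}, however, the paper argues by contrapositive: from two covers $y,y'$ of $g$ it manufactures the element $\tilde h = g(y\vee y')^{-1}$ of degree $2$ lying in $[s^{-1},e]$ with $\tilde h\gtrsim g$, concludes $\deg(h_1)\geq 2$ for the left-normal head, and transfers this back via \cref{prop:left_right_symmetry_of_degrees}. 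You instead prove the implication directly by induction on $\lambda(g)$, peeling off the first right-normal factor $g_1$ and setting up the cover bijection $h\mapsto hg_1^{-1}$ between upper covers of $g$ and of $p=gg_1^{-1}$; the pivotal step, that any cover $h\succ g$ in $G^-$ satisfies $h\leq g_1$ because its nontrivial first right-normal factor $h\vee s^{-1}$ sits above the dual atom $g_1$, is sound. Your argument is longer and leans on the equivalence of meet-irreducibility with having at most one upper cover (valid here by noetherianity/finite length of $[g,e]$), but it buys more: iterating the bijection essentially shows that everything above $g$ is one of the partial products $g_l\cdots g_1$, i.e.\ it already contains the content of \cref{prop:meet_irreducibles_are chains}, which the paper derives separately afterwards. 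The paper's route is shorter and exploits the left/right symmetry machinery it has just built; yours is more self-contained on the right-normal side.
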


\begin{proof}
Assume that $g$ is meet-irreducible in $G^-$ and let $g = g_kg_{k-1}\ldots g_1$ be the right-normal and $g = h_1 \ldots h_{k-1}h_k$ be the left-normal factorization. By \cref{thm:iota_is_nonincreasing}, it is sufficient to prove $\deg(g_1) = 1$. Assuming otherwise, we would have $\deg(h_1) = \deg(g_1) > 1$, by \cref{prop:left_right_symmetry_of_degrees}. But then we could represent $h_1 = h \wedge h^{\prime}$ with $h_1 \neq h, h^{\prime}$, due to $[s^{-1},e]$ being dually atomistic. However, we would then get a representation $g = h h_2 \ldots h_{k-1}h_k \wedge h^{\prime} h_2 \ldots h_{k-1}h_k$ with both terms unequal to $g$.

In order to show the other direction, assume that the element $g \in G^-$ is \emph{not} meet-irreducible in $G^-$. Then there are elements $y,y^{\prime} \in G^-$ with $g \prec y,y^{\prime}$ such that $g = y \wedge y^{\prime}$. Let furthermore be $\gamma := y \vee y^{\prime}$. Then $\len([g,y \vee y^{\prime}]) = 2$ which implies that the element $\tilde{h} := g \gamma^{-1}$ has $\deg(\tilde{h}) = 2$.

But then $\tilde{h} \gtrsim g$. Also, as $\tilde{h} = y \gamma^{-1} \wedge y^{\prime} \gamma^{-1}$ is a meet of dual atoms, we get that $\tilde{h} \in [s^{-1},e]$. Therefore, if $g = h_1 \ldots h_{k-1}h_k$ is the left-normal factorization, we have $\tilde{h} \gtrsim h_1$. In particular, $\deg(h_1) \geq \deg(\tilde{h}) > 1$. Due to \cref{prop:left_right_symmetry_of_degrees} we see that $\iota_1(g) = \deg(h_1) > 1$, i.e. $g$ is not $1$-homogeneous.
\end{proof}

If $L$ is a lattice that is bounded from above, recall that an element $x \in L$ a \emph{dual chain}, whenever $[x,1_L]$ is a chain.

\begin{cor} \label{prop:meet_irreducibles_are chains}
An element $g \in G^-$ is meet-irreducible in $G^-$ if and only if $g$ is a dual chain.
\end{cor}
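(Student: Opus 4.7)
The plan is to combine Proposition \ref{prop:meet_irreducibles_are_1_homogeneous} with an inductive analysis of the right-normal factorization. By that proposition, meet-irreducibility of $g \in G^-$ is equivalent to $g$ being $1$-homogeneous, so it suffices to prove that $g$ is $1$-homogeneous if and only if $[g,e]$ is a chain.

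The backward implication is straightforward. Assume that $[g,e]$ is a chain; then, given any representation $g = a \wedge b$ with $a,b \in G^-$, the inequalities $g \leq a,b \leq e$ force $a,b \in [g,e]$, where meets coincide with minima, so $g \in \{a,b\}$. Hence $g$ is meet-irreducible.

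For the forward direction, I would proceed by induction on $k := \deg(g)$. The base case $k = 0$ gives $g = e$, for which $[e,e] = \{e\}$ is trivially a chain. For the inductive step, let $g = g_k g_{k-1} \cdots g_1$ be the right-normal factorization, with each $g_i \in X(G^-)$ by $1$-homogeneity. Set $h := g_{k-1} \cdots g_1$. The right-maximality conditions \eqref{eq:right_maximality_lattice_condition} defining the right-normal factorization of $g$ restrict to the corresponding conditions for the shorter expression, so $g_{k-1} \cdots g_1$ is the right-normal factorization of $h$, exhibiting $h$ as $1$-homogeneous of degree $k - 1$. By the inductive hypothesis, $[h,e]$ is a chain. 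Since $g$ is meet-irreducible (again by Proposition \ref{prop:meet_irreducibles_are_1_homogeneous}) and $h$ covers $g$ (as $\deg(h) - \deg(g) = 1$ together with modularity), $h$ must be the unique cover of $g$ in $G^-$. Now $[g,e]$ has finite length (since $\deg$ is strictly monotone on strict comparisons, bounding chain lengths by $k$), so for any $x \in [g,e]$ with $x > g$ the interval $[g,x]$ contains a cover of $g$, which must equal $h$. Thus $x \geq h$, and we conclude $[g,e] = \{g\} \cup [h,e]$, which is again a chain.

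I do not anticipate any serious obstacle; the main technical point is the observation that $1$-homogeneity is preserved under left-truncation of the right-normal factorization, which drives the induction, together with the fact that in a finite-length modular lattice, meet-irreducibility is equivalent to having a unique cover.
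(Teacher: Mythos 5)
Your proof is correct, and it takes a genuinely different route from the paper's. Both arguments are anchored on \cref{prop:meet_irreducibles_are_1_homogeneous}, but the paper then argues directly and non-inductively: given any $h \geq g$ with $l := \lambda(h)$, it computes $h = h \vee s^{-l} \geq g \vee s^{-l} = g_l g_{l-1}\cdots g_1$ and compares degrees ($\deg(h) \geq l = \deg(g_l\cdots g_1)$, while $h \geq g_l\cdots g_1$ forces $\deg(h) \leq l$) to conclude $h = g_l\cdots g_1$, so that $[g,e]$ is exactly the chain of partial products. You instead induct on $\deg(g)$, using two observations the paper does not need to make explicit: that left-truncation of a $1$-homogeneous right-normal factorization is again the right-normal factorization of the truncated product (immediate from the restriction of \eqref{eq:right_maximality_lattice_condition} and uniqueness in \cref{thm:right_normal_facs_exist_and_unique}), and that in a finite-length lattice meet-irreducibility is equivalent to having a unique cover. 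The second observation alone would not suffice — meet-irreducible elements of finite-length modular lattices need not be dual chains in general — so the truncation step, which guarantees that the unique cover $h$ is itself meet-irreducible, is genuinely load-bearing; you have it, and the induction closes. The paper's version is shorter and yields the explicit description $[g,e] = \{ g_l\cdots g_1 : 0 \leq l \leq k\}$ in one stroke; yours isolates the reusable lattice-theoretic fact about unique covers and makes the hereditary nature of $1$-homogeneity under truncation visible, at the cost of an induction.
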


\begin{proof}
It is trivial that a dual chain is meet-irreducible. On the other hand, let $g \in G^-$ be meet-irreducible and let $g = g_kg_{k-1} \ldots g_1$ be the right-normal factorization, with all $\deg(g_i) = 1$, by \cref{prop:meet_irreducibles_are_1_homogeneous}. Let $h \in G^-$ be such that $h \geq g$ and set $l := \lambda(h) \leq k$. We claim that $h = g_lg_{l-1} \ldots g_1$. It is clear, that $\deg(h) \geq l$. On the other hand, $h = h \vee s^{-l} \geq g \vee s^{-l} = g_l g_{l-1} \ldots g_1$. As $\deg(g_l g_{l-1} \ldots g_1) = l$, this confirms the claimed factorization. Therefore, $[g,e] = \{ g_l g_{l-1} \ldots g_1 : 0 \leq l \leq k \}$ is a chain; therefore, $g$ is a dual chain.
\end{proof}

\section{The center of the strong order interval} \label{sec:center_of_strong_order_interval}

We want to construct a lattice decomposition for modular noetherian right $\ell$-groups with strong order unit. We will start with a lattice decomposition of a suitably chosen strong order interval and extend this to the entirety of $G$. In order to do that, we need the classic theorem that modular geometric lattices can uniquely be written as a product of directly irreducible modular geometric lattices.

A lattice $L$ is called \emph{directly irreducible} if in an internal product $L = L_1 \times L_2$, either $|L_1| = 1$ or $|L_2| = 1$.

\begin{thm} \label{thm:decomposition_geometric_lattices}
Let $L$ be a modular geometric lattice. Then there is an internal decomposition $L = \prod_{i=1}^k L_i$, where each $L_i$ is a directly irreducible modular geometric lattice. Furthermore, this decomposition is unique up to a permutation of the factors.
\end{thm}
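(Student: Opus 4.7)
The existence part should fall out of a straightforward induction on $n = \len(L)$. If $L$ is directly irreducible, in particular when $n \leq 1$, there is nothing to do. Otherwise write $L = A \times B$ internally with both factors non-trivial; each factor is isomorphic to a principal ideal of $L$ and so inherits modularity and geometricity (geometricity of an interval in a modular geometric lattice is standard), and length is additive under direct products, so both $A$ and $B$ have length strictly below $n$. Applying the inductive hypothesis to each factor and reassembling via \cref{prop:external_products_are_internal_products} gives the required decomposition.

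The uniqueness part is the actual content, and my plan is to exhibit an intrinsic invariant that determines the factors. Define a \emph{connectedness relation} on the set of atoms of $L$: two atoms $p,q$ are \emph{directly connected} if $p \neq q$ and the interval $[0_L, p \vee q]$ has length $2$ and contains at least three atoms of $L$; take the transitive closure. Let $\mathcal{P}_1, \ldots, \mathcal{P}_k$ be the connected components and set $c_i := \bigvee \mathcal{P}_i$. Using geometricity ($L$ is atomistic) and the rank-additive parallelogram identity $\len(x) + \len(y) = \len(x \vee y) + \len(x \wedge y)$ available in any modular lattice of finite length, one shows $c_i \wedge \bigvee_{j \neq i} c_j = 0_L$ and $\bigvee_{i=1}^k c_i = 1_L$, so that $L \cong \prod_{i=1}^k [0_L,c_i]$ internally, each factor being itself a modular geometric lattice. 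Each $[0_L, c_i]$ is directly irreducible: a non-trivial internal splitting $[0_L,c_i] = U \times V$ would partition the atoms of $\mathcal{P}_i$ between $U$ and $V$, and no atom in $U$ could be directly connected with an atom in $V$ (a length-$2$ interval in a direct product with a rank-$1$ contribution from each factor has exactly two atoms), contradicting connectedness of $\mathcal{P}_i$.

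To conclude, given any alternative internal decomposition $L = \prod_{j=1}^l L_j'$ into directly irreducible modular geometric factors, each atom of $L$ lies in exactly one $L_j'$ (because atoms are join-irreducible and $1_L = \bigvee_j 1_{L_j'}$), and the same ``two atoms per length-$2$ interval across a direct product'' observation shows that directly connected atoms lie in the same factor; hence each $\mathcal{P}_i$ is entirely contained in some $L_{\sigma(i)}'$, and maximality (direct irreducibility of $L_{\sigma(i)}'$) forces $L_{\sigma(i)}' = [0_L,c_i]$, yielding $l = k$ and the uniqueness up to a permutation. The main obstacle I expect is the careful bookkeeping required to verify that the connectedness relation and its components really are preserved under arbitrary direct decompositions; this is in effect a geometric incarnation of Ore's Krull--Schmidt theorem for modular lattices of finite length, and I would expect to be able to cite it directly from \cite[Chapter IV]{Graetzer-Lattice} rather than reprove it from scratch.
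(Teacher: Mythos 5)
Your proposal is correct in outline, but it takes a genuinely different route from the paper: the paper does not argue at all, it simply cites \cite{Graetzer-Lattice} (Theorem 393 for existence, Corollary 277 for uniqueness, plus a remark reconciling external with internal decompositions), whereas you reprove the statement via the standard connectivity relation on atoms. Your route has a real advantage: because the blocks $\mathcal{P}_i$ and hence the elements $c_i$ are defined intrinsically from $L$, the decomposition is \emph{canonical}, and uniqueness falls out of the observation that any irreducible factor of any decomposition must have its atom set equal to some $\mathcal{P}_i$ --- so you never need the general Ore/Krull--Schmidt refinement theorem (Grätzer's Corollary 277), nor the paper's follow-up remark about upgrading an external uniqueness statement to an internal one. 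Two points to tidy up. First, the one place where real work hides is the claim $c_i \wedge \bigvee_{j\neq i}c_j = 0_L$: this needs the lemma that each component $\mathcal{P}_i$ is a \emph{flat}, i.e.\ every atom below $\bigvee \mathcal{P}_i$ already lies in $\mathcal{P}_i$, and more generally that an atom $p \leq \bigvee A$ (with $A$ a minimal such set of atoms, $|A|\geq 2$) is directly connected to every element of $A$; this is an induction on $|A|$ using modularity and the exchange property, and it is exactly the content you would be citing from Grätzer anyway. (Note also that for distinct atoms $p,q$ the condition $\len([0_L,p\vee q])=2$ is automatic by the parallelogram identity, so your relation reduces to ``there is a third atom below $p\vee q$'', which is the classical one.) Second, a cosmetic mismatch with the paper's conventions: its internal products are built from \emph{upward-closed} sublattices and meet-representations, while your factors are the ideals $[0_L,c_i]$; you pass between the two either via \cref{prop:external_products_are_internal_products} or by replacing $[0_L,c_i]$ with the filter $\bigl[\bigvee_{j\neq i}c_j,\,1_L\bigr]$, which is isomorphic to it by the diamond lemma.
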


\begin{proof}
See \cite[Theorem 393]{Graetzer-Lattice} for the existence of the decomposition and \cite[Corollary 277]{Graetzer-Lattice} for the uniqueness.
\end{proof}

\begin{rema}
A cautious reader may have noted that the isomorphisms in \cite[Corollary 277]{Graetzer-Lattice} are \emph{external} product decompositions. This means that the isomorphism classes of indecomposable factors are determined but not a priori their position in an internal decomposition. However, \cite[Theorem 276]{Graetzer-Lattice} and the preceding discussion make clear that two sets of indecomposable internal factors would admit a common refinement - which is only possible if the decompositions are already equal, up to permutation.

We will sometimes refer to this corollary and we will always mean its implications on \emph{internal} decompositions! 
\end{rema}

From the decomposition of a modular geometric lattice into directly irreducible lattices and the classification of irreducible modular geometric lattices (\cite[Chapter V., Section 5.]{Graetzer-Lattice}), whose duals are still modular geometric, it is clear that the notions \emph{geometric} and \emph{dually geometric} are actually equivalent for bounded modular lattices. As all geometric lattices considered in this work are bounded and modular, we will for simplicity speak about geometricity while making use of the dual geometricity of these lattices.

From now on, we will always assume that $G$ is a modular noetherian right $\ell$-group where the strong order unit $s$ with $s^{-1} := \bigwedge X(G^-)$ exists, so \cref{thm:strong_order_interval_is_geometric} applies.

For in this section we will work a lot with product decompositions of $[s^{-1},e]$, it makes sense to look at \emph{central} elements in the strong order interval:

\begin{defi}[{\cite[I, Definition 3.2.]{maeda}}]
Let $L$ be a bounded lattice. An element $z \in L$ is called \emph{central} if and only if there is an isomorphism $L \overset{\sim}{\longrightarrow} L_1 \times L_2$, with $L_1,L_2$ being bounded lattices, such that $z$ is sent to the element $(0_{L_1},1_{L_2})$. 
\end{defi}

We will call the totality of central elements in $L$ the \emph{center} of $L$ and denote it by $\Cent(L)$.

The following theorem will be central for our further investigations of $\Cent(L)$:

\begin{thm} \label{thm:cent_is_boolean}
If $L$ is a bounded lattice, then $\Cent(L)$ is a Boolean sublattice of $L$.
\end{thm}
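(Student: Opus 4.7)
The plan is to show that $\Cent(L)$ is closed under complements, meets, and joins, and that the resulting sublattice is distributive, hence Boolean. The guiding idea is that two centrally induced direct decompositions of $L$ admit a common refinement, within which lattice operations on central elements translate into coordinatewise Boolean operations on $\{0,1\}$-vectors indexing the factors.

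\textbf{Step 1 (Preliminaries).} First I would observe that $0_L, 1_L \in \Cent(L)$ via the trivial decomposition $L \cong \{*\} \times L$. For any $z \in \Cent(L)$ witnessed by $\varphi \colon L \overset{\sim}{\longrightarrow} L_1 \times L_2$ with $\varphi(z) = (0_{L_1}, 1_{L_2})$, the element $z^{*} := \varphi^{-1}(1_{L_1}, 0_{L_2})$ lies in $\Cent(L)$ (apply the same decomposition with factors swapped) and is a complement of $z$ in $L$, i.e.\ $z \wedge z^{*} = 0_L$ and $z \vee z^{*} = 1_L$.

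\textbf{Step 2 (Common refinement).} Given $z_1, z_2 \in \Cent(L)$ with decomposition $\varphi_1 \colon L \overset{\sim}{\longrightarrow} L_1 \times L_2$ witnessing the centrality of $z_1$, write $\varphi_1(z_2) = (c_1, c_2)$. The key claim is that $c_i \in \Cent(L_i)$ for $i = 1, 2$. Granting this, one obtains sub-decompositions $L_i \cong A_i \times B_i$ with $c_i \mapsto (0_{A_i}, 1_{B_i})$, and hence a fourfold isomorphism $L \cong A_1 \times B_1 \times A_2 \times B_2$ in which $z_1$ corresponds to the bit vector $(0, 0, 1, 1)$ and $z_2$ to $(0, 1, 0, 1)$.

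\textbf{Step 3 (Closure and Booleanness).} Within this common refinement, $z_1 \wedge z_2$ and $z_1 \vee z_2$ are computed coordinatewise as the bit vectors $(0, 0, 0, 1)$ and $(0, 1, 1, 1)$. Every such bit vector is central: grouping the four factors along the subset of $1$'s versus $0$'s produces a corresponding decomposition of $L$. Hence $\Cent(L)$ is closed under $\wedge$ and $\vee$ (and under $z \mapsto z^{*}$ by Step 1), and so it is a sublattice of $L$. Iterating the refinement argument with any finite collection of central elements embeds the sublattice they generate into a Boolean cube $\{0,1\}^n$, which furnishes distributivity and uniqueness of complements at once. Consequently $\Cent(L)$ is a Boolean sublattice of $L$.

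\textbf{Main obstacle.} The substantive point is the assertion in Step 2 that a central element $(c_1, c_2) \in L_1 \times L_2$ decomposes into central elements of the individual factors --- equivalently, that any two direct decompositions of the bounded lattice $L$ admit a common refinement. Since $L$ is only assumed to be a bounded lattice (modularity is not invoked for this proposition), this is the nontrivial kernel of the argument; I would handle it either by citing the refinement theorem for direct product decompositions of bounded lattices from Maeda \cite{maeda}, or by arguing directly via transport of the $z_2$-decomposition across $\varphi_1$ and verifying that its restriction to each $L_i$ yields a decomposition of $L_i$ whose distinguished element is $c_i$.
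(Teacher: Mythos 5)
Your outline is correct, but note that the paper does not actually prove \cref{thm:cent_is_boolean}: its ``proof'' is the single citation \cite[I, Satz 3.4.]{maeda}, so any genuine argument is by definition a different route from the paper's. Your Steps 1 and 3 are fine as written: $z^{*}$ is central and complements $z$, every $\{0,1\}$-vector in a common refinement is central by regrouping the factors, and checking distributivity on triples inside a cube $\{0,1\}^{n}$ gives a distributive complemented sublattice, hence unique complements for free. You have also correctly isolated the entire mathematical content in Step~2 --- that the components $c_{i}$ of a central element with respect to another central decomposition are again central, equivalently that two binary direct decompositions of a bounded lattice admit a common refinement. The one caution is that for a bare bounded lattice (no modularity is available here) this step is genuinely nontrivial, and ``arguing directly via transport'' will not be a formal verification: the classical proofs establish it by first showing that a central element $z$ is \emph{neutral} (every triple containing $z$ generates a distributive sublattice) and satisfies $x = (x \wedge z) \vee (x \wedge z^{*})$ for all $x$, and only then deducing the refinement and the Boolean structure of $\Cent(L)$ --- which is precisely the content of the result the paper cites. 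So your proposal is a correct and more informative skeleton than the paper offers, but as written it delegates the same hard step back to the same source; to make it self-contained you would need to carry out the neutrality argument in Step~2 rather than treat it as routine.
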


\begin{proof}
\cite[I, Satz 3.4.]{maeda}.
\end{proof}

The following statements can be seen easily:

\begin{prop} \label{prop:irreducible_means_trivial_center}
Let $L$ be a modular geometric lattice. Then $L$ is directly irreducible if and only if $\Cent(L) = \{ 1_L, 0_L \}$.
\end{prop}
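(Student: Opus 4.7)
The plan is to argue essentially from the definitions, using \cref{prop:external_products_are_internal_products} as the bridge between the \emph{external} decomposition language in the definition of central element and the \emph{internal} product language in the definition of directly irreducible. The modular/geometric hypothesis will not actually be needed; all that is required is that $L$ is bounded (which it is, being a modular geometric lattice of finite length).

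For the direction ``directly irreducible $\Rightarrow$ trivial center'', I would take an arbitrary $z \in \Cent(L)$ and apply the definition to get an external isomorphism $\varphi : L \overset{\sim}{\longrightarrow} L_1 \times L_2$ sending $z$ to $(0_{L_1},1_{L_2})$. By \cref{prop:external_products_are_internal_products}, this external decomposition corresponds to an internal decomposition $L = L_1' \times L_2'$ with $L_i' \cong L_i$. Direct irreducibility forces one of the factors to be trivial: if $|L_1| = 1$ then $0_{L_1} = 1_{L_1}$ and $(0_{L_1},1_{L_2})$ is the top of $L_1 \times L_2$, so $z = 1_L$; symmetrically, if $|L_2| = 1$ then $z = 0_L$. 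Hence $\Cent(L) \subseteq \{0_L,1_L\}$, and the reverse inclusion is immediate from taking the trivial decompositions $L \cong \{\ast\} \times L$ and $L \cong L \times \{\ast\}$.

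For the converse direction, I would assume $\Cent(L) = \{0_L,1_L\}$ and take an arbitrary internal decomposition $L = L_1 \times L_2$. Again by \cref{prop:external_products_are_internal_products}, this gives an external isomorphism $\varphi : L \overset{\sim}{\longrightarrow} L_1 \times L_2$. Setting $z := \varphi^{-1}(0_{L_1},1_{L_2})$ produces a central element, so by hypothesis $z \in \{0_L,1_L\}$. If $z = 0_L$, transporting through $\varphi$ gives $(0_{L_1},1_{L_2}) = (0_{L_1},0_{L_2})$, hence $1_{L_2} = 0_{L_2}$ and $|L_2| = 1$; if $z = 1_L$, one obtains $|L_1| = 1$ by the same argument. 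In either case one factor is trivial, so $L$ is directly irreducible.

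There is essentially no obstacle here: the content is packed into \cref{prop:external_products_are_internal_products} and the definitions themselves. The only thing to be mildly careful about is the bookkeeping of which component is $0$ and which is $1$ in the central element $(0_{L_1},1_{L_2})$, to make sure the right factor ends up trivial in each case.
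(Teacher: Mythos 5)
Your argument is correct and is exactly the unpacking the paper has in mind: its proof is simply ``this follows directly from the definitions,'' and your use of \cref{prop:external_products_are_internal_products} to translate between the external decomposition in the definition of a central element and the internal product in the definition of direct irreducibility is the intended bridge. Your observation that only boundedness (not modularity or geometricity) is needed is also accurate.
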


\begin{proof}
This follows directly from the definitions.
\end{proof}

\begin{prop} \label{prop:dual_atoms_of_center_are_irred_factors}
Let $L$ be a modular geometric lattice with an internal factorization $L \cong L_1 \times \ldots \times L_k$, where all factors $L_i$ are non-trivial and directly irreducible. Then $|X(\Cent(L))| = k$.
\end{prop}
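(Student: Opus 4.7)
The plan is to exploit the Boolean structure of $\Cent(L)$ established in \cref{thm:cent_is_boolean}: since a Boolean lattice is determined by its set of dual atoms, it suffices to show $\Cent(L) \cong \{0,1\}^k$, which automatically has exactly $k$ dual atoms. I will achieve this by producing $k$ explicit central dual atoms coming from the decomposition, and then showing every central element of $L$ is a meet of some subset of them.

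For each $i$, the factor $L_i \subseteq L$ is an upset of the form $[z_i, 1_L]$, where $z_i := 0_{L_i}$ is the bottom of $L_i$ regarded as an element of $L$. The element $z_i$ is central in $L$, since regrouping the decomposition as $L \cong L_i \times \prod_{j \neq i} L_j$ identifies $z_i$ with $(0_{L_i}, 1_{\prod_{j \neq i} L_j})$. Furthermore, $z_i$ is a \emph{dual atom} of $\Cent(L)$: any element of $\Cent(L)$ strictly between $z_i$ and $1_L$ would lie in the interval $[z_i, 1_L] = L_i$ and would witness a direct decomposition of $L_i$, contradicting the direct irreducibility of $L_i$ via \cref{prop:irreducible_means_trivial_center}. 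The $z_i$ are pairwise distinct because the $L_i$ are distinct upsets, and each lies strictly below $1_L$ because the $L_i$ are non-trivial.

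It remains to show that these $k$ elements are \emph{all} the dual atoms of $\Cent(L)$; I will prove the stronger statement $\Cent(L) = \{\bigwedge_{i \in I} z_i : I \subseteq \{1, \ldots, k\}\}$. Let $z \in \Cent(L)$, witnessed by an isomorphism $L \overset{\sim}{\to} M_1 \times M_2$ sending $z$ to $(0_{M_1}, 1_{M_2})$. Each $M_j$ is itself a modular geometric lattice and decomposes internally into directly irreducible factors by \cref{thm:decomposition_geometric_lattices}. Concatenating these decompositions, one obtains an internal decomposition of $L$ into directly irreducibles, which by the uniqueness part of \cref{thm:decomposition_geometric_lattices} and the subsequent remark on common refinements of internal decompositions must coincide with $L_1 \times \ldots \times L_k$ up to permutation. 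Hence there is a partition $\{1, \ldots, k\} = A \sqcup B$ with $M_1 = \prod_{i \in A} L_i$ and $M_2 = \prod_{j \in B} L_j$ as internal factors of $L$, and translating $(0_{M_1}, 1_{M_2})$ back through the internal product representation yields $z = \bigwedge_{i \in A} z_i$.

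I expect the main obstacle to be this last step: carefully lining up the abstract witness $L \cong M_1 \times M_2$ of centrality of $z$ with the \emph{given} internal decomposition, using the uniqueness theorem for decompositions into directly irreducible factors. Everything else is bookkeeping inside the internal product structure, and the final conclusion $|X(\Cent(L))| = k$ is immediate: we have exhibited $\Cent(L)$ as the Boolean algebra of subsets of $\{1, \ldots, k\}$, whose coatoms are precisely the $k$ singletons, corresponding to $z_1, \ldots, z_k$.
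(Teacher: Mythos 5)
Your proof is correct, and its first half coincides with the paper's: you construct the same elements $z_i$ (bottom of $L_i$ viewed in $L$), observe they are central by regrouping the factors, and rule out central elements strictly between $z_i$ and $1_L$ by applying \cref{prop:irreducible_means_trivial_center} inside $[z_i,1_L]\cong L_i$. Where you diverge is the completeness step. The paper disposes of it in one line: since $\bigwedge_{i=1}^k z_i = 0_L$ and $\Cent(L)$ is Boolean (\cref{thm:cent_is_boolean}), a finite family of coatoms of a Boolean lattice meeting to the bottom must exhaust all coatoms. You instead classify $\Cent(L)$ outright: given any central $z$ with witness $L\cong M_1\times M_2$, you refine each $M_j$ into irreducibles and invoke the uniqueness part of \cref{thm:decomposition_geometric_lattices} (and the remark on internal decompositions) to match this refinement with $L_1\times\cdots\times L_k$, obtaining $z=\bigwedge_{i\in A}z_i$. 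Your route is longer but proves strictly more --- it exhibits $\Cent(L)$ as the full Boolean algebra $2^{\{1,\ldots,k\}}$ generated by the $z_i$, and it does not actually need \cref{thm:cent_is_boolean} for the count; the paper's route is shorter but leans on the Boolean structure of the center, which does the combinatorial work implicitly. Both arguments are sound.
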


\begin{proof}
For sake of simplicity, we identify $L$ with the external product $L_1 \times \ldots \times L_k$. It is clear that the elements $z_i = (\epsilon_j^i)_{j=1}^k$ are central in $L_1 \times \ldots \times L_k$ where
\[
\epsilon_j^i = \begin{cases}
0_{L_i} & j = i \\
1_{L_j} & j \neq j.
\end{cases}
\]
It is immediate that $[z_i,1_L] \cong L_i$. If there was an element $\tilde{z} \in L$ with $z_i < \tilde{z} < 1_L$ for some $i$, then $\tilde{z}$ would be a nontrival central element in $[z_i,1_L]$, contradicting \cref{prop:irreducible_means_trivial_center}. Also, there are no further elements in $X(\Cent(L))$, as $\bigwedge_{i=1}^k z_i = 0_L$.
\end{proof}

Let the sublattice $\Ccal(G^-) \subseteq G^-$ be defined as $\Ccal(G^-) := \Cent ([s^{-1},e])$. Obviously, $e$ and $s^{-1}$ are elements of $\Ccal(G^-)$; in particular, they are the top resp. bottom element thereof.

The aim of this section is to show that $\Ccal(G^-)$ is closed under the $\to$-operation defined in \cref{subs:right_l_groups}.

In order to achieve this, let $\{ z_1, \ldots, z_k \} = X(\Ccal(G^-))$ be the dual atoms in $\Ccal(G^-)$; note that these need not be dual atoms in $[s^{-1},e]$ resp. $G^-$. We now have a canonical lattice isomorphism
\[
\varphi: \prod_{i=1}^k [z_i,e] \longiso [ s^{-1},e ] \quad ; \quad
(a_i)_{i=1}^k \longmapsto \bigwedge_{i=1}^k a_i
\]
whose inverse map is given by
\[
\varphi^{-1}(a) = (a \vee z_i)_{i=1}^k.
\]
The intervals $[z_i,e]$ are irreducible modular geometric lattices, and so are all of their intervals. By means of $\varphi$, for all $a \in [s^{-1},e]$ we have (canonical) isomorphisms
\begin{align}
[a,e] & \cong \prod_{i=1}^k [a_i,e] \label{eq:upper_set_decomposition}\\
[s^{-1},a] & \cong \prod_{i=1}^k [z_i,a_i] \label{eq:lower_set_decomposition}
\end{align}
where $(a_i)_{i=1}^k = \varphi^{-1}(a)$.

We remark here that each upper and lower set therefore decomposes as a product of irreducible modular geometric lattices, depending on how many components of $\varphi^{-1}(a)$ are equal to $e$ resp. $z_i$. This motivates us to introduce the following quantities:
\begin{align*}
U(a) & :=  \# \{ i \in [1,k] : a \vee z_i  = e \} \\
L(a) & := \# \{ i \in [1,k] : a \vee z_i  = z_i \}. \\
M(a) & := k - (U(a) + L(a)).
\end{align*}
Here, $k := |X(\Ccal(G^-))|$. Obviously, these three quantities are nonnegative and sum up to $k$.

By \cref{eq:upper_set_decomposition} it becomes clear that the number of irreducible factors of $[a,e]$ is $L(a) + M(a)$, whereas \cref{eq:lower_set_decomposition} implies that the number of irreducible factors of $[s^{-1},a]$ is $U(a) + M(a)$.

For the rest of this section, fix $k$ as the number of dual atoms in $\Ccal(G^-)$.

\begin{lem} \label{lem:central_eq_to_Ma=0}
An element $a \in [s^{-1},e]$ is in $\Ccal(G^-)$ if and only if $M(a) = 0$.
\end{lem}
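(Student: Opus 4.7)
The plan is to transport the question through the canonical isomorphism $\varphi : \prod_{i=1}^k [z_i,e] \longiso [s^{-1},e]$ and read off centrality coordinatewise. Recall that under $\varphi^{-1}$, an element $a$ is identified with the tuple $(a \vee z_i)_{i=1}^k$, so the quantities $U(a)$ and $L(a)$ are precisely the numbers of coordinates equal to $e$ (the top of $[z_i,e]$) respectively $z_i$ (the bottom of $[z_i,e]$). Thus $M(a)=0$ says exactly that every coordinate of $\varphi^{-1}(a)$ is either $0$ or $1$ in its factor.

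First I would observe the (routine) fact that the center of a finite product of bounded lattices is the product of the centers: an element of a product is central iff in every internal decomposition of the product it appears as a $(0,1)$-tuple, and such decompositions split across the factors. Hence
\[
\Cent\Bigl(\prod_{i=1}^k [z_i,e]\Bigr) = \prod_{i=1}^k \Cent([z_i,e]).
\]

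Next I would verify that each factor $[z_i,e]$ is directly indecomposable, so that by \cref{prop:irreducible_means_trivial_center} one has $\Cent([z_i,e]) = \{z_i,e\}$. If some $[z_j,e]$ decomposed non-trivially as $A \times B$, then the image of $(0_A,1_B)$ under $\varphi$ would be a central element of $[s^{-1},e]$ lying strictly between $z_j$ and $e$, contradicting the choice of $z_j$ as a dual atom of $\Ccal(G^-)$. Combining these two observations,
\[
\Ccal(G^-) \;=\; \varphi\Bigl(\prod_{i=1}^k \{z_i,e\}\Bigr).
\]

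Finally, $a \in \Ccal(G^-)$ iff $\varphi^{-1}(a) \in \prod_{i=1}^k \{z_i,e\}$, iff for every $i$ the coordinate $a \vee z_i$ lies in $\{z_i,e\}$. Since these two possibilities are counted by $L(a)$ and $U(a)$ respectively, this is equivalent to $U(a)+L(a) = k$, i.e.\ $M(a) = 0$. There is no real obstacle here: once the product decomposition of the center is recorded and the indecomposability of the $[z_i,e]$ is justified from $z_i \in X(\Ccal(G^-))$, the lemma is just bookkeeping of the definitions of $U$, $L$, $M$.
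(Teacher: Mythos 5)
Your proof is correct and is exactly the paper's argument, which the paper compresses into a single line: decompose $[s^{-1},e]$ into its irreducible components $[z_i,e]$ and apply \cref{prop:irreducible_means_trivial_center} to see that each factor has trivial center, so the central elements are precisely those whose coordinates $a \vee z_i$ all lie in $\{z_i,e\}$, i.e.\ those with $M(a)=0$. One cosmetic remark: your parenthetical justification of $\Cent\bigl(\prod_{i=1}^k [z_i,e]\bigr)=\prod_{i=1}^k\Cent([z_i,e])$ should say that a central element appears as a $(0,1)$-tuple in \emph{some} decomposition, not in every one (the trivial decomposition already gives a counterexample to the latter); the identity itself is standard and your use of it is fine.
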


\begin{proof}
After decomposing $[s^{-1},e]$ into its irreducible components, this follows from \cref{prop:irreducible_means_trivial_center}.
\end{proof}

We can now prove the following lemma:

\begin{lem} \label{lem:equality_for_Ua_and_La}
For any $a \in [s^{-1},e]$, we have the equality
\begin{equation} \label{eq:equality_Ua_La}
U(s^{-1}a^{-1}) = L(a).
\end{equation}
\end{lem}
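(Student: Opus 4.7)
My plan is to show that $s^{-1}a^{-1}$ is a complement of $a$ in the modular geometric lattice $[s^{-1},e]$, and then to transport this statement along the product decomposition $[s^{-1},e]\cong\prod_{i=1}^k[z_i,e]$ given by $\varphi^{-1}(x)=(x\vee z_i)_{i=1}^k$. In a product of lattices, complementation is carried out coordinatewise; moreover, in each bounded lattice the top element is the unique complement of the bottom, and conversely. Together these observations will identify the indices counted by $U(s^{-1}a^{-1})$ with those counted by $L(a)$ via the identity map on $\{1,\ldots,k\}$.

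First I would check that $s^{-1}a^{-1}$ lies in $[s^{-1},e]$: the bound $s^{-1}a^{-1}\leq e$ follows from $s^{-1}\leq a$ by right-multiplication with $a^{-1}$, while $s^{-1}\leq s^{-1}a^{-1}$ follows from $e\leq a^{-1}$ together with the normality of $s^{-1}$ (left-multiplication by a normal element preserves order). Then I would apply the parallelogram identity \cref{lem:parallelogram_identity} to obtain
\[
\deg(a\vee s^{-1}a^{-1})+\deg(a\wedge s^{-1}a^{-1})=\deg(a)+\deg(s^{-1}a^{-1})=\deg(s^{-1}).
\]
Since $a\vee s^{-1}a^{-1}\in[s^{-1},e]$ forces $\deg(a\vee s^{-1}a^{-1})\leq 0$ and $a\wedge s^{-1}a^{-1}\in[s^{-1},e]$ forces $\deg(a\wedge s^{-1}a^{-1})\geq\deg(s^{-1})$, the displayed sum is saturated only if both inequalities are equalities, giving $a\vee s^{-1}a^{-1}=e$ and $a\wedge s^{-1}a^{-1}=s^{-1}$. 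Thus $s^{-1}a^{-1}$ is a complement of $a$ in $[s^{-1},e]$.

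Transporting to the product decomposition, the $i$-th coordinate $s^{-1}a^{-1}\vee z_i$ of $s^{-1}a^{-1}$ is a complement in $[z_i,e]$ of the $i$-th coordinate $a\vee z_i$ of $a$. By the opening observation applied inside the bounded lattice $[z_i,e]$, we have $s^{-1}a^{-1}\vee z_i=e$ if and only if $a\vee z_i=z_i$. Counting such indices then yields $U(s^{-1}a^{-1})=L(a)$. The only step of real content is the complement relation, and there the parallelogram identity does all the work; I foresee no genuine obstacle beyond carefully keeping track of the order-reversing map $a\mapsto s^{-1}a^{-1}$ on $[s^{-1},e]$.
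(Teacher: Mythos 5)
Your argument breaks at its central step: $s^{-1}a^{-1}$ is in general \emph{not} a complement of $a$ in $[s^{-1},e]$. The degree estimates you invoke are reversed. For an element $x \in [s^{-1},e]$ one has $0 \leq \deg(x) \leq \deg(s^{-1})$, since $\deg(x)$ is the length of $[x,e]$ and therefore \emph{increases} as $x$ decreases; so membership of $a \vee s^{-1}a^{-1}$ and $a \wedge s^{-1}a^{-1}$ in $[s^{-1},e]$ gives $\deg(a \vee s^{-1}a^{-1}) \geq 0$ and $\deg(a \wedge s^{-1}a^{-1}) \leq \deg(s^{-1})$, not the opposite inequalities. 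The parallelogram identity (\cref{lem:parallelogram_identity}) then only says that two numbers in $[0,\deg(s^{-1})]$ sum to $\deg(s^{-1})$, which saturates nothing. Moreover the complement claim is genuinely false, not merely unproved: in the Klein bottle group $G = \langle x,y \mid x^2 = y^2 \rangle$ (the structure group of the transposition solution on two elements, hence a distributive and in particular modular example) one has $s^{-1} = x \wedge y = x^2$, so for $a = x$ we get $s^{-1}a^{-1} = x$ and $a \vee s^{-1}a^{-1} = x \neq e$. Had $s^{-1}a^{-1}$ always been the complement of $a$, the duality operator $D(a) = (s^{-1}a^{-1})^{\prime}$ of \cref{thm:atoms_of_center_are_l_algebra_with_duality} would be the identity, which it is not. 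The same example refutes your proposed identification of index sets via the identity map: with $z_1 = x$, $z_2 = y$, the index counted by $L(x)$ is $1$, while the index counted by $U(s^{-1}x^{-1}) = U(x)$ is $2$.

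What is true, and what the paper uses, is an equality of \emph{cardinalities} with no index-wise matching. Right multiplication by $a^{-1}$ is a lattice isomorphism $[s^{-1},a] \overset{\sim}{\to} [s^{-1}a^{-1},e]$. The first interval decomposes as $\prod_{i=1}^k [z_i, a \vee z_i]$ and the second as $\prod_{i=1}^k [s^{-1}a^{-1} \vee z_i, e]$; every nontrivial factor in either product is directly irreducible, being an interval in one of the irreducible components $[z_i,e]$. By uniqueness of the decomposition into directly irreducible factors, the two isomorphic lattices have the same number of nontrivial factors, hence the same number of trivial ones; these counts are $L(a)$ and $U(s^{-1}a^{-1})$ respectively. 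If you want to salvage your write-up, replace the complementation argument by this transport of the decomposition of $[s^{-1},a]$ along $x \mapsto xa^{-1}$ and drop the claim that the two index sets coincide.
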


\begin{proof}
Let $\varphi^{-1}(a) = (a_i)_{i=1}^k$. In the decomposition $[s^{-1},a] \cong \prod_{i=1}^k [z_i,a_i]$, exactly $L(a)$ factors are trivial. Right-multiplication by $a^{-1}$ gives an isomorphism $[s^{-1},a] \longiso [s^{-1}a^{-1},e]$. Therefore, letting $\varphi^{-1}(s^{-1}a^{-1}) = (b_i)_{i=1}^k$, we must have the same number of trivial factors in the decomposition $[s^{-1}a^{-1},e] \cong \prod_{i=1}^k [b_i,e]$, which is $U(s^{-1}a^{-1})$.
\end{proof}

\begin{lem} \label{lem:ba'_in_cent}
Let $a,b \in \Ccal(G^-)$ be such that $b \leq a$, then $ba^{-1} \in \Ccal(G^-)$ as well. In this case, $L(ba^{-1}) = L(b) - L(a)$.
\end{lem}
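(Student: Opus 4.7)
The plan is to identify $ba^{-1}$ with a specific element of $\Ccal(G^-)$ via the central decomposition $[s^{-1},e] \cong \prod_{i=1}^k [z_i,e]$. Writing $a = \bigwedge_{i \in I_a} z_i$ and $b = \bigwedge_{i \in I_b} z_i$ for subsets $I_a, I_b \subseteq \{1,\ldots,k\}$, the hypothesis $b \leq a$ forces $I_a \subseteq I_b$, and the natural candidate is
\[
c := \bigwedge_{i \in I_b \setminus I_a} z_i \in \Ccal(G^-),
\]
which already has $L(c) = |I_b| - |I_a| = L(b) - L(a)$; the aim is thus to prove $ba^{-1} = c$.

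First I would verify that $ba^{-1} \in [s^{-1},e]$: right-invariance gives $ba^{-1} \leq aa^{-1} = e$ directly, while $a \leq e$ implies $e \leq a^{-1}$ (again by right-invariance), and combined with $b \geq s^{-1}$ and the normality of $s^{-1}$ this forces $ba^{-1} \geq s^{-1}a^{-1} \geq s^{-1}$. I would then exploit the lattice isomorphism $\rho : [b,a] \longiso [ba^{-1},e]$, $x \mapsto xa^{-1}$, together with the restriction of the central decomposition to $[b,a] \cong \prod_i [b \vee z_i,\, a \vee z_i]$: a case analysis on whether $i \in I_a$, $i \in I_b \setminus I_a$, or $i \notin I_b$ collapses every factor except those indexed by $I_b \setminus I_a$, each of which is $[z_i,e]$. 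Hence $[ba^{-1},e]$ is isomorphic to the product of the directly irreducible modular geometric lattices $[z_i,e]$ for $i \in I_b \setminus I_a$.

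The interval $[ba^{-1},e]$ also carries a second, aligned decomposition $\prod_i [(ba^{-1})_i,e]$ inherited from the big central decomposition, with $(ba^{-1})_i = ba^{-1} \vee z_i \in [z_i,e]$. By the Krull--Schmidt-type uniqueness of \cref{thm:decomposition_geometric_lattices}, these two decompositions of $[ba^{-1},e]$ must refine to the same list of irreducibles up to permutation. The plan is to combine this with the length identity $\deg(ba^{-1}) = \deg(b) - \deg(a) = \sum_{i \in I_b \setminus I_a} \deg(z_i)$ coming from the degree homomorphism, and with the constraint that each aligned factor $[(ba^{-1})_i,e]$ sits inside the irreducible $[z_i,e]$, to force $(ba^{-1})_i = z_i$ for $i \in I_b \setminus I_a$ and $(ba^{-1})_i = e$ otherwise. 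This yields $ba^{-1} = c \in \Ccal(G^-)$ and the claimed value of $L(ba^{-1})$. The main obstacle is precisely this position-wise matching: a priori, the irreducible factors $[z_i,e]$ transported from $[b,a]$ via $\rho$ could land at aligned positions $j \neq i$ in $[ba^{-1},e]$, compensated by extra trivial factors elsewhere, while still respecting the length count; ruling out such off-diagonal placements is what makes it necessary to combine the $\deg$-bookkeeping with the uniqueness of irreducible decompositions, and once done, \cref{lem:central_eq_to_Ma=0} lets us certify $ba^{-1} \in \Ccal(G^-)$ by $M(ba^{-1}) = 0$.
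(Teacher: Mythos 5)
Your proposal has a genuine gap, and in fact the identity you set out to prove is false. The ``natural candidate'' $c = \bigwedge_{i \in I_b \setminus I_a} z_i$ is not equal to $ba^{-1}$ in general: already for $b = s^{-1}$ and $a = z$ a dual atom of the center, one has $s^{-1}z^{-1} = D(z)^{\prime}$, where $D$ is the duality map of \cref{thm:atoms_of_center_are_l_algebra_with_duality}, and $D$ is in general a nontrivial permutation of $X(\Ccal(G^-))$. Concretely, in the structure group $G = \langle x_1,x_2 \mid x_1^2 = x_2^2 \rangle$ one has $s^{-1} = x_1 \wedge x_2 = x_1^2$, so $s^{-1}x_1^{-1} = x_1$, whereas your formula predicts $s^{-1}x_1^{-1} = x_2$. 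The lemma itself survives ($x_1$ is central and $L(x_1) = 1 = L(s^{-1}) - L(x_1)$), but only because centrality together with $L(ba^{-1}) = L(b)-L(a)$ determines $ba^{-1}$ up to a choice of index set $J$ with $|J| = |I_b \setminus I_a|$, and $J$ need not be $I_b \setminus I_a$. So the target of your argument must be weakened to: every coordinate $ba^{-1} \vee z_i$ lies in $\{z_i, e\}$.

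Even for that weakened goal, the mechanism you propose does not close the hole you yourself identify. Krull--Schmidt uniqueness only matches the nontrivial factors $[ba^{-1} \vee z_i, e]$ with the irreducibles $[z_j,e]$ ($j \in I_b\setminus I_a$) \emph{up to isomorphism}, and the degree bookkeeping $\deg(ba^{-1}) = \sum_{j \in I_b\setminus I_a}\deg(z_j)$ is exactly the sum of the lengths of those factors wherever they sit; neither excludes a configuration in which some $ba^{-1}\vee z_{i}$ lies strictly between $z_{i}$ and $e$ with $[ba^{-1}\vee z_{i},e]$ isomorphic to a \emph{shorter} irreducible $[z_j,e]$, $j \neq i$ --- which would give $M(ba^{-1})>0$. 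Ruling this out requires an input beyond the lattice decomposition of $[b,a]$, and this is where the paper's proof differs essentially: it first treats $b = s^{-1}$ by applying the identity $U(s^{-1}a^{-1}) = L(a)$ of \cref{lem:equality_for_Ua_and_La} twice and using that conjugation by $s$ is a lattice automorphism of $[s^{-1},e]$ (\cref{lem:conjugation_by_s_is_automorphism}) to get $M(s^{-1}a^{-1})=0$, and then reduces the general case to this one by transporting centrality along the isomorphism $[s^{-1},a] \cong [s^{-1}a^{-1},e]$. Your first step (that $ba^{-1} \in [s^{-1},e]$ and $[ba^{-1},e] \cong [b,a]$ is a product of $L(b)-L(a)$ irreducibles) is fine and gives the last sentence of the lemma once centrality is known, but the centrality itself needs the conjugation argument.
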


\begin{proof}
Let $a \in \Ccal(G^-)$. We first prove the special case that $b = s^{-1}$. By \cref{lem:central_eq_to_Ma=0}, $M(a) = 0$ which implies $L(a) + U(a) = k$.

Using \cref{eq:equality_Ua_La} twice, we get:
\begin{align*}
L(a) & = U(s^{-1}a^{-1}) \\
& = k - L(s^{-1}a^{-1}) - M(s^{-1}a^{-1}) \\
& = k - U(s^{-1}as) - M(s^{-1}a^{-1}) \\
& = k - U(a) - M(s^{-1}a^{-1}) \\
& = L(a) - M(s^{-1}a^{-1}),
\end{align*}
where the equality $U(s^{-1}as) = U(a)$ follows from the fact that $a \mapsto s^{-1}as$ restricts to a lattice automorphism of $[s^{-1},e]$ (\cref{lem:conjugation_by_s_is_automorphism}). Thus, we have shown that $M(s^{-1}a^{-1}) = 0$. So, by \cref{lem:central_eq_to_Ma=0}, we have shown that $s^{-1}a^{-1} \in \Ccal(G^-)$.

We now prove the statement for general $a,b$ with $b  \leq a$. By restriction, it is clear that $b$ is a central element of the lattice $[s^{-1},a]$. As right-multiplication by $a^{-1}$ induces an isomorphism $[s^{-1},a] \cong [s^{-1}a^{-1},e]$, we see that $ba^{-1}$ - the image of $b$ - is a central element in the lattice $[s^{-1}a^{-1},e]$. As $s^{-1}a^{-1}$ is central in $[s^{-1},e]$ and $ba^{-1}$ is central in $[s^{-1}a^{-1},e]$, we infer that $ba^{-1}$ is central in $[s^{-1},e]$ as well.

The last part of the lemma easily follows from the fact that the interval $[ba^{-1},e] \cong [b,a]$ decomposes as a direct product of $L(b)-L(a)$ directly indecomposable factors.
\end{proof}

\begin{thm} \label{thm:center_closed_under_arrow}
$\Ccal(G^-)$ is closed under the $\to$-operation defined in \cref{subs:right_l_groups}.
\end{thm}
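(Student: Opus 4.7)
The proof should be almost immediate from the preparatory lemmas. Recall from equation \eqref{eq:s2} that for $a,b \in G^-$ we have $a \to b = (a \wedge b) a^{-1}$. So, given $a,b \in \Ccal(G^-)$, the plan is to verify that $(a \wedge b) a^{-1}$ lies in $\Ccal(G^-)$ by applying \cref{lem:ba'_in_cent} to the pair $(a \wedge b, a)$.

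The first step is to establish that $a \wedge b \in \Ccal(G^-)$. This follows directly from \cref{thm:cent_is_boolean}, which asserts that the center $\Ccal(G^-) = \Cent([s^{-1},e])$ is a (Boolean) sublattice of $[s^{-1},e]$; in particular, it is closed under meets. Consequently $a \wedge b$ is central, and trivially $a \wedge b \leq a$.

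The second and final step is to invoke \cref{lem:ba'_in_cent} with the elements $b' := a \wedge b \in \Ccal(G^-)$ and $a' := a \in \Ccal(G^-)$, where $b' \leq a'$. The lemma yields $b'(a')^{-1} = (a \wedge b) a^{-1} \in \Ccal(G^-)$, which by \eqref{eq:s2} is precisely $a \to b$. Hence $\Ccal(G^-)$ is closed under the arrow operation, as required.

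There is no genuine obstacle here; the entire content has been packaged into \cref{thm:cent_is_boolean} and \cref{lem:ba'_in_cent}. The only thing to be mildly careful about is making the identification $a \to b = (a \wedge b) a^{-1}$ explicit, so that the reader sees why the reduction to \cref{lem:ba'_in_cent} is applicable. The proof therefore can be given in just two or three lines.
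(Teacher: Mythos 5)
Your proof is correct and is essentially identical to the paper's: both write $a \to b = (a \wedge b)a^{-1}$, note $a \wedge b \leq a$, and apply \cref{lem:ba'_in_cent}. Your version is slightly more careful in explicitly citing \cref{thm:cent_is_boolean} to justify $a \wedge b \in \Ccal(G^-)$, a step the paper leaves implicit.
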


\begin{proof}
Let $a,b \in \Ccal(G^-)$. As $a \to b = (a \wedge b)a^{-1}$ and $a \wedge b \leq a$, it follows from \cref{lem:ba'_in_cent} that $a \to b \in \Ccal(G^-)$.
\end{proof}

As $\Ccal(G^-)$ is a Boolean algebra (\cref{thm:cent_is_boolean}), we can define the operator 
\begin{align*}
D: \Ccal(G^-) & \to \Ccal(G^-) \\
a & \mapsto (s^{-1}a^{-1})^{\prime},
\end{align*}
where $x^{\prime}$ denotes the complement of $x$ in the Boolean algebra $\Ccal(G^-)$.

Note that $U(D(a)) = U((s^{-1}a^{-1})^{\prime}) = k - U(s^{-1}a^{-1}) = k - (k - U(a)) = U(a)$ for all $a \in \Ccal(G^-)$. Also note that since $a \mapsto s^{-1}a^{-1}$ and $a \mapsto a^{\prime}$ are bijections from $\Ccal(G^-)$ to itself, so is the map $D$.

One useful property of the map $D$ is that it is capable of producing right-maximal expressions:

\begin{prop} \label{prop:duality_gives_right_maximal_expression}
If Let $x \in \Ccal(G^-)$, then $y := D(x)$ is the unique element $y \in [s^{-1},e]$ such that $\deg(y) = \deg(x)$ and the expression $yx$ is right-maximal. Furthermore, $z \vee D(x) = e$ implies that $zx \geq s^{-1}$.
\end{prop}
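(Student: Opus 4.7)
The plan is to first verify directly that $y := D(x)$ has the right degree and makes $yx$ right-maximal, then establish uniqueness among elements with those two properties, and finally deduce the last implication from the product structure induced by a central complement.

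First I would compute $\deg(D(x))$. By construction $D(x) = (s^{-1}x^{-1})'$ is the Boolean complement of $s^{-1}x^{-1}$ inside the sublattice $\Ccal(G^-)$, which is bounded by $s^{-1}$ and $e$. Hence
\[
D(x) \vee s^{-1}x^{-1} = e, \qquad D(x) \wedge s^{-1}x^{-1} = s^{-1}.
\]
Applying the parallelogram identity (\cref{lem:parallelogram_identity}) to the pair $D(x), s^{-1}x^{-1}$ gives $\deg(D(x)) + \deg(s^{-1}x^{-1}) = \deg(e) + \deg(s^{-1})$, which simplifies to $\deg(D(x)) = \deg(x)$. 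The first of the two displayed identities above is exactly the right-maximality condition \cref{eq:right_maximality_lattice_condition} for $y x$ with $y = D(x)$, so the existence half is done.

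For uniqueness, suppose $y \in [s^{-1},e]$ satisfies $\deg(y) = \deg(x)$ and $y \vee s^{-1}x^{-1} = e$. Once again the parallelogram identity yields
\[
\deg(y \wedge s^{-1}x^{-1}) = \deg(y) + \deg(s^{-1}x^{-1}) - \deg(e) = \deg(x) + \deg(s^{-1}) - \deg(x) = \deg(s^{-1}),
\]
so $y \wedge s^{-1}x^{-1} = s^{-1}$. Thus $y$ is a complement of $s^{-1}x^{-1}$ in $[s^{-1},e]$. Since $s^{-1}x^{-1}$ is central in $[s^{-1},e]$ (as $x \in \Ccal(G^-)$ and $\Ccal(G^-)$ is closed under $x \mapsto s^{-1}x^{-1}$, cf.\ \cref{lem:ba'_in_cent}), any central element has a \emph{unique} complement: indeed, a decomposition $[s^{-1},e] \cong L_1 \times L_2$ identifying $s^{-1}x^{-1}$ with $(0_{L_1}, 1_{L_2})$ forces any complement to be $(1_{L_1}, 0_{L_2})$. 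Consequently $y = D(x)$.

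For the last claim, I exploit exactly this product decomposition. Writing $c := s^{-1}x^{-1}$ and $c' := D(x)$, the induced isomorphism $[s^{-1},e] \overset{\sim}{\longrightarrow} [s^{-1}, c] \times [s^{-1}, c']$ sends $z \in [s^{-1},e]$ to $(z \wedge c, z \wedge c')$, and sends $c'$ to $(s^{-1}, c')$. Computing in the product,
\[
z \vee c' \longleftrightarrow (z \wedge c, \, (z \wedge c') \vee c') = (z \wedge c, \, c'),
\]
so $z \vee c' = e$ is equivalent to $z \wedge c = c$, i.e.\ $z \geq c = s^{-1}x^{-1}$. By right-invariance, this is exactly $zx \geq s^{-1}$.

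The main obstacle, if any, is really bookkeeping: one must make sure that complements of centrally located elements in $[s^{-1},e]$ are unique and that the implication in the final claim is a biconditional once the complementary central element appears. Both points reduce cleanly to the product representation guaranteed by centrality, so no real surprise arises; everything else is the parallelogram identity applied twice.
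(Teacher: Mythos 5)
Your proposal is correct and follows essentially the same route as the paper: right-maximality is identified with $y \vee s^{-1}x^{-1} = e$, the degree hypothesis together with the parallelogram identity forces $y \wedge s^{-1}x^{-1} = s^{-1}$, and uniqueness follows because the central element $s^{-1}x^{-1}$ has a unique complement, namely $D(x)$. Your extra explicit verification of existence and the product-decomposition computation for the final implication are just slightly more detailed versions of what the paper leaves implicit.
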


\begin{proof}
In order for the expression $yx$ to be right-maximal it is necessary to have $y \vee s^{-1}x^{-1} = e$. As $\deg(s^{-1}x^{-1}) = \deg(s^{-1}) - \deg(x)$, the condition $\deg(y) = \deg(x)$ forces $y \wedge s^{-1}x^{-1} = s^{-1}$, i.e. $y$ is a complement of $s^{-1}x^{-1}$ in $[s^{-1},e]$. As $s^{-1}x^{-1}$ is central (\cref{lem:ba'_in_cent}), there is only one complement, which is $(s^{-1}x^{-1})^{\prime}$.

If $z \vee D(x) = e$, then $z \geq s^{-1}x^{-1}$, which implies $zx \geq s^{-1}$
\end{proof}

It is notable that $D$ preserves important properties of the elements in $\Ccal(G^-)$:
\begin{prop} \label{prop:D_preserves_deg_and_U}
Let $z \in \Ccal(G^-)$, then
\begin{align*}
\deg(D(z)) & = \deg(z) ; \\
U(D(z)) & = U(z).
\end{align*}
\end{prop}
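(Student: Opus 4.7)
The plan is to handle the two equations separately, observing that the $U$-equation has essentially already been verified in the discussion preceding the proposition, while the degree equation follows in one line from the parallelogram identity applied to $s^{-1}z^{-1}$ and its Boolean complement.

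For $U(D(z)) = U(z)$, I would simply point to the computation given right after the definition of $D$: Boolean-complementing a central element flips the $U$-value via $U(x') = k - U(x)$ (which one verifies by writing $x$ as the meet of a subset of the dual atoms $z_1,\ldots,z_k$ of $\Ccal(G^-)$, so that $x \vee z_i = e$ exactly on the complementary index set), and Lemma~\ref{lem:equality_for_Ua_and_La} together with $L(z) + U(z) = k$ for central $z$ gives $U(s^{-1}z^{-1}) = L(z) = k - U(z)$, hence $U(D(z)) = U((s^{-1}z^{-1})') = k - U(s^{-1}z^{-1}) = U(z)$.

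For $\deg(D(z)) = \deg(z)$, the key observation is that since $\Ccal(G^-)$ is a bounded Boolean sublattice of $[s^{-1},e]$ with the same $0$ and $1$ (Theorem~\ref{thm:cent_is_boolean}), the complement $D(z) = (s^{-1}z^{-1})'$ satisfies
\[
D(z) \vee s^{-1}z^{-1} = e \quad\text{and}\quad D(z) \wedge s^{-1}z^{-1} = s^{-1}
\]
as joins and meets in $G^-$ itself. Applying the parallelogram identity (Lemma~\ref{lem:parallelogram_identity}) and using that $\deg$ is a group homomorphism yields
\[
\deg(D(z)) + \deg(s^{-1}) - \deg(z) = \deg(e) + \deg(s^{-1}) = \deg(s^{-1}),
\]
so $\deg(D(z)) = \deg(z)$.

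There is no real obstacle here: the only subtlety is checking that the Boolean complement taken inside $\Ccal(G^-)$ genuinely serves as a complement in the larger lattice $[s^{-1},e]$, but this is immediate from $\Ccal(G^-)$ being a $\{0,1\}$-preserving sublattice. Everything else is a direct application of results already in the paper.
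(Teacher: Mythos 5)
Your proposal is correct and follows essentially the same route as the paper: the degree equality comes from $D(z)$ being the complement of the central element $s^{-1}z^{-1}$ in $[s^{-1},e]$ together with the parallelogram identity (which is exactly the content of the proof of \cref{prop:duality_gives_right_maximal_expression} that the paper cites), and the $U$-equality is the computation via \cref{lem:equality_for_Ua_and_La} and $U+L=k$ that the paper itself records right after defining $D$ (the paper phrases it through $L(D(z))=L(z)$, which is trivially equivalent). No gaps.
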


\begin{proof}
The first equality has already been shown in the preceding proof.

For the second equality, we might as well prove that $L(D(z)) = L(z)$, as $M(z) = M(D(z))=0$ (\cref{lem:central_eq_to_Ma=0}). Using \cref{lem:equality_for_Ua_and_La}, we calculate
\[
L(z) = U(s^{-1}z^{-1}) = L((s^{-1}z^{-1})^{\prime}) = L(D(z)).
\]
\end{proof}

We will also make use of the following property:

\begin{lem} \label{lem:Dz_contains_atoms_not_in_image_of_z*}
Let $z \in \Ccal(G^-)$, then for $g \in X(G^-)$ we have the equivalence:
\[
g \geq D(z) \Leftrightarrow \not \exists h \in [s^{-1},e] : z \to h = g.
\]
\end{lem}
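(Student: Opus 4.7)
The plan is to reformulate the right-hand side purely in lattice terms and then exploit the direct product decomposition of $[s^{-1},e]$ induced by the complementary central pair $\{s^{-1}z^{-1}, D(z)\}$.

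First I would translate the condition ``$\exists h \in [s^{-1},e] : z \to h = g$''. Using $z \to h = (z \wedge h)z^{-1}$, the equation $z \to h = g$ becomes $z \wedge h = gz$. If such an $h \geq s^{-1}$ exists, then $gz = z \wedge h \geq s^{-1}$, so $g \geq s^{-1}z^{-1}$. Conversely, given $g \geq s^{-1}z^{-1}$, set $h := gz$; since $g \leq e$ and $g \geq s^{-1}z^{-1}$, we have $h \in [s^{-1},e]$, and $gz \leq z$ yields $z \wedge h = h = gz$, hence $z \to h = g$. This reduces the statement to showing that, for $g \in X(G^-)$,
\[
g \geq D(z) \iff g \not\geq s^{-1}z^{-1}.
\]

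Next, recall that $D(z) = (s^{-1}z^{-1})^{\prime}$, so $s^{-1}z^{-1} \vee D(z) = e$ and $s^{-1}z^{-1} \wedge D(z) = s^{-1}$ in the Boolean algebra $\Ccal(G^-)$, and hence in $[s^{-1},e]$. By \cref{cor:all_dual_atoms_are_above_s'}, $g \geq s^{-1}$, so $g$ is a dual atom in the modular geometric lattice $[s^{-1},e]$. The complementary central pair $\{s^{-1}z^{-1}, D(z)\}$ induces an isomorphism $[s^{-1},e] \longiso L_1 \times L_2$ sending $s^{-1}z^{-1} \mapsto (0_{L_1},1_{L_2})$ and $D(z) \mapsto (1_{L_1},0_{L_2})$.

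A dual atom of a product of bounded lattices is either of the form $(g_1,1_{L_2})$ with $g_1 \prec 1_{L_1}$ or of the form $(1_{L_1},g_2)$ with $g_2 \prec 1_{L_2}$, and exactly one of these two cases occurs. In the first case one computes $g \vee s^{-1}z^{-1} = g$ while $g \vee D(z) = e$, so $g \geq s^{-1}z^{-1}$ and $g \not\geq D(z)$; in the second case $g \geq D(z)$ and $g \not\geq s^{-1}z^{-1}$. This is precisely the biconditional we want. The only non-formal step is the translation in the first paragraph; after that, everything reduces to the easy observation that a dual atom of a two-factor product lattice lies above exactly one of the two central idempotents.
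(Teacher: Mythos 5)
Your proof is correct and follows essentially the same route as the paper: you identify the image of $h \mapsto z \to h$ on $[s^{-1},e]$ as $[s^{-1}z^{-1},e]$ and then use that $s^{-1}z^{-1}$ and $D(z)$ are complementary central elements, so a dual atom lies above exactly one of them. The only difference is that you spell out the verification of the image claim (via $h := gz$), which the paper merely asserts.
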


\begin{proof}
The image of $[s^{-1},e] \to [s^{-1},e]$; $h \mapsto (z \to h)$ is $[s^{-1}z^{-1},e]$. As $s^{-1}z^{-1}$ and $D(z)$ are complements to each other and central, an element of $X(G^-)$ must either be in $[s^{-1}z^{-1},e]$ and therefore is an image under $h \mapsto (z \to h)$, or be in $[D(z),e]$ and there lies over $D(z)$.
\end{proof}

We repeat some basic facts on \emph{$L$-algebras}. For more on Rump's theory of $L$-algebras, see the foundational article \cite{self_similarity},

\begin{defi}
An \emph{$L$-algebra} is a triple $(X,\to,e)$ where $e$ is a distinguished element of $e$ and $\to : X \times X \to X$ is a binary operation, such that the following identities are fulfilled for any $x,y,z \in X$:
\begin{align*}
x \to x & = x \to e = e, \\
e \to x & = x, \\
(x \to y) \to (x \to z) & = (y \to x) \to (y \to z) \\
x \to y = y \to x = e & \Leftrightarrow x = y.
\end{align*}
\end{defi}

It can be shown that each $L$-algebra carries a canonical partial order $\leq$ defined by $x \leq y : \Leftrightarrow x \to y = e$ (\cite[Proposition 2.]{self_similarity}).  From the definition it is clear that under this order, $x \leq e$ for all $x \in X$.

The following fact follows from the identities of \cref{lem:equations_for_arrow}:

\begin{prop} \label{prop:cones_are_l_algebras}
If $G$ is a right $\ell$-group with negative cone $G^-$, then $G^-$ becomes an $L$-algebra under the arrow operation $x \to y = yx^{-1} \wedge e$.
\end{prop}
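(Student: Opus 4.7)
The plan is simply to verify the four defining identities of an $L$-algebra directly from the identities for the arrow operation collected in \cref{lem:equations_for_arrow}; essentially nothing new has to be proven, as everything is already packaged there.

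First I would observe that the arrow operation $x \to y = yx^{-1} \wedge e$ indeed takes values in $G^-$, since $yx^{-1} \wedge e \leq e$. Then the first two axioms, $x \to x = x \to e = e$ and $e \to x = x$, are literally identities \eqref{eq:s1a} and \eqref{eq:s1b}.

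The self-distributive identity $(x \to y) \to (x \to z) = (y \to x) \to (y \to z)$ is the only axiom that requires a small manipulation. Applying \eqref{eq:s3} to each side, I would write
\[
(x \to y) \to (x \to z) = (x \wedge y) \to z \quad \text{and} \quad (y \to x) \to (y \to z) = (y \wedge x) \to z,
\]
and conclude using commutativity of $\wedge$ on $G^-$.

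Finally, for the antisymmetry-type axiom, identity \eqref{eq:l_inequality} gives $x \to y = e \Leftrightarrow x \leq y$ and $y \to x = e \Leftrightarrow y \leq x$; the conjunction of these two conditions is equivalent to $x = y$ by antisymmetry of $\leq$. This exhausts the axioms, so $(G^-, \to, e)$ is an $L$-algebra. There is no real obstacle here; the work was already done in establishing \cref{lem:equations_for_arrow}, and the present proposition is just a packaging statement that organizes those identities under the $L$-algebra axioms of \cite{self_similarity}.
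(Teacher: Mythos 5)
Your proof is correct and follows exactly the route the paper intends: the paper simply states that the proposition "follows from the identities of \cref{lem:equations_for_arrow}", and your verification — closure via $yx^{-1}\wedge e\leq e$, the first two axioms from \eqref{eq:s1a} and \eqref{eq:s1b}, self-distributivity by applying \eqref{eq:s3} to both sides and using commutativity of $\wedge$, and antisymmetry from \eqref{eq:l_inequality} — is precisely the omitted routine check. Nothing is missing.
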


If there are no nontrivial order relations on $X$, i.e. if the restriction of $\leq$ to $X \setminus \{ e \}$ is the trivial order, then $X$ is called a \emph{discrete} $L$-algebra.

\begin{defi}[\cite{Rump-Geometric-Garside}]
Let $X$ be a discrete $L$-algebra. Then a bijection $D: X \setminus \{ e \} \to X \setminus \{ e \}$ is called a \emph{duality} if for all $x,y \in X \setminus \{ e \}$ with $x \neq y$ we have
\[
D(x \to y) = (y \to x) \to D(y).
\]
\end{defi}

\begin{rema} \label{rema:l_algebras_and_cycle_sets}
By results of Rump \cite{rump_duality}, we now know that the concept of duality is not only meaningful for discrete $L$-algebras but also for a broader class of $L$-algebras.

Discrete $L$-algebras with duality, however, are in $1-1$-correspondence with non-degenerate cycle sets, algebraic structures that parametrize non-degenerate set-theoretic solutions to the Yang-Baxter equation \cite[Theorem 2.]{Rump-Geometric-Garside}.
\end{rema}

\begin{thm} \label{thm:atoms_of_center_are_l_algebra_with_duality}
$\tilde{X}(\Ccal(G^-))$ is an $L$-algebra with duality.
\end{thm}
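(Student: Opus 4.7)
The plan is to show that $\tilde{X}(\Ccal(G^-))$ is closed under $\to$ and carries a trivial induced order (hence is a discrete $L$-algebra), and then to verify that the restriction of $D$ gives a duality.

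For closure, I would proceed as follows. Given distinct dual atoms $z_i, z_j \in X(\Ccal(G^-))$, the Boolean structure of $\Ccal(G^-)$ together with the direct decomposition $[s^{-1}, e] \cong \prod_m L_m$ yields $L(z_i \wedge z_j) = 2$ while $L(z_i) = 1$. Applying \cref{lem:ba'_in_cent} then gives $L(z_i \to z_j) = L(z_i \wedge z_j) - L(z_i) = 1$, so $z_i \to z_j$ is again a dual atom of $\Ccal(G^-)$. The trivial cases (one argument equal to $e$, or the two arguments coinciding) are immediate. Discreteness is then automatic since distinct dual atoms of a Boolean algebra are pairwise incomparable.

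For the duality, I take the restriction of $D: \Ccal(G^-) \to \Ccal(G^-)$ to $X(\Ccal(G^-))$. Since $D$ preserves $U$-values by \cref{prop:D_preserves_deg_and_U}, and dual atoms in $\Ccal(G^-)$ are characterized by $U = k - 1$ together with $M = 0$, this restriction is a bijection on $X(\Ccal(G^-))$. A key intermediate claim is that $D(z_i) = z_i$ for every dual atom: by the characterization in \cref{prop:duality_gives_right_maximal_expression}, this amounts to verifying that $z_i \cdot z_i$ is right-maximal, which after applying right-invariance reduces to $s^{-1} \vee z_i^2 = z_i$, a consequence of the direct decomposition $[s^{-1}, e] \cong \prod_m L_m$, since $z_i^2$ exceeds the depth of $s^{-1}$ in the $L_i$-direction. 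Once $D|_{X(\Ccal)}$ is identified with the identity, the duality identity $D(x \to y) = (y \to x) \to D(y)$ collapses to $x \to y = (y \to x) \to y$ for distinct dual atoms $x, y$.

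I expect the main obstacle to be verifying this final identity. Both sides are dual atoms of the same degree $\deg(y)$ by the parallelogram identity (\cref{lem:parallelogram_identity}), so equality can be extracted from the uniqueness in \cref{prop:duality_gives_right_maximal_expression} by showing that $((y \to x) \to y) \cdot (x \to y)$ is a right-maximal expression. Using $(x \to y) \cdot x = (y \to x) \cdot y = x \wedge y$ from \cref{eq:s2} together with the Boolean complementarity $s^{-1}(x \to y)^{-1} = (x \to y)'$ in $\Ccal(G^-)$, this right-maximality should reduce to a lattice identity in the Boolean algebra $\Ccal(G^-)$ that follows from the explicit direct decomposition of $[s^{-1}, e]$.
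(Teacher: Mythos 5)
Your closure argument coincides with the paper's: for distinct $a,b\in X(\Ccal(G^-))$ one has $L(a\wedge b)=2$ and $L(a)=1$, so \cref{lem:ba'_in_cent} gives $L(a\to b)=1$, and discreteness is clear. That half is fine.

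The duality half, however, rests on a false intermediate claim. You assert that $D(z)=z$ for every $z\in X(\Ccal(G^-))$, equivalently (via \cref{prop:duality_gives_right_maximal_expression}) that $z\cdot z$ is right-maximal, i.e.\ $z^2\vee s^{-1}=z$. This already fails for distributive $G$: take the Klein bottle group $G=\langle x_1,x_2\mid x_1^2=x_2^2\rangle$, the structure group of the involutive permutation solution on two elements, which is a distributive noetherian right $\ell$-group with $X(G^-)=\{x_1,x_2\}$ and $s^{-1}=x_1\wedge x_2=x_1^2=x_2^2$. Here $[s^{-1},e]$ is Boolean, so $\Ccal(G^-)=[s^{-1},e]$ and $X(\Ccal(G^-))=\{x_1,x_2\}$; then $s^{-1}x_1^{-1}=x_1$, hence $D(x_1)=(s^{-1}x_1^{-1})^{\prime}=x_1^{\prime}=x_2\neq x_1$, and correspondingly $x_1^2\vee s^{-1}=s^{-1}\neq x_1$, so $x_1\cdot x_1$ is not right-maximal. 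In general $D$ restricted to $X(\Ccal(G^-))$ is the diagonal bijection of the associated cycle set and is the identity only in the square-free case. Consequently the identity you reduce the problem to, $x\to y=(y\to x)\to y$, is not the one that needs to be proved, and the remainder of your argument does not go through.

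The paper never computes $D$ on individual atoms. Instead it uses \cref{lem:Dz_contains_atoms_not_in_image_of_z*} to describe $D(x\wedge y)$ as the meet of the dual atoms outside the image of $(x\wedge y)\to(-)$, expands $(x\wedge y)\to z$ in the two ways allowed by \cref{eq:s3} to obtain
\[
D(x\wedge y)=D(x\to y)\wedge\bigl((x\to y)\to D(x)\bigr)=D(y\to x)\wedge\bigl((y\to x)\to D(y)\bigr),
\]
and then, since $L(D(x\wedge y))=2$ and $D(x\to y)\neq D(y\to x)$ for $x\neq y$ (as $D$ is injective and $x\to y=y\to x$ forces $x=y$ by \cref{eq:s2}), concludes $D(x\to y)=(y\to x)\to D(y)$. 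This comparison of two meet-decompositions is the idea your proposal is missing.
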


\begin{proof}
We first show that $\tilde{X}(\Ccal(G^-))$ is an $L$-algebra at all. By \cref{prop:cones_are_l_algebras} and \cref{thm:center_closed_under_arrow}, it is sufficient to prove that for $a,b \in X(\Ccal(G^-))$ with $a \neq b$, also $a \to b \in X(\Ccal(G^-))$.

In this case, $L(a) = L(b) = 1$ and $L(a \wedge b) = 2$. By the last part of \cref{lem:ba'_in_cent}, we get 
\[
L(a \to b) = L((a \wedge b)a^{-1}) = L(a \wedge b) - L(a) = 2-1 = 1,
\]
implying that $a \to b \in X(\Ccal(G^-))$.

We now show that $X(\Ccal(G^-)) \to X(\Ccal(G^-)); \ a \mapsto D(a) := (s^{-1}a^{-1})^{\prime}$ restricts to a duality of $\tilde{X}(\Ccal(G^-))$. Note that this map is well-defined due to the second equality in \cref{prop:D_preserves_deg_and_U}. Now take $x,y \in X(\Ccal(G^-))$ with $x \neq y$. Using \cref{lem:Dz_contains_atoms_not_in_image_of_z*}, we see that $D(x \wedge y)$ is the meet of all elements in $X(\Ccal(G^-))$ that are not in the image of $z \mapsto (x \wedge y) \to z$.

By $(x \wedge y) \to z = (x \to y) \to (x \to z)$ (\cref{eq:s3}), we see that
\[
D(x \wedge y) = D(x \to y) \wedge \left( (x \to y) \to D(x) \right).
\]
On the other hand, expressing $(x \wedge y) \to z = (y \to x) \to (y \to z)$ shows that
\[
D(x \wedge y) = D(y \to x) \wedge \left( (y \to x) \to D(y) \right).
\]
As $L(D(x \wedge y)) = L(x \wedge y) = 2$, it is clear that
\[
D(x \to y) \neq (x \to y) \to D(x) \textnormal{ and } D(y \to x) \neq (y \to x) \to D(y).
\]
Therefore, $D(x \to y) \in \{ D(y \to x); (y \to x) \to D(y) \}$. From $x \wedge y = (x \to y)x = (y \to x)y$ it follows that $x \to y = y \to x$ can only appear when $x = y$; this finally shows that $D(x \to y) = (y \to x) \to D(y)$.
\end{proof}

Taking into account \cref{rema:l_algebras_and_cycle_sets}, the theorem can thus be interpreted as saying that each modular noetherian right $\ell$-group with a strong order unit contains an involutive set-theoretic solution to the Yang-Baxter equation. We will follow this line of thought in the following section where we will prove that $G$ contains a distributive noetherian right $\ell$-group, which is equivalent to the structure group of a set-theoretic solution to the Yang-Baxter equation (see \cite[Sections 2 + 3]{Rump-Geometric-Garside}).

\section{The distributive scaffold and the beam decomposition} \label{sec:the_distributive_scaffold}

The aim of this section is to prove the following two results
\begin{enumerate}[1)]
\item The subgroup $H$ generated by $X(\Ccal(G^-))$ in $G$ is a distributive sublattice.
\item The \emph{lattice} decomposition $H \cong \prod_{i=1}^k \mathbb{Z}$ is induced by a \emph{lattice} decomposition $G \cong \prod_{i=1}^k G_i$ by suitable sublattices $G_i \subseteq G$.
\end{enumerate}

Given a discrete $L$-algebra $X$ with duality, we can define the \emph{structure monoid}
\[
M(X) := \left\langle X \setminus \{ e \} | (x \to y) x = (y \to x) y  \right\rangle_{\mathrm{mon}}
\]
and the \emph{structure group}
\[
G(X) := \left\langle X \setminus \{ e \} | (x \to y) x = (y \to x) y  \right\rangle_{\mathrm{gr}}.
\]
There is a natural monoid homomorphism $\iota: M(X) \to G(X)$ that is given by sending $x$ to $x$ whenever $x \in X \setminus \{ e \}$.

One can easily show ( \cite[Proposition 5.]{Rump-Geometric-Garside}) that if $G$ is a modular noetherian right $\ell$-group, then the set $\tilde{X}(G^-)$ is closed under the $\to$-operation: let $x,y \in \tilde{X}(G^-)$ be such that $x,y \neq e$ and $x \neq y$ (all other cases are trivial), then $x \vee y = e$, so $[x \to y, e] \cong [x \wedge y, x] \cong [y,x \vee y]$ is a chain of length $1$. Therefore, $x \to y \in X(G^-)$ as well.

We have the following results proved by Chouraqui and Rump.

\begin{thm}
If $X$ is a finite, discrete $L$-algebra with duality, the homomorphism $\iota: M(X) \to G(X)$ embeds $M(X)$ as the negative cone of a distributive noetherian right-invariant lattice-order on $G(X)$ that has a strong order unit.

For each distributive noetherian right $\ell$-group with strong order unit, $\tilde{X}(G^-) = X(G^-) \cup \{ e \}$ is a finite, discrete $L$-algebra with duality and the canonical homomorphisms $G(\tilde{X}(G^-)) \to G$ and $M(\tilde{X}(G^-)) \to G^-$ are isomorphisms.
\end{thm}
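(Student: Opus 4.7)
The plan is to handle the two directions of this biconditional correspondence separately, exploiting Rump's bridge between $L$-algebras with duality and cycle sets and, by extension, non-degenerate involutive set-theoretic solutions to the Yang-Baxter equation (see \cref{rema:l_algebras_and_cycle_sets}).

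For the first direction, given a finite discrete $L$-algebra $X$ with duality, the first step is to construct a non-degenerate involutive cycle set on $X \setminus \{ e \}$ by means of Rump's correspondence: the duality map $D$ provides the missing ingredient to convert the operation $\to$ into the binary operation $\cdot$ of a cycle set, and this cycle set is well-defined and non-degenerate precisely because $D$ is a bijection. This cycle set encodes a non-degenerate involutive set-theoretic solution to the Yang-Baxter equation whose defining relations are exactly those of $M(X)$. Now one invokes Chouraqui's original theorem: the structure group of such a solution is a Garside group whose Garside monoid is the structure monoid, the lattice structure given by divisibility is distributive, and the Garside element $\Delta = \bigvee(X \setminus \{ e \})$ supplies a strong order unit. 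The noetherianity is automatic from the finiteness of $X$ and the length function $\lambda$.

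For the second direction, one starts with a distributive noetherian right $\ell$-group $G$ with strong order unit $s$. First I would observe, as already noted in the excerpt, that $\tilde{X}(G^-)$ is closed under $\to$: if $x \neq y$ are both in $X(G^-)$, then the interval $[x \wedge y, x]$ is of length $1$ by the Jordan-Hölder theorem for modular lattices, so $x \to y \prec e$ as well; cases where $x = e$, $y = e$, or $x = y$ are trivial. Finiteness of $X(G^-)$ follows from noetherianity combined with distributivity: in a distributive lattice the dual atoms are pairwise incomparable and any infinite collection would violate the descending chain condition on $[s^{-1},e]$, which by \cref{cor:all_dual_atoms_are_above_s'} contains them all. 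Distributivity forces $\Ccal(G^-) = [s^{-1},e]$ in the Boolean sense, which makes the operator $D$ from \cref{sec:center_of_strong_order_interval} available on all of $\tilde{X}(G^-)$; \cref{thm:atoms_of_center_are_l_algebra_with_duality} then directly yields the discrete $L$-algebra with duality structure on $\tilde{X}(G^-)$.

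To conclude the isomorphism claim, one first verifies that the relations $(x \to y)x = (y \to x)y$ hold in $G^-$ for $x,y \in X(G^-)$, which is just \cref{eq:s2} combined with the commutativity of meet. Thus there is a canonical monoid homomorphism $\pi : M(\tilde{X}(G^-)) \to G^-$, which is surjective by \cref{prop:irred_facts_exist}. Injectivity is the heart of the matter: for this I would use right-normal factorizations, noting that by \cref{prop:meet_irreducibles_are_1_homogeneous} in the distributive case every element of $G^-$ is $1$-homogeneous with respect to $s$, so the right-normal factorization of $g \in G^-$ is a factorization into dual atoms, and uniqueness of this factorization (\cref{thm:right_normal_facs_exist_and_unique}) together with the defining relations makes $\pi$ injective. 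Finally, the group isomorphism $G(\tilde{X}(G^-)) \cong G$ follows from $\pi$ being an isomorphism of negative cones together with \cref{prop:extending_homomorphisms} and \cref{prop:group_of_fractions}.

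The main obstacle is the first direction, specifically the step that turns defining relations in $M(X)$ into a distributive lattice structure on $G(X)$. This is where cancellativity of $M(X)$, the left Ore condition, and the existence of common multiples for the lattice meets and joins must all be established; these are precisely the ingredients of Chouraqui's embedding theorem, and they rely in an essential way on the involutive non-degenerate nature of the cycle set extracted from $D$. Everything else, including verification that the lattice on $G(X)$ is right-invariant and that the resulting order unit is strong, reduces to routine checks within the Garside framework once this embedding is in place.
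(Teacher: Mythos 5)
The paper itself does not prove this theorem at all: it is attributed to Chouraqui and Rump and the ``proof'' consists solely of citations to \cite[Theorem 3.3]{chouraqui} and \cite[Theorem 2, Proposition 7]{Rump-Geometric-Garside}. Your reconstruction of the first direction (pass through the cycle set attached to $(X,\to,D)$ and invoke Chouraqui's embedding theorem for cancellativity, the Ore condition and the distributive Garside structure) is faithful to how those sources argue, and your observations that $X(G^-)$ is finite (dual atoms are meet-prime in a distributive lattice, so their partial meets form a strictly descending chain inside $[s^{-1},e]$, bounded below by $s^{-1}$) and that distributivity makes $[s^{-1},e]$ Boolean, hence $\Ccal(G^-)=[s^{-1},e]$, so that \cref{thm:atoms_of_center_are_l_algebra_with_duality} applies, are both correct and rather elegant ways to recover the ``$\tilde{X}(G^-)$ is a discrete $L$-algebra with duality'' part from the paper's own machinery.

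There is, however, a genuine gap in your injectivity argument for $\pi: M(\tilde{X}(G^-)) \to G^-$. You claim that ``in the distributive case every element of $G^-$ is $1$-homogeneous with respect to $s$,'' citing \cref{prop:meet_irreducibles_are_1_homogeneous}. That proposition says an element is $1$-homogeneous \emph{if and only if} it is meet-irreducible, and in a distributive $G$ most elements are not meet-irreducible: already $g = s^{-1}$ has the one-term right-normal factorization $g_1 = s^{-1}$ with $\deg(g_1) = |X(G^-)|$ (concretely, in $G = \mathbb{Z}^2$ with $s=(1,1)$, the element $(-1,-1)$ is its own right-normal factorization and has degree $2$). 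So the right-normal factorization of \cref{thm:right_normal_facs_exist_and_unique} is \emph{not} a factorization into dual atoms, and its uniqueness does not hand you a normal form over the generating set $X(G^-)$; atomic factorizations, which do use the generators, are highly non-unique as words. The step that actually closes this gap is the arrow-calculus argument the paper uses in the proof of \cref{thm:the_distributive_scaffold}: given two words $x_k\cdots x_1$ and $y_l \cdots y_1$ over $X(G^-)$, rewrite $(x_k\cdots x_1) \to (y_l\cdots y_1)$ and the reverse arrow as words over $X(G^-)$ using \cref{eq:l_structure_left} and \cref{eq:l_structure_right}; these computations are identical in $M(\tilde{X}(G^-))$ and in $G^-$, so equality in $G$ forces both arrows to be $e$, which forces equality in the monoid. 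You should replace the $1$-homogeneity step by this argument (or by an explicit normal-form theorem for $M(X)$); the remainder of your second direction, including surjectivity via \cref{prop:irred_facts_exist} and the passage to the group isomorphism via \cref{prop:extending_homomorphisms} and \cref{prop:group_of_fractions}, is fine.
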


\begin{proof}
The fact that $G = G(X)$ is a right $\ell$-group with $X(G^-) = X$ is proven in \cite[Theorem 3.3]{chouraqui}. The proof that $G$ is a distributive noetherian right $\ell$-group can be found in \cite[Theorem 2, Proposition 7]{Rump-Geometric-Garside}. For the result that each distributive noetherian right $\ell$-group $G$ is isomorphic to the structure group $G(\tilde{X}(G^-))$, see \cite[Theorem 2]{Rump-Geometric-Garside}.
\end{proof}

As $ X:= \tilde{X}(\Ccal(G^-))$ is indeed an $L$-algebra with duality (\cref{thm:atoms_of_center_are_l_algebra_with_duality}), we know that $G(X)$ is a distributive noetherian right $\ell$-group with negative cone $M(X)$. We are already in the position to prove:

\begin{thm} \label{thm:the_distributive_scaffold}
Let $G$ be a modular noetherian right $\ell$-group with the strong order unit $s = \left( \bigwedge X(G^-) \right)^{-1}$. Then there is a natural homomorphism $f: G(\tilde{X}(\Ccal(G^-))) \to G$ that embeds $G(\tilde{X}(\Ccal(G^-)))$ as a distributive sublattice. 
\end{thm}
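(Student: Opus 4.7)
The plan is to invoke the universal property of the structure group in order to construct $f$, and then to show that it is injective with image a distributive sublattice. Write $X := \tilde{X}(\Ccal(G^-))$; by \cref{thm:atoms_of_center_are_l_algebra_with_duality} this is a discrete $L$-algebra with duality, so the Chouraqui-Rump theorem recalled at the beginning of this section makes $G(X)$ into a distributive noetherian right $\ell$-group with negative cone $M(X)$. Since $\Ccal(G^-) \supseteq X$ is closed under the $\to$-operation of $G^-$ by \cref{thm:center_closed_under_arrow}, and identity \cref{eq:s2} yields $(x \to y)x = x \wedge y = (y \to x)y$ for $x,y \in \Ccal(G^-)$, the defining relations of $M(X)$ are already satisfied in $G^-$. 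Hence the inclusion $X \setminus \{e\} \hookrightarrow G^-$ extends to a monoid homomorphism $f^- : M(X) \to G^-$, which in turn extends to a group homomorphism $f : G(X) \to G$ by \cref{prop:extending_homomorphisms}.

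The first real step is to check that $f^-$ is a morphism of $L$-algebras, i.e.\ $f^-(w \to w') = f^-(w) \to f^-(w')$ for all $w, w' \in M(X)$. On generators this is automatic from the closure of $\Ccal(G^-)$ under $\to$; the general case proceeds by induction on the combined length of the two arguments, using \cref{eq:l_structure_left} to reduce $(xy) \to z$ to $x \to (y \to z)$ and \cref{eq:l_structure_right} to rewrite $x \to (yz)$ in terms of arrow-subexpressions of shorter combined length. Both identities hold in $M(X)$ and in $G^-$, so the induction goes through. Meets are then preserved via $a \wedge b = (a \to b) a$. Injectivity of $f^-$ reduces, via the right-normal factorizations of \cref{thm:right_normal_facs_exist_and_unique}, to injectivity on the strong order interval $[s_{G(X)}^{-1}, e] \subseteq M(X)$: since $G(X)$ is distributive, this interval is a finite Boolean algebra whose atoms are $X \setminus \{e\}$, and $f^-$ identifies it with the Boolean algebra $\Ccal(G^-)$ of \cref{thm:cent_is_boolean} atom-for-atom. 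Injectivity of $f$ on all of $G(X)$ then follows from \cref{prop:group_of_fractions}.

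The main obstacle is preservation of \emph{joins}, since the arrow machinery delivers meets but not joins directly. I would handle this by working on the left-handed side of the theory: the left-normal factorizations of \cref{thm:left_normal_facs_exist_and_unique}, together with \cref{prop:strong_order_intervals_coincide} and \cref{prop:left_right_symmetry_of_degrees}, furnish a parallel ``dual $L$-algebra'' structure on $G^-$ in which the roles of $\wedge$ and $\vee$ are appropriately exchanged. Rerunning the inductive argument against the corresponding dual identities shows that $f^-$ respects this dual structure as well, from which closure of $f^-(M(X))$ under $\vee$ can be extracted. The embedding of the full group $f(G(X)) \hookrightarrow G$ is then itself a lattice embedding by the fraction decomposition of \cref{prop:group_of_fractions}, and distributivity of the image is inherited from that of $G(X)$ via the Chouraqui-Rump theorem.
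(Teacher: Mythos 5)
Your construction of $f$, the reduction to negative cones, the arrow-based argument for preservation of meets, and the injectivity of $f$ all match the paper's proof in substance. The genuine gap is in the join-preservation step, which is precisely where the paper has to do real work. Asserting that the left-handed factorization theory ``furnishes a parallel dual $L$-algebra structure'' and that one can ``rerun the inductive argument against the corresponding dual identities'' presupposes the one fact that does not come for free: that $X(\Ccal(G^-))$ is closed under the \emph{dual} arrow operation of $G$, i.e.\ that for $x \neq y$ in $X(\Ccal(G^-))$ the element $z$ determined by $z^{-1} = (x^{-1} \vee y^{-1})x$ again lies in $X(\Ccal(G^-))$, and that this operation agrees with the corresponding one computed inside $G(X)$. \cref{thm:center_closed_under_arrow} gives closure under $\to$, which controls meets; it says nothing about the operation governing joins, because $x^{-1} \vee y^{-1}$ is a join of \emph{positive} elements of $G$ and there is no a priori reason it should land back in the scaffold. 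Without this closure, the inductive rewriting of $g_1 \rightsquigarrow g_2$ as a product of generators cannot even be set up on the $G$-side, so ``closure of $f^-(M(X))$ under $\vee$'' cannot be extracted as claimed --- the argument is circular exactly at the point where joins enter.

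The paper fills this gap with a concrete computation: by \cref{lem:ba'_in_cent} and \cref{lem:conjugation_by_s_is_automorphism}, the elements $x^{-1}s^{-1}$ and $y^{-1}s^{-1}$ lie in $\Ccal(G^-)$ and cover $s^{-1}$ there; modularity gives $x^{-1}s^{-1} \vee y^{-1}s^{-1} \succ x^{-1}s^{-1}$ in $\Ccal(G^-)$; a second application of \cref{lem:ba'_in_cent} puts $x^{-1}s^{-1}\left(x^{-1}s^{-1} \vee y^{-1}s^{-1}\right)^{-1}$ into $X(\Ccal(G^-))$; and an explicit simplification identifies the inverse of this element with $(x^{-1} \vee y^{-1})x$. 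You would need to supply this (or an equivalent) argument before the ``dual'' half of your induction can run. A secondary, repairable issue: reducing injectivity of $f^-$ to the strong order interval ``via right-normal factorizations'' is delicate, since right-maximality is itself a join condition ($s^{-1}g_i^{-1} \vee g_{i+1} = e$) and at that stage you have not yet shown that $f^-$ preserves joins; the paper's direct argument --- if $f^-(g_1) = f^-(g_2)$, then the formal product expressions for $g_1 \to g_2$ and $g_2 \to g_1$ obtained from \cref{eq:l_structure_left} and \cref{eq:l_structure_right} evaluate to $e$ in $G^-$ and hence already in $M(X)$ --- avoids this dependence.
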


\begin{proof}
The existence of the homomorphism is clear, as the relations $(a \to b)a = (b \to a)b$ for $a,b \in X := \tilde{X}(\Ccal(G^-))$ are inherited from $G^-$. We are therefore left with showing that this map is also an injective homomorphism of lattices.

We first restrict to $M(X)$. Given two expressions $g_1 = x_kx_{k-1} \ldots x_1$, $g_2 = y_ly_{l-1} \ldots y_1$, with all $x_i, y_j \in X$, we can use \cref{eq:l_structure_left} and \cref{eq:l_structure_right} to express $g_1 \to g_2$ and $g_2 \to g_1$ as products from elements in $X$. If $g_1 = g_2$ holds in $G$, then $g_1 \to g_2 = e = g_2 \to g_1$ and the expression can consist of $e$'s only. Then $g_1 \to g_2 = g_2 \to g_1 = e$ holds in $M(X)$ as well, from which it follows that $g_1 = g_2$ in $M(X)$. By the rule $a \wedge b = (a \to b)a$, which holds both in $M(X)$ as in $G^-$, we conclude that the map $M(X) \to G^-$ is an injective homomorphism of $\wedge$-semilattices.

As $G(X)$ is a group of left fractions for $M(X)$, it is easy to see that the canonical group homomorphism $G(X) \to G$ has trivial kernel.

Therefore, we know that $\mathrm{im}(f)$ is $\wedge$-subsemilattice of $G$ that is a distributive lattice under the partial order inherited by $G$. We are left with showing that $\mathrm{im}(f)$ is a $\vee$-subsemilattice as well.

This can be done easiest as follows: the right $\ell$-group $G^{\sim} = (G(X),\leq_{\mathrm{op}})$ with $g \leq_{\mathrm{op}} h \Leftrightarrow h \leq g$, is still distributive and noetherian, so $X(G^{\sim}) = X^{-1}$ also has the structure of a discrete $L$-algebra $(X^{-1},\rightsquigarrow)$ with duality. Therefore, we have to show that for $x,y \in X(\Ccal(G^-))$, the (unique) element $x \rightsquigarrow y := z \in G$ that fulfills $z^{-1} = (x^{-1} \vee y^{-1})x$ is still in $X(\Ccal(G^-))$ - this is because then, the natural homomorphism $f^{\sim}: G(X^{-1},\rightsquigarrow)) \to (G,\leq_{\mathrm{op}})$, sending $x^{-1}$ to its counterpart in $G$ is an embedding of $(G(X),\leq_{\mathrm{op}})$ as a $\wedge_{\mathrm{op}}$-subsemilattice of $(G,\leq_{\mathrm{op}})$, that is, a $\vee$-subsemilattice of $(G,\leq)$.

So, let $x,y \in X(\Ccal(G^-))$. We may assume that $x,y \neq e$ and $x \neq y$. Using \cref{lem:ba'_in_cent} and the fact that conjugation by $s$ induces a lattice automorphism of $[s^{-1},e]$ (\cref{lem:conjugation_by_s_is_automorphism}), we see that $x^{-1}s^{-1},y^{-1}s^{-1} \in \Ccal(G^-)$ and in $\Ccal(G^-)$ we have the covering relations 
\[
x^{-1}s^{-1},y^{-1}s^{-1} \succ_{\Ccal(G^-)} s^{-1}.
\]
By modularity, this implies $x^{-1}s^{-1} \vee y^{-1}s^{-1} \succ_{\Ccal(G^-)} x^{-1}s^{-1}$; another application of \cref{lem:ba'_in_cent} shows that $x^{-1}s^{-1} (x^{-1}s^{-1} \vee y^{-1}s^{-1})^{-1} \in X(\Ccal(G^-))$. Therefore,
\begin{align*}
X(\Ccal(G^-))^{-1} & \ni \left( x^{-1}s^{-1} (x^{-1}s^{-1} \vee y^{-1}s^{-1})^{-1} \right)^{-1} \\
& = (x^{-1}s^{-1} \vee y^{-1}s^{-1})sx \\
& = (x^{-1} \vee y^{-1})x \\
& = z^{-1} = (x \rightsquigarrow y)^{-1}.
\end{align*}
This concludes the proof.
\end{proof}

\begin{defi}
The \emph{distributive scaffold} is the subgroup $\mathcal{D}(G) := \left\langle \Ccal(G^-) \right\rangle _{\mathrm{gr}} \leq G$.
\end{defi}

In the distributive right $\ell$-group $G(X)$, we can define for each $z \in X \setminus \{ e \}$ a sequence of \emph{frozen elements} $z, D(z)z, D^2(z)D(z)z, \ldots$ (see \cite[2.1]{chouraqui_godelle})

We want to use a more generalized notion:

\begin{defi}
For an element $z \in \Ccal(G^-)$ and an integer $n \geq 1$ we define its \emph{$n$-th frozen power} as
\[
\Phi_n(z) := D^{n-1}(z)D^{n-2}(z) \ldots D(z)z.
\]
\end{defi}

Note that $\Phi_1(z) = z$ for all $z \in \Ccal(G^-)$. Using the fact that $D$ preserves the degrees of central elements (\cref{prop:D_preserves_deg_and_U}), one can see pretty easily that $\deg(\Phi_n(z)) = n \cdot \deg(z)$.

We will soon prove that the frozen powers $\Phi_n(z)$ ($z \in X(\Ccal(G^-))$) together form a dual frame in $[s^{-n},e]$. In our proof strategy we will take advantage of knowing where irreducibles are located:

\begin{lem} \label{lem:meet_irreducibles_over_frozen_powers}
Let $n \geq 1$ and let $g \in G^-$ be meet-irreducible with $g \geq s^{-n}$. Then there is a unique element $z \in X(\Ccal(G^-))$ such that $g \geq \Phi_n(z)$.
\end{lem}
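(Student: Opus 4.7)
I would prove uniqueness and existence separately, treating uniqueness by a direct structural argument and existence by induction on $n$ (tacitly assuming $g\neq e$, the only nontrivial case).

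For uniqueness, iterating \cref{prop:duality_gives_right_maximal_expression} makes every adjacent pair in $\Phi_n(z)=D^{n-1}(z)D^{n-2}(z)\cdots D(z)z$ right-maximal, so this expression is the right-normal factorization of $\Phi_n(z)$; its rightmost factor is $z$, giving $\Phi_n(z)\vee s^{-1}=z$. Hence $g\geq\Phi_n(z)$ forces $g\vee s^{-1}\geq z$. Since $g$ is meet-irreducible and therefore $1$-homogeneous by \cref{prop:meet_irreducibles_are_1_homogeneous}, $g\vee s^{-1}$ is a dual atom of $[s^{-1},e]$; the product decomposition $[s^{-1},e]\cong\prod_{i=1}^{k}[z_{i},e]$ shows that each such dual atom lies above exactly one $z_i\in X(\Ccal(G^-))$, which pins down $z$.

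For existence I would induct on $n$. The base $n=1$ is the same dual-atom statement. For $n\geq2$: if $\lambda(g)\leq n-1$ then the inductive hypothesis at $n-1$ yields $g\geq\Phi_{n-1}(z)\geq\Phi_n(z)$, since $\Phi_n(z)\leq\Phi_{n-1}(z)$. Otherwise $\lambda(g)=n$; setting $g':=g\vee s^{-(n-1)}=g_{n-1}\cdots g_1$, the element $g'$ is meet-irreducible (being $1$-homogeneous) with $\lambda(g')=n-1$ and shares its first right-normal factor $g_1$ with $g$, so induction delivers $g'\geq\Phi_{n-1}(z)$ for the same $z$. By \cref{eq:l_structure_left}, the remaining task $g\geq\Phi_n(z)=D^{n-1}(z)\Phi_{n-1}(z)$ is equivalent to
\[
\Phi_{n-1}(z)\to g\;\geq\;D^{n-1}(z).
\]

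The main obstacle is this last inequality. If $g\geq\Phi_{n-1}(z)$ it is trivial. Otherwise, because $[g,e]$ is a chain by \cref{prop:meet_irreducibles_are chains} and $g\vee\Phi_{n-1}(z)\in[g,g']$, a chain of length one (since $gg'^{-1}=g_n$ is a dual atom), one has $g\vee\Phi_{n-1}(z)=g'$; the parallelogram identity (\cref{lem:parallelogram_identity}) then gives $\deg(g\wedge\Phi_{n-1}(z))=(n-1)\deg(z)+1$, so $\Phi_{n-1}(z)\to g=(g\wedge\Phi_{n-1}(z))\Phi_{n-1}(z)^{-1}$ is a single dual atom of $G^-$ and therefore lies above a unique $z''\in X(\Ccal(G^-))$. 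The required inequality then reduces to the Boolean-algebraic statement $D^{n-1}(z)\leq z''$ in $\Ccal(G^-)$. Establishing this last point--by tracking how the right-maximality of the pair $(g_n,g_{n-1})$ propagates through the explicit definition $D(x)=(s^{-1}x^{-1})'$ and the induced action of $D$ on the dual basis $X(\Ccal(G^-))$--is the most technical step and the real crux of the proof.
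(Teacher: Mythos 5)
Your overall architecture is sound and in fact very close to the paper's: the paper also fixes $z$ as the unique element of $X(\Ccal(G^-))$ under the rightmost normal factor $g_1=g\vee s^{-1}$ and then climbs the right-normal factorization, showing at each stage that the next frozen factor must be the appropriate power of $D$. Your uniqueness argument is correct, your reduction via \cref{eq:l_structure_left} to $\Phi_{n-1}(z)\to g\geq D^{n-1}(z)$ is valid, and your degree computation showing that $w:=\Phi_{n-1}(z)\to g$ is a dual atom of $G^-$ is fine. But the step you defer as ``the most technical step and the real crux'' is precisely the entire content of the lemma, and the route you gesture at (tracking the right-maximality of the pair $(g_n,g_{n-1})$ through the formula $D(x)=(s^{-1}x^{-1})'$) is not a proof. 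As it stands, this is a genuine gap.

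The gap closes in a few lines, and the mechanism is exactly the one the paper uses inside its induction: the \emph{second} assertion of \cref{prop:duality_gives_right_maximal_expression} together with the length constraint $\lambda(g)=n$. Suppose $z''\neq D^{n-1}(z)$, where $z''$ is your unique central dual atom with $w\geq z''$. Two distinct dual atoms of the Boolean algebra $\Ccal(G^-)$ join to $e$, so $z''\vee D^{n-1}(z)=z''\vee D\bigl(D^{n-2}(z)\bigr)=e$, and \cref{prop:duality_gives_right_maximal_expression} gives $z''D^{n-2}(z)\geq s^{-1}$, hence $wD^{n-2}(z)\geq s^{-1}$. By \cref{lem:multipl_inequalities_with_normal_elements},
\[
g\wedge\Phi_{n-1}(z)\;=\;w\,\Phi_{n-1}(z)\;=\;\bigl(wD^{n-2}(z)\bigr)\,\Phi_{n-2}(z)\;\geq\;s^{-1}s^{-(n-2)}\;=\;s^{-(n-1)},
\]
so $g\geq g\wedge\Phi_{n-1}(z)\geq s^{-(n-1)}$, contradicting $\lambda(g)=n$. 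Hence $z''=D^{n-1}(z)$ and $w\geq D^{n-1}(z)$ as required. (This is the same argument by which the paper identifies the unique $\tilde z$ with $H_{i+1}\wedge\Phi_i(z)\geq\tilde z\,\Phi_i(z)$ as $D^i(z)$.) With this paragraph inserted, your proof is complete and essentially coincides with the paper's; note also that you should exclude $g=e$ explicitly, as you observed.
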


\begin{proof}
Let $g = g_k g_{k-1} \ldots g_1$ be the right-normal factorization, which clearly has $k \leq n$ factors (see \cref{thm:right_normal_facs_exist_and_unique}).
By \cref{prop:meet_irreducibles_are_1_homogeneous}, we know that $\deg(g_i) = 1$ ($1 \leq i \leq k$). As $g_1$ is therefore a dual atom, there must be a unique $z \in X(\Ccal(G^-))$ with $g_1 \geq z$. Fix this $z$.

We will now prove inductively that $H_i := g_i g_{i-1} \ldots g_1 \geq \Phi_i(z)$ for all $1 \leq i \leq k$. Therefore, assume that this has already been proven for some value of $i$. By modularity, the element $H_{i+1} \wedge \Phi_i(z) \prec \Phi_i(z)$. Note that $\lambda(H_{i+1}) = i+1$ implies by \cref{thm:right_normal_facs_exist_and_unique} that $H_{i+1} \not \geq s^{-i}$. We now easily see that
\[
H_{i+1} \wedge \Phi_i(z) \geq s^{-1} \Phi_i(z) = \bigwedge_{\tilde{z} \in X(\Ccal(G^-))} \tilde{z} \cdot \Phi_i(z),
\]
and that $H_{i+1} \wedge \Phi_i(z)$ can only lie in one of these factors. On the other hand, we see that $\lambda( \tilde{z} \cdot \Phi_i(z)) = \lambda(\tilde{z}D^{i-1}(z) \ldots D(z)z) = i+1$ - but due to \cref{prop:duality_gives_right_maximal_expression} this is only possible with $\tilde{z} = D^i(z)$. Therefore, $H_{i+1} \geq \tilde{z} \Phi_i(z) \geq \Phi_{i+1}(z)$.

Having arrived at $g \geq \Phi_k(z)$, it is now also clear that $g \geq \Phi_n(z)$.
\end{proof}

We can now prove the following:

\begin{prop} \label{prop:negative_elements_are_meets_over_frozen_elements}
Let $n \geq 1$, and $\{ z_1,\ldots,z_k \} = X(\Ccal(G^-))$, then each element $g \in [s^{-n},e]$ is expressible as
\[
g = \bigwedge_{i=1}^k (g \vee \Phi_n(z_i)).
\]

In particular, $s^{-n} = \bigwedge_{i=1}^k \Phi_n(z_i)$.
\end{prop}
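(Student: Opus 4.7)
The plan is to reduce the claim to \cref{lem:meet_irreducibles_over_frozen_powers} via a standard meet-irreducible decomposition. The easy direction $g \leq \bigwedge_{i=1}^k (g \vee \Phi_n(z_i))$ is immediate from $g \leq g \vee \Phi_n(z_i)$ for every $i$; only the reverse inequality requires work.

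For the non-trivial direction, I would first write $g$ as a finite meet of meet-irreducibles lying above it. Since $[g,e] \subseteq e^{\downarrow}$ and the noetherian hypothesis gives the ascending chain condition on $e^{\downarrow}$, a Zorn-style argument on the collection of elements of $[g,e]$ that are \emph{not} finite meets of meet-irreducibles in $[g,e]$ shows this collection must be empty: a maximal counterexample would decompose as a meet $a \wedge b$ with $a,b$ strictly larger, hence already expressible as such meets. Thus $g = m_1 \wedge \ldots \wedge m_r$ with each $m_j$ meet-irreducible in $[g,e]$ (and $<e$, else it can be dropped). Each such $m_j$ is in fact meet-irreducible in $G^-$, because any strictly larger element $y \in G^-$ with $y > m_j$ automatically lies in $[g,e] \subseteq G^-$, so a non-trivial meet-decomposition in $G^-$ would already occur inside $[g,e]$.

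Now each $m_j \geq g \geq s^{-n}$, so \cref{lem:meet_irreducibles_over_frozen_powers} supplies a (unique) index $i_j$ with $m_j \geq \Phi_n(z_{i_j})$. Combining this with $m_j \geq g$ yields
\[
m_j \;\geq\; g \vee \Phi_n(z_{i_j}) \;\geq\; \bigwedge_{i=1}^k (g \vee \Phi_n(z_i)).
\]
Taking the meet over $j$ delivers $g \geq \bigwedge_{i=1}^k (g \vee \Phi_n(z_i))$, which, together with the trivial direction, proves the equality.

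For the final assertion, I would specialize to $g = s^{-n}$. Because $\Phi_n(z_i) = D^{n-1}(z_i) \cdots D(z_i) z_i$ is a product of $n$ elements of $[s^{-1},e]$, \cref{lem:multipl_inequalities_with_normal_elements} (applied iteratively with the normal element $s^{-1}$) gives $\Phi_n(z_i) \geq s^{-n}$, so $s^{-n} \vee \Phi_n(z_i) = \Phi_n(z_i)$, and the identity collapses to $s^{-n} = \bigwedge_{i=1}^k \Phi_n(z_i)$. There is no serious obstacle here; the whole argument is really a packaging of \cref{lem:meet_irreducibles_over_frozen_powers} together with the existence of meet-irreducible decompositions in finite-length intervals, the only mildly delicate point being the transfer of meet-irreducibility from $[g,e]$ to $G^-$.
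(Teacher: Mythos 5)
Your proposal is correct and follows essentially the same route as the paper: decompose $g$ as a finite meet of meet-irreducibles using the noetherian hypothesis, apply \cref{lem:meet_irreducibles_over_frozen_powers} to place each irreducible above some $\Phi_n(z_{i})$, and conclude the reverse inequality. The extra care you take (transferring meet-irreducibility from $[g,e]$ to $G^-$, discarding trivial factors, and verifying $\Phi_n(z_i) \geq s^{-n}$ for the final assertion) fills in details the paper leaves implicit, but introduces no new ideas.
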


\begin{proof}
As $G$ is noetherian, we clearly have a finite expression $g = \bigwedge_{j=1}^l \iota_j$, where each $\iota_j$ is meet-irreducible. By \cref{lem:meet_irreducibles_over_frozen_powers}, we can rewrite this term as
\[
g = \bigwedge_{i=1}^k \left( \bigwedge \{ \iota_j : \iota_j \geq \Phi_n(z_i) \} \right) =: \bigwedge_{i=1}^k h_i.
\]
From $h_i \geq g \vee \Phi_n(z_i)$, we now deduce the statement of the proposition.
\end{proof}

As $\deg(s^{-n}) = n \cdot \deg(s^{-1}) = \sum_{i=1}^k n \cdot \deg(z_i) = \sum_{i=1}^k \deg ( \Phi_n(z_i) )$, we can derive:

\begin{prop} \label{prop:phi_n_z_are_a_dual_frame}
The elements $\Phi_n(z_i)$ ($1 \leq i \leq k$) form a dual frame of $[s^{-n},e]$. More precisely, $[s^{-n},e]$ is an internal direct product
\[
[s^{-n},e] = \prod_{i=1}^k \Phi_n(z_i)^{\uparrow}.
\]
In particular, the elements $\Phi_n(z_i)$ are central in $[s^{-n},e]$.
\end{prop}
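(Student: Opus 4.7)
The plan is to build the decomposition via the natural map $\phi : [s^{-n},e] \to \prod_{i=1}^k [\Phi_n(z_i),e]$, $g \mapsto (g \vee \Phi_n(z_i))_{i}$, with candidate inverse $\psi : (y_i)_i \mapsto \bigwedge_i y_i$. The preceding proposition already gives $\psi \circ \phi = \mathrm{id}$, so the entire task reduces to showing $\phi \circ \psi = \mathrm{id}$; once this is done, both maps are order-preserving mutual inverses and hence lattice isomorphisms, which is exactly the internal direct product decomposition (the dual frame property then follows automatically by chasing tuples). Modularity will reduce $\phi \circ \psi = \mathrm{id}$ to the single identity
\[
\Phi_n(z_i) \vee \bigwedge_{j \neq i} \Phi_n(z_j) = e \qquad (1 \le i \le k).
\]

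I would prove this identity first by a degree count. Set $a = \Phi_n(z_i)$ and $b = \bigwedge_{j \neq i}\Phi_n(z_j)$; the preceding proposition says $a \wedge b = s^{-n}$. Iterating the modular inequality $\deg(x \wedge y) \le \deg(x) + \deg(y)$ (a consequence of the diamond lemma via $[x \wedge y, y] \cong [x, x \vee y]$) yields $\deg(b) \le \sum_{j \neq i}\deg(\Phi_n(z_j))$. Inserting this into the parallelogram identity \cref{lem:parallelogram_identity} together with the equality $\deg(s^{-n}) = \sum_j \deg(\Phi_n(z_j))$ recalled just before the proposition gives
\[
\deg(a \vee b) = \deg(a) + \deg(b) - \deg(a \wedge b) \le \deg(\Phi_n(z_i)) + \sum_{j \neq i}\deg(\Phi_n(z_j)) - \sum_j \deg(\Phi_n(z_j)) = 0,
\]
so $a \vee b = e$.

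For $\phi \circ \psi = \mathrm{id}$, given $y_j \geq \Phi_n(z_j)$ and $g := \bigwedge_j y_j$, the inclusion $\Phi_n(z_i) \le y_i$ allows a single application of modularity:
\[
g \vee \Phi_n(z_i) = \left(y_i \wedge \bigwedge_{j \neq i} y_j\right) \vee \Phi_n(z_i) = y_i \wedge \left(\Phi_n(z_i) \vee \bigwedge_{j \neq i} y_j\right).
\]
Using $y_j \geq \Phi_n(z_j)$ together with the key identity, the inner join dominates $\Phi_n(z_i) \vee \bigwedge_{j \neq i}\Phi_n(z_j) = e$, so the whole expression collapses to $y_i \wedge e = y_i$, as required.

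The main obstacle is the degree-count step producing the key identity; this is the point where the structural equality $\deg(s^{-n}) = \sum_j \deg(\Phi_n(z_j))$ is bootstrapped from a mere lower bound on $\deg(b)$ into a genuine spanning condition at level $n$. Once the internal product decomposition is established, centrality of each $\Phi_n(z_i)$ is immediate by bundling factors: $[s^{-n},e] \cong \Phi_n(z_i)^{\uparrow} \times \prod_{j \neq i}\Phi_n(z_j)^{\uparrow}$ sends $\Phi_n(z_i)$ to (bottom of the first factor, top of the second), matching the definition of a central element.
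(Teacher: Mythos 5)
Your argument is correct. It rests on the same two inputs as the paper's proof --- the parallelogram identity \cref{lem:parallelogram_identity} and the equalities $\bigwedge_i \Phi_n(z_i) = s^{-n}$, $\deg(s^{-n}) = \sum_i \deg(\Phi_n(z_i))$ from \cref{prop:negative_elements_are_meets_over_frozen_elements} and the preceding remark --- but it finishes differently. The paper first proves full dual independence by contradiction (a degree count over arbitrary disjoint $A,B$, of which your key identity is essentially the case $A = \{i\}$, $B = \{j : j \neq i\}$), and then obtains uniqueness of the meet representation by a second degree squeeze: any representation $g = \bigwedge_i h_i$ with $h_i \geq \Phi_n(z_i)$ satisfies $h_i \geq g \vee \Phi_n(z_i)$, and dual independence of both families forces $\deg(h_i) = \deg(g \vee \Phi_n(z_i))$, hence equality. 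You instead prove only the single complementation identity $\Phi_n(z_i) \vee \bigwedge_{j \neq i}\Phi_n(z_j) = e$ by the degree count and then extract uniqueness from one application of the modular law, which shows explicitly that $y_i = g \vee \Phi_n(z_i)$ for \emph{every} admissible representation; this makes the mutual inverseness of $\phi$ and $\psi$, and hence the dual frame property and centrality, entirely mechanical. Your route is slightly more economical (one degree count instead of two) and makes the role of modularity more visible; the paper's route proves dual independence of the whole family up front, which it reuses later (e.g.\ in \cref{thm:phi_n_z_upset_has_dual_basis}). Both are complete proofs.
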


\begin{proof}
The elements $\Phi_n(z_i)$ ($1 \leq i \leq k$) are clearly dually spanning. If they were not dually independent, we could find $A, B \subseteq \{1, \ldots, k \}$ with $A \cap B = \emptyset$ but
\[ 
\left( \bigwedge_{i \in A} \Phi_n(z_i) \right) \vee \left( \bigwedge_{i \in B}  \Phi_n(z_i) \right) < e.
\]
An application of the parallelogram identity (\cref{lem:parallelogram_identity}) would imply 
\[
\deg \left( \bigwedge_{i \in A \cup B} \Phi_n(z_i) \right) < \sum_{i \in A \cup B} \deg(\Phi_n(z_i));
\]
using the identity again, this leads to
\begin{align*}
\deg(s^{-n}) & =  \deg \left( \bigwedge_{i=1}^k \Phi_n(z_i) \right) \\
& \leq \deg \left( \bigwedge_{i \in A \cup B} \Phi_n(z_i) \right) + \deg \left( \bigwedge_{i \in \{1,\ldots, k \} \setminus (A \cup B)} \Phi_n(z_i) \right) \\
& < \sum_{i \in A \cup B} \deg(\Phi_n(z_i)) + \sum_{i \in \{1,\ldots, k \} \setminus (A \cup B)} \deg(\Phi_n(z_i)) \\
& = \sum_{i = 1}^k \deg(\Phi_n(z_i)) = \deg(s^{-n}).
\end{align*}
A contradiction to the fact that the $\Phi_n(z_i)$ ($1 \leq i \leq k$) actually dually span the interval $[s^{-n},e]$.

By \cref{prop:negative_elements_are_meets_over_frozen_elements}, each $g \in [s^{-n},e]$ has the decomposition
\[
g = \bigwedge_{i=1}^k (g \vee \Phi_n(z_i)) 
\]
Setting $g_i := g \vee \Phi_n(z_i)$, we see from the dual independence of the $\Phi_n(z_i)$ that the elements $g_i \geq \Phi_n(z_i)$ are dually independent either. Therefore, $\deg(g) = \sum_{i=1}^k \deg(g_i)$. In a meet-representation of $g$, the elements $g_i$ ($1 \leq \leq k$) consistute the smallest possible choice, meaning that for a representation $g = \bigwedge_{i=1}^k h_i$ with all $h_i \geq \Phi_n(z_i)$, we necessarily have $h_i \geq g_i$. But then the $h_i$ ($1 \leq i \leq k$) must also be dually independent, implying that $\deg(g) = \sum_{i=1}^k \deg(h_i)$ which is only possible if $\deg(g_i) = \deg(h_i)$ resp. $g_i = h_i$ for all $i$. This shows that the meet-representation is indeed unique, so we have an internal inner product.
\end{proof}

\begin{lem} \label{lem:dual_atoms_over_frozen_elements}
Let $z \in \Ccal(G^-)$ and $n \geq 1$. If $x \in X(G^-)$ is a dual atom with $x \geq \Phi_n(z)$, then already $x \geq z$.
\end{lem}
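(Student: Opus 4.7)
The strategy is to recognize the defining product
\[
\Phi_n(z) = D^{n-1}(z)D^{n-2}(z)\cdots D(z)\,z
\]
as the right-normal factorization of $\Phi_n(z)$, read off its first right-normal factor via the formula of \cref{thm:right_normal_facs_exist_and_unique}, and then invoke the fact that every dual atom of $G^-$ dominates $s^{-1}$.

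If $z = e$, then $D(e) = e$ forces $\Phi_n(z) = e$, so no dual atom $x \in X(G^-)$ can satisfy $x \geq \Phi_n(z)$ and the implication is vacuous. Assume henceforth $z \neq e$. Since $D$ restricts to a bijection of $\Ccal(G^-)$ that preserves degrees (\cref{prop:D_preserves_deg_and_U}), every factor $D^i(z)$ lies in $\Ccal(G^-) \subseteq [s^{-1},e]$ and is different from $e$. The right-maximality condition \cref{eq:right_maximality_lattice_condition} between adjacent factors reduces to
\[
D^{i}(z) \vee s^{-1} D^{i-1}(z)^{-1} = e \qquad (1 \leq i < n),
\]
which is exactly the content of \cref{prop:duality_gives_right_maximal_expression} applied with $x := D^{i-1}(z) \in \Ccal(G^-)$, since the unique right-maximalising element for $x$ is $D(x) = D^i(z)$. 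By the uniqueness portion of \cref{thm:right_normal_facs_exist_and_unique}, the displayed product is therefore the right-normal factorization of $\Phi_n(z)$, in particular of length $n$.

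Applying the formula $g_1 = g \vee s^{-1}$ from \cref{thm:right_normal_facs_exist_and_unique} then yields $\Phi_n(z) \vee s^{-1} = z$. Since $x \in X(G^-)$, \cref{cor:all_dual_atoms_are_above_s'} gives $\lambda(x) = 1$, and hence $x \vee s^{-1} = x$; taking the join with $s^{-1}$ on both sides of $x \geq \Phi_n(z)$ now gives
\[
x \;=\; x \vee s^{-1} \;\geq\; \Phi_n(z) \vee s^{-1} \;=\; z,
\]
as claimed. The only step requiring genuine input is identifying the given expression as a right-normal factorization, and this is essentially bought by \cref{prop:duality_gives_right_maximal_expression} covering arbitrary (not just atomic) central $z$; so no real obstacle remains.
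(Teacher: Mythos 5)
Your proof is correct and follows essentially the same route as the paper: both identify $\Phi_n(z) = D^{n-1}(z)\cdots D(z)z$ as the right-normal factorization via \cref{prop:duality_gives_right_maximal_expression}, deduce $\Phi_n(z) \vee s^{-1} = z$ from the first-factor formula, and conclude using $x \geq s^{-1}$ for dual atoms. Your version merely spells out the right-maximality verification and the trivial case $z = e$ that the paper leaves implicit.
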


\begin{proof}
Note that $\Phi_n(z) = D^{n-1}(z) D^{n-2}(z) \ldots D(z)z$ is a right-normal expression by \cref{prop:duality_gives_right_maximal_expression}. This implies $\Phi_n(z) \vee s^{-1} = z$. For $x \in X(G^-)$ we have $x \geq s^{-1}$. Therefore, $x \geq \Phi_n(z) \vee s^{-1} = z$.
\end{proof}

\begin{prop} \label{prop:dual_atoms_of_center_in_strong_order_powers}
For all $n \geq 1$ we have $X(\Cent([s^{-n},e])) = \{ \Phi_n(z) : z \in X(\Ccal(G^-)) \}$.
\end{prop}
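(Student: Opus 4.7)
The plan is to reduce the statement to the direct indecomposability of each factor in the internal decomposition $[s^{-n},e] = \prod_{l=1}^{k}\Phi_n(z_l)^{\uparrow}$ supplied by \cref{prop:phi_n_z_are_a_dual_frame}. Since for a finite product of bounded lattices the center splits as a product of centers, one obtains $\Cent([s^{-n},e]) \cong \prod_{l=1}^{k}\Cent(\Phi_n(z_l)^{\uparrow})$. In a product of Boolean algebras, a dual atom is precisely a tuple with a dual atom of $\Cent(\Phi_n(z_l)^{\uparrow})$ in exactly one coordinate and the top element elsewhere. Hence, once we know that each factor $\Phi_n(z_l)^{\uparrow}$ is directly indecomposable, so that $\Cent(\Phi_n(z_l)^{\uparrow}) = \{\Phi_n(z_l),e\}$ by \cref{prop:irreducible_means_trivial_center}, the unique dual atom of that factor embeds back into $[s^{-n},e]$ as $\Phi_n(z_l)$, giving the claimed equality.

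Direct indecomposability of $\Phi_n(z_l)^{\uparrow}$ I would prove by induction on $n$. The base case $n=1$ is immediate: $\Phi_1(z_l)^{\uparrow} = [z_l,e]$ is one of the $k$ indecomposable factors of $[s^{-1},e]$ corresponding to the dual atom $z_l \in X(\Ccal(G^-))$ via \cref{prop:dual_atoms_of_center_are_irred_factors}. For the inductive step, assume the statement at level $n-1$ and suppose, for contradiction, that $\Phi_n(z_l)^{\uparrow}$ admits complementary centrals $c,c'$ with $\Phi_n(z_l) < c, c' < e$. Since $\Phi_{n-1}(z_l) \geq \Phi_n(z_l)$ sits inside $\Phi_n(z_l)^{\uparrow}$, the induced upper-set decomposition $\Phi_{n-1}(z_l)^{\uparrow} \cong [\Phi_{n-1}(z_l) \vee c, e] \times [\Phi_{n-1}(z_l) \vee c', e]$ combined with the inductive hypothesis forces one factor to be trivial; after relabeling we may assume $\Phi_{n-1}(z_l) \vee c = e$. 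The key geometric step is to pass to the sub-interval $[\Phi_n(z_l), z_l]$, where $z_l = \Phi_n(z_l) \vee s^{-1}$ is the leading right-normal factor of $\Phi_n(z_l)$. Right-multiplication by $z_l^{-1}$ identifies $[\Phi_n(z_l), z_l]$ lattice-theoretically with $\Phi_{n-1}(D(z_l))^{\uparrow}$, which is directly indecomposable by the inductive hypothesis (using that $D(z_l) \in X(\Ccal(G^-))$). Modularity applied to $\Phi_{n-1}(z_l) \vee c = e$ together with $\Phi_{n-1}(z_l) \leq z_l$ gives $z_l \vee c = e$, hence $c' \leq z_l$, so the central decomposition of $\Phi_n(z_l)^{\uparrow}$ restricts to $[\Phi_n(z_l), z_l] \cong [c \wedge z_l,\, z_l] \times [c',\, z_l]$. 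This is a nontrivial decomposition of an indecomposable lattice unless the second factor is trivial, that is, unless $c' = z_l$.

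The main obstacle is the remaining degenerate case $c' = z_l$. For $n \geq 3$ it is excluded by the modular identity $z_l = (\Phi_{n-1}(z_l) \vee c) \wedge z_l = \Phi_{n-1}(z_l) \vee (c \wedge z_l) = \Phi_{n-1}(z_l) \vee \Phi_n(z_l) = \Phi_{n-1}(z_l)$, which contradicts $\Phi_{n-1}(z_l) < z_l$. The boundary case $n = 2$, where $\Phi_1(z_l) = z_l$ makes this identity vacuous, requires ruling out directly that $z_l$ is central in $\Phi_2(z_l)^{\uparrow}$. A degree computation via \cref{lem:parallelogram_identity} forces any candidate complement to satisfy $\deg(c) = \deg(z_l)$ and, using \cref{prop:duality_gives_right_maximal_expression}, ultimately to coincide with $D(z_l)$; the purported internal decomposition $\Phi_2(z_l)^{\uparrow} \cong [z_l,e] \times [D(z_l),e]$ would then place the whole sublattice $[D(z_l),e] \subseteq [s^{-1},e]$ inside $\Phi_2(z_l)^{\uparrow}$. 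But $\Phi_2(z_l)^{\uparrow} \cap [s^{-1},e] = [z_l,e]$, because $s^{-1} \vee \Phi_2(z_l) = \Phi_1(z_l) = z_l$; this forces $D(z_l) \leq z_l$, hence $D(z_l) = z_l$, so the purported complement collapses onto $z_l$ itself and the central decomposition cannot exist. The rigidity of the $\deg(z_l)$-homogeneous right-normal factorization $\Phi_n(z_l) = D^{n-1}(z_l) \cdots D(z_l) \cdot z_l$ from Section \ref{sec:factorization_theory}, together with the control over $\Ccal(G^-)$ from Section \ref{sec:center_of_strong_order_interval}, is thus the backbone of the argument.
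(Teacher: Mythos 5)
Your reduction to the direct indecomposability of the factors $\Phi_n(z_l)^{\uparrow}$ is exactly the paper's, and most of your induction checks out, but the boundary case $n=2$ contains a genuine gap: the claim that the complement $c$ of $z_l$ must coincide with $D(z_l)$. From $c \wedge z_l = \Phi_2(z_l) = D(z_l)z_l$ and the identity $(z_l \to c)z_l = c \wedge z_l$ you may conclude $z_l \to c = D(z_l)$, but not $c = D(z_l)$: passing from $z_l \to c = cz_l^{-1}\wedge e$ back to $c$ would require $c \leq cz_l^{-1}$, i.e.\ $cz_l \leq c$, which is a left-invariance property that is not available in a right $\ell$-group. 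Moreover, \cref{prop:duality_gives_right_maximal_expression} characterizes $D(z_l)$ only among elements of $[s^{-1},e]$ for which the \emph{product} with $z_l$ is right-maximal, and you have established neither $c \geq s^{-1}$ nor $cz_l = c \wedge z_l$. The subsequent step placing $[D(z_l),e]$ inside $\Phi_2(z_l)^{\uparrow}$ also silently uses $D(z_l) \geq D(z_l)z_l$, which is the same unavailable inequality.

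The degenerate case $c'=z_l$ (for every $n \geq 2$, in fact) closes immediately with the dual-atom argument that the paper runs globally: since $c < e$ and $G$ is noetherian, there is a dual atom $a \in X(G^-)$ with $a \geq c \geq \Phi_n(z_l)$; by \cref{lem:dual_atoms_over_frozen_elements} then $a \geq z_l$, whence $a \geq c \vee z_l = e$, a contradiction. The paper's own proof is this observation applied once and for all: a nontrivial decomposition $\Phi_n(z)^{\uparrow} = L_1 \times L_2$ splits $X(G^-)\cap \Phi_n(z)^{\uparrow}$ into two nonempty classes whose meets produce a nontrivial decomposition of $z^{\uparrow}=[z,e]$, contradicting the irreducibility of $[z,e]$ for $z \in X(\Ccal(G^-))$. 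That argument makes the induction, the passage to $[\Phi_n(z_l),z_l]$, and the $n \geq 3$ modular identity unnecessary, although those parts of your proposal are correct as written.
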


\begin{proof}
Letting $X(\Ccal(G^-)) =: \{ z_1 , \ldots, z_k \}$, we already know that the elements $\Phi_n(z_i)$ ($1 \leq i \leq k$) are central and constitute a dual frame of $[s^{-n},e]$. We must show that none of these factors can be decomposed any further. So let $z = z_i$ for some index $i$.

Assume that there was an internal direct product $\Phi_n(z_)^{\uparrow} = L_1 \times L_2$ for some nontrivial sublattices $L_1,L_2 \subseteq \Phi_n(z)^{\uparrow}$, then we had a partition
\[
X(G^-) \cap \Phi_n(z)^{\uparrow} = (X(G^-) \cap L_1) \uplus (X(G^-) \cap L_2)
\]
with both partition classes on the right side nonempty. But $\bigwedge \left( X(G^- \cap \Phi_n(z)^{\uparrow}) \right) = s^{-1} \vee \Phi_n(z)^{\uparrow} = z$ as each dual atom lies above $s^{-1}$ (\cref{cor:all_dual_atoms_are_above_s'}) and $s^{-1} \vee \Phi_n(z)^{\uparrow}$ has to be a meet of dual atoms.

Therefore, we would get a nontrivial internal direct product $z^{\uparrow} = z_1^{\uparrow} \times z_2^{\uparrow}$ with the factors $z_i = \bigwedge (X(G^-) \cap L_i)$ ($i=1;2$), a contradiction to the fact that for $z \in X(\Ccal(G^-))$, the lattice $z^{\uparrow}$ is directly irreducible.
\end{proof}

We now go one step further by considering \emph{semi-beams} as lattice counterparts to frozen elements:

\begin{defi}
Let $z \in \Ccal(G^-)$, then the \emph{semi-beam} associated with $z$ is defined as
\[
\beth^-(z) := \bigcup_{n \geq 1} \Phi_n(z)^{\uparrow}.
\]
\end{defi}

We are now in the fortunate position to prove the following theorem:

\begin{thm} \label{thm:G-_is_product_of_indecomposable_semibeams}
There is an isomorphism of lattices $G^- \cong \prod_{i=1}^k \beth^-(z_i)$, where $\{ z_1, \ldots , z_k \} := X(\Ccal(G^-))$. Furthermore, the lattices $\beth^-(z_i)$ are directly indecomposable.
\end{thm}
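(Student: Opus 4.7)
The plan is to glue the internal product decompositions
\[
[s^{-n},e] = \prod_{i=1}^k \Phi_n(z_i)^{\uparrow}
\]
supplied by \cref{prop:phi_n_z_are_a_dual_frame} into a global isomorphism $\Psi \colon G^- \to \prod_{i=1}^k \beth^-(z_i)$, using that $G^- = \bigcup_{n \geq 1} [s^{-n},e]$ because $s$ is a strong order unit. I would first record that each $\beth^-(z_i) = \bigcup_{n \geq 1} \Phi_n(z_i)^{\uparrow}$ is indeed a sublattice of $G^-$: the upsets $\Phi_n(z_i)^{\uparrow} = [\Phi_n(z_i),e]$ form an increasing chain of sublattices of $G^-$, so their union is closed under meets and joins by choosing a common level~$n$ for any given pair of elements.

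For the isomorphism itself, given $g \in G^-$ I pick $n$ with $g \in [s^{-n},e]$ and write $g$ uniquely as $g = \bigwedge_{i=1}^k g_i^{(n)}$ with $g_i^{(n)} \in \Phi_n(z_i)^{\uparrow}$, setting $\Psi(g) := (g_i^{(n)})_{1 \leq i \leq k}$. The main obstacle will be the cross-level compatibility $g_i^{(m)} = g_i^{(n)}$ whenever $m \leq n$ and $g \in [s^{-m},e]$, as this is exactly what is needed to make $\Psi$ independent of the choice of~$n$. This follows because $g_i^{(m)} \geq \Phi_m(z_i) \geq \Phi_n(z_i)$, so $(g_i^{(m)})_i$ is also a valid meet-representation of $g$ inside $[s^{-n},e]$ and must coincide with $(g_i^{(n)})_i$ by the uniqueness part of \cref{prop:phi_n_z_are_a_dual_frame}. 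Preservation of meets and joins by $\Psi$ then reduces to the same assertion for a single $[s^{-n},e]$ after choosing a common~$n$; injectivity is immediate from unique meet-representation; and surjectivity follows by taking, for $(h_i)_i \in \prod_i \beth^-(z_i)$, a level $n$ with every $h_i \in \Phi_n(z_i)^{\uparrow}$ and setting $g := \bigwedge_i h_i \in [s^{-n},e]$, which satisfies $\Psi(g) = (h_i)_i$.

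For the direct indecomposability of each $\beth^-(z_i)$ I would argue by contradiction. Suppose $\beth^-(z_i) = A \times B$ is an internal decomposition into upward-closed sublattices with both $A,B$ strictly larger than $\{e\}$, and choose $a_0 \in A \setminus \{e\}$ and $b_0 \in B \setminus \{e\}$. Pick $n$ large enough that $a_0, b_0 \in \Phi_n(z_i)^{\uparrow}$. Restricting the decomposition to $\Phi_n(z_i)^{\uparrow}$ yields an internal product
\[
\Phi_n(z_i)^{\uparrow} = \bigl( A \cap \Phi_n(z_i)^{\uparrow} \bigr) \times \bigl( B \cap \Phi_n(z_i)^{\uparrow} \bigr),
\]
in which both factors are nontrivial (witnessed by $a_0$ and $b_0$), contradicting the direct indecomposability of $\Phi_n(z_i)^{\uparrow}$ established in \cref{prop:dual_atoms_of_center_in_strong_order_powers}. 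Thus the genuinely delicate point of the entire argument is the cross-level compatibility of the finite-level factorizations; once it is in hand, both the isomorphism and the indecomposability drop out routinely.
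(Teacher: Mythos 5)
Your proposal is correct and follows essentially the same route as the paper: reduce the claim to unique meet-representations $g = \bigwedge_i g_i$ with $g_i \in \beth^-(z_i)$ via \cref{prop:external_products_are_internal_products}, obtain these from the finite-level internal products of \cref{prop:phi_n_z_are_a_dual_frame} and \cref{prop:negative_elements_are_meets_over_frozen_elements}, and deduce indecomposability of each semi-beam from the indecomposability of the pieces $\Phi_n(z_i)^{\uparrow}$. Your explicit treatment of the cross-level compatibility $g_i^{(m)} = g_i^{(n)}$ fills in a step the paper leaves implicit, and your restriction-to-a-finite-level argument for indecomposability is a clean variant of the paper's remark that one argues as in \cref{prop:dual_atoms_of_center_in_strong_order_powers}.
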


\begin{proof}
By \cref{prop:external_products_are_internal_products}, we need to show that each element $g \in G^-$ is uniquely representable as $\bigwedge_{i=1}^k g_i$ with $g_i \in \beth^-(z_i)$.

As $g \in [s^{-n},e]$ for all large enough integers $n$, this follows immediately from \cref{prop:negative_elements_are_meets_over_frozen_elements}.

The indecomposability statement is proven in the same way as in \cref{prop:dual_atoms_of_center_in_strong_order_powers}, by looking at the image of $z_i$ in a product decomposition in $\beth^-(z_i)$.
\end{proof}

Note that $X(\Ccal(G^-)) = \{z_1, \ldots , z_k \} = X(\mathcal{D}(G)^-)$, by construction. Therefore, with regard to the degree function in the distributive scaffold $\mathcal{D}(G)$, we have $\deg_{\mathcal{D}(G)}(z_i) = 1$ ($1 \leq i \leq k$). As $\mathcal{D}(G)$ is a right sub-$\ell$-group of $G$ with $s \in \mathcal{D}(G)$, a factorization $g = g_kg_{k-1}\ldots g_1$ (all $g_i \in \Ccal(G^-)$) is right-normal in $\mathcal{D}(G)$ if and only if it is right-normal in $G$.

The factorizations $\Phi_n(z_i) = D^{n-1}(z_i)\ldots D(z_i)z_i$ are therefore $1$-homogeneous with respect to $\mathcal{D}(G^-)$, from which it follows (\cref{prop:meet_irreducibles_are_1_homogeneous}, \cref{prop:meet_irreducibles_are chains}) that they are dual chains in $\mathcal{D}(G^-)$, i.e. the elements lying above $\Phi_n(z_i)$ are exactly the elements $\Phi_k(z_i)$ with ($k \leq n$). This proves:

\begin{prop} \label{prop:intersect_semibeams_with_dg}
For all $z \in X(\Ccal(G^-))$, we have:
\[
\beth^-_G(z) \cap \mathcal{D}(G) = \{ \Phi_n(z) : n \geq 0 \} = \beth^-_{\mathcal{D}(G)}(z).
\]
In particular, the semi-beam decomposition of $G^-$ induces the semi-beam decomposition of $\mathcal{D}(G)$ whose factors are chains of frozen powers.
\end{prop}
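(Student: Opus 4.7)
The plan is to exploit what has already been observed in the paragraph preceding the statement: the distributive scaffold $\mathcal{D}(G)$ is itself a (distributive) noetherian right $\ell$-group that shares the strong order unit $s$ with $G$, and its set of dual atoms is exactly $X(\Ccal(G^-)) = \{z_1,\ldots,z_k\}$. In particular, $s^{-1} = \bigwedge X(\mathcal{D}(G)^-)$, so \cref{thm:strong_order_interval_is_geometric} and all the machinery of \cref{sec:factorization_theory} apply equally to $\mathcal{D}(G)$, and right-normal factorizations in $\mathcal{D}(G)$ agree with right-normal factorizations in $G$ whenever the factors lie in $\Ccal(G^-)$.

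First I would argue that $\Phi_n(z) \in \mathcal{D}(G)$ and that the expression $\Phi_n(z) = D^{n-1}(z) \ldots D(z)z$ is its right-normal factorization both in $G$ and in $\mathcal{D}(G)$: right-maximality is guaranteed by \cref{prop:duality_gives_right_maximal_expression}, and the factors lie in $X(\Ccal(G^-)) \cup \{e\}$. Hence, with respect to $\mathcal{D}(G)$, the element $\Phi_n(z)$ is $1$-homogeneous, so by \cref{prop:meet_irreducibles_are_1_homogeneous} and \cref{prop:meet_irreducibles_are chains} applied inside $\mathcal{D}(G)$, the element $\Phi_n(z)$ is a dual chain in $\mathcal{D}(G)^-$.

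Next, being a dual chain in $\mathcal{D}(G)^-$ means that $[\Phi_n(z),e]_{\mathcal{D}(G)}$ is totally ordered. Since this chain has length $n$ and contains the strictly decreasing sequence $e \succ z = \Phi_1(z) \succ \Phi_2(z) \succ \ldots \succ \Phi_n(z)$ (consecutive covers follow from $\deg(\Phi_i(z)) = i \cdot \deg(z)$ in the distributive setting where all central atoms have degree $1$ in $\mathcal{D}(G)$), we conclude
\[
\{ h \in \mathcal{D}(G) : h \geq \Phi_n(z) \} = \{ \Phi_i(z) : 0 \leq i \leq n \}.
\]
Taking the union over $n \geq 1$ yields
\[
\beth^-_G(z) \cap \mathcal{D}(G) = \bigcup_{n \geq 1}\left( \Phi_n(z)^{\uparrow} \cap \mathcal{D}(G) \right) = \{ \Phi_n(z) : n \geq 0 \}.
\]

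Finally, the equality with $\beth^-_{\mathcal{D}(G)}(z)$ is just the definition of a semi-beam evaluated inside $\mathcal{D}(G)$: the $D$-operator and the frozen powers only depend on the central element $z$ and on the lattice structure of $[s^{-1},e]$, which is inherited, so $\beth^-_{\mathcal{D}(G)}(z) = \bigcup_{n \geq 1} [\Phi_n(z),e]_{\mathcal{D}(G)}$ coincides with the chain of frozen powers just identified. The additional claim that the semi-beam decomposition of $G^-$ restricts to that of $\mathcal{D}(G)$ is then immediate from \cref{thm:G-_is_product_of_indecomposable_semibeams} applied inside $\mathcal{D}(G)$, combined with the fact that intersection with $\mathcal{D}(G)$ is a lattice homomorphism on each factor. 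The only mild obstacle is making sure that the inclusion $\mathcal{D}(G) \subseteq G$ really preserves right-normal factorizations, but this is guaranteed by $s$ being a strong order unit in both groups and by right-maximality being a purely lattice-theoretic condition \cref{eq:right_maximality_lattice_condition}.
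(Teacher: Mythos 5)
Your proposal is correct and follows essentially the same route as the paper: identify $X(\Ccal(G^-))$ with the dual atoms of $\mathcal{D}(G)^-$, note that $\Phi_n(z)=D^{n-1}(z)\cdots D(z)z$ is a right-normal, $1$-homogeneous factorization with respect to $\mathcal{D}(G)$, and then invoke \cref{prop:meet_irreducibles_are_1_homogeneous} and \cref{prop:meet_irreducibles_are chains} inside $\mathcal{D}(G)$ to conclude that the elements of $\mathcal{D}(G)$ above $\Phi_n(z)$ are exactly the frozen powers $\Phi_i(z)$ with $i\le n$. The auxiliary points you flag (that $\mathcal{D}(G)$ is a sub-$\ell$-group containing $s$ with the same strong order interval generators, and that right-maximality transfers because it is a lattice condition and $\mathcal{D}(G)$ is a sublattice) are precisely the observations the paper makes in the paragraph preceding the proposition.
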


We now want to show that the semi-beam decomposition of $G^-$ is rigid enough in order to be preserved by right-multiplications:

\begin{prop} \label{prop:rigidity_of_semibeams}
Let $g \in G^-$, be decomposed as $g = \bigwedge_{i=1}^kh_i$ with $h_i \in \beth^-(z_i)$.
Then there is a permutation $\pi_g : \{ 1,\ldots,k \} \to \{ 1, \ldots ,k \}$, such that for all $1 \leq i \leq k$:
\[
\beth^-(z_i) \cdot g = \left( \bigwedge_{j \neq i^{\pi_g}} h_j \right) \wedge (h_{i^{\pi_g}}^{\downarrow})_{\beth^-(z_{i^{\pi_g}})}
\]
\end{prop}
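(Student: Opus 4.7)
The approach is to define the permutation $\pi_g$ via a degree analysis of the $k$ products $z_ig$, prove the forward inclusion by induction on $\deg(x)$ for $x \in \beth^-(z_i)$, and then deduce both bijectivity of $\pi_g$ and the reverse inclusion from the uniqueness of the internal direct product decomposition of $g^{\downarrow}$.

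Right-multiplication $\mu_g : G^- \to g^{\downarrow}$, $y \mapsto yg$, is a lattice isomorphism by right-invariance, so the decomposition $G^- = \prod_{\ell=1}^k \beth^-(z_\ell)$ of \cref{thm:G-_is_product_of_indecomposable_semibeams} both restricts to $g^{\downarrow} = \prod_{\ell=1}^k (h_\ell^{\downarrow})_{\beth^-(z_\ell)}$ and transports under $\mu_g$ to $g^{\downarrow} = \prod_{\ell=1}^k \beth^-(z_\ell)\cdot g$. For each $i$, computing the semi-beam decomposition of $z_ig$ yields $(z_ig)_\ell \leq h_\ell$ for every $\ell$; since
\[
\sum_{\ell=1}^k \bigl(\deg((z_ig)_\ell) - \deg(h_\ell)\bigr) = \deg(z_ig) - \deg(g) = 1
\]
with nonnegative summands, exactly one index $j$ has $(z_ig)_j < h_j$, while $(z_ig)_\ell = h_\ell$ for $\ell \neq j$. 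I would set $\pi_g(i) := j$.

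The forward inclusion $\beth^-(z_i)\cdot g \subseteq \bigl(\bigwedge_{j \neq \pi_g(i)} h_j\bigr) \wedge (h_{\pi_g(i)}^{\downarrow})_{\beth^-(z_{\pi_g(i)})}$ would be shown by induction on $\deg(x)$. For a dual atom $x \in X(G^-) \cap \beth^-(z_i)$, \cref{lem:dual_atoms_over_frozen_elements} (applied with $x \geq \Phi_n(z_i)$) gives $x \geq z_i$, hence $xg \geq z_ig$, and combining with $xg \leq g$ sandwiches $(xg)_\ell = h_\ell$ for $\ell \neq \pi_g(i)$. For $x$ of higher degree that is not meet-irreducible, one writes $x = x_1 \wedge x_2$ in $\beth^-(z_i)$ with $\deg(x_m) < \deg(x)$ --- which is possible since any upper bound of a semi-beam element remains in the semi-beam by the product decomposition --- and transports via right-invariance $xg = x_1 g \wedge x_2 g$. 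For meet-irreducible $x$, a dual chain by \cref{prop:meet_irreducibles_are chains}, factor $x = w \cdot x'$ with $w$ a dual atom over $z_i$ and $x' \in \beth^-(z_i)$ of smaller degree, set $y' := x'g$, and use the parallelogram identity \cref{lem:parallelogram_identity} together with the coordinatewise computation of $z_ig \wedge y'$ to check that $\deg(z_ig \wedge y') = \deg(y') + 1 = \deg(z_iy')$. Since $z_iy' \leq z_ig \wedge y'$ trivially, these elements coincide; their coordinates are $h_\ell$ for $\ell \neq \pi_g(i)$ by construction of $y'$ and $\pi_g(i)$. Finally $z_iy' \leq wy' \leq y'$ sandwiches $(wy')_\ell = h_\ell$ for $\ell \neq \pi_g(i)$, closing the induction.

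Bijectivity of $\pi_g$ then follows from the uniqueness of internal direct decompositions of $g^{\downarrow}$: the forward inclusion places each directly indecomposable factor $\beth^-(z_i)\cdot g$ inside the slice indexed by $\pi_g(i)$, and since the $k$ factors of the decomposition $g^{\downarrow} = \prod_\ell \beth^-(z_\ell)\cdot g$ must be pairwise dually independent (joining to $g$), a collision $\pi_g(i_1) = \pi_g(i_2)$ would force two such independent indecomposable sublattices into the same slice, contradicting the product structure of $g^{\downarrow} = \prod_\ell (h_\ell^{\downarrow})_{\beth^-(z_\ell)}$. The reverse inclusion then falls out: both internal decompositions of $g^{\downarrow}$ have exactly $k$ factors bijectively matched by $\pi_g$, and the forward inclusions force factor-by-factor equality. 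The principal technical obstacle is the verification $z_iy' = z_ig \wedge y'$ in the inductive step for meet-irreducible $x$, which requires the precise degree tally via the parallelogram identity and exploits the partial coordinate rigidity of $y'$ supplied by the inductive hypothesis.
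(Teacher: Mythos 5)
Your setup is right, and your closing appeal to the uniqueness of decompositions into directly indecomposable factors is in fact the paper's entire argument, but the way you define $\pi_g$ contains a genuine error. You claim $\deg(z_ig)-\deg(g)=1$, yet $\deg(z_i)=\len([z_i,e])=\delta_{z_i}$ is the dimension of the $i$-th directly indecomposable factor of the strong order interval, not $1$: the $z_i$ are dual atoms of the \emph{center} $\Ccal(G^-)=\Cent([s^{-1},e])$, not of $G^-$, and in the cases of main interest $\delta_{z_i}\geq 4$. The corrected tally is $\sum_{\ell}\bigl(\deg((z_ig)_\ell)-\deg(h_\ell)\bigr)=\deg(z_i)=\delta_{z_i}$, from which "exactly one coordinate drops" does not follow. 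The same slip recurs in your meet-irreducible inductive step, where you write $\deg(z_iy')=\deg(y')+1$. The single-coordinate-drop statement is true, but you would have to obtain it from the direct indecomposability of $[z_ig,g]\cong[z_i,e]$ (an interval in an internal product is the product of its coordinate intervals, so an indecomposable interval can be nontrivial in at most one coordinate); as written, your definition of $\pi_g$ is unjustified.

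Beyond that, the induction is unnecessary. Once you have the two decompositions $g^{\downarrow}=\prod_{\ell}\beth^-(z_\ell)\cdot g$ (transport of \cref{thm:G-_is_product_of_indecomposable_semibeams} under the lattice isomorphism $x\mapsto xg$) and $g^{\downarrow}=\prod_{\ell}(h_\ell^{\downarrow})_{\beth^-(z_\ell)}$ (restriction of the semibeam decomposition to the downset $g^{\downarrow}$) --- both of which you write down in your first paragraph --- the first is a decomposition into $k$ directly indecomposable factors and the second into $k$ nontrivial factors, so the uniqueness theorem for such decompositions (\cite[Corollary 277]{Graetzer-Lattice}, as invoked in the paper) already identifies the factors up to a permutation, which is precisely the statement to be proved. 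Your forward-inclusion induction, even if repaired along the lines above, only re-derives what that theorem gives you for free.
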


Here, the subscript $\beth^-(z_i)$ means that the interval is formed within the factor $\beth^-(z_i)$.

\begin{proof}
Let $g = \bigwedge_{i=1}^k h_i$ with $h_i \in \beth^-(z_i)$. Representing $G^-$ as an external product of semibeams, we have the following lattice maps:
\[
G^- \cong \prod_{i=1}^k \beth^-(z_i) \underset{\sim}{\overset{x \mapsto xg}{\longrightarrow}} \prod_{i=1}^k \beth^-(z_i)g = \prod_{j=1}^k (h_j^{\downarrow})_{\beth^-(z_j)}.
\]
As each $\beth^-(z_i)$ is directly indecomposable \cref{thm:G-_is_product_of_indecomposable_semibeams} and the factorization on the right-hand side has $k$ non-trivial pieces, it follows from the uniqueness theorem for indecomposable factorizations \cite[Corollary 277.]{Graetzer-Lattice}, that the factors can be identified by a permutation. But this is exactly the statement of the proposition.
\end{proof}

We now extend the semibeam decomposition of $G^-$ to a \emph{beam decomposition} of $G$.

Let $G^- = \prod_{i=1}^k \beth^-(z_i)$ be the semibeam decomposition. By right-invariance, we can shift the semibeam decomposition in order to get for each $n \geq 1$ an internal decomposition $s^{n \downarrow} = \prod_{i=1}^k \beth^-(z_i) s^n$.

Writing $e = \bigwedge_{i = 1}^k x_i^{(n)}$ with $x_i^{(n)} \in \beth^-(z_i)s^n$, the image of $G^-$ in the external decomposition of $s^{n \downarrow}$ can be identified with the subset
\[
\prod_{i = 1}^k x_i^{(n)\downarrow} \subseteq  \prod_{i=1}^k \beth^-(z_i) s^n.
\]
Note that the decomposition on the right is clearly an indecomposable factorization, as its $k$ pieces are clearly isomorphic to the semibeams of $G^-$. On the left hand side, we have a factorization into $k$ non-trivial sublattices. Therefore, by \cite[Corollary 277.]{Graetzer-Lattice}, the factorization on the left hand side must be, up to some permutation, come from the factorization of $G^-$ into semibeams; therefore, the factorization $s^{n \downarrow} = \prod_{i=1}^k \beth^-(z_i)s^n$ induces the factorization $G^- \prod_{i=1}^k \beth^-(z_i)$. More generally, the factorization $s^{n \downarrow} = \prod_{i=1}^k \beth^-(z_i)s^n$ induces the factorization $s^{m \downarrow} = \prod_{i=1}^k \beth^-(z_i)s^m$ for all $0 \leq m \leq n$.

As $G = \bigcup_{n \geq 0} s^{n \downarrow}$, we can deduce that there are convex sublattices $\beth(z_i) \subseteq G$, such that $\beth(z_i) \cap G^- = \beth^-(z_i)$ which provide, by the extension described in \cref{prop:glueing_factorizations}, an isomorphism $G \overset{\sim}{\to} \prod_{i=1}^k \beth(z_i)$.

The sublattices $\beth(z_i)$ constructed by the extension procedure are the \emph{beams} (associated with $z_i$).

We state the observation as a theorem:

\begin{thm} \label{thm:G_is_product_of_beams}
There is an isomorphism of lattices $\psi: G \overset{\sim}{\to} \prod_{i=1}^k \beth(z_i)$ where $\{z_1, \ldots , z_k \} := X(\Ccal(G^-))$.
\end{thm}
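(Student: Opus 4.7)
The plan is to apply the glueing proposition \cref{prop:glueing_factorizations} to the increasing chain of principal downsets $(s^{n\downarrow})_{n \geq 0}$, whose union is $G$ since $s$ is a strong order unit. All the ingredients have already been prepared in the paragraphs just preceding the statement, so the task is to package them correctly.

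First, I would verify that for each $n \geq 0$ the principal downset $L^{(n)} := s^{n\downarrow}$ carries an internal factorization $L^{(n)} = \prod_{i=1}^k L_i^{(n)}$ with $L_i^{(n)} := \beth^-(z_i) s^n$. This comes for free from right-invariance: right-multiplication by $s^n$ is a lattice isomorphism $G^- \longiso s^{n\downarrow}$, so the semibeam decomposition $G^- = \prod_{i=1}^k \beth^-(z_i)$ of \cref{thm:G-_is_product_of_indecomposable_semibeams} transports to the claimed decomposition of $L^{(n)}$. Since each $\beth^-(z_i)$ is directly indecomposable by the same theorem, so is each $L_i^{(n)}$.

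The key compatibility step is to show that for $m \leq n$ the factorization of $L^{(m)}$ is the one induced from $L^{(n)}$ (in the sense of \cref{lem:restriction_of_factor_extension}). Here I would use the uniqueness part of \cref{thm:decomposition_geometric_lattices} together with \cite[Corollary 277]{Graetzer-Lattice}: both $L^{(m)} = \prod_{i=1}^k L_i^{(m)}$ and the induced decomposition coming from $L^{(n)}$ are factorizations of $L^{(m)}$ into $k$ directly indecomposable pieces with mutually isomorphic factors, so they must coincide up to a permutation. To pin down the identification (so that $L_i^{(m)}$ matches with the extension of $L_i^{(m')}$ for $m' \leq m$), I would observe that the bottom element $s^{-m}$ of $L^{(m)}$ has coordinates $\Phi_m(z_i) \cdot s^m$ in the $i$-th factor $L_i^{(m)}$, and these are unambiguously labelled by the dual atoms $z_i \in X(\Ccal(G^-))$ via \cref{prop:dual_atoms_of_center_in_strong_order_powers}; consistency across different $n$ then forces the canonical labelling.

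With the hypotheses of \cref{prop:glueing_factorizations} in place, I apply it to obtain an external factorization $\psi: G \longiso \prod_{i=1}^k \beth(z_i)$ by convex sublattices $\beth(z_i) \subseteq G$ with $\beth(z_i) \cap L^{(0)} = \beth(z_i) \cap G^- = \beth^-(z_i)$, which is exactly the beam decomposition announced. The main obstacle is really the bookkeeping in the previous paragraph: one has to check that the permutation produced by the Krull–Schmidt-style uniqueness theorem is the identity for the chosen labelling, and the cleanest way is to track the central elements $\Phi_n(z_i)$ through the right-multiplication isomorphisms, as was already done implicitly in \cref{prop:rigidity_of_semibeams}.
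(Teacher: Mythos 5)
Your proposal is correct and follows essentially the same route as the paper: right-invariance transports the semibeam decomposition of $G^-$ to internal decompositions $s^{n\downarrow} = \prod_{i=1}^k \beth^-(z_i)s^n$, the uniqueness of indecomposable factorizations \cite[Corollary 277]{Graetzer-Lattice} yields the compatibility of these decompositions along the chain, and \cref{prop:glueing_factorizations} then assembles the external factorization $G \cong \prod_{i=1}^k \beth(z_i)$. Your extra bookkeeping for the permutation is welcome (the paper leaves it more implicit), but note one small slip: $L^{(m)} = s^{m\downarrow}$ has no bottom element; the element whose coordinates are $\Phi_m(z_i)s^m$ is $e = \bigwedge_{i=1}^k \Phi_m(z_i)s^m$, which is exactly what the paper tracks via the $x_i^{(n)}$.
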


By \cref{prop:intersect_semibeams_with_dg}, we have the induced internal decomposition
\[
\mathcal{D}(G)^- = \prod_{i=1}^k \beth_{\mathcal{D}(G)}^-(z_i) = \prod_{i=1}^k \beth^-(z_i) \cap G^-.
\]
This again shifts to internal decompositions
\[
\mathcal{D}(G)^- \cdot s^n = \prod_{i=1}^k \beth_{\mathcal{D}(G)}^-(z_i) \cdot s^n = \prod_{i=1}^k \left( \beth^-(z_i) \cap G^- \right) \cdot s^n.
\]
Recall that $s^{-n} = \bigwedge_{i=1}^k \Phi_n(z_i)$, which shows that $e = \bigwedge_{i=1}^k \Phi_n(z_i)s^{-n}$, so $\varepsilon_i^{s^{n \downarrow},G^-} = \Phi_n(z_i)s^{-n} \in \mathcal{D}(G)^-$. Examining the formula defining the extension this shows
\[
\beth^-_{\mathcal{D}(G)}(z_i)^{\mathcal{D}(G)s^n} \subseteq \beth_G^-(z_i)^{Gs^n},
\]
i.e. the extension of semi-beams to beams in $G$ is compatible with the extension process in $\mathcal{D}(G)$. This proves:

\begin{prop} \label{prop:intersect_beams_with_g}
For all $z \in X(\Ccal(G^-))$,
\[
\beth_G(z) \cap \mathcal{D}(G) = \beth_{\mathcal{D}(G)}(z)
\]
and the following diagram commutes:
\begin{center}
\begin{tikzpicture}
\node (dg) at (0,1.5) {$\mathcal{D}(G)$} ;
\node (g) at (0,0) {$G$} ;
\node (dgprod) at (4,1.5) {$\prod_{i=1}^k \beth_{\mathcal{D}(G)}(z_i)$} ;
\node (gprod) at (4,0) {$\prod_{i=1}^k \beth_G(z_i)$} ;

\draw [->] (dg) -- (dgprod) node [midway,below] {$\sim$} node [midway,above] {$\psi_{\mathcal{D}(G)}$} ;
\draw [->] (g) -- (gprod) node [midway,above] {$\sim$} node [midway,below] {$\psi_G$} ;
\draw [right hook-latex] (dg) -- (g) ;
\draw [right hook-latex] (dgprod) -- (gprod) ; 
\end{tikzpicture}
\end{center}
Here, the vertical arrows are inclusions and the horizontal arrows are the maps $\psi$ given in \cref{thm:G_is_product_of_beams}.
\end{prop}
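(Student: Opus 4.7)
The plan is to deduce both assertions from a single computation showing that, for every $n \geq 1$, the extension of the semibeam decomposition from $G^-$ to $s^{n\downarrow}$ restricts on $\mathcal{D}(G)\,s^n$ to the corresponding extension from $\mathcal{D}(G)^-$ to $\mathcal{D}(G)\,s^n$. Taking the union over $n$ then immediately yields the commutativity of the diagram, after which the intersection identity follows by a short bookkeeping argument.

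Fix $n$ and write $L := s^{n\downarrow}$, $K := G^-$, $L' := \mathcal{D}(G)\,s^n$, $K' := \mathcal{D}(G)^-$, so that $L = \prod_{i} \beth^-(z_i)s^n$ and $L' = \prod_i \beth^-_{\mathcal{D}(G)}(z_i)s^n$ as internal products. By \cref{prop:intersect_semibeams_with_dg}, the $i$-th factor of $L'$ sits inside the $i$-th factor of $L$. The observation made in the paragraph preceding the proposition is that $\varepsilon_i^{L,K} = \Phi_n(z_i)s^{-n}$ lies in $\mathcal{D}(G)^-$; uniqueness of the meet-representation $e = \bigwedge_i \Phi_n(z_i)s^{-n}$ within $L'$ then forces $\varepsilon_i^{L',K'} = \varepsilon_i^{L,K}$ and hence $\overline{\varepsilon}_i^{L',K'} = \overline{\varepsilon}_i^{L,K}$. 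Likewise, if $g \in L'$ has $L'$-decomposition $g = \bigwedge_i g_i^{L'}$, then $g_i^{L'} \in \beth^-(z_i)s^n$, so by uniqueness in $L$ we have $g_i^L = g_i^{L'}$. Substituting these equalities into the defining formula $\psi_i^{L,K}(g) = g_i^L \wedge \overline{\varepsilon}_i^{L,K}$ gives $\psi^{L,K}(g) = \psi^{L',K'}(g)$ for every $g \in L'$, with coordinates in $\beth^-_{\mathcal{D}(G)}(z_i)^{\mathcal{D}(G) s^n} \subseteq \beth^-(z_i)^{s^{n\downarrow}}$. Taking the union over $n$ yields precisely the commutativity of the square.

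For the intersection identity, I would observe that $\beth_G(z_j)$ is characterized as the set of $g \in G$ with $\psi_G(g)_i = e$ for all $i \neq j$: indeed, inside the external factorization of $s^{n\downarrow}$, the sublattice $\beth^-(z_j)^{s^{n\downarrow}}$ is pinned at $\varepsilon_i^{L,K} \wedge \overline{\varepsilon}_i^{L,K} = \bigwedge_{l=1}^{k} \varepsilon_l^{L,K} = 1_K = e$ in each coordinate $i \neq j$. The same characterization holds for $\beth_{\mathcal{D}(G)}(z_j)$ with $1_{K'} = e$. Since the coordinatewise inclusion $\prod_i \beth_{\mathcal{D}(G)}(z_i) \hookrightarrow \prod_i \beth_G(z_i)$ preserves $e$, the commutativity just established implies that the two characterizations coincide on $\mathcal{D}(G)$, so $\beth_G(z) \cap \mathcal{D}(G) = \beth_{\mathcal{D}(G)}(z)$.

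The expected obstacle is almost entirely conceptual: correctly identifying which pair of factorizations to compare at level $n$ (namely $L'=\mathcal{D}(G)s^n$ sitting inside $L=s^{n\downarrow}$, with $L'$ inheriting its factorization from $\mathcal{D}(G)^-$ rather than from $L$). Once this setup is in place, the agreement $\varepsilon_i^{L',K'}=\varepsilon_i^{L,K}$ recorded before the proposition reduces the argument to a direct application of the extension formulas from \cref{subs:lattice_theory}; no hidden calculation beyond the uniqueness of internal decompositions is needed.
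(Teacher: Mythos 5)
Your proof is correct and takes essentially the same route as the paper: both arguments rest on the observation that $\varepsilon_i^{s^{n\downarrow},G^-}=\Phi_n(z_i)s^{n}$ lies in the distributive scaffold, so that the extension maps $\psi^{L,K}$ and $\psi^{L',K'}$ literally agree on $\mathcal{D}(G)\cap s^{n\downarrow}$ and the union over $n$ gives the commuting square together with the coordinate characterization of the beams; you merely spell out the uniqueness arguments that the paper compresses into ``examining the formula defining the extension''. (The only blemish, inherited from the paper's own text, is a sign: since $e=s^{-n}s^{n}=\bigwedge_{i}\Phi_n(z_i)s^{n}$ with $\Phi_n(z_i)s^{n}\in\beth^-(z_i)s^{n}$, the correct expression is $\varepsilon_i^{s^{n\downarrow},G^-}=\Phi_n(z_i)s^{n}$, which lies in $\mathcal{D}(G)\cap s^{n\downarrow}$ rather than in $\mathcal{D}(G)^-$ --- harmless, as all that is used is membership in the scaffold.)
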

Note that the diagram is simply commutative as $\beth_{\mathcal{D}(G)}(z) \subseteq \beth_G(z)$ for all $z \in X(\Ccal(G^-))$ and the isomorphism $\psi_G$ restricts to $\psi_{\mathcal{D}(G)}$, as they are given by the exact same term.

It also turns out that the beams are as rigid as the semibeams!

\begin{thm} \label{thm:rigidity_of_beams}
Let $\rho_g: G \to G$; $h \mapsto hg$. For all $g \in G^-$, there is a permutation $\pi_g: \{1, \ldots,k \} \to \{ 1, \ldots, k \}$ and lattice isomorphisms $\alpha_i : \beth(z_i) \to \beth(z_{i^{\pi_g}})$ such that for all $\underline{x} = (x_i)_{1 \leq i \leq k} \in \prod_{i=1}^k \beth(z_i)$ we have
\[
\left( \psi \circ \rho_g \circ \psi^{-1} \right) ( \underline{x} ) = ( \alpha_{i^{\pi_g^{-1}}}(x_{i^{\pi_g^{-1}}})  )_{1 \leq i \leq k}
\]
\end{thm}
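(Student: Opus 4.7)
My plan is to lift the rigidity of semibeams (\cref{prop:rigidity_of_semibeams}) from the cone $G^-$ to the full group by applying Krull--Schmidt uniqueness (\cite[Corollary 277]{Graetzer-Lattice}) on the bounded exhausting sublattices $s^{n-1\downarrow}$ and gluing the local isomorphisms coherently.

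The first step is to check the basic setup. Right-multiplication $\rho_g$ is a lattice automorphism of $G$: the order on a right $\ell$-group is right-invariant and $g$ is invertible, so $\rho_g$ preserves $\wedge, \vee$ with inverse $\rho_{g^{-1}}$. Via $\psi$ it conjugates to a lattice automorphism of $\prod_i \beth(z_i)$. I also need to confirm that each beam is directly indecomposable: writing $\beth(z_i) = \bigcup_n L_i^n$ as the directed union of the bounded pieces from \cref{prop:glueing_factorizations}, each $L_i^n$ is isomorphic to the shifted semibeam $\beth^-(z_i) s^{n-1}$ and thus directly indecomposable by \cref{thm:G-_is_product_of_indecomposable_semibeams}; any nontrivial direct decomposition of $\beth(z_i)$ would restrict to a nontrivial decomposition of some bounded piece, which is a contradiction.

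The core argument proceeds at each finite level. For $n \geq \lambda(g)+1$ we have $s^{n-1}g \geq e$ by \cref{lem:multipl_inequalities_with_normal_elements}, so $(s^{n-1}g)^{\downarrow}$ contains $G^-$ and sits inside $s^{n-1\downarrow}$. The restriction of $\rho_g$ is then a lattice isomorphism $s^{n-1\downarrow} \overset{\sim}{\to} (s^{n-1}g)^{\downarrow}$. The source has the internal decomposition $\prod_i \beth^-(z_i)s^{n-1}$; transporting this through $\rho_g$ yields one indecomposable decomposition of the target, while the target also carries the intrinsic decomposition it inherits as a principal ideal of $s^{n-1\downarrow}$. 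Krull--Schmidt forces these to agree up to a permutation $\sigma^{(n)}$. Restriction to the subideal $G^- \subseteq (s^{n-1}g)^{\downarrow}$ recovers exactly the situation analysed in the proof of \cref{prop:rigidity_of_semibeams}, and comparison with the semibeam rigidity identifies $\sigma^{(n)} = \pi_g$.

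Having $\sigma^{(n)}$ independent of $n$, I define $\alpha_i : \beth(z_i) \to \beth(z_{i^{\pi_g}})$ by sending $x \in L_i^n$ to the $i^{\pi_g}$-component of $\psi(xg)$; the consistency across $n$ furnished by \cref{lem:factor_extensions_are_compatible} makes this well-defined, and each $\alpha_i$ is a lattice isomorphism because it arises as the composition of the restriction of $\rho_g$ to a bounded piece of a beam with the projection to the corresponding factor, followed by passage to the colimit. The coordinate formula in the statement then follows by writing an arbitrary $\underline{x} \in \prod_i \beth(z_i)$ componentwise and using that on each bounded level $\rho_g$ carries the $i$-th factor into the $i^{\pi_g}$-th factor while keeping the other coordinates fixed at the values dictated by $\psi(g)$. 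The principal technical hurdle is precisely the identification $\sigma^{(n)} = \pi_g$ together with the gluing across levels: one must carefully track how right-multiplication by $g$ interacts with the shifted decompositions $\prod_i \beth^-(z_i) s^{n-1}$, using the compatibility of the factor extensions from \cref{prop:glueing_factorizations} together with the normality of $s$ to descend the identification from large $n$ to all levels.
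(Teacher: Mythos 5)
Your proof is correct and follows essentially the same route as the paper: both arguments reduce the statement to the Krull--Schmidt uniqueness of indecomposable decompositions applied to bounded principal downsets whose factors are copies of the (indecomposable) semibeams, and then glue the resulting level-wise permutations by a coherence argument. The paper streamlines this by showing that an \emph{arbitrary} lattice automorphism of $\prod_{i=1}^k \beth(z_i)$ permutes the factors --- applying the uniqueness theorem to every principal downset $\underline{x}^{\downarrow} \cong G^-$ at once rather than only to the exhausting chain $s^{n\downarrow}$ --- which avoids your explicit identification of $\sigma^{(n)}$ with the semibeam permutation $\pi_g$.
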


\begin{proof}
Clearly, $\psi \circ \rho_g \circ \psi^{-1}$ is an automorphism of the lattice $\prod_{i=1}^k \beth(z_i)$. Therefore, it suffices that each automorphism $\varphi : \prod_{i=1}^k \beth(z_i) \to \prod_{i=1}^k \beth(z_i)$ has the desired form.

Note first that $\varphi$ maps downsets $\underline{x}^{\downarrow} = \prod_{i=1}^k x_i^{\downarrow}$ isomorphically to downsets $\varphi(\underline{x}) =: \underline{y} = \prod_{i=1}^k y_i^{\downarrow}$. Recall that all downsets are isomorphic, as lattices, to $G^- \cong \prod_{i=1}^k \beth^-(z_i)$, so the decompositions are already indecomposable decompositions and $\varphi$ therefore just permutes the factors in the sense that we can find a permutation $\pi: \{1, \ldots, k\} \to \{1, \ldots, k\}$ and automorphisms $\alpha_i: x_i^{\downarrow} \to y_{i^{\pi}}^{\downarrow}$ such that
\begin{equation} \label{eq:rigidity_of_downsets}
\varphi ( \underline{w} ) = ( \alpha_{i^{\pi^{-1}}}(w_{i^{\pi^{-1}}})  )_{1 \leq i \leq k}
\end{equation}
for all $\underline{w} \in \underline{x}^{\downarrow}$. When ranging over different downsets $\underline{x}^{\downarrow}$, these maps must restrict to each other under inclusion of downsets which is only possible if $\pi$ is independent of the downset $\underline{x}^{\downarrow}$. Therefore, the formula in \cref{eq:rigidity_of_downsets} is valid on the entire domain $\prod_{i=1}^k \beth(z_i)$.
\end{proof}

We remark here that the beams $\beth(z)$ are not necessarily subgroups. However, they are sublattices of $G$ by construction. However, from the preceding results we can actually derive an easy recipe to find convex subgroups: write
\[
G \cong \prod_{i=1}^{k}\beth(z_i) = \prod_{I \in \mathcal{I}} \left( \prod_{i \in I} \beth(z_i) \right)
\]
where $\mathcal{I}$ is the set of isomorphism classes of beams $\beth(z_i)$ appearing in $G$, and $i,j$ lie in the same class $I \in \mathcal{I}$ if there is a lattice isomorphism $\beth(z_i) \cong \beth(z_j)$. Collecting isomorphic lattice factors, $G$ decomposes as a product of \emph{isotypical components} $G_I = \prod_{i \in I} \beth(z_i)$. Due to rigidity (\cref{thm:rigidity_of_beams}), each right-multiplication $h \mapsto hg$, viewed in the representation $\prod_{i=1}^k \beth(z_i)$, restricts to automorphisms of the isotypical factors $G_I$, which are represented by tuples $(x_i)_{1 \leq i \leq k}$ where $x_i = e$ for $i \not \in I$. Therefore, the isotypical component $G_I$ stabilizes $(e)_{i \not \in I}$ when $G$ acts on the lattice $\prod_{i \not \in I} \beth(z_i)$, that is, by restriction to coordinates not in $I$.

Therefore, the isotypical components correspond to convex subgroups of $G$. Note that this is not necessarily the finest decomposition of $G$ into convex subgroups; however, it is the finest canonical subgroup decomposition you can get from only looking at the lattice structure.

\section{Lattice structure of the beams} \label{sec:lattice_structure_of_beams}

We close our analysis with a lattice-theoretic investigation of the beams.

Let $L$ be a lattice; we call an element $x \in L$ \emph{distributive} if the identity
\[
x \vee (y \wedge z) = (x \vee y) \wedge (x \vee z)
\]
holds for all $y,z \in L$.

With that notion, we can start with definition of a \emph{primary lattice}, which is slightly more general than the one given by Inaba \cite{inaba}.

\begin{defi}
Let $L$ be a modular lattice where every interval has finite length. Then $L$ is called \emph{primary} if any interval $\left[ x,y \right]$ is either a chain or contains no distributive\footnote{Note that in Inaba's original article such elements are called \emph{neutral}. Today, the notion of neutrality is given by another identity which turns out to be equivalent to distributivity whenever $L$ is modular \cite[Chapter III, Section 2.]{Graetzer-Lattice}.} elements aside from $x,y$ when considered as a lattice on its own.
\end{defi}

Using the notion of \emph{perspectivity}, one can give a simple criterion for primarity: if $L$ is bounded from below, recall that two elements $x,y \in L$ are called \emph{perspective} if there is a $z \in L$ with $x \wedge z = y \wedge z = 0$ and $x \vee z = y \vee z$.

\begin{prop} \label{prop:primarity_perspectivity}
If $L$ is modular and of each interval is finite length\footnote{In the cited article, Inaba assumes all lattices to be of finite length, see \cite[page 48]{inaba}. As primarity is a hereditary property, there is no harm in extending the notion to unbounded lattices.} then $L$ is primary if and only if in each interval $\left[ x,y \right]$ ($x,y \in L$), any two atoms are perspective (with respect to $\left[x,y \right]$).
\end{prop}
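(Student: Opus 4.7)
The plan is to reduce both directions to the classification of length-$2$ modular lattices, which are precisely the $M_\kappa$'s: chains when $\kappa = 1$, the Boolean $2\times 2$ lattice $M_2$ when $\kappa = 2$, and otherwise $M_\kappa$ with $\kappa \geq 3$, whose interior contains no distributive elements. The proposition amounts to saying that in a primary lattice the only length-$2$ non-chain intervals that occur are of the latter kind, and that these are characterized lattice-theoretically by having at least three atoms, i.e., by having every two atoms connected by a perspectivity.

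For the forward direction I take an interval $[x,y]$ and two distinct atoms $a,b$ of it. Applying the diamond isomorphism lemma \cref{lem:diamond_lemma} to $a$ and $b$ (noting $a \wedge b = x$), the element $a \vee b$ covers both $a$ and $b$, so $[x, a \vee b]$ is a modular lattice of length $2$ that is not a chain. Primarity of $L$ then forces $[x, a \vee b]$ to have no distributive elements other than its endpoints, which rules out $M_2$ (whose middle atoms are distributive) and forces $[x, a \vee b] \cong M_\kappa$ with $\kappa \geq 3$. A third atom $c$ of $[x, a \vee b]$ then witnesses the perspectivity $a \vee c = a \vee b = b \vee c$ and $a \wedge c = x = b \wedge c$ of $a$ and $b$ inside $[x,y]$.

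For the backward direction I argue by contraposition: assume $[x,y]$ is not a chain and contains a distributive element $d$ with $x < d < y$, and I produce two non-perspective atoms of some sub-interval. The key structural idea is that if $e \in [x,y]$ is incomparable with $d$, then with $p := d \wedge e$ and $q := d \vee e$ one has $p < d < q$ and $e$ a complement of $d$ in $[p,q]$. The distributivity of $d$ is inherited by $[p,q]$, and since a distributive and complemented element of a modular lattice is neutral, this yields an internal direct product $[p,q] \cong [p,d] \times [d,q]$ with both factors non-trivial. Picking an atom $\tilde a \in [p,d]$ and an atom $\tilde b$ of $[p,q]$ belonging to the $[d,q]$-factor (i.e., with $\tilde b \wedge d = p$ and $\tilde b \vee d$ an atom of $[d,q]$), a short coordinate computation in the product shows that no $z \in [p,q]$ can simultaneously satisfy $\tilde a \wedge z = \tilde b \wedge z = p$ together with $\tilde a \vee z = \tilde b \vee z$, so $\tilde a$ and $\tilde b$ fail to be perspective in $[p,q]$.

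The delicate point is the case in which \emph{every} element of $[x,y]$ is comparable with $d$, so that $[x,y] = [x,d] \cup [d,y]$ and the non-chain half (say $[x,d]$) must be treated separately. My plan is to run an induction on $\len([x,y])$: the base $\len([x,y]) = 2$ is immediate from the $M_\kappa$-classification above, and for the inductive step I expect to locate, inside the non-chain half, a strictly shorter sub-interval to which the inductive hypothesis applies and yields the required pair of non-perspective atoms. This inductive reduction is the step I anticipate to be the main obstacle; the product-decomposition argument and the $M_\kappa$-classification themselves are standard manipulations.
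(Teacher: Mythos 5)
Your forward direction is correct, and so is Case~1 of your backward direction: if some $e$ is incomparable with the distributive element $d$, then $d$ is a complemented distributive (hence neutral, hence central) element of $[d\wedge e, d\vee e]$, that interval splits as a nontrivial direct product, and your coordinate computation showing that atoms taken from distinct factors are never perspective is right. (For what it is worth, the paper offers no proof to compare against -- it simply cites Inaba's Theorem~45 -- so yours is necessarily an independent argument.)

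However, the step you flag as the ``main obstacle'' -- Case~2, where every element of $[x,y]$ is comparable with $d$ -- is not a technical gap but a point where the implication, read against the paper's literal definition of \emph{primary}, actually fails; no induction can close it. Take $L$ to be $M_3$ with one new top element adjoined, i.e.\ $0 \prec a,b,c \prec m \prec 1$ with $[0,m] \cong M_3$. Every interval of $L$ is either a chain or (namely $[0,m]$ and $[0,1]$) has exactly the three pairwise perspective atoms $a,b,c$, so the perspectivity condition holds throughout. Yet $m$ is a distributive element of the non-chain interval $[0,1]$: the identity $m\vee(u\wedge v)=(m\vee u)\wedge(m\vee v)$ is immediate when $u,v\leq m$ (both sides equal $m$) and when $u=1$ (both sides equal $m\vee v$). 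So $L$ is not primary in the sense of the definition given in the paper, $d=m$ is comparable with everything, and the non-chain half $[0,m]\cong M_3$ contains no nontrivial distributive element whatsoever -- there is no shorter sub-interval for your proposed induction to descend to.

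The moral is that your Case~1 computation proves exactly the right theorem, just not the one as literally stated: the perspectivity condition is equivalent to ``no interval of $L$ is isomorphic to the Boolean square,'' equivalently ``no non-chain interval admits a nontrivial direct decomposition,'' equivalently ``no non-chain interval contains a \emph{complemented} distributive element other than its endpoints.'' A non-complemented distributive element (like $m$ above) produces no direct decomposition and is invisible to perspectivity. If ``primary'' is read with this complementedness (equivalently, indecomposability) restriction -- which must be what Inaba's Theorem~45 actually asserts, and is all the paper ever uses downstream -- then your forward argument together with Case~1 is already a complete proof and Case~2 evaporates. With the definition exactly as printed, the proposition is false and the obstacle you anticipated is insurmountable.
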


\begin{proof}
\cite[Theorem 45]{inaba}
\end{proof}

\begin{prop} \label{prop:beams_are_primary}
For each $z \in X(\Ccal(G^-))$, the beam $\beth(z)$ is a primary sublattice of $G$.
\end{prop}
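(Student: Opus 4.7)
The plan is to verify the perspectivity criterion of Proposition~\ref{prop:primarity_perspectivity}. Since $\beth(z)$ is a sublattice of the modular noetherian right $\ell$-group $G$ and every interval $[x,y]\subseteq\beth(z)$ has length $\deg(x)-\deg(y)<\infty$, $\beth(z)$ is a modular lattice in which every interval has finite length. Primarity therefore reduces to showing that any two distinct atoms $a,b$ of an arbitrary interval $[x,y]\subseteq\beth(z)$ are perspective in $[x,y]$. By the diamond lemma (Lemma~\ref{lem:diamond_lemma}), $a\vee b$ covers both $a$ and $b$, so $[x,a\vee b]$ is a length-$2$ subinterval of $[x,y]$; any third atom of $[x,a\vee b]$ is automatically a common complement of $a$ and $b$ there and hence in $[x,y]$.

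To locate such a third atom, I would right-multiply by $(a\vee b)^{-1}$; this lattice automorphism sends $[x,a\vee b]$ isomorphically onto $[u,e]$, where $u:=x(a\vee b)^{-1}\in G^-$, with the images $\tilde a:=a(a\vee b)^{-1}$ and $\tilde b:=b(a\vee b)^{-1}$ being distinct dual atoms of $G^-$. The rigidity of the beam decomposition (Theorem~\ref{thm:rigidity_of_beams}) implies that $u$ lies in a single semibeam $\beth^-(z')$, and by convexity of $\beth(z')$ the atoms $\tilde a,\tilde b$ belong to $\beth^-(z')$ as well. Since $\tilde a,\tilde b\in X(G^-)$ lie above some frozen power $\Phi_n(z')$, Lemma~\ref{lem:dual_atoms_over_frozen_elements} yields $\tilde a,\tilde b\geq z'$, i.e.\ $\tilde a,\tilde b\in[z',e]$. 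A parallelogram count (Lemma~\ref{lem:parallelogram_identity}) gives $\deg(\tilde a\wedge\tilde b)=2=\deg(u)$, which together with $u\leq\tilde a\wedge\tilde b$ forces $u=\tilde a\wedge\tilde b\in[z',e]$. Thus the length-$2$ interval $[u,e]$ is a subinterval of the directly indecomposable modular geometric lattice $[z',e]$ (Propositions~\ref{prop:dual_atoms_of_center_are_irred_factors} and~\ref{prop:irreducible_means_trivial_center}).

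To finish I would invoke the classical fact that any length-$2$ subinterval of a directly indecomposable modular geometric lattice of length $\geq 2$ contains at least three atoms: in length $2$ it is an $M_n$ with $n\geq 3$, since $M_2\cong 2\times 2$ is decomposable, and in higher length it is the statement that every projective line — over a division ring, or in a non-Desarguesian projective plane — carries at least three points. Hence $[u,e]$ admits a third atom $\tilde c$, and right-multiplying back by $a\vee b$ yields the desired common complement $c$ of $a,b$ in $[x,a\vee b]$. The main obstacle is the localization step — after translation the bottom element $u$ must lie inside a single irreducible factor $[z',e]$ of the strong order interval — and this is exactly what the rigidity of beams, combined with the degree identity $u=\tilde a\wedge\tilde b$, guarantees.
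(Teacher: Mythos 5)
Your proposal is correct and follows essentially the same route as the paper: verify the perspectivity criterion of \cref{prop:primarity_perspectivity}, use right-invariance and the rigidity of beams to translate the length-two interval generated by two atoms into an interval $[u,e]$ sitting inside a single irreducible component $[z',e]$ of the strong order interval, and then invoke the fact that a line in an indecomposable modular geometric lattice carries at least three points. Your degree count showing $u=\tilde a\wedge\tilde b\geq z'$ is a welcome elaboration of a step the paper leaves implicit.
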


\begin{proof}
By \cref{prop:primarity_perspectivity}, we have to show that for an interval $[a,b] \subseteq \beth(z)$ any two atoms $x,y \in [a,b] \in G$ are perspective. We can assume $x \neq y$; note that $a = x \wedge y$ in this case. With $c = x \vee y$, it suffices to restrict to $[a,c]$. Using right-invariance to shift to some sublattice $[ac^{-1},e] \cong [a,b]$, we can assume that $[ac^{-1},e] \subseteq [\tilde{z},e] \subseteq \beth(\tilde{z})$ - note that we just used the rigidity of beams, \cref{thm:rigidity_of_beams}. As $[\tilde{z},e]$ is an indecomposable modular geometric lattice, $[ac^{-1},e]$ is one as well. In such a lattice, the statement is clearly true.
\end{proof}

We now investigate the existence of dual bases which requires some technical arguments:

\begin{lem} \label{lem:transporting_independence}
Let $L$ be bounded from above and modular and let $\{ a,y_1,\ldots, y_n \} \subseteq L$ be dually independent. If $b \in L$ fulfils $a \vee b = 1_L$, then the set $\{ (a \wedge y_i) \vee b : 1 \leq i \leq n \}$ is dually independent as well.
\end{lem}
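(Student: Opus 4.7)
Plan: To check dual independence of $\{z_1,\ldots,z_n\}$ with $z_i := (a \wedge y_i) \vee b$, I need $(\bigwedge_{i \in I} z_i) \vee (\bigwedge_{i \in J} z_i) = \bigwedge_{i \in I \cap J} z_i$ for all $I, J \subseteq \{1,\ldots,n\}$. The $\leq$-direction is trivial, so the content lies in the reverse inequality, and I would handle it by passing to coordinates via the product decomposition coming from the dual frame.

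First, I would make the harmless reduction to the case $b \geq m$, where $m := \bigwedge\{a, y_1,\ldots,y_n\}$. Replacing $b$ by $b \vee m$ leaves each $z_i$ unchanged because $m \leq a \wedge y_i$, so $(a \wedge y_i) \vee (b \vee m) = (a \wedge y_i) \vee b$; and the hypothesis $a \vee b = 1_L$ is preserved. From now on I may assume $b \in [m, 1_L]$.

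Next, I would invoke the internal product decomposition of $[m,1_L]$ afforded by the dual frame $\{a, y_1,\ldots,y_n\}$: in the spirit of \cref{prop:external_products_are_internal_products}, a dual frame in a bounded modular lattice yields a canonical isomorphism
\[
[m, 1_L] \longiso [a, 1_L] \times \prod_{i=1}^n [y_i, 1_L], \qquad x \mapsto (x \vee a,\, x \vee y_1,\, \ldots,\, x \vee y_n).
\]
The dual-independence identities $a \vee y_i = 1_L$ (from $\{a\} \cap \{y_i\} = \emptyset$) and $y_i \vee y_j = 1_L$ (for $i \neq j$) then show that $a$ has coordinates $(a, 1_L, \ldots, 1_L)$, each $y_i$ sits in the single slot $i$ as $(1_L,\ldots,y_i,\ldots,1_L)$, and the assumption $a \vee b = 1_L$ forces the zeroth coordinate of $b$ to equal $1_L$. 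Writing $b = (1_L, w_1, \ldots, w_n)$ with $w_i := b \vee y_i \in [y_i, 1_L]$ records this.

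Finally I would compute each $z_i$ in coordinates. Using $z_i \vee a = a \vee b = 1_L$, $z_i \vee y_i = y_i \vee b = w_i$, and $z_i \vee y_j = (a \wedge y_i) \vee y_j \vee b = 1_L$ for $j \neq i$ (the last step again by dual independence), one sees that $z_i$ corresponds to the tuple with $w_i$ in position $i$ and $1_L$ in every other position. From this explicit shape the dual independence check becomes a coordinate-wise verification: $\bigwedge_{i \in I} z_i$ is the tuple with $w_j$ in positions $j \in I$ and $1_L$ elsewhere, and the join of $\bigwedge_I z_i$ with $\bigwedge_J z_i$ has $w_j \vee w_j = w_j$ in positions $j \in I \cap J$ and $1_L$ in all others, matching $\bigwedge_{i \in I \cap J} z_i$ exactly. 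The step requiring the most care is the product decomposition in paragraph three — once one verifies the stated map is an isomorphism (the non-trivial direction being $\bigwedge_{i=0}^n (x \vee x_i) = x$ for $x \in [m, 1_L]$, which is an instance of the dual-independence hypothesis), the rest is a routine coordinate calculation.
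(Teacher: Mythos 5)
Your opening reduction to $b \geq m$ is harmless, but the proof breaks at its central step: the claimed isomorphism
\[
[m,1_L] \;\cong\; [a,1_L] \times \prod_{i=1}^n [y_i,1_L], \qquad x \mapsto (x\vee a,\, x\vee y_1,\ldots,x\vee y_n),
\]
does not follow from dual independence and is false in general. Dual independence only governs joins of meets of subsets of $\{a,y_1,\ldots,y_n\}$ themselves; it gives no control over $\bigwedge_{i}(x\vee x_i)$ for an arbitrary $x\in[m,1_L]$, and $x\mapsto x\vee c$ is not a meet-homomorphism in a merely modular lattice. A dual frame in a modular lattice need not induce a product decomposition: in the subspace lattice of a three-dimensional vector space, three planes in general position are dually independent with meet $0$, yet that lattice is directly irreducible; already in $M_3$, two of the three coatoms form a dual frame of a five-element lattice that is certainly not a product of two two-element chains, and the third coatom receives the same ``coordinates'' as $1_L$ under your map. (This is precisely why the paper, in \cref{prop:phi_n_z_are_a_dual_frame}, has to establish the unique-meet-representation property separately from the frame property, using degree counting.) Since your final verification is a coordinatewise computation inside this nonexistent product, the gap is fatal to the argument as written.

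The paper's own proof sidesteps the issue by never leaving the world of honest lattice isomorphisms: setting $Y=\bigwedge_i y_i$, it transports the dually independent family $y_1,\ldots,y_n$ from $[Y,1_L]$ into $[Y\wedge a,\, a]$ via the diamond isomorphism $x\mapsto x\wedge a$ of \cref{lem:diamond_lemma} (using $Y\vee a=1_L$, which \emph{is} an instance of dual independence), enlarges the resulting elements $a\wedge y_i$ by joining with $a\wedge b$ (dual independence is preserved under replacing elements by larger ones), and then applies a second diamond isomorphism $[a\wedge b,\,a]\cong[b,1_L]$, $x\mapsto x\vee b$, to land exactly on the elements $(a\wedge y_i)\vee b$. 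If you want to salvage a coordinate picture, you would need an additional hypothesis guaranteeing the product decomposition (e.g.\ centrality of the frame elements), which is not available here.
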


\begin{proof}
Set $Y := \bigwedge_{i=1}^n y_i$. The elements $y_1, \ldots, y_n$ are dually independent in $[Y,1_L]$. We also have the diamond isomorphism $[Y,1_L] = [Y, Y \vee a] \overset{\sim}{\to} [Y \wedge a, a]$ which is given by the map $y \mapsto y \wedge a$. It maps the dually independent elements $y_i$ ($1 \leq i \leq n$) from $[Y,1_L]$ to the elements $a \wedge y_i$ ($1 \leq i \leq n$) in $[Y \wedge a, a]$ which are therefore dually independent in $[Y \wedge a, a]$.

A dually independent set keeps being dually independent after replacing some elements by bigger ones, so the elements $(y_i \wedge a) \vee (a \wedge b)$ ($1 \leq i \leq n$) are still dually independent in $[a \wedge Y, a]$. In particular, they are dually independent in $[a \wedge b, a]$.

We have the diamond isomorphism $[a \wedge b, a] \overset{\sim}{\to} [b, a \vee b] = [b,1_L]$ given by $x \mapsto x \vee b$. The images of the dually independent elements $(y_i \wedge a) \vee (a \wedge b)$ ($1 \leq i \leq n$) can be determined as follows:
\[
(y_i \wedge a) \vee (a \wedge b) \mapsto (y_i \wedge a) \vee (a \wedge b) \vee b = (y_i \wedge a) \vee b.
\]
Therefore, the elements $(y_i \wedge a) \vee b$ are dually independent in $[b,1_L]$. In particular, they are dually independent in $L$.
\end{proof}

\begin{lem} \label{lem:coatoms_above_join_of_cochains}
Let $L$ be a bounded from above, modular lattice and $x_1,\ldots, x_n \in L$ dual chains. Setting $X = x_1 \wedge \ldots \wedge x_n$, there are at most $n$ dually independent dual atoms above $X$.
\end{lem}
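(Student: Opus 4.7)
I would proceed by induction on $n$, the number of dual chains.

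The base cases are immediate. For $n=0$ the meet $X=1_L$ has no dual atom strictly above it, so $m=0$. For $n=1$ the interval $[X,1_L]=[x_1,1_L]$ is itself a chain; any two distinct elements of a chain strictly below $1_L$ have join strictly less than $1_L$, which precludes two distinct dually independent dual atoms, giving $m\leq 1$.

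The heart of the argument is the inductive step from $n$ to $n+1$. Given dually independent $a_1,\ldots,a_m$ above $X=\bigwedge_{i=1}^{n+1}x_i$, the decisive observation is that the chain $[x_{n+1},1_L]$ contains a unique dual atom of $L$, namely the predecessor of $1_L$ in that chain, which I denote $a^{\star}$. Consequently, at most one $a_j$ can equal $a^{\star}$, and every $a_j\neq a^{\star}$ satisfies $a_j\vee x_{n+1}=1_L$. I would split into two sub-cases. If some $a_j$, say $a_1$, equals $a^{\star}$, then by the diamond isomorphism $[a_j\wedge a_1,a_1]\cong[a_j,1_L]$ each $a_j\wedge a_1$ for $j\geq 2$ is a dual atom of $a_1$, and the family $\{a_j\wedge a_1:j\geq 2\}$ is dually independent in $[X,a_1]$. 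Crucially, $a_j\neq a^{\star}$ forces $a_j\wedge a_1\not\geq x_{n+1}$ for $j\geq 2$, which intuitively means that the chain $x_{n+1}$ plays no role in this configuration. Passing to the sub-interval $[X'',a_1]$ with $X'':=a_1\wedge\bigwedge_{i=1}^n x_i$ and lifting each atom via $y\mapsto y\vee X''$, I would obtain $m-1$ dually independent dual atoms of $a_1$ above $X''$ in a modular lattice with exactly $n$ dual chains $x_i\wedge a_1$ ($i\leq n$), and the inductive hypothesis gives $m-1\leq n$.

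If instead $a^{\star}$ is absent from $\{a_j\}$, every $a_j$ satisfies $a_j\vee x_{n+1}=1_L$. I would pick any $a_{j_0}$ and invoke \cref{lem:transporting_independence} with $b=x_{n+1}$, producing a dually independent set $\{(a_{j_0}\wedge a_l)\vee x_{n+1}:l\neq j_0\}$ inside the chain $[x_{n+1},1_L]$; combining the chain-theoretic rigidity (at most one element strictly below $1_L$ in a dually independent subset of a chain) with the inductive hypothesis applied to $[X,x_{n+1}]$ and the $n$ dual chains $x_i\wedge x_{n+1}$, I would close the induction.

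The hard part will be verifying that the reductions faithfully preserve the dual-independence count: both the join $y\mapsto y\vee X''$ in the first sub-case and the meet $y\mapsto y\wedge x_{n+1}$ implicitly used in the second can, a priori, collapse distinct atoms to a single image (as in $M_3$ with $x_1=a$, $x_2=b$ and the set $\{a,c\}$, where $a\wedge b=c\wedge b=0$). Resolving this requires careful use of the modular law together with the defining property $(\bigwedge I)\vee(\bigwedge J)=\bigwedge(I\cap J)$ of dual independence: either a lift stays as a genuine dual atom in the smaller interval or it coincides with the top, and the latter situation can be bounded by a single unit that is absorbed into the inductive count. Once this bookkeeping is handled, the two sub-cases combine to give $m\leq n+1$, completing the induction.
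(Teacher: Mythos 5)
There is a genuine gap, and it sits exactly where you flag ``the hard part'': the collapsing of atoms under your reduction maps is not a bookkeeping nuisance that can be ``bounded by a single unit'', it actually occurs and your proposed fix does not close the count. Concretely, take $L$ to be the subgroup lattice of $M=\mathbb{Z}/p^2\oplus\mathbb{Z}/p$ with the two dual chains $x_1=\langle(0,1)\rangle$ and $x_2=\langle(p,1)\rangle$ (both quotients are cyclic of order $p^2$), so $X=0$ and $n+1=2$. Here $a^{\star}=a_1=p\mathbb{Z}/p^2\oplus\mathbb{Z}/p$ is the unique dual atom above $x_2$, and for the dual atom $a_2=\mathbb{Z}/p^2\oplus 0$ one gets $X''=a_1\wedge x_1=\langle(0,1)\rangle$ and $b_2=a_1\wedge a_2=\langle(p,0)\rangle$, whence $b_2\vee X''=a_1$: the lift collapses to the top and your first sub-case produces $0$, not $m-1$, dual atoms above $X''$. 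Moreover, even if you could show that at most one lift collapses, discarding it leaves only $m-2$ atoms for the inductive hypothesis, giving $m-2\leq n$, i.e.\ $m\leq n+2$ --- one more than the bound you need. The same collapsing phenomenon (your own $M_3$ example) undermines the second sub-case, where you need the elements $a_l\wedge x_{n+1}$ to remain distinct and dually independent in $x_{n+1}^{\downarrow}$; this does not follow from dual independence of the $a_l$ in $L$. So the induction, as sketched, does not go through.

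The paper's proof avoids the collapse problem entirely by arguing by contradiction rather than by transporting the whole family of atoms. Assuming $n+1$ dually independent dual atoms $y_1,\ldots,y_{n+1}$ above $X$ and setting $X'=x_1\wedge\ldots\wedge x_{n-1}$, it first uses \cref{lem:transporting_independence} together with the inductive hypothesis (applied to $[X',1_L]$) only to locate a \emph{single pair} $i<j$ with $(y_i\wedge y_j)\vee X'=1_L$; it then applies the diamond isomorphism to the length-two interval $[y_i\wedge y_j,1_L]$, which has two distinct dual atoms, and lands inside $[X,X']\cong$ a subinterval of the chain $[x_n,1_L]$ --- a contradiction, since a chain interval admits at most one dual atom. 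If you want to salvage your approach, this is the missing idea: do not try to push all $m$ atoms down into a smaller lattice (where they may merge); instead extract one pair whose meet joins with the complementary data to the top, and let the chain hypothesis kill that pair.
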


\begin{proof}
We use induction on $n$: for $n=1$, this is trivial. Now let $n > 1$ and assume that the statement has already been proven for all $k < n$.

Assume to the contrary that there were $n+1$ dually independent dual atoms above $X$, say $y_1,\ldots,y_{n+1}$. Set $X^{\prime} = x_1 \wedge \ldots \wedge x_{n-1}$.

We first show that we can then find $i,j$ with $1 \leq i < j \leq n+1$ such that $(y_i \wedge y_j) \vee X^{\prime} = 1_L$: clearly, not all $y_i$ can be above $X^{\prime}$ since this would contradict the induction hypothesis. Therefore, there is an index $l$ with $1 \leq l \leq n+1$ such that $y_l \ngeq X^{\prime}$. There is another index $m$ with $1 \leq m \leq n+1$ such that $(y_l \wedge y_m) \vee X^{\prime} = 1_L$. Otherwise, for all $m \neq l$, we would have $y_m^{\prime}:= (y_l \wedge y_m) \vee X^{\prime} < 1_L$. Note also that $y_l \ngeq X^{\prime}$ implies $y_m^{\prime} > y_l \wedge y_m$. Since modularity implies $\len([y_l \wedge y_m,1_L]) = 2$, we get for all $m \neq l$ the covering relation $y_m^{\prime} \prec 1_L$.

But then the elements $y_m^{\prime}$ ($m \neq l$) would form a dually independent set of $n$ dual atoms in $[X^{\prime},1_L]$, by \cref{lem:transporting_independence}. This would be a contradiction to the induction hypothesis. So there is an $m \neq l$ with $(y_l \wedge y_m) \vee X^{\prime} = 1_L$. By indexing properly, we can take the desired $i < j$ such that $\left\{ i,j \right\} = \left\{ k,l \right\}$, and the claim is proven.

Set $Y := y_i \wedge y_j$, then
\[
\left[ Y,1_L \right] = \left[ Y, Y \vee X^{\prime} \right] \cong \left[ Y \wedge X^{\prime}, X^{\prime} \right] \subseteq \left[ X,X^{\prime} \right],
\]
where the inclusion follows from $Y \geq X$. But $\left[ X,X^{\prime} \right]$ is a chain, which we can see by the consideration that
\[
\left[ X,X^{\prime} \right] = \left[ X^{\prime} \wedge x_n ,X^{\prime} \right] \cong \left[ x_n, X^{\prime} \vee x_n \right] \subseteq \left[ x_n, 1_L \right].
\]
Therefore, $\left[X, X^{\prime}\right]$ can have at most one dual atom - this, however, would contradict the fact that $[X,X^{\prime}]$ contains an upset isomorphic to $\left[ Y, 1_L \right]$ which has at least two different dual atoms.
\end{proof}

\begin{lem} \label{lem:cochains_independent_when_atoms_are}
Let $L$ be a bounded from above, modular, lattice which fulfils the ascending chain condition. Let $y_1, \ldots y_n \in L$ be dual chains. If $e \succ x_i \geq y_i$ ($1 \leq i \leq n$) are the unique dual atoms lying over the $y_i$ then $y_1,\ldots,y_n$ are dually independent if and only if $x_1,\ldots, x_n$ are. 
\end{lem}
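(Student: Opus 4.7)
The plan is to prove the equivalence by induction on the total slack $N := \sum_{i=1}^n \len([y_i, x_i])$, using the length characterization of dual independence in modular lattices of finite length: a family $\{z_1, \ldots, z_n\}$ is dually independent if and only if $\len([\bigwedge_i z_i, e]) = \sum_i \len([z_i, e])$. This is a direct consequence of the parallelogram identity \cref{lem:parallelogram_identity} by induction on $n$; the ascending chain condition and boundedness above ensure that every upward interval is of finite length, so all these lengths are well-defined.

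The base case $N = 0$ is trivial: $y_i = x_i$ for all $i$. For the inductive step, pick an index $k$ with $y_k < x_k$, and let $y_k^+$ be the successor of $y_k$ in the chain $[y_k, e]$, so that $y_k^+ \leq x_k$. Replacing $y_k$ by $y_k^+$ produces a new family $(\tilde y_i)_{i=1}^n$ of dual chains with the same associated dual atoms $x_i$ but total slack reduced by one, so by induction the equivalence holds for $(\tilde y_i)$. It remains to show that $\{y_i\}$ is dually independent if and only if $\{\tilde y_i\}$ is.

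Writing $Y = \bigwedge_i y_i$, $\tilde Y = \bigwedge_i \tilde y_i$, and $R = \bigwedge_{i \neq k} y_i$, modularity (applied to $y_k \leq y_k^+$) gives
\[
y_k \vee \tilde Y \;=\; y_k^+ \wedge (y_k \vee R).
\]
If $\{\tilde y_i\}$ is dually independent then $y_k^+ \vee R = e$, and since $y_k^+ < e$ (as $y_k < x_k$), this forces $R \not\leq y_k^+$ and hence $R \not\leq y_k$; thus $y_k \vee R > y_k$ in the chain $[y_k, e]$, i.e.\ $y_k \vee R \geq y_k^+$, so $y_k \vee \tilde Y = y_k^+$. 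The diamond isomorphism \cref{lem:diamond_lemma} then gives $\len([Y, \tilde Y]) = \len([y_k, y_k^+]) = 1$, and combining with length additivity for $\{\tilde y_i\}$ yields length additivity for $\{y_i\}$, hence dual independence of $\{y_i\}$. The converse direction is symmetric: $\{y_i\}$ dually independent together with the general bound $\len([Y, \tilde Y]) \leq 1$ (which holds because $y_k \vee \tilde Y \leq y_k^+$) forces length additivity for $\{\tilde y_i\}$, hence its dual independence.

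The principal technical obstacle is packaging the length characterization of dual independence as a clean standalone lemma: both directions (dual independence $\Rightarrow$ length additivity, and length additivity $\Rightarrow$ dual independence) follow from the parallelogram identity by induction on $n$, but the inductive bookkeeping — in particular, showing that length additivity for $\{z_1, \ldots, z_n\}$ simultaneously forces $z_n \vee \bigwedge_{i<n} z_i = e$ and length additivity for the $(n-1)$-subfamily — needs to be carried out carefully, since naively dual independence is weaker than an internal product decomposition (as the $M_3$ example shows).
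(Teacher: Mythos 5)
Your strategy --- inducting on the total slack $\sum_i \len([y_i,x_i])$ and reducing everything to a length-additivity criterion for dual independence --- is genuinely different from the paper's proof, which deduces the nontrivial direction from \cref{lem:coatoms_above_join_of_cochains}: if $\bigl(\bigwedge_{i\in I}y_i\bigr)\vee y_j<e$ for some $j\notin I$, then this join lies in the chain $[y_j,e]$ and hence below $x_j$, so the $|I|+1$ dually independent dual atoms $x_i$ ($i\in I$) and $x_j$ all sit above $\bigwedge_{i\in I}y_i$, contradicting that lemma. The core manipulations in your inductive step (the modular computation $y_k\vee(y_k^+\wedge R)=y_k^+\wedge(y_k\vee R)$, the identification $y_k\vee\tilde Y=y_k^+$, and the transfer of length additivity across one covering step) are all correct, and the length characterization of dual independence you invoke is a true statement in modular lattices whose relevant intervals have finite length; its nontrivial half rests on the same fact the paper cites as \cite[Theorem 360]{Graetzer-Lattice}.

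The gap is your justification of finiteness: it is \emph{not} true that the ascending chain condition together with boundedness from above forces every interval $[y,e]$ to have finite length. ACC forbids infinite strictly ascending sequences but is perfectly compatible with infinite strictly \emph{descending} ones; for instance the chain $e>a_1>a_2>\cdots>y$ satisfies ACC, is modular and bounded above, $y$ is a dual chain whose unique dual atom is $a_1$, and yet $\len([y,e])=\infty$ and $y$ has no cover $y^{+}$. Under the lemma's stated hypotheses your induction parameter $N$ may therefore be infinite, the successor $y_k^{+}$ need not exist, and the length characterization is unavailable, so the argument does not prove the lemma as stated; the paper's proof avoids lengths entirely and survives this. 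The gap is local to the hypotheses rather than to the idea: if you additionally assume that each $[y_i,e]$ has finite length --- which holds in every application in the paper, where these intervals sit inside some $[s^{-n},e]$ --- then $\len\bigl(\bigl[\bigwedge_i y_i,e\bigr]\bigr)\le\sum_i\len([y_i,e])<\infty$ by the diamond isomorphism and your induction goes through.
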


\begin{proof}
If, under the stated conditions, the elements $y_1, \ldots , y_n$ are dually independent, then the elements $x_1, \ldots, x_n$ are dually indepedent as well, since dual independence is preserved under replacing elements by possibly larger ones.

Now assume that $x_1, \ldots, x_n$ are dually independent. We prove that for any $I \subseteq \{ 1, \ldots, n \}$ and $1 \leq j \leq n$ such that $j \notin I$ we have $Y:=\left( \bigwedge_{i \in I} y_i \right) \vee y_j = 1_L$. By \cite[Theorem 360.]{Graetzer-Lattice}, this is equivalent to dual independence.

If we had $Y < 1_L$, then, in particular $y_j \leq Y < 1_L$ which would imply $Y \leq x_j$. With $X := \bigwedge_{i \in I} y_i$, this would imply that there are at least $|I|+1$ dually independent dual atoms above $X$, namely the elements $x_i$ ($i \in I$) and $x_j$, thus contradicting \cref{lem:coatoms_above_join_of_cochains}.
\end{proof}

We now show that meet-irreducibles in $G^-$ can always be extended to larger ones:

\begin{lem} \label{lem:extend_meet_irreducibles}
Let $g \in G^-$ be meet-irreducible. Then there is a meet-irreducible element $g^{\prime} \in G^-$ with $g^{\prime} \prec g$.
\end{lem}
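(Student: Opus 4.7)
My plan is to extend the right-normal factorization of $g$ by one factor of degree $1$ at the left, so the enlarged element is again $1$-homogeneous and hence meet-irreducible by \cref{prop:meet_irreducibles_are_1_homogeneous}.

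More concretely, since $g$ is meet-irreducible, \cref{prop:meet_irreducibles_are_1_homogeneous} gives that its right-normal factorization $g = g_kg_{k-1}\ldots g_1$ has $\deg(g_i)=1$ for all $i$, so every $g_i$ lies in $[s^{-1},e]$ and is in fact a dual atom of $G^-$ (using \cref{cor:all_dual_atoms_are_above_s'}). I then consider $s^{-1}g_k^{-1}$, which lies in $[s^{-1},e]$ (because $s^{-1}\le g_k \le e$ gives $s^{-1}\le s^{-1}g_k^{-1}\le e$) and satisfies $\deg(s^{-1}g_k^{-1})=\deg(s^{-1})-1$. Because $[s^{-1},e]$ is geometric, hence complemented, I can pick a complement $c\in[s^{-1},e]$ of $s^{-1}g_k^{-1}$, that is,
\[
c\wedge s^{-1}g_k^{-1}=s^{-1},\qquad c\vee s^{-1}g_k^{-1}=e.
\]
By the parallelogram identity (\cref{lem:parallelogram_identity}), $\deg(c)=1$, so $c$ is a dual atom of $[s^{-1},e]$; since any element of $G^-$ between $c$ and $e$ automatically lies in $[s^{-1},e]$, $c$ is also a dual atom of $G^-$.

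Now set $g':=c\cdot g = c\,g_kg_{k-1}\ldots g_1$. All factors lie in $[s^{-1},e]\setminus\{e\}$; the right-maximality conditions \cref{eq:right_maximality_lattice_condition} for the pairs $(g_{i+1},g_i)$ with $i<k$ are inherited from the original factorization, while the new condition $c\vee s^{-1}g_k^{-1}=e$ holds by the choice of $c$. So this is a right-normal factorization of $g'$, and by uniqueness (\cref{thm:right_normal_facs_exist_and_unique}) it is \emph{the} right-normal factorization. All $k+1$ factors have degree $1$, so $g'$ is $1$-homogeneous, and \cref{prop:meet_irreducibles_are_1_homogeneous} yields that $g'$ is meet-irreducible in $G^-$.

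Finally, $g'=cg$ with $c\prec e$, so by right-invariance $g'\prec g$. The only subtlety worth flagging is the verification that the complement $c$, a priori only a dual atom of $[s^{-1},e]$, is actually a dual atom of $G^-$; this is the reason one must argue that any element of $G^-$ sandwiched between $c$ and $e$ automatically lies in the strong order interval, which is immediate from $c\ge s^{-1}$.
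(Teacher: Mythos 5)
Your proof is correct and follows essentially the same route as the paper: prepend a degree-one factor $c$ with $c\vee s^{-1}g_k^{-1}=e$ to the $1$-homogeneous right-normal factorization and invoke \cref{prop:meet_irreducibles_are_1_homogeneous} again. The only cosmetic difference is that you obtain $c$ as a complement of $s^{-1}g_k^{-1}$ in the geometric interval $[s^{-1},e]$ (with the parallelogram identity giving $\deg(c)=1$), whereas the paper picks a dual atom not lying above $s^{-1}g_k^{-1}$ using $s^{-1}=\bigwedge X(G^-)$; both are valid.
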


\begin{proof}
Let $g = g_k g_{k-1} \ldots g_1$ be the right-normal factorization. By \cref{prop:meet_irreducibles_are_1_homogeneous}, all $\deg(g_i) = 1$ ($1 \leq i \leq k$). As $s^{-1} = \bigwedge X(G^-)$ and $s^{-1} < s^{-1}g_k^{-1}$, there is an element $g_{k+1} \prec e$ with $g_{k+1} \vee s^{-1}g_k^{-1} = e$. Therefore, $g_{k+1}g_kg_{k-1} \ldots g_1$ is a right-normal expression with all $\deg(g_i) = 1$ ($1 \leq i \leq k+1$) and therefore represents an element $g^{\prime} \prec g$, which is meet-irreducible due to another application of \cref{prop:meet_irreducibles_are_1_homogeneous}.
\end{proof}

\begin{thm} \label{thm:phi_n_z_upset_has_dual_basis}
Let $z \in X(\Ccal(G^-))$, then for each $n \geq 1$, the lattice $ \Phi_n(z)^{\uparrow}$ has a dual basis $y_1, \ldots, y_d$ with $\deg(y_i) = n$ ($1 \leq i \leq d$), where $d := \deg(z)$.
\end{thm}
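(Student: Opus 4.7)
The plan is to construct the desired dual basis by extending a dual basis of $[z,e]$ downward into $\Phi_n(z)^{\uparrow}$. First, since $z \in X(\Ccal(G^-))$, the interval $[z,e]$ is a directly indecomposable modular geometric lattice of length $d = \deg(z)$, hence of dimension $d$. By \cref{prop:dimension_of_geometric_lattice_is_size of_dual_basis} it admits a dual basis $x_1,\ldots,x_d$ consisting of dual atoms of $[z,e]$; as each $x_i \prec e$, these are simultaneously dual atoms of $G^-$ lying over $z$.

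Next, I would extend each $x_i$ to a meet-irreducible element $y_i \in G^-$ of length $n$. Starting from the trivial one-factor right-normal factorization of $x_i$, apply \cref{lem:extend_meet_irreducibles} iteratively $n-1$ times; an inspection of the proof shows that at each step a new leftmost factor is prepended to the right-normal factorization, so the first factor remains equal to $x_i$ throughout. After $n-1$ steps one obtains a meet-irreducible $y_i \in G^-$ whose right-normal factorization has $n$ factors, all of degree $1$ by \cref{prop:meet_irreducibles_are_1_homogeneous}, so $\deg(y_i) = n$. Because the first right-normal factor of $y_i$ is $x_i \geq z$, the argument in the proof of \cref{lem:meet_irreducibles_over_frozen_powers} identifies $z$ as the unique central dual atom with $y_i \geq \Phi_n(z)$; in particular, every $y_i$ lies in $\Phi_n(z)^{\uparrow}$.

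It remains to verify that $\{y_1,\ldots,y_d\}$ is a dual basis of $\Phi_n(z)^{\uparrow}$. Meet-irreducibility and the degree condition are built in. By \cref{prop:meet_irreducibles_are chains} each $y_i$ is a dual chain, and its unique covering dual atom in $G^-$ is precisely its first right-normal factor $x_i$. Noetherianity supplies the ascending chain condition needed to invoke \cref{lem:cochains_independent_when_atoms_are}, which transfers the dual independence of $x_1,\ldots,x_d$ (independent in $[z,e]$, hence in $G^-$ since all relevant meets land above $z$) to dual independence of $y_1,\ldots,y_d$. For dual spanning, iterating the parallelogram identity \cref{lem:parallelogram_identity} against the dually independent family gives $\deg(\bigwedge_{i=1}^d y_i) = \sum_{i=1}^d \deg(y_i) = nd = \deg(\Phi_n(z))$; combined with $\bigwedge_i y_i \geq \Phi_n(z)$ this forces equality. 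The main obstacle is essentially bookkeeping: one must confirm that \cref{lem:extend_meet_irreducibles} really preserves the first right-normal factor (which is read off from its proof) and that the resulting $y_i$'s land above $\Phi_n(z)$ rather than above some other $\Phi_n(z')$ — both points are secured by the choice $x_i \geq z$ together with \cref{lem:meet_irreducibles_over_frozen_powers}.
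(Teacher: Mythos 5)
Your proposal is correct and follows essentially the same route as the paper's proof: pull a dual basis $x_1,\ldots,x_d$ of $[z,e]$ from \cref{prop:dimension_of_geometric_lattice_is_size of_dual_basis}, extend each $x_i$ to a meet-irreducible $y_i$ of degree $n$ via \cref{lem:extend_meet_irreducibles}, locate the $y_i$ above $\Phi_n(z)$ with \cref{lem:meet_irreducibles_over_frozen_powers}, transfer dual independence with \cref{lem:cochains_independent_when_atoms_are}, and close with the degree count. Your extra bookkeeping (that the extension preserves the first right-normal factor $x_i$, hence pins down the correct central dual atom $z$) only makes explicit what the paper leaves implicit.
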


\begin{proof}
Note that $\deg(z)$ is also the dimension of $[z,e]$ as a geometric lattice. By \cref{prop:dimension_of_geometric_lattice_is_size of_dual_basis}, there is a dual basis $x_1, \ldots , x_d$ for $[z,e]$. By \cref{lem:extend_meet_irreducibles}, for all $1 \leq i \leq d$ we can find meet-irreducibles $y_i \leq x_i$ with $\deg(y_i) = n$. Clearly, all $y_i \geq s^{-n}$. By \cref{lem:meet_irreducibles_over_frozen_powers}, we can conclude that $y_i \geq \Phi_n(z)$.

Due to \cref{lem:cochains_independent_when_atoms_are}, the elements $y_1, \ldots, y_d$ are dually independent, as $x_1, \ldots, x_d$ are. Therefore,
\[
n \cdot d = \deg(\Phi_n(z)) \geq \deg \left( \bigwedge_{i=1}^d y_i \right) = \sum_{i=1}^d \deg(y_i) = d \cdot n. 
\]
So the inequality is actually an equality which implies that $\bigwedge_{i=1}^d y_i = \Phi_n(z)$. Thus, the elements $y_1, \ldots , y_d$ form a dual basis of $\Phi_n(z)^{\uparrow}$.
\end{proof}

All rings considered in the following are unital.

Let $R$ be a ring and ${}_RM$ a left $R$-module. We denote by $L({}_RM)$ the lattice of all $R$-submodules ordered by inclusion. Given $A,B \in R$, the respective lattice-operations are then given by $A \vee B = A + B$ and $A \wedge B = A \cap B$. As it is more comfortable to read, we will denote the lattice-operations as $ +, \cap$. It is well-known that the lattices $L({}_RM)$ are modular \footnote{In fact, the name has its origin in the fact that the prototypes of \emph{modular} lattices are coming from \emph{modules} in the described way.}.

As we are working with free modules most of the time, it is convenient to write $L(R,\delta) := L({}_RR^{\delta})$.

The two types of rings that we will mainly consider are the following:

\begin{defi}{\cite[Definition 6.6]{monk_jonsson}}
A ring $R$ is called \emph{completely primary uniserial} (\emph{cpu}, for short) if it is local and every left- or right ideal $I \leq R$ is of the form $\mathfrak{m}^i$ for some integer $i$, where $\mathfrak{m}$ is the unique maximal ideal of $R$.

The \emph{length} of a cpu ring $R$ is defined as the minimal integer $i$ with the property that $\mathfrak{m}^i = (0)$.

An element $\pi \in \mathfrak{m} \setminus \mathfrak{m}^2$ is called a \emph{uniformizer} of $R$.
\end{defi}

\begin{defi}
Let $Q$ be a (possibly skew) field. A surjective map $v: Q \to \mathbb{Z} \cup \{ + \infty \}$ is a \emph{discrete valuation} if the following three axioms are fulfilled for $x,y \in Q$:
\begin{align*}
v(x) = \infty & \Leftrightarrow x = 0 \\
v(x+y) & \geq \min \{ v(x),v(y) \} \\
v(xy) & = v(x) + v(y).
\end{align*}
A pair of the form $(Q,v)$, with $Q$ a skew field and $v$ a discrete valuation on $Q$, is called a \emph{discrete valuation field} (\emph{dvf}, for short).

If $(Q,v)$ is a discrete valuation field, the subring $R := \{ x \in Q : v(x) \geq 0 \}$ is called a (noncommutative) \emph{discrete valuation ring} (\emph{dvr}, for short).

A \emph{uniformizer} of a dvr is any element $\pi \in R$ with $v(\pi) = 1$.
\end{defi}

When talking about discrete valuation fields or -rings, we will not explicitly mention the valuation $v$. If we are only interested in the dvr $R$, we will also occasionally not mention the dvf $Q$ wherein it is embedded.

As in the commutative theory, every left- or right ideal $I \leq R$ is actually both-sided and of the form $I = \pi^k R$ for some integer $k > 0$, where $\pi$ is an arbitrary uniformizer. More generally, any finitely generated left $R$-submodule $M \leq {}_RQ$ is of the form $M = \pi^k R$ for some $k \in \Z$.

Furthermore, note that $\mathfrak{m} = \pi R$ is the unique maximal ideal of $R$, so $R$ is a local ring.

We furthermore need the notion of an \emph{$R$-lattice} in a module. It should always be clear that when talking about $R$-lattices, we mean the module-theoretic notion, \emph{not} the order-theoretic notion of lattices.

\begin{defi}
Let ${}_RM$ be a left $R$-module over the ring $R$. A submodule $A \in L({}_RM)$ is called an \emph{$R$-lattice} in ${}_RM$, if
\begin{enumerate}[1)]
\item $A$ is finitely generated, and
\item $A$ is an \emph{essential} submodule in ${}_RM$, meaning that it has nonzero intersection with each nonzero submodule in ${}_RM$.
\end{enumerate}
\end{defi}

The totality of all $R$-lattices in ${}_RM$ will be denoted as $\Lat({}_RM)$. If $M = {}_RR^{\delta}$, we will also use the more comfortable notation $\Lat(R,\delta) := \Lat({}_RR^{\delta})$.

It is easy to see that if $R$ is noetherian, the property of being an $R$-lattice in ${}_RM$ is stable under taking sums and intersections of submodules, therefore $\Lat({}_RM)$ is a sublattice of $L({}_RM)$, in this case. If $R$ is a dvr with respective dvf $Q$, then the following characterization is quite useful:

\begin{lem} \label{lem:characterization_of_lattices}
Let $R$ be a dvr with dvf $Q$, $\pi \in R$ a uniformizer thereof, and $\delta \geq 1$ some integer. Then:
\begin{enumerate}[i)]
\item An element $A \leq L(R, \delta)$ is an $R$-lattice if and only if there is an integer $n$ such that $\pi^nR^{\delta} \subseteq A$.
\item An element $A \leq L({}_RQ^{\delta})$ is an $R$-lattice if and only if there is an integer $n$ such that $\pi^nR^{\delta} \subseteq A \subseteq \pi^{-n} R^{\delta}$.
\end{enumerate}
\end{lem}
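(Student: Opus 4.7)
The plan is to reduce both statements to two elementary facts about a (noncommutative) dvr $R$ with uniformizer $\pi$: first, every nonzero element of $Q$ has the form $\pi^k u$ with $u \in R^{\times}$ and $k \in \Z$; second, every nonzero cyclic left submodule of ${}_RR$ (resp.\ ${}_RQ$) is of the form $\pi^kR$ with $k \geq 0$ (resp.\ $k \in \Z$). Part (i) is a warm-up for (ii), so I would do them in order.

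For part (i), the ``$\Leftarrow$'' direction I would handle as follows. If $\pi^n R^{\delta} \subseteq A$, then $A/\pi^n R^{\delta}$ is a submodule of the finite-length module $R^{\delta}/\pi^n R^{\delta} \cong (R/\pi^nR)^{\delta}$, hence finitely generated; combined with finite generation of $\pi^n R^{\delta}$ this gives finite generation of $A$. For essentiality, any nonzero $b$ in a submodule $B \subseteq R^{\delta}$ yields $\pi^n b \in A \cap B$, and $\pi^n b \neq 0$ by torsion-freeness of $R^{\delta}$ over the domain $R$. For ``$\Rightarrow$'', essentiality applied to each cyclic submodule $Re_i$ produces an element $r_ie_i \in A$ with $0 \neq r_i \in R$; writing $r_i = u_i\pi^{k_i}$ with $u_i \in R^{\times}$ and multiplying by $u_i^{-1}$ on the left shows $\pi^{k_i}e_i \in A$. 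Taking $n := \max_i k_i$ gives $\pi^n e_i \in A$ for each $i$, hence $\pi^n R^{\delta} \subseteq A$.

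For part (ii), the ``$\Leftarrow$'' direction uses the same sandwich trick: $A/\pi^nR^{\delta}$ embeds in the finite-length module $\pi^{-n}R^{\delta}/\pi^nR^{\delta} \cong (R/\pi^{2n}R)^{\delta}$, so $A$ is finitely generated, and for essentiality in $Q^{\delta}$, any nonzero $q$ in a submodule $B$ admits some power $\pi^k$ such that $\pi^kq \in \pi^nR^{\delta} \subseteq A$, with $\pi^kq \neq 0$ again by torsion-freeness. For the ``$\Rightarrow$'' direction, the lower inclusion $\pi^nR^{\delta} \subseteq A$ is produced exactly as in (i), using that $Re_i$ is a nonzero submodule of $Q^{\delta}$ and that nonzero elements of $R$ have the form $u\pi^k$. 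The upper inclusion $A \subseteq \pi^{-n}R^{\delta}$ comes from finite generation: pick generators $a_1,\ldots,a_r$ of $A$; each lies in $\pi^{-m_i}R^{\delta}$ for some $m_i \geq 0$ because the (finitely many) components of $a_i \in Q^{\delta}$ have finite valuation. Choosing $n$ larger than all $k_i$ and $m_i$ produces a single integer that does both jobs.

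The only delicate point is the noncommutativity of $R$: one must confirm that ``cyclic left submodules of ${}_RR$ are precisely the $\pi^k R$'' follows from the ideal structure of a noncommutative dvr, and that units can be moved across $\pi$-powers on either side. Both are standard consequences of the decomposition $r = u\pi^k = \pi^kv$ and the fact that left and right ideals of $R$ coincide with the two-sided ideals $\pi^kR = R\pi^k$. Beyond that, the proof is essentially bookkeeping and no step should cause genuine difficulty.
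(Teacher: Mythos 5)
Your proposal is correct and follows essentially the same route as the paper: essentiality is tested against the coordinate axes $Re_i$ to produce the lower bound $\pi^nR^{\delta}\subseteq A$, and the upper bound in (ii) comes from the finite valuations of the coefficients of a finite generating set. The only cosmetic difference is that you establish finite generation in (i) via the quotient $A/\pi^nR^{\delta}$, whereas the paper simply invokes noetherianity of $R$ to see that any submodule of $R^{\delta}$ is finitely generated; your explicit handling of the unit decomposition $r=u\pi^k$ in the noncommutative setting is a welcome addition rather than a deviation.
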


\begin{proof}
Denote the standard basis of ${}_RR^{\delta}$ resp. ${}_RQ^{\delta}$ by $e_1, \ldots, e_{\delta}$.

Let $A \in L(R, \delta)$ be an $R$-lattice. As $A$ is essential in $R^{\delta}$, the intersections $A \cap Re_i \neq 0$ ($1 \leq i \leq \delta$); therefore $A \cap Re_i = \pi^{n_i}Re_i$ for some $n_i \geq 0$. Taking $n = \max \{ n_i : 1 \leq i \leq \delta \}$, we see that $\pi^nRe_i \subseteq A$ for all $1 \leq i \leq \delta$. Therefore, $\pi^n R^{\delta} \subseteq A$. On the other hand, each submodular $\pi^nR^{\delta}$ is clearly essential in ${}_RR^{\delta}$; therefore any submodule $A$ containing some $\pi^nR^{\delta}$ is essential as well. As we are working with(in) finitely generated modules over noetherian rings, finite generation is already guaranteed!

Now we prove the second statment: the essentiality of an element $A \in L({}_RQ^{\delta})$ is equivalent to the existence of an $n \geq 0$ with $\pi^nR^{\delta} \subseteq A$; this is proved in the same way as above. As a submodule of the form $\pi^{-n}R^{\delta}$ is finitely generated and $R$ is noetherian, every $A$ contained in some submodule of this form is finitely generated as well. On the other hand, let $A \in L({}_RQ^{\delta})$ be generated by the (finitely many) elements $v_j = \sum_{i}q_{ji}e_i$ ($1 \leq j \leq m$). Then there is an $n$ such that all coefficients $q_{ji} \in \pi^{-n}R$. In particular, all $v_j \in \pi^{-n}R^{\delta}$, therefore $A \subseteq \pi^{-n}R^{\delta}$. Of course, the bounds $\pi^{\pm n}R^{\delta}$ are not guaranteed to be \emph{a priori} equal; however, by chosing large enough values you can clearly find an $n$ such that both bounds are satisfied.
\end{proof}

Let $R$ be a dvr and $\pi \in R$ a uniformizer, then it is easily seen that the rings $R_n := R/\pi^nR$ are cpu-rings of length $n$ for all $n \geq 1$.

We furthermore need some notation in order to translate morphisms of modules into morphisms of lattices: let $R,S$ be (unital) rings and ${}_RM, {}_SN$ be left modules. A map $\varphi: M \to N$ is called \emph{$\alpha$-semilinear} for a ring isomorphism $\alpha: R \to S$ if $\varphi$ is a homomorphism of abelian groups and for all $r \in R, m \in M$ we have $\varphi(rm) = \alpha(r)\varphi(m)$. If we are only interested in semilinearity of a $\varphi$ without the need to emphasize the isomorphism $\alpha$ itself, we will simply speak of \emph{semilinear} maps.

Let $\varphi: {}_RM \to {}_SN$ be a semilinear map between modules. Then we have canonical maps
\begin{align*}
\varphi_{\ast}: L({}_RM) \to L({}_SN) \quad ; & \quad A \mapsto \varphi(A) \\
\varphi^{\ast}: L({}_SN) \to L({}_RM) \quad ; & \quad B \mapsto \varphi^{-1}(B).
\end{align*}
These maps can easily be checked for well-definedness. Without further restrictions, we can only say that $\varphi_{\ast}$ is a monomorphism of $\vee$-semilattices and $\varphi^{\ast}$ is a monomorphism of $\wedge$-semilattices.

Note, however, that if $\varphi$ is injective (resp. surjective), the maps $\varphi_{\ast}$ (resp. $\varphi^{\ast}$) are indeed homomorphisms of lattices. In the first case, $L({}_RM)$ is embedded as a downset of $L({}_SN)$; in the second case, $L({}_SN)$ is embedded as an upset of $L({}_RM)$. If $\varphi$ is bijective - that is, a \emph{semilinear isomorphism} - this shows that $\varphi_{\ast}, \varphi^{\ast}$ are isomorphisms of lattices that are mutually inverse.

We remark that it is quite useful to extend this no(ta)tion to the following case: given a \emph{surjective} ring homomorphism $\alpha : R \to S$, we can still define semilinear maps by the rule $\varphi(rm) = \alpha(r)\varphi(m)$. Letting $r$ act on $N$ via scalar multiplication by $\alpha(r)$, we can regard ${}_SN$ as an $R$-module ${}_RN$ in a way that $L({}_RN) = L({}_SN)$. An $\alpha$-semilinear map ${}_RM \to {}_SN$ then is actually the same as a standard homomorphism (or an $\mathrm{id}_R$-semilinear map, if you like fancy words) of $R$-modules ${}_RM \to {}_RN$.

We can now show:

\begin{prop} \label{prop:lat_r_delta_as_a_direct_limit}
Let $R$ be a noncommutative dvr and $\pi \in R$ a uniformizer; furthermore, set $R_n = R/\pi^nR$.

For $n \geq m$, let $\varphi_{n,m}: R_n^{\delta} \to R_m^{\delta}$ be the canonical projection that is $\alpha_{n,m}$-semilinear, where $\alpha_{n,m}: R_n \to R_m$ is the canonical projection. Then the direct system of lattices $\left( L(R_n,\delta) \right)_{n \geq 1}$ with the homomorphisms $\varphi_{n,m}^{\ast}: L(R_m,\delta) \to L(R_n,\delta)$ has the limit:
\[
\Lat(R,\delta) \cong \lim_{\rightarrow} L(R_n,\delta)
\]
\end{prop}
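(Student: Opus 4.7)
The plan is to construct an explicit isomorphism between $\Lat(R,\delta)$ and the direct limit by exhibiting a compatible family of lattice embeddings $\psi_n : L(R_n,\delta) \hookrightarrow \Lat(R,\delta)$ and checking injectivity and surjectivity via \cref{lem:characterization_of_lattices}. The key observation is that for each $n \geq 1$, the canonical surjective semilinear projection $\pi_n : R^{\delta} \to R_n^{\delta}$ induces, by the remark preceding the proposition, a lattice homomorphism $\pi_n^{\ast} : L(R_n,\delta) \to L(R,\delta)$ that embeds $L(R_n,\delta)$ as the upset $\{ A \leq R^{\delta} : A \supseteq \pi^n R^{\delta} \}$ of $L(R,\delta)$ (this is the correspondence theorem for modules). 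By part i) of \cref{lem:characterization_of_lattices}, every element of this upset is an $R$-lattice, so we may corestrict to obtain $\psi_n : L(R_n,\delta) \hookrightarrow \Lat(R,\delta)$.

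Next, I would verify compatibility with the transition maps. For $n \geq m$ the canonical projections satisfy $\pi_m = \varphi_{n,m} \circ \pi_n$, from which $\psi_m = \psi_n \circ \varphi_{n,m}^{\ast}$ follows immediately by contravariance of the pullback construction. Thus $(\psi_n)_{n \geq 1}$ is a cocone over the direct system, and the universal property produces a lattice homomorphism
\[
\Psi : \varinjlim L(R_n,\delta) \to \Lat(R,\delta).
\]

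To finish, I would show that $\Psi$ is bijective. For surjectivity, given $A \in \Lat(R,\delta)$, part i) of \cref{lem:characterization_of_lattices} provides an $n$ with $\pi^n R^{\delta} \subseteq A$, and then $A/\pi^n R^{\delta} \in L(R_n,\delta)$ satisfies $\psi_n(A/\pi^n R^{\delta}) = A$. For injectivity, suppose $A \in L(R_n,\delta)$ and $B \in L(R_m,\delta)$ satisfy $\psi_n(A) = \psi_m(B)$; without loss of generality $n \geq m$, so compatibility gives $\psi_n(\varphi_{n,m}^{\ast}(B)) = \psi_m(B) = \psi_n(A)$, and since $\psi_n$ is an embedding we conclude $\varphi_{n,m}^{\ast}(B) = A$, meaning $A$ and $B$ are identified in the direct limit.

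There is no real obstacle here: the argument is purely formal, relying on the correspondence theorem, compatibility of canonical projections, and the characterization of $R$-lattices from \cref{lem:characterization_of_lattices}. The only point deserving mild care is that each $\psi_n$ (and hence $\Psi$) is a homomorphism of \emph{lattices} rather than merely of posets, which is precisely ensured by the surjectivity of the semilinear maps $\pi_n$.
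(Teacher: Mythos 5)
Your proposal is correct and follows essentially the same route as the paper: both use the pullbacks along the canonical semilinear projections $R^{\delta} \to R_n^{\delta}$ to embed each $L(R_n,\delta)$ as the upset of $\pi^n R^{\delta}$ inside $\Lat(R,\delta)$, check compatibility with the transition maps, and conclude via part i) of \cref{lem:characterization_of_lattices} that these images exhaust $\Lat(R,\delta)$. Your write-up merely makes the cocone/universal-property bookkeeping more explicit than the paper does.
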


\begin{proof}
Let $A \in L(R,\delta)$ be a lattice. By \cref{lem:characterization_of_lattices}, there is an $n \geq 1$ with $\pi^n R^{\delta} \subseteq A$. Letting $\varphi_n: R^{\delta} \to R_n^{\delta}$ which is $\alpha_n$-semilinear with respect to the canonical projection $\alpha_n: R \to R_n$, we have $A = \varphi_n^{\ast}(A + \pi^n R^{\delta})$. Therefore,
\[
\Lat(R,\delta) = \bigcup_{n = 1}^{\infty}  \varphi_n^{\ast} \left( L(R_n,\delta) \right),
\]
which can now easily identified as $\lim_{\rightarrow} L(R_n,\delta)$, as all $\varphi_n^{\ast}$ are embeddings into $\Lat(R,\delta)$ that are compatible with the direct system in the sense that $\varphi_m^{\ast} = \varphi_n^{\ast} \varphi_{n,m}^{\ast}$.
\end{proof}

The following characterization of dvr's will be useful:

\begin{lem} \label{lem:gubareni_lemma}
Let $R$ be a (potentially noncommutative) local ring, such that the maximal ideal $\mathfrak{m}$ is of the form $\mathfrak{m} = \pi R = R \pi$ for some non-nilpotent $\pi \in R$. If $\bigcap_{n \geq 1} \mathfrak{m}^n = (0)$, then $R$ is a dvr. 
\end{lem}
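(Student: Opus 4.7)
The plan is to construct a discrete valuation on $R$ directly from the hypotheses and then to extend it to a skew field of fractions in which $R$ appears as the non-negative valuation subring. The key structural fact to establish first is that every nonzero $x \in R$ admits a representation $x = \pi^n u = u' \pi^n$ with $n \geq 0$ uniquely determined and $u, u' \in R^{\times}$: since $\mathfrak{m}^k = \pi^k R = R \pi^k$ and $\bigcap_k \mathfrak{m}^k = (0)$, there is a largest $n$ with $x \in \mathfrak{m}^n$, and writing $x = \pi^n y$ the maximality of $n$ forces $y \notin \mathfrak{m}$, hence $y \in R^{\times}$ by locality; the right-handed version is parallel. I would then deduce that $\pi$ is left- and right-regular (if $\pi x = 0$ with nonzero $x = \pi^m u$, cancellation of the unit $u$ yields $\pi^{m+1} = 0$, contradicting non-nilpotence), whence $R$ is a domain and the exponent $n =: v(x)$ is intrinsic. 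A short further step shows that every nonzero left or right ideal of $R$ is some $\mathfrak{m}^n$: any such ideal contains $\pi^n u$ with $u$ a unit, hence contains $\mathfrak{m}^n$, and one picks $n$ minimal.

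Next, because $R\pi = \pi R$ and $\pi$ is regular, the rule $r\pi = \pi \sigma(r)$ well-defines a ring automorphism $\sigma : R \to R$ that sends units to units, and induction gives $r\pi^b = \pi^b \sigma^b(r)$. Setting $v(0) := \infty$, the multiplicativity $v(xy) = v(x) + v(y)$ follows from the computation $\pi^a u \cdot \pi^b w = \pi^{a+b} \sigma^b(u) w$ with $\sigma^b(u) w \in R^{\times}$, while $v(x+y) \geq \min(v(x),v(y))$ is immediate from additive closure of the $\mathfrak{m}^k$. Since every nonzero one-sided principal ideal in $R$ is some $\mathfrak{m}^n$, the intersection of any two nonzero left ideals is nonzero, so both the left and right Ore conditions hold and $R$ embeds into a skew field of fractions $Q$. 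I would then extend $v$ to $Q$ via $v(xs^{-1}) := v(x) - v(s)$, verifying independence of the representative by means of the Ore calculus, and check the three valuation axioms on $Q$. Since the set of elements of $Q$ with $v \geq 0$ is exactly $R$, this displays $R$ as a dvr.

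The main obstacle will be the bookkeeping imposed by noncommutativity: verifying that $\sigma$ is indeed a well-defined ring automorphism preserving units, tracking the $\sigma$-twists in the proof of multiplicativity of $v$, and pushing the valuation through the Ore localization so that $v$ is unambiguous on all representatives $xs^{-1} = x' s'^{-1}$ in $Q$. Everything else essentially reduces to the single structural observation that $R \setminus \{0\} = \bigsqcup_{n \geq 0} \pi^n R^{\times}$, which is the noncommutative analogue of unique factorization into powers of a uniformizer.
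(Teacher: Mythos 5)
Your argument is sound, but note that the paper does not actually prove this lemma: its ``proof'' is a bare citation of Gubareni's Proposition~4, so there is no in-paper argument to compare against. What you propose is the standard self-contained route, and every step checks out. The structural core, $R \setminus \{0\} = \bigsqcup_{n \geq 0} \pi^n R^{\times}$, is correctly derived: $\mathfrak{m}^k = \pi^k R = R\pi^k$ by induction from $\pi R = R\pi$, the intersection condition gives a largest $n$ with $x \in \mathfrak{m}^n$, and locality upgrades the cofactor to a unit; moreover the same exponent $n$ works for the left- and right-handed representations precisely because $\pi^n R = R\pi^n$. Regularity of $\pi$ (hence that $R$ is a domain --- a point you need explicitly before invoking Ore) follows from non-nilpotence as you say, the twist $\sigma$ defined by $r\pi = \pi\sigma(r)$ is a well-defined automorphism because $\pi$ is regular and $R\pi = \pi R$, and the identity $u\pi^b = \pi^b\sigma^b(u)$ gives multiplicativity of $v$. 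The Ore conditions follow since any two nonzero one-sided ideals contain a common $\mathfrak{m}^N \neq (0)$, and the extension of $v$ to the skew fraction field $Q$ with $\{q \in Q : v(q) \geq 0\} = R$ is routine (every nonzero element of $Q$ is $\pi^m u$ with $m \in \mathbb{Z}$, $u \in R^{\times}$, which makes well-definedness and surjectivity of $v$ onto $\mathbb{Z} \cup \{+\infty\}$ immediate). This matches the paper's definition of a dvr as the non-negative part of a discrete valuation field, so the proof is complete once the deferred Ore bookkeeping is written out; there is no gap.
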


\begin{proof} 
\cite[Proposition 4.]{gubareni}.
\end{proof}

\begin{lem} \label{lem:inverse_limits_of_cpu_rings_are_dvrs}
Let $(R_k)_{k \geq 1}$ be an inverse system of cpu rings such that the ring $R_k$ has length $k$ and the ring homomorphisms $\varphi_{k,l}: R_k \to R_l$ are all surjective. Then $R = \lim_{\leftarrow} R_k$ is a dvr.
\end{lem}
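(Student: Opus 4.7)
The plan is to invoke Lemma~\ref{lem:gubareni_lemma}: I will construct a non-nilpotent element $\pi \in R$ and verify that $R$ is local with maximal ideal $\mathfrak{m}$ satisfying $\mathfrak{m} = \pi R = R\pi$ and $\bigcap_{n \geq 1}\mathfrak{m}^n = (0)$.

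First I would dispose of the easier parts. Writing $\mathfrak{m}_k$ for the maximal ideal of $R_k$, the only ideals of $R_k$ are the $\mathfrak{m}_k^i$, so $\ker(\varphi_{k,l})$ must coincide with $\mathfrak{m}_k^l$; consequently $\varphi_{k,l}(\mathfrak{m}_k) = \mathfrak{m}_l$ and $\varphi_{k,l}$ preserves both units and non-units. Hence a compatible sequence $r = (r_k) \in R$ consists either entirely of units or entirely of non-units, and the set of non-units is precisely $\mathfrak{m} := \lim_{\leftarrow}\mathfrak{m}_k$, which is an ideal. So $R$ is local. For the intersection, $\mathfrak{m}^n \subseteq \lim_{\leftarrow}\mathfrak{m}_k^n$, and $\mathfrak{m}_k^n = 0$ for $n \geq k$; every component of any element of $\bigcap_n \mathfrak{m}^n$ is thus forced to vanish.

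Next I would build $\pi$ by compatibly lifting uniformizers. Starting from a uniformizer $\pi_2 \in \mathfrak{m}_2 \setminus \mathfrak{m}_2^2$, suppose inductively that compatible uniformizers $\pi_2, \ldots, \pi_k$ with $\varphi_{j,j-1}(\pi_j) = \pi_{j-1}$ have been chosen. Any uniformizer $\tilde \pi_{k+1}$ of $R_{k+1}$ projects to a uniformizer of $R_k$, because $\ker \varphi_{k+1,k} = \mathfrak{m}_{k+1}^k \subseteq \mathfrak{m}_{k+1}^2$ (using $k \geq 2$), so $\varphi_{k+1,k}(\tilde \pi_{k+1}) = u_k \pi_k$ for some unit $u_k \in R_k$. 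Lifting $u_k$ to a unit $\tilde u_{k+1} \in R_{k+1}$ (units lift, as they are exactly the elements outside the maximal ideal, which is preserved under $\varphi_{k+1,k}$) and setting $\pi_{k+1} := \tilde u_{k+1}^{-1}\tilde \pi_{k+1}$ yields a uniformizer of $R_{k+1}$ with $\varphi_{k+1,k}(\pi_{k+1}) = \pi_k$. The resulting $\pi := (\pi_k)_k \in R$ is non-nilpotent, since $\pi_{n+1}^n$ generates the nonzero ideal $\mathfrak{m}_{n+1}^n$ for every $n$.

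The hard part is to show $\mathfrak{m} = \pi R$; the identity $\mathfrak{m} = R\pi$ follows by a symmetric argument. For each $k$ one has a short exact sequence of abelian groups
\[
0 \to \operatorname{ann}_{R_k}(\pi_k) \to R_k \xrightarrow{\pi_k \cdot} \mathfrak{m}_k \to 0,
\]
where $\operatorname{ann}_{R_k}(\pi_k) = \mathfrak{m}_k^{k-1}$ (since $\pi_k\mathfrak{m}_k^{k-1} = \mathfrak{m}_k^k = 0$ while $\pi_k\mathfrak{m}_k^{k-2} = \mathfrak{m}_k^{k-1} \neq 0$). Given $r \in \mathfrak{m}$, one may choose $s_k \in R_k$ with $\pi_k s_k = r_k$, but the $s_k$ need not be compatible, the obstruction lying precisely in the annihilator system. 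The key observation is the Mittag-Leffler property: for each fixed $l$, the transition map $\varphi_{k,l}$ sends $\mathfrak{m}_k^{k-1}$ into $\mathfrak{m}_l^{k-1}$, which vanishes as soon as $k \geq l + 1$, so the images in $\mathfrak{m}_l^{l-1}$ stabilize at $0$. Hence the inverse limit preserves the exactness of the sequences above, the map $\pi \cdot : R \to \mathfrak{m}$ is surjective, and Lemma~\ref{lem:gubareni_lemma} yields that $R$ is a dvr.
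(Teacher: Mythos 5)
Your proof is correct and follows essentially the same route as the paper: both reduce to \cref{lem:gubareni_lemma} by establishing locality, non-nilpotence of a compatible system of uniformizers, $\bigcap_n \mathfrak{m}^n = (0)$, and $\mathfrak{m} = \pi R = R\pi$. The only differences are cosmetic: you build $\pi$ by inductively lifting uniformizers (the paper just lifts $\pi_2$ arbitrarily to $R$), and your Mittag--Leffler argument for the surjectivity of $s \mapsto \pi s$ is exactly the paper's explicit correction $y_k := \varphi_{k+1,k}(r_{k+1})$, using $\operatorname{ann}_{R_k}(\pi_k) = \mathfrak{m}_k^{k-1} = \ker(\varphi_{k,k-1})$, packaged in homological language.
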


\begin{proof}
Denote the maximal ideal of $R_k$ by $\mathfrak{m}_k$. It is clear that for $l \leq k$, the ideal $\mathfrak{m}_k^l$ is the only one, such that $R_k/\mathfrak{m}_k^l$ is of length $l$. Therefore, $\ker(\varphi_{k,l}) = \mathfrak{m}_k^l$.

Furthermore, $\varphi_{k,l}^{-1}(\mathfrak{m}_l) = \mathfrak{m}_k$ because the preimage of $\mathfrak{m}_l$ must the the (unique) maximal ideal of $R_k$. Due to surjectivity, it follows that $\varphi_{k,l}(\mathfrak{m}_k) = \varphi_{k,l}(\varphi_{k,l}^{-1}(\mathfrak{m}_l)) = \mathfrak{m}_l$. Therefore, the ideals $(\mathfrak{m}_k)_{k \geq 1}$ form by restriction an inverse system connected by surjections.

Represent the elements of the inverse limit $R$ as $(x_k)_{k \geq 1}$ with $x_k \in R_k$ and $\varphi_{k,l}(x_k) = x_l$ ($k \geq l$). We define the ideal
\[
\mathfrak{m} = \{ (x_k)_{k \geq 1} : x_k \in \mathfrak{m}_k \} \leq R.
\]
Each element $(x_k)_{k \geq 1} \in R \setminus \mathfrak{m}$ is invertible, which can be shown by the following argument: by the preceding arguments, all components $x_k \not \in \mathfrak{m}_k$ and are therefore invertible. As each $x_k$ is invertible in $R_k$, the element $(x_k)_{k \geq 1}$ is invertible in $R$. Therefore, $R$ is local.

Let $\pi_2$ be a uniformizer of $R_2$. Then, by the surjectivity of the connecting maps, there exists an element $\pi = (\pi_k)_{k \geq 1} \in R$ whose component at the index $2$ is $\pi_2$. As $\pi_2 \in \mathfrak{m}_2 \setminus \mathfrak{m}_2^2$, it is clear that $\pi_k \in \mathfrak{m}_k \setminus \mathfrak{m}_k^2$ for all $k \geq 2$. As $R_k$ has length $k$, this implies that $\pi_k^l \neq 0$ for $l < k$. In particular, $\pi^l \neq 0$ holds in $R$ for all $l$, so $\pi$ is non-nilpotent.

As $\mathfrak{m}_k^n = (0)$ holds whenever $n \geq k$, we also see that $\bigcap_{n \geq 1} \mathfrak{m}^n = (0)$.

Finally, let $x = (x_k)_{k \geq 1} \in \mathfrak{m}$. We need to prove the existence of an element $y = (y_k)_{k \geq 1} \in R$ with $\pi y = x$. For each $k \geq 1$, chose $r_k \in R_k$ with $\pi_k r_k = x_k$. It follows for any $k \geq 2$ (!) that
\[
\pi_k r_k = x_k = \varphi_{k+1,k} (\pi_{k+1} r_{k+1}) = \pi_k \underbrace{\varphi_{k+1,k}(r_{k+1})}_{=: y_k}.
\]
It follows that $\pi_k(r_k - y_k) = 0$, so $r_k - y_k \in \mathfrak{m}_k^{k-1} = \ker(\varphi_{k,k-1})$, therefore $\varphi_{k,k-1}(y_k) = \varphi_{k,k-1}(r_k) = y_{k-1}$. It follows that $(y_k)_{k \geq 1} \in R$ and, by construction, $\pi y = x$. The existence of a $y^{\prime} \in R$ with $x= y^{\prime}\pi$ follows by symmetry.

By \cref{lem:gubareni_lemma}, $R$ is a dvr.
\end{proof}

\begin{thm}[\textbf{Inaba}]
Let $L$ be a bounded primary lattice that has a basis $y_1,\ldots , y_{\delta}$ where $\delta \geq 4$ and $\len([0,y_i]) = n_i$ for some positive integers $n_i$ ($1 \leq i \leq \delta$). Then there is a cpu ring $R$ of length $n = \max \{ n_i : 1 \leq i \leq \delta \}$ with uniformizer $\pi$ such that $L \cong L({}_RM)$, with the module
\[
M = \sum_{i = 1}^{\delta} \pi^{n - n_i} R \subseteq R^{\delta}.
\]

In particular, if $\len([0,y_i]) = n$ ($1 \leq i \leq \delta$) for some common integer $n \geq 1$, then $L \cong L(R, \delta)$ for a cpu-ring $R$ that is of length $n$.
\end{thm}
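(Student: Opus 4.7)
The plan is to invoke Inaba's classical coordinatisation theorem for primary lattices \cite{inaba} and then to read off the specific ring and module structure from the basis data. I would first handle the homogeneous case $n_1 = \cdots = n_\delta = n$, which is the content of Inaba's main theorem, and then reduce the general inhomogeneous case to it by an enlargement argument.

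For the homogeneous case, one constructs the coordinatising ring $R$ by fixing two basis elements $y_i, y_j$ and identifying elements of $R$ with perspectivities between the coordinate chains $[0, y_i]$ and $[0, y_j]$, a noncommutative analogue of von Staudt's throw calculus. The hypothesis $\delta \geq 4$ supplies the Desarguesian configurations needed to show that the ring thus obtained is independent of the basis pair and that the coordinate map is a lattice isomorphism $L \cong L(R, \delta)$ sending each $y_i$ to the axis $R e_i$. Since $[0, Re_i] \cong [0, y_i]$ has length $n$, the ring $R$ must have length $n$ as well.

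For the general case, I would embed $L$ as an upper interval inside a homogeneous primary lattice $\tilde L$ of length $n$ and dimension $\delta$, with a homogeneous basis $\tilde y_1, \ldots, \tilde y_\delta$ of common length $n$, such that $L = [c, 1_{\tilde L}]$ for some $c \in \tilde L$ chosen so that the interval $[c \wedge \tilde y_i, \tilde y_i]$ has length $n_i$ and corresponds to $[0, y_i]$ under the identification. Applying the homogeneous case to $\tilde L$ produces a cpu ring $R$ of length $n$ and an isomorphism $\tilde L \cong L(R, \delta)$ sending $\tilde y_i \mapsto R e_i$. The image of $c$ must then be the unique submodule of $R^\delta$ whose intersection with each axis $R e_i$ is $\pi^{n - n_i} R e_i$, namely $M = \sum_i \pi^{n-n_i} R e_i$. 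Restricting the isomorphism to $c^{\uparrow}$ yields $L \cong L({}_R M)$, with the basis $y_i$ mapping to $\pi^{n-n_i} R e_i$ as required.

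The main obstacle is the construction of the enlargement $\tilde L$: one needs a primary lattice with the prescribed length, dimension, and basis combinatorics that realises $L$ as a specified interval. One route is to build $\tilde L$ by a dual procedure, formally attaching $n - n_i$ many new elements below each $y_i$ while preserving modularity and primarity — the latter monitored via \cref{prop:primarity_perspectivity} and the perspectivity of atoms. A cleaner alternative is to avoid $\tilde L$ altogether and run Inaba's coordinatisation directly on $L$: one extracts $R$ from perspectivities between maximal-length chains inside $L$ (which exist because $n = \max_i n_i$ is attained by some basis element), and then the position of a shorter basis chain $[0, y_i]$ inside an ambient chain of length $n$ is rigid up to automorphism, forcing the image of $y_i$ in the coordinate lattice to be $\pi^{n-n_i} Re_i$. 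The rigidity afforded by $\delta \geq 4$ ensures these two perspectives yield the same ring and the same module $M$.
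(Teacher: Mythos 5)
This theorem carries no proof in the paper: it is quoted as a classical result of Inaba \cite{inaba} and used as a black box (the labelled attribution \textbf{Inaba} and the surrounding text make this clear). So there is no in-paper argument to compare against, and your sketch has to stand on its own as a proof of Inaba's coordinatization theorem — which it does not quite do. Your description of the homogeneous case (frame-based von Staudt calculus, with $\delta \geq 4$ supplying the Desarguesian configurations) is an accurate summary of the classical strategy, but it is a summary, not a proof; if one is permitted to cite that much of Inaba, one may as well cite the full statement, which already covers the inhomogeneous case directly — the coordinatizing module of a primary lattice with basis chains of lengths $n_i$ comes out as a direct sum of cyclic modules $R/\pi^{n_i}R$, which is exactly what $M = \sum_i \pi^{n-n_i}Re_i$ is. The reduction to the homogeneous case is therefore not how the original argument runs, and it is precisely where your sketch has a genuine gap.

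Concretely: the enlargement $\tilde L$ is never constructed. ``Formally attaching $n - n_i$ new elements below each $y_i$'' is not an operation on modular lattices — extending a single chain downward forces you to specify meets and joins of the new elements with \emph{every} element of $L$, and there is no reason the result is modular, let alone primary; \cref{prop:primarity_perspectivity} lets you \emph{check} primarity but gives no recipe for building the extension. The only clean way to produce $\tilde L$ is to already know $L \cong L({}_RM)$ and embed $M$ into $R^\delta$, which is circular. There is also a bookkeeping error in the reduction as written: if $L$ is realized as the upper interval $[c, 1_{\tilde L}]$ of $L(R^\delta)$ with $[c \wedge Re_i, Re_i]$ of length $n_i$, then $c$ must correspond to $\bigoplus_i \pi^{n_i}Re_i$, and one gets $L \cong L(R^\delta/c) \cong L\bigl(\bigoplus_i R/\pi^{n_i}R\bigr) \cong L({}_RM)$; the element $c$ is \emph{not} $M$ itself, and $[M, R^\delta]$ would have basis chains of the wrong lengths $n - n_i$. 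Your fallback suggestion — run the coordinatization directly on $L$ — is the right move, but as stated it amounts to ``do what Inaba does,'' i.e. to citing the theorem, which is what the paper does.
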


Now Inaba's theorem tells us the following:

\begin{prop} \label{prop:piecewise_coordinatization}
Let $z \in X(\Ccal(G^-))$ have $\delta_z := \deg(z) \geq 4$, then for each $n \geq 1$, there is a cpu-ring $R_{z,n}$ of length $n$ such that there is a homomorphism of lattices $\Phi_n(z)^{\uparrow} \cong L(R_{z,n},\delta_z)$.
\end{prop}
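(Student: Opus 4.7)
My plan is to reduce the statement to Inaba's theorem, recalled just above. The hypotheses to verify are that $\Phi_n(z)^{\uparrow} = [\Phi_n(z), e]$ is a bounded primary lattice, that it admits a basis of cardinality $\delta_z \geq 4$, and that each basis element generates a chain of length $n$ down to the bottom. The first task is to observe that $[\Phi_n(z), e] \subseteq \beth^-(z)$: if $g \in G^-$ satisfies $g \geq \Phi_n(z)$, then using the semi-beam decomposition of \cref{thm:G-_is_product_of_indecomposable_semibeams}, the component of $g$ in each $\beth^-(z_j)$ with $z_j \neq z$ is $\geq e$ (the top of that factor, because $\Phi_n(z)$ itself has component $e$ there) and hence equals $e$, so $g$ lies entirely in $\beth^-(z)$. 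Boundedness is clear, and primarity descends from the primarity of $\beth(z)$ (\cref{prop:beams_are_primary}) since primarity is a hereditary property of intervals.

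Next I would convert the dual basis supplied by \cref{thm:phi_n_z_upset_has_dual_basis} into a basis in Inaba's sense. Let $y_1, \ldots, y_{\delta_z}$ be the dual basis of $\Phi_n(z)^{\uparrow}$ with $\deg(y_i) = n$ for each $i$, and set $y_i^{\ast} := \bigwedge_{j \neq i} y_j$. Dual independence yields $y_i^{\ast} \vee y_i = e$, while dual spanning yields $y_i^{\ast} \wedge y_i = \bigwedge_j y_j = \Phi_n(z)$. Applying the diamond lemma (\cref{lem:diamond_lemma}) with $a = y_i$ and $b = y_i^{\ast}$ then gives
\[
[\Phi_n(z), y_i^{\ast}] \cong [y_i, e],
\]
and the right-hand side is a chain of length $n$ because $y_i$ is meet-irreducible by \cref{prop:meet_irreducibles_are chains} and $\deg(y_i) = n$. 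Hence each $y_i^{\ast}$ is join-irreducible and $\len([\Phi_n(z), y_i^{\ast}]) = n$. Dualising the computation inside the proof of \cref{lem:reflection_of_frames} shows that $y_1^{\ast}, \ldots, y_{\delta_z}^{\ast}$ are independent and spanning, so they constitute a basis in the sense required by Inaba's theorem.

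With $\delta_z \geq 4$ and all chain-lengths equal to $n_i = n$, Inaba's theorem delivers a cpu ring $R_{z,n}$ of length $n$ such that $\Phi_n(z)^{\uparrow} \cong L({}_{R_{z,n}}M)$ with $M = \sum_i \pi^{n-n_i}R_{z,n} = R_{z,n}^{\delta_z}$, which is precisely $L(R_{z,n}, \delta_z)$. The only genuinely delicate point in this plan is the translation between the dual basis that the structure theory of $\ell$-groups delivers naturally (meet-irreducibles descending from chains of frozen powers of central atoms) and the join-basis consumed by Inaba; once that translation is carried out via $y_i \mapsto \bigwedge_{j \neq i} y_j$ and the diamond lemma, the proposition follows by direct citation.
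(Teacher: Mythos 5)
Your proof is correct and takes essentially the same route as the paper's: obtain the dual basis of meet-irreducible dual chains from \cref{thm:phi_n_z_upset_has_dual_basis}, reflect it to a join-basis of chains of length $n$ via \cref{lem:reflection_of_frames} (which you re-derive explicitly with the diamond lemma), and invoke Inaba's theorem. Your explicit check of the primarity hypothesis via \cref{prop:beams_are_primary} and of the containment $\Phi_n(z)^{\uparrow} \subseteq \beth^-(z)$ spells out details the paper leaves implicit, but the argument is the same.
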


\begin{proof}
By \cref{thm:phi_n_z_upset_has_dual_basis}, the lattice $\Phi_n(z)^{\uparrow}$ has a dual basis of dual chains $y_1,\ldots y_{\delta_z}$ where $\deg(y_i) = n$ ($1 \leq i \leq \delta_z$). By \cref{lem:reflection_of_frames}, there is a basis $Y^{\prime} = y_1^{\prime},\ldots, y_{\delta_z}^{\prime}$ for $\Phi_n(z)^{\uparrow}$ that consists of chains such that $\len([\Phi_n(z),y_i^{\prime}]) = \len([y_i,e]) = n$ for all $i$. The rest follows from Inaba's theorem.
\end{proof}

\begin{defi}
For a $z \in X(\Ccal(G^-))$, the integer $\delta_z := \deg(z)$ will be called the \emph{dimension} of the semibeam $\beth^-(z)$ resp. the beam $\beth(z)$.
\end{defi}

Let $L$ be a lattice. For an element $x \in L$ such that $x^{\downarrow}$ fulfills the ascending chain condition, we define the \emph{radical} of $x$ as
\[
\Rad_L(x) := \bigwedge \{ y \in L : y \prec x \}
\]
and if $x^{\uparrow}$ fulfills the descending chain condition, we define the \emph{socle} of $x$ as
\[
\Soc_L(x) := \bigvee \{ y \in L : y \succ x \}.
\]

\begin{rema}
Note that the socle of a module ${}_RM$ is the sum of all simple submodules in $M$; therefore the module-theoretic socle is represented in $L({}_RM)$ by the element $\Soc_{L({}_RM)}(0)$, whereas the module-theoretic radical, being defined as the intersection of all maximal submodules of $M$, is represented by the element $\Rad_{L({}_RM)}(M)$.
\end{rema}

The following lemma shows that right- and left-normal factorizations in modular noetherian right $\ell$-groups with strong order unit can be seen as a Garside-theoretic counterpart to socle and radical series in module theory.

\begin{lem} \label{lem:rad_and_soc_from_normal_factorizations}
For $g \in G^-$, let $g = g_k g_{k-1} \ldots g_1$ be the right-normal and $g = h_1 h_2 \ldots h_k$ the left-normal factorization. Then
\[
\Rad_{[g,e]}(e) = g_1 \quad ; \quad \Soc_{G^-}(g) = h_2 \ldots h_k.
\]
\end{lem}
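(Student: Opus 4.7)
The plan is to attack the two formulae separately, each by identifying the relevant covers (respectively dual atoms) and matching them against explicit formulae for $g_1$ and $h_1$.

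\textbf{Radical.} The explicit expression in \cref{thm:right_normal_facs_exist_and_unique} for $i=1$, together with $g \vee e = e$, gives $g_1 = g \vee s^{-1}$. The elements of $[g,e]$ covered by $e$ are precisely those $y \in X(G^-)$ with $y \geq g$. By \cref{cor:all_dual_atoms_are_above_s'} every such $y$ satisfies $y \geq s^{-1}$, and hence $y \geq g \vee s^{-1} = g_1$; so $\Rad_{[g,e]}(e) \geq g_1$. For the reverse inequality, observe that $g_1 \in [s^{-1},e]$ and that, by \cref{thm:strong_order_interval_is_geometric} together with the coincidence of geometricity and dual geometricity for bounded modular lattices, $[s^{-1},e]$ is dually atomistic. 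Hence $g_1$ equals the meet of the dual atoms of $[s^{-1},e]$ above it, all of which already lie above $g$. Therefore $g_1 = \Rad_{[g,e]}(e)$.

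\textbf{Socle.} Right-multiplication by $g^{-1}$ is an order automorphism of $G$, and it restricts to a bijection between $\{y \in G^- : y \succ g\}$ and $\{z \in G^+ : z \succ e,\ z \leq g^{-1}\}$: indeed $y \geq g$ forces $yg^{-1} \geq e$, and $y \leq e$ forces $yg^{-1} \leq g^{-1}$, while covers are preserved by any order isomorphism. As order automorphisms preserve existing joins,
\[
\Soc_{G^-}(g) = \Bigl(\bigvee \{z \in G^+ : z \succ e,\ z \leq g^{-1}\}\Bigr)\,g.
\]
By \cref{prop:join_of_atoms_is_strong_order_unit}, every atom of $G^+$ lies in $[e,s]$, and this interval is a modular geometric, hence atomistic, lattice. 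Thus the atoms of $G^+$ below $g^{-1}$ coincide with the atoms of $[e,\, g^{-1} \wedge s]$, whose join is $g^{-1} \wedge s$ by atomisticity. Consequently $\Soc_{G^-}(g) = (g^{-1} \wedge s)\,g$.

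\textbf{Identification with $h_2 \ldots h_k$.} The formula in \cref{thm:left_normal_facs_exist_and_unique} for $i = 1$, combined with $g \curlyvee e = e$ (which follows from $g \leq e \Leftrightarrow g \lesssim e$, via \cref{lem:normal_element_less_and_lesssim} applied to the normal element $e$), yields $h_1 = g \curlyvee s^{-1}$. By the very definition of $\curlyvee$ recorded in \cref{subs:right_l_groups}, $g \curlyvee s^{-1} = (g^{-1} \wedge s)^{-1}$, so $h_1^{-1} = g^{-1} \wedge s$ and hence $h_2 \ldots h_k = h_1^{-1} g = (g^{-1} \wedge s)\,g = \Soc_{G^-}(g)$. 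The main obstacle is the socle direction: one must convert covers below $g$ into atoms of $G^+$ via right-multiplication, exploit the atomisticity of $[e,s]$ to evaluate their join, and only then recognize the resulting expression $(g^{-1} \wedge s)\,g$ as $h_1^{-1} g$ by unwinding the definition of the dual join.
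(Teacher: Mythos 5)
Your proof is correct. The radical half is essentially the paper's own argument: both directions rest on \cref{cor:all_dual_atoms_are_above_s'} (every dual atom lies above $s^{-1}$, giving $\Rad_{[g,e]}(e) \geq g \vee s^{-1} = g_1$) and on the dual atomisticity of $[s^{-1},e]$ from \cref{thm:strong_order_interval_is_geometric} (giving the reverse inequality). For the socle half you diverge: the paper simply applies the already-proven radical formula to the opposite right $\ell$-group $(G^{\mathrm{op}},\lesssim)$ to get $\Rad_{[g,e]_{\lesssim}}(e) = h_1$ and then computes $\Soc_{G^-}(g) = h_1^{-1}g$ by right-translation, whereas you bypass the duality principle entirely, computing $\Soc_{G^-}(g) = (g^{-1}\wedge s)\,g$ directly from the atomisticity of $[e,s]$ (\cref{prop:join_of_atoms_is_strong_order_unit}) and then recognizing $g^{-1}\wedge s = h_1^{-1}$ from the closed formula of \cref{thm:left_normal_facs_exist_and_unique}. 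The two routes use the same underlying ingredients (translation by $g^{-1}$, and geometricity on the positive side), but yours is more self-contained in that it does not tacitly invoke the fact that the radical statement dualizes to $(G^{\mathrm{op}},\lesssim)$ with the same strong order unit, at the price of an explicit computation with $\curlyvee$; both are sound and of comparable length.
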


\begin{proof}
As $g_1 \in [s^{-1},e]$, the latter being dually atomistic by Rump's theorem, \cref{thm:strong_order_interval_is_geometric}, we know that $\Rad_{[g,e]}(e) \leq g_1$. As $\Rad_{[g,e]}(e)$ is a meet of dual atoms, it follows from \cref{cor:all_dual_atoms_are_above_s'} that $\Rad_{[g,e]}(e) \geq s^{-1}$. As $\Rad_{[g,e]}(e) \geq g$ holds trivially, we have shown that $\Rad_{[g,e]}(e) \geq g \vee s^{-1} = g_1$. This proves the first statement.

From the first statement, we deduce that, with respect to the dual order, we have
\[
\Rad_{[g,e]_{\lesssim}}(e) = h_1,
\]
where $h_1$ is the first factor in the left-normal factorization. This is only the first statement rewritten in terms of the dual order. Using this, we can calculate
\[
\Soc_{G^-}(g) = \Soc_{[g,e]}(g)  = \Soc_{[e,g^{-1}]}(e) \cdot g  = \left( \Rad_{[g,e]_{\lesssim}}(e) \right)^{-1} g  = h_1^{-1}g = h_2 \ldots h_k.
\]
\end{proof}

With this it is easy to determine socle and radical of frozen powers:

\begin{lem} \label{lem:soc_and_rad_of_frozens}
Let $z \in X(\Ccal(G^-))$ and $n \geq 1$. Then $\Rad_{[\Phi_n(z),e]}(e) = z$ and $\Soc_{G^-}(\Phi_n(z)) = \Phi_{n-1}(z)$.
\end{lem}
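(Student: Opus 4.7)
The plan is to invoke the preceding lemma \cref{lem:rad_and_soc_from_normal_factorizations}, which expresses $\Rad_{[g,e]}(e)$ and $\Soc_{G^-}(g)$ in terms of the first factor of the right-normal resp. left-normal factorization of $g$. The main task is therefore to identify these factorizations for $g = \Phi_n(z)$.

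First I would observe that the canonical expression $\Phi_n(z) = D^{n-1}(z) D^{n-2}(z) \ldots D(z) z$ is already a right-normal factorization. Indeed, iterating \cref{prop:duality_gives_right_maximal_expression}, each juxtaposition $D^i(z) \cdot \left(D^{i-1}(z) \ldots z\right)$ is right-maximal, and all factors lie in $\Ccal(G^-) \setminus \{e\} \subseteq [s^{-1},e] \setminus \{e\}$. By \cref{thm:right_normal_facs_exist_and_unique}, this is the (unique) right-normal factorization. Applying the first part of \cref{lem:rad_and_soc_from_normal_factorizations} immediately yields $\Rad_{[\Phi_n(z),e]}(e) = z$, since the rightmost factor is $z$.

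For the socle I would next check that this factorization is $\delta_z$-homogeneous (with $\delta_z := \deg(z)$). This is immediate from \cref{prop:D_preserves_deg_and_U}, which gives $\deg(D^i(z)) = \deg(z) = \delta_z$ for every $i$. By \cref{prop:homogeneous_rightnormal_implies_leftnormal}, a $d$-homogeneous right-normal factorization is automatically left-normal, so the expression $\Phi_n(z) = D^{n-1}(z) D^{n-2}(z) \ldots D(z) z$ is simultaneously the left-normal factorization with $h_1 = D^{n-1}(z)$, $h_2 = D^{n-2}(z)$, \ldots, $h_n = z$. The second part of \cref{lem:rad_and_soc_from_normal_factorizations} then gives
\[
\Soc_{G^-}(\Phi_n(z)) = h_2 h_3 \ldots h_n = D^{n-2}(z) D^{n-3}(z) \ldots D(z) z = \Phi_{n-1}(z),
\]
as desired (with the convention that the empty product equals $e = \Phi_0(z)$ when $n = 1$).

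There is no real obstacle here beyond the bookkeeping of matching up the two factorization conventions; the key conceptual input has already been done in \cref{prop:duality_gives_right_maximal_expression} (right-maximality of frozen power expressions) and \cref{prop:homogeneous_rightnormal_implies_leftnormal} (coincidence of right- and left-normal factorizations in the homogeneous case).
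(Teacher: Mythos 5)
Your proposal is correct and follows essentially the same route as the paper: establish that $\Phi_n(z)=D^{n-1}(z)\ldots D(z)z$ is the right-normal factorization (via \cref{prop:duality_gives_right_maximal_expression}), apply \cref{lem:rad_and_soc_from_normal_factorizations} for the radical, then use $\deg(z)$-homogeneity (\cref{prop:D_preserves_deg_and_U}) and \cref{prop:homogeneous_rightnormal_implies_leftnormal} to see the same expression is left-normal and read off the socle. No gaps.
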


\begin{proof}
We first note that we have the right-normal factorization
\[
\Phi_n(z) = D^{n-1}(z)D^{n-2}(z) \ldots D(z)z.
\]
Therefore, the first statement follows directly from \cref{lem:rad_and_soc_from_normal_factorizations}.

By \cref{prop:D_preserves_deg_and_U}, this right-normal factorization is $\deg(z)$-homogeneous and therefore also left-normal, by an application of \cref{prop:homogeneous_rightnormal_implies_leftnormal}. Therefore, \cref{lem:rad_and_soc_from_normal_factorizations} implies that
\[
\Soc_{G^-}(\Phi_n(z)) = D^{n-2}(z) \ldots D(z)z = \Phi_{n-1}(z).
\]
\end{proof}

We need another lemma regarding the behavior of $\Rad$ and $\Soc$ in $\beth(z)$:

\begin{lem} \label{lem:soc_and_rad_are_inverse_on_beams}
Let $z \in X(\Ccal(G^-))$, then the maps $g \mapsto \Rad_{\beth(z)}(g)$ and $g \mapsto \Soc_{\beth(z)}(g)$ are mutually inverse automorphisms of the lattice $\beth(z)$.
\end{lem}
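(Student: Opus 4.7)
My plan is to prove well-definedness of $\Rad := \Rad_{\beth(z)}$ and $\Soc := \Soc_{\beth(z)}$ on $\beth(z)$, and then mutual inversion via the piecewise coordinatization of $\beth(z)$ by cpu-ring submodule lattices, with the dvr limit providing the key cancellation.

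First, for $g \in \beth(z)$ I would establish that $\Rad(g)$ and $\Soc(g)$ are well-defined elements of $\beth(z)$. Since $\beth(z)$ is a convex sublattice of $G$ and $G \cong \prod_i \beth(z_i)$ by \cref{thm:G_is_product_of_beams}, covers of $g$ in $\beth(z)$ correspond to single-coordinate changes in the $z$-direction of the product decomposition. For $g \in \beth^-(z) \cap [\Phi_n(z),e]$ all such covers lie in the finite modular lattice $[\Phi_{n+1}(z), g]$, and the general case reduces to this via the presentation of $\beth(z)$ coming from the shifted factorizations of the principal ideals $s^{n\downarrow}$ (Section~5). Both defining meets and joins are thus finite and remain in $\beth(z)$.

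Next, I would invoke \cref{prop:piecewise_coordinatization} to identify each piece $[\Phi_n(z),e]$ with $L(R_{z,n}, \delta_z)$, where $R_{z,n}$ is a cpu ring of length $n$. By \cref{lem:inverse_limits_of_cpu_rings_are_dvrs}, the system $(R_{z,n})_{n\geq 1}$ with its connecting surjections assembles into a dvr $R_z$ with uniformizer $\pi$. For $g$ corresponding to a submodule $M$ in a sufficiently large piece (so that no covers touch the boundary $\Phi_n(z)$), $\Rad(g)$ is represented by $\pi M$ and $\Soc(g)$ by the join of submodules covering $M$ from above, which in the dvr limit equals $\pi^{-1} M$. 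The compatibility of these representations across varying $n$ is a direct consequence of the compatibility of the piecewise coordinatizations with the connecting surjections. The mutual inversion $\Soc(\Rad(g)) = g = \Rad(\Soc(g))$ then reduces to the identity $\pi \cdot \pi^{-1} = 1$ in the dvr, which holds since $\pi$ is a non-zero-divisor, and as order-preserving bijections $\Rad$ and $\Soc$ are lattice automorphisms.

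The main obstacle lies in the gluing: mutual inversion genuinely requires passing to the dvr limit and does not hold within any individual cpu piece, where $\pi^{-1}(\pi M) = M + \pi^{n-1}R_{z,n}^{\delta_z}$ properly contains $M$. Making this limit argument lattice-theoretic, without invoking the full coordinatization theorem \cref{thm:large_enough_beams_are_coordinatizable} whose proof depends on this lemma, is the main technical challenge; I expect it must be addressed by a careful simultaneous choice of uniformizers compatible with the connecting surjections, so that the resulting dvr $R_z$ really acts on the entire beam rather than piecewise.
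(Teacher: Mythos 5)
There is a genuine gap here, in fact two. First, your entire strategy runs through the piecewise coordinatization $[\Phi_n(z),e]\cong L(R_{z,n},\delta_z)$ of \cref{prop:piecewise_coordinatization}, which is only available when $\delta_z=\deg(z)\geq 4$ (it rests on Inaba's theorem, which needs a basis of size at least $4$). The lemma, however, is asserted for \emph{every} $z\in X(\Ccal(G^-))$, and it is needed in that generality; your argument says nothing about beams of dimension $1$, $2$ or $3$. Second, even for $\delta_z\geq 4$ you have correctly identified, but not resolved, the circularity: the socle of an element near the top of a piece (already $\Soc_{\beth(z)}(e)$) is not computable inside any interval $[\Phi_n(z),e]$, since its covers from above live in $\beth(z)\setminus\beth^-(z)$; the object that would let you compute it, namely the coordinatization of the whole beam by $\Lat({}_{R_z}Q_z^{\delta_z})$, is \cref{thm:large_enough_beams_are_coordinatizable}, whose proof uses precisely this lemma. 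Ending with ``I expect it must be addressed by a careful simultaneous choice of uniformizers'' leaves the load-bearing step undone.

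The paper's proof avoids all of this and is essentially two lines. By right-invariance, the set of covers of $g$ from below is $X(G^-)\cdot g$, so $\Rad_G(g)=\left(\bigwedge X(G^-)\right)g=s^{-1}g$, and dually $\Soc_G(g)=sg$ by \cref{prop:join_of_atoms_is_strong_order_unit}; since $s$ is normal, left multiplication by $s^{\pm 1}$ gives mutually inverse lattice automorphisms of $G$. Covers in a direct product of lattices are computed coordinatewise, so under the beam decomposition $\psi:G\overset{\sim}{\to}\prod_{i=1}^k\beth(z_i)$ of \cref{thm:G_is_product_of_beams} these global maps split into the componentwise $\Rad_{\beth(z_i)}$ and $\Soc_{\beth(z_i)}$, which are therefore mutually inverse order-preserving bijections, hence lattice automorphisms. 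No coordinatization, no restriction on $\delta_z$, no limit of cpu rings. I would encourage you to look for this kind of global, multiplication-based argument before reaching for the module-theoretic machinery: the ring-theoretic picture is a \emph{consequence} of lemmas like this one, not a tool for proving them.
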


\begin{proof}
By right-invariance, together with \cref{prop:join_of_atoms_is_strong_order_unit}, we know that $\Rad_{G}(g) = s^{-1}g$ and $\Soc_{G}(g) = sg$ for all $g \in G$; therefore, $\Rad$ and $\Soc$ are indeed mutally inverse automorphisms of $G$.

Under the isomorphism $\psi : G \to \prod_{i=1}^k \beth(z_i)$ provided by \cref{thm:G_is_product_of_beams}, we have the equalities $\psi(\Rad_{G}(g)) = (\Rad_{\beth(z_i)}(\psi_i(g)))_{1 \leq i \leq k}$ and $\psi(\Soc_G(g)) = (\Soc_{\beth(z_i)}(\psi_i(g)))_{1 \leq i \leq k}$. Therefore, for each $1 \leq i \leq k$, the maps $\Soc_{\beth(z_i)}$ are $\Rad_{\beth(z_i)}$ are order-preserving bijections that are mutually inverse, which implies that they are actually automorphisms of the lattice $\beth(z_i)$.
\end{proof}

Furthermore, we have the following easy results for socles and radicals in submodule lattices:

\begin{lem} \label{lem:soc_and_rad_over_a_local_ring}
Let ${}_RM$ be a left module over the discrete valuation ring $R$ with the uniformizer $\pi$. Let $A \in L({}_RM)$. Then:
\begin{enumerate}[i)]
\item If $A^{\downarrow}$ fulfills the ascending chain condition, then $\Rad_{L({}_RM)}(A) = \pi A$
\item If $A^{\uparrow}$ fulfills the descending chain condition, then $\Soc_{L({}_RM)}(A) = \pi^{-1}A = \{ x \in M : \pi x \in A \}$.
\end{enumerate}
\end{lem}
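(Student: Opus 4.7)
The plan is to exploit the local structure of $R$: its unique maximal left ideal is $\pi R = R\pi$, since every nonzero left ideal of a noncommutative dvr is of the form $R\pi^n = \pi^n R$. Consequently, every simple left $R$-module is isomorphic to the skew field $k := R/\pi R$. This single structural fact drives both parts of the lemma.

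For (i), I first verify $\pi A \subseteq \Rad_{L({}_RM)}(A)$: every $B \prec A$ has simple quotient $A/B \cong k$, so $\pi \cdot (A/B) = 0$ forces $\pi A \subseteq B$; intersecting over all such $B$ gives the inclusion. For the reverse, the ACC on $A^{\downarrow}$ makes $A$ a finitely generated $R$-module, so $A/\pi A$ is a finite-dimensional $k$-vector space. Given any $x \in A \setminus \pi A$, the image $\bar{x} \neq 0$ lies outside some $k$-hyperplane $H \subseteq A/\pi A$, and the preimage of $H$ in $A$ is a maximal proper submodule $B \prec A$ with $x \notin B$; hence $x \notin \Rad_{L({}_RM)}(A)$.

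For (ii), set $B := \pi^{-1}A$. Any cover $C \succ A$ has simple quotient $C/A \cong k$, so $\pi C \subseteq A$, i.e., $C \subseteq B$; taking the sum over covers yields $\Soc_{L({}_RM)}(A) \subseteq B$. Conversely, for each $x \in B \setminus A$, the cyclic module $(A + Rx)/A$ is nonzero and annihilated by $\pi$ (since $\pi x \in A$), so it is a one-dimensional $k$-vector space, hence simple; thus $A + Rx \succ A$ and $A + Rx \subseteq B$. Summing over $x \in B \setminus A$ then recovers $B \subseteq \Soc_{L({}_RM)}(A)$.

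The main subtlety will be the noncommutativity, specifically verifying that all simple left $R$-modules are isomorphic to $k$; once this is in hand the argument reduces to elementary linear algebra over the skew field $k$. The ACC assumption in (i) is what yields finite-dimensionality of $A/\pi A$, while the DCC in (ii) ensures that covers of $A$ in $L({}_RM)$ exist and that $\Soc_{L({}_RM)}(A)$ is a meaningful join.
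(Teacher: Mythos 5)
Your proposal is correct and takes essentially the same route as the paper: both directions hinge on the facts that a cover in $L({}_RM)$ has quotient isomorphic to $k=R/\pi R$ (giving $\pi A\subseteq\Rad(A)$ and $\Soc(A)\subseteq\pi^{-1}A$) and that $A/\pi A$ and $\pi^{-1}A/A$ are $k$-vector spaces, hence semisimple. Your hyperplane and one-dimensional-subspace arguments merely spell out the semisimplicity step that the paper states in one line, so there is no substantive difference.
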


\begin{proof}
Note that $A \prec B$ holds for $A,B \in L({}_RM)$ if and only if $A/B \cong R/\pi R$ as $R$-modules, which implies that $\pi A \subseteq B$ in this case.

In the first case, this shows that $\pi A \subseteq \Rad_{L({}_RM)}(A)$. On the other hand, $A / \pi A$ can be considered as a left vector space over $R/ \pi R$ and therefore is semisimple. This shows that $\Rad_{L({}_RM)}(A) \subseteq \pi A$, from which we conclude the desired equality.

In the second case, we can argue similarly that $\pi \cdot \Soc_{L({}_RM)}(A) \subseteq A$ which shows the inclusion $\Soc_{L({}_RM)}(A) \subseteq \pi^{-1}A$. On the other hand, considering $\pi^{-1}A / A$ as a left $R/\pi R$-vector space, we can again conclude that $\pi^{-1}A / A$ is a sum of simple modules which implies that $\pi^{-1}A \subseteq \Soc_{L({}_RM)}(A)$; again, this proves equality.
\end{proof}

\begin{lem} \label{lem:soc_and_rad_are_inverse_in_lat}
Let $Q$ be a dvf with dvr $R$ and let $\delta \geq 1$. Then the maps $x \mapsto \Soc(x)$ and $x \mapsto \Rad(x)$ are lattice automorphisms of $\Lat({}_RQ^{\delta})$ that are inverse to each other.
\end{lem}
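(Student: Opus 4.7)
The plan is to derive the explicit formulas $\Rad(A) = \pi A$ and $\Soc(A) = \pi^{-1}A$ on $\Lat({}_RQ^{\delta})$ from \cref{lem:soc_and_rad_over_a_local_ring} applied to the ambient module lattice $L := L({}_RQ^{\delta})$, where $\pi \in R$ is a uniformizer. Once these formulas are established, both assignments are order-preserving maps $\Lat({}_RQ^{\delta}) \to \Lat({}_RQ^{\delta})$; since $\pi$ is invertible in $Q$, they are mutually inverse bijections, hence mutually inverse lattice automorphisms, as claimed.

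To invoke \cref{lem:soc_and_rad_over_a_local_ring} for $A \in \Lat({}_RQ^{\delta})$ one first needs the relevant chain conditions in $L$. ACC on $A^{\downarrow}$ is immediate: $A$ is finitely generated over the noetherian ring $R$, hence a noetherian module. For DCC on $A^{\uparrow}$, the key observation is that any $R$-submodule $M \subseteq Q$ containing $R$ is of the form $\pi^{-n}R$ for some $n \geq 0$, or $M = Q$ (using $\pi R = R\pi$ to see that every cyclic $R$-submodule $Rx \subseteq Q$ equals $\pi^{-v(x)}R$). Consequently $Q/R$ is artinian, hence so are $(Q/R)^{\delta}$ and its quotient $Q^{\delta}/A$, giving the desired DCC.

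The remaining step is to show that the socle and radical computed inside $\Lat({}_RQ^{\delta})$ coincide with those in $L$. If $B \prec_L A$ with $A \in \Lat({}_RQ^{\delta})$, then $A/B \cong R/\pi R$ forces $\pi A \subseteq B$, so $B$ is essential (containing the essential $\pi A$) and finitely generated (a submodule of the noetherian module $A$); hence $B \in \Lat({}_RQ^{\delta})$. Dually, if $A \prec_L B$ with $A \in \Lat({}_RQ^{\delta})$, then $B$ is essential (containing $A$) and finitely generated (since $B/A$ is simple and $A$ is finitely generated), so again $B \in \Lat({}_RQ^{\delta})$. Thus $L$-covers of elements of $\Lat({}_RQ^{\delta})$ lie in $\Lat({}_RQ^{\delta})$, and one verifies directly that $\pi A, \pi^{-1}A \in \Lat({}_RQ^{\delta})$. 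The main obstacle is precisely this coincidence between ambient and relative socle/radical, but it reduces to routine bookkeeping with the structure of submodules of $Q$.
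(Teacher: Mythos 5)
Your proposal is correct and follows essentially the same route as the paper: both derive $\Rad(A)=\pi A$ and $\Soc(A)=\pi^{-1}A$ from \cref{lem:soc_and_rad_over_a_local_ring}, check that these computations descend from $L({}_RQ^{\delta})$ to the sublattice $\Lat({}_RQ^{\delta})$ (you via showing $L$-covers of $R$-lattices are $R$-lattices, the paper via convexity of $\Lat({}_RQ^{\delta})$ together with \cref{lem:characterization_of_lattices}), and conclude from the invertibility of $\pi$ in $Q$. Your explicit verification of the chain conditions needed to invoke \cref{lem:soc_and_rad_over_a_local_ring} is a welcome addition that the paper leaves implicit.
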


\begin{proof}
First observe that $\Lat({}_RQ^{\delta})$ is a convex sublattice of $L({}_RQ^{\delta})$. Let $A \in \Lat({}_RQ^{\delta})$. By \cref{lem:soc_and_rad_over_a_local_ring}, $\Rad_{L({}_RQ^{\delta})}(A) = \pi A$, which is again an  $R$-lattice by statement 2 of \cref{lem:characterization_of_lattices}. This implies that $\pi A$ is the intersection of elements $\prec A$ which are $R$-lattices by the same argument. As the intersection is taken in a convex sublattice of $L({}_RQ^{\delta})$, the radical with respect to $\Lat({}_RQ^{\delta})$ cannot be properly contained in the radical with respect to $L({}_RQ^{\delta})$. Therefore, $\Rad_{\Lat({}_RQ^{\delta})}(A) = \pi A$.

By a similar reasoning, we get $\Soc_{\Lat({}_RQ^{\delta})}(A) = \pi^{-1} A$. As multiplication by $\pi$ is invertible in ${}_RQ^{\delta}$, we see that $\Rad$ and $\Soc$ are actually inverse to each other.
\end{proof}

\begin{thm} \label{thm:large_enough_semibeams_are_coordinatizable}
Let $z \in X(\Ccal(G^-))$ have $\delta_z := \deg(z) \geq 4$, then there is a noncommutative dvr $R_z$ such that there is a lattice isomorphism
\[
\beth(z)^- \cong \Lat(R_z, \delta_z).
\]
\end{thm}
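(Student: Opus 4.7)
The plan is to glue the piecewise coordinatizations provided by \cref{prop:piecewise_coordinatization} into a single global coordinatization of $\beth^-(z) = \bigcup_{n \geq 1} \Phi_n(z)^{\uparrow}$. Concretely, for each $n \geq 1$, \cref{prop:piecewise_coordinatization} furnishes a cpu-ring $R_n$ of length $n$ together with a lattice isomorphism $\varphi_n : \Phi_n(z)^{\uparrow} \longiso L(R_n, \delta_z)$, and the task is to choose the pairs $(R_n, \varphi_n)$ so that they are mutually compatible under the inclusions $\Phi_n(z)^{\uparrow} \subseteq \Phi_{n+1}(z)^{\uparrow}$.

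To identify the compatibility condition, observe that \cref{lem:soc_and_rad_of_frozens} yields $\Phi_n(z) = \Soc_{\beth(z)}(\Phi_{n+1}(z))$, and this socle is already computed inside the interval $\Phi_{n+1}(z)^{\uparrow}$. Under $\varphi_{n+1}$ it therefore corresponds to $\Soc_{L(R_{n+1},\delta_z)}(0)$, which by \cref{lem:soc_and_rad_over_a_local_ring} equals $\pi_{n+1}^{\,n} R_{n+1}^{\delta_z}$ for any uniformizer $\pi_{n+1}$ of $R_{n+1}$. Hence $\varphi_{n+1}$ restricts to a lattice isomorphism from $\Phi_n(z)^{\uparrow}$ onto $\bigl[\pi_{n+1}^{\,n} R_{n+1}^{\delta_z}, R_{n+1}^{\delta_z}\bigr] \cong L(\bar R_n, \delta_z)$, where $\bar R_n := R_{n+1}/\pi_{n+1}^{\,n} R_{n+1}$ is easily seen to be a cpu-ring of length $n$.

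The central step is now to invoke the essential uniqueness of the coordinatizing ring in Inaba's theorem, valid because $\delta_z \geq 4$, in order to conclude $\bar R_n \cong R_n$. After adjusting the isomorphisms $\varphi_n$ by such ring isomorphisms, one obtains an inverse system $(R_n)_{n \geq 1}$ of cpu-rings with strictly increasing lengths and surjective transition maps, together with a compatible family $(\varphi_n)_{n \geq 1}$ in the sense that $\varphi_{n+1}|_{\Phi_n(z)^{\uparrow}} = \varphi_n$ modulo the induced identifications. By \cref{lem:inverse_limits_of_cpu_rings_are_dvrs}, the inverse limit $R_z := \lim_{\leftarrow} R_n$ is then a noncommutative dvr.

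Finally, passing to the direct limit of the $\varphi_n$ and appealing to \cref{prop:lat_r_delta_as_a_direct_limit} yields the desired lattice isomorphism
\[
\beth^-(z) \;=\; \bigcup_{n \geq 1} \Phi_n(z)^{\uparrow} \;\longiso\; \lim_{\rightarrow} L(R_n, \delta_z) \;\cong\; \Lat(R_z, \delta_z).
\]
The main obstacle is precisely the compatibility of the inductive step: the existence of a coordinatizing cpu-ring for each single $\Phi_n(z)^{\uparrow}$ is not enough; one needs the essential uniqueness of this ring so that the $\varphi_n$ can be made to restrict to one another and thus assemble into a coherent inverse system. Once this uniqueness is available, the remainder is purely formal limit bookkeeping.
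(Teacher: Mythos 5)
Your overall strategy is the same as the paper's: coordinatize each $\Phi_n(z)^{\uparrow}$ via \cref{prop:piecewise_coordinatization}, use \cref{lem:soc_and_rad_of_frozens} and \cref{lem:soc_and_rad_over_a_local_ring} to identify $\Phi_n(z)^{\uparrow}$ with the upset of $\Soc_{L(R_{n+1},\delta_z)}(0)=\pi_{n+1}^{\,n}R_{n+1}^{\delta_z}$, assemble the rings into an inverse system, and invoke \cref{lem:inverse_limits_of_cpu_rings_are_dvrs}. But two steps are underestimated, and the second is a genuine gap.

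First, the weak form of uniqueness you invoke --- that the abstract lattice $L(\bar R_n,\delta_z)\cong L(R_n,\delta_z)$ forces $\bar R_n\cong R_n$ as rings --- is not what makes the $\varphi_n$ compatible. To turn the inclusion $L(R_n,\delta_z)\hookrightarrow L(R_{n+1},\delta_z)$ into a surjective ring homomorphism $\alpha_{n+1,n}:R_{n+1}\to R_n$ (as required by \cref{lem:inverse_limits_of_cpu_rings_are_dvrs}) together with a compatible map of modules, you need the stronger statement that every lattice isomorphism $L(R_n,\delta_z)\cong L(\bar R_n,\delta_z)$ is \emph{induced by a semilinear bijection} of the underlying free modules. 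This is Camillo's theorem, which the paper cites for exactly this purpose; ring-level uniqueness alone does not let you ``adjust'' the $\varphi_n$ into a coherent family.

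Second, the concluding step is not purely formal. \cref{prop:lat_r_delta_as_a_direct_limit} identifies $\Lat(R_z,\delta_z)$ with the direct limit of the $L(R_n,\delta_z)$ taken along the \emph{canonical} coordinatewise projections $R_{n+1}^{\delta_z}\to R_n^{\delta_z}$. The transition maps produced by your gluing are arbitrary semilinear surjections, so a priori the direct limit of your system is $\Lat({}_{R_z}M)$ where $M=\lim_{\leftarrow}R_n^{\delta_z}$ is formed along those maps, and one must still prove ${}_{R_z}M\cong{}_{R_z}R_z^{\delta_z}$. The paper does this by lifting a basis of $R_1^{\delta_z}$ to $M$ and showing, via Nakayama's lemma and a composition-length count, that the lifts form a basis of each $R_n^{\delta_z}$; the author explicitly flags that this is not automatic precisely because the connecting maps are not reduction of coefficients. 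Without this argument the isomorphism $\beth^-(z)\cong\Lat(R_z,\delta_z)$ is not yet established.
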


\begin{proof}
As we are only working with one fixed $z$, we abbreviate $R_z =: R$, $R_{z,n} =: R_n$, $\delta_z =: \delta$.

We start with the piecewise coordinatizations $c_n : \Phi_n(z)^{\uparrow} \overset{\sim}{\to} L(R_n,\delta)$, that exist by \cref{prop:piecewise_coordinatization}. For any $n \geq 1$, the embedding $\iota_{n+1,n} : \Phi_n(z)^{\uparrow} \hookrightarrow \Phi_{n+1}(z)^{\uparrow}$ corresponds to an embedding of $L(R_n,\delta)$ as an upper set of $L(R_{n+1},\delta)$. Our first step will consist of showing that this embedding is actually induced by a semilinear surjection $R_{n+1}^{\delta} \to R_n^{\delta}$.

We start by determining some socles: by \cref{lem:soc_and_rad_of_frozens}, $\Soc_{G^-}(\Phi_{n+1}(z)) = \Phi_{n}(z)$. Furthermore $\Soc_{L(R_{n+1},\delta)}(0) = \pi_{n+1}^nR_{n+1}^{\delta}$, by \cref{lem:soc_and_rad_over_a_local_ring}.

We define $S_n := R_{n+1}/\pi_{n+1}^nR_{n+1}$. Furthermore, let $p: R_{n+1}^{\delta} \to S_n^{\delta}$ be the canonical projection that is $\beta$-semilinear with respect to the projection $\beta: R_{n+1} \to S_n$. By the preceding discussion, $p^{\ast}$ identifies $L(S_n,\delta)$ as the upset $\Soc_{L(R_{n+1},\delta)}(0)^{\uparrow} \subseteq L(R_{n+1},\delta)$.

We have exactly the same situation within $G^-$: As $\Phi_n(z)$ is the socle of $\Phi_{n+1}(z)$, there is an isomorphism $f: \Phi_n(z)^{\uparrow} \overset{\sim}{\to} L(S_n,\delta)$ fitting into the commutative diagram below: 

\begin{center}
\begin{tikzpicture}
\node (Ln) at (0,0) {$L(R_n,\delta)$} ;
\node (Lsoc) at (5,0) {$L(S_n,\delta)$} ;
\node (Ln+1) at (10,0) {$L(R_{n+1},\delta)$} ;
\node (Phin) at (0,2) {$\Phi_n(z)^{\uparrow}$} ;
\node (Phin+1) at (10,2) {$\Phi_{n+1}(z)^{\uparrow}$} ;

\draw [right hook-latex] (Phin) -- (Phin+1) node [midway,above] {$\iota_{n+1,n}$} ; 
\draw [->] (Phin) -- (Ln) node [sloped,midway,above] {$\sim$} node [midway,left] {$c_n$}; 
\draw [->] (Phin+1) -- (Ln+1) node [sloped,midway,below] {$\sim$} node [midway,right] {$c_{n+1}$} ;
\draw [->] (Ln) -- (Lsoc) node [midway,above] {$\sim$} node [midway,below] {$q^{\ast}$} ;
\draw [->] (Phin) -- (Lsoc) node [sloped,midway,below] {$\sim$} node [sloped,midway,above] {$f$} ;
\draw [right hook-latex] (Lsoc) -- (Ln+1) node [midway,below] {$p^{\ast}$} ;
\draw [->,bend right] (Ln) to [in=200,out=340] node [below] {$\varphi_{n+1,n}^{\ast}$} (Ln+1) ;
\end{tikzpicture}
\end{center}

Therefore, there is an isomorphism of lattices $L(R_n,\delta) \overset{\sim}{\to} L(S_n,\delta)$ that fits into this commutative diagram. By a result of Camillo \cite[Corollary 6.3)]{camillo},
this isomorphism is of the form $q^{\ast}$, where $q: S_n^{\delta} \to R_n^{\delta}$ is a $\gamma$-semilinear bijection with respect to some ring isomorphism $\gamma: R_n \to S_n$.

With the surjective homomorphism $\alpha_{n+1,n} = \gamma \circ \beta$, the map $\varphi_{n+1,n} = q \circ p$ is clearly an $\alpha$-semilinear surjection such that $\varphi_{n+1,n}^{\ast}$ makes the diagram commute!

By the surjective maps $\alpha_{n+1,n}$, the rings $R_n$ can be arranged in an inverse system of cpu rings. As $R_n$ is of length $n$, the ring $R = \lim_{\longleftarrow} R_n$ is a dvr by \cref{lem:inverse_limits_of_cpu_rings_are_dvrs}. By the canonical projection $R \to R_n$, each $R_n^{\delta}$ can actually be considered as an $R$-module.

Furthermore,
\[
M := \underset{\leftarrow}{\lim}\ R_n^{\delta} = \left\{ (m_n)_{n \geq 1} \in \prod_{n=1}^{\infty} R_n^{\delta} \, : \, \forall n \geq 1: \varphi_{n+1,n}(m_{n+1}) = m_n \right\}
\]
is naturally a left $R$-module with scalar multiplication $(r_n)_{n \geq 1} \cdot (m_n)_{n \geq 1} = (r_n m_n)_{n \geq 1}$.

We now show that ${}_RM \cong {}_RR^{\delta}$ (this is not completely trivial since the connecting maps between the different $R_n^{\delta}$ are not a priori given by reduction of coefficients). There are clearly elements $x^1,\ldots,x^{\delta} \in R_1^{\delta}$ which form a basis of $R_1^{\delta}$ (a cpu ring of length $1$ is a field!). We now choose $m^1,\ldots,m^{\delta} \in M$ with $m_1^i=x^i$ ($1 \leq i \leq \delta$). Our aim is now to show that $m^1, \ldots, m^{\delta}$ form a basis of ${}_RM$.

Fix some integer $n \geq 1$. We clearly have $\ker(\varphi_{n,1}) = \pi_nR_n^{\delta} = \Rad(R_n^{\delta})$, so $\varphi_{n,1}: R_n^{\delta} \to R_1^{\delta}$ provides an isomorphism of $R_n$-modules $\overline{\varphi}_{n,1}: R_n^{\delta}/\Rad(R_n^{\delta}) \overset{\sim}{\rightarrow} R_1^{\delta}$. Here we see $R_1^{\delta}$ as an $R_n$-module via restriction of scalars by $\alpha_{n,1}: R_n \to R_1$. Since the elements $\varphi_{n,1}(m_n^i) = m_1^i$ ($1 \leq i \leq \delta$) generate $R_1^{\delta}$, the elements $m_n^i$ ($1 \leq i \leq \delta$) generate $R_n^{\delta}$, by Nakayama's lemma. The module $R_n^{\delta}$ has composition length $\delta \cdot n$, so the $m_n^i$ ($1 \leq i \leq \delta$) must form a basis of $R_n^{\delta}$ for any fixed $k$.

Given any element $m =(m_n)_{n \geq 1} \in M$, we can therefore find for each $n \geq 1$ unique elements $r_n^i \in R_n$ ($1 \leq i \leq \delta$) such that $m_n = \sum_{i=1}^{\delta} r_n^i m_n^i$. For each pair of integers $k \geq l \geq 1$, we then have
\[
m_l = \varphi_{k,l} m_k
\]
\[
\Rightarrow  \sum_{i=1}^{\delta} r_l^i m_l^i = \varphi_{k,l} \left( \sum_{i=1}^{\delta} r_k^i m_k^i \right)
 = \sum_{i=1}^{\delta} \alpha_{k,l} (r_k^i) \varphi_{k,l}(m_k^i)
 = \sum_{i=1}^{\delta} \alpha_{k,l} (r_k^i) m_k^l.
\]
This proves that $\alpha_{k,l}(r_k^i)=r_l^i$. Therefore, for each $1 \leq i \leq \delta$, setting $r^i := (r_n^i)_{n \geq 1}$ defines an element of $R$. These $r^i \in R$ are the unique elements in $R$ with the property that $\sum_{i=1}^{\delta}r^im^i = m$. Since this shows that every element in $M$ is uniquely expressible as an $R$-linear combination of the elements $m^1, \ldots, m^{\delta}$, we have ${}_RM \cong {}_RR^{\delta}$.

Let $\pi \in R$ be a uniformizer. By restriction of scalars, we can make for $n \geq 1$ the identification $L_n = L({}_RM_n)$ where $M_n := M / \pi^n M$. We have seen that ${}_RM \cong {}_RR^{\delta}$. From \cref{prop:lat_r_delta_as_a_direct_limit}, we can therefore conclude that $G^- \cong \underset{\rightarrow}{\lim} \ L_k \cong \Lat({}_RR^{\delta})$.
\end{proof}

We can not only coordinatize semi-beams. We can even coordinatize whole beams!

\begin{thm} \label{thm:large_enough_beams_are_coordinatizable}
Let $z \in X(\Ccal(G^-))$ with $\delta_z := \deg(z) \geq 4$, then there is a noncommutative dvf $Q_z$ with dvr $R_z$ such that there is a lattice isomorphism
\[
\beth(z) \cong \Lat({}_{R_z}Q_z^{\delta_z})
\]
\end{thm}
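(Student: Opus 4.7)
The plan is to extend the coordinatization $c : \beth^-(z) \overset{\sim}{\to} \Lat(R_z, \delta_z)$ furnished by \cref{thm:large_enough_semibeams_are_coordinatizable} to an isomorphism $\tilde{c} : \beth(z) \overset{\sim}{\to} \Lat({}_{R_z} Q_z^{\delta_z})$, where $Q_z$ denotes the discrete valuation field into which $R_z$ embeds. The guiding observation is that both sides arise as directed unions of upward translates of their ``bounded'' sublattices under the socle operator. On the module side, let $\pi \in R_z$ be a uniformizer; by \cref{lem:characterization_of_lattices} every $R_z$-lattice in $Q_z^{\delta_z}$ is contained in some $\pi^{-n} R_z^{\delta_z}$, so $\Lat({}_{R_z}Q_z^{\delta_z}) = \bigcup_{n \geq 0} \pi^{-n} \Lat(R_z, \delta_z)$, and by \cref{lem:soc_and_rad_are_inverse_in_lat} the operator $A \mapsto \pi^{-1}A$ coincides with $\Soc_{\Lat({}_{R_z}Q_z^{\delta_z})}$. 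On the beam side, \cref{lem:soc_and_rad_are_inverse_on_beams} identifies $\sigma := \Soc_{\beth(z)}$ and $\rho := \Rad_{\beth(z)}$ as mutually inverse lattice automorphisms. Using that $s$ is a strong order unit in $G$ and that right-multiplication by $s$ corresponds to the coordinate-wise socle under $\psi$ (from the proof of \cref{lem:soc_and_rad_are_inverse_on_beams}), every $g \in \beth(z)$ satisfies $\rho^n(g) \leq e$ for some $n$, so $\beth(z) = \bigcup_{n \geq 0} \sigma^n(\beth^-(z))$.

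The crucial step is the intertwining identity $c(\rho(h)) = \pi \cdot c(h)$ for $h \in \beth^-(z)$. Because $\beth^-(z) = \beth(z) \cap G^-$ is a downset in $\beth(z)$, every cover $g \prec_{\beth(z)} h$ of an element $h \in \beth^-(z)$ automatically satisfies $g \leq e$, hence $g \in \beth^-(z)$; thus $\rho|_{\beth^-(z)}$ agrees with the radical computed intrinsically in $\beth^-(z)$. The analogous observation for $\Lat(R_z, \delta_z) \subseteq \Lat({}_{R_z}Q_z^{\delta_z})$, combined with \cref{lem:soc_and_rad_over_a_local_ring}, shows that multiplication by $\pi$ on $\Lat(R_z,\delta_z)$ coincides with $\Rad_{\Lat(R_z,\delta_z)}$. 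Since $c$ is a lattice isomorphism, it automatically intertwines the two intrinsic radicals, and by iteration one obtains $c(\rho^j(h)) = \pi^j c(h)$ for all $j \geq 0$ and $h \in \beth^-(z)$.

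With this in hand, I would define $\tilde{c}(g) := \pi^{-n} c(\rho^n(g))$ for any $n$ large enough that $\rho^n(g) \in \beth^-(z)$; well-definedness is immediate from the iterated intertwining. The map $\tilde{c}$ is a lattice homomorphism since for any finite collection of inputs one may choose a common large $n$, and $\rho^n$, $c$, and multiplication by $\pi^{-n}$ are lattice isomorphisms on their respective domains. Surjectivity follows from \cref{lem:characterization_of_lattices}: given $A \in \Lat({}_{R_z}Q_z^{\delta_z})$, pick $n$ with $\pi^n A \in \Lat(R_z,\delta_z)$ and take $g := \sigma^n(c^{-1}(\pi^n A))$. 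Injectivity follows from the injectivity of $c$ together with the bijectivity of $\rho$. The main technical subtlety lies in the choice to work with radicals rather than socles: applying $\sigma$ to an element $h \in \beth^-(z)$ can leave the downset (for instance $\sigma(e)$ lies strictly above $e$), which would prevent a direct intertwining with $\pi^{-1}$; the radical, by contrast, keeps us inside $\beth^-(z)$ and therefore inside the domain of $c$.
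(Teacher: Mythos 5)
Your proposal is correct and follows essentially the same route as the paper: both extend the semibeam coordinatization $c$ of \cref{thm:large_enough_semibeams_are_coordinatizable} to all of $\beth(z)$ by setting $\tilde{c}(g) = \Soc^n(c(\Rad_{\beth(z)}^n(g)))$ for $n$ large enough that $\Rad_{\beth(z)}^n(g) \in \beth^-(z)$, which is literally your $\pi^{-n}c(\rho^n(g))$ in view of \cref{lem:soc_and_rad_are_inverse_in_lat}. Your justification of well-definedness via the intertwining $c \circ \rho = (\pi \cdot {}) \circ c$ on the downset $\beth^-(z)$ is the same mechanism the paper uses in its displayed computation, and your explicit treatment of surjectivity, injectivity, and the reason for preferring radicals over socles only makes the argument more complete.
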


\begin{proof}
As in the preceding proof, we suppress the $z$ in $R_z, Q_z, \ldots$. Let $c: \beth(z)^- \overset{\sim}{\to} \Lat(R,\delta)$ be the lattice isomorphism constructed in \cref{thm:large_enough_semibeams_are_coordinatizable}. We claim that this lattice isomorphism can uniquely be extended to a lattice isomorphism $\tilde{c} : \beth(z) \overset{\sim}{\to} \Lat({}_RQ^{\delta})$. 

The uniqueness is clear: if $\tilde{c}(g)$ is defined for some $g \in \beth(z)$, then, clearly
\[
\tilde{c} \left( \Soc_{\beth(z)}(g) \right) = \Soc_{\Lat({}_RQ^{\delta})} \left( \tilde{c}(g) \right).
\]
This argument essentially contains the construction of the extension:

Let $g \in \beth(z)$. We define
\[
\tilde{c}(g) := \Soc_{\Lat({}_RQ^{\delta})}^k \left( c \left( \Rad_{\beth(z)}^k(g) \right) \right),
\]
where $k \geq 0$ is an arbitrary integer with $\Rad_{\beth(z)}^k(g) \in \beth^-(z)$. This is indeed well-defined: letting $k_0 \geq 0$ be minimal with $\Rad_{\beth(z)}^{k_0}(g) \in \beth^-(z)$, we write any admissible $k$ in this definition as $k = k_0 + l$. Then
\begin{align*}
\Soc_{\Lat({}_RQ^{\delta})}^k(\Rad_{\beth(z)}^k(g)) & = \left( \Soc_{\Lat({}_RQ^{\delta})}^{k_0} \circ \Soc_{\Lat({}_RQ^{\delta})}^l \right) \left( c \left( \left( \Rad_{\beth(z)}^l  \circ \Rad_{\beth(z)}^{k_0} \right) (g) \right) \right) \\
& = \left( \Soc_{\Lat({}_RQ^{\delta})}^{k_0} \circ \Soc_{\Lat(R,\delta)}^l \right) \left( c \left( \left( \Rad_{\beth(z)^-}^l  \circ \Rad_{\beth(z)}^{k_0} \right) (g) \right) \right) \\
& = \left( \Soc_{\Lat({}_RQ^{\delta})}^{k_0} \circ \Soc_{\Lat(R,\delta)}^l  \circ \Rad_{\Lat(R,\delta)}^l \right) \left( c \left( \Rad_{\beth(z)}^{k_0}(g) \right) \right) \\
& = \Soc_{\Lat({}_RQ^{\delta})}^{k_0} \left( c \left( \Rad_{\beth(z)}^{k_0}(g) \right) \right).
\end{align*}

Note that for any $k \geq 0$, $\tilde{c}$ restricts to $\Soc^k(e)^{\downarrow}$ and $\Soc^k(R^{\delta})^{\downarrow}$ \footnote{Note that $R^{\delta}$ is the top element of $\Lat(R,\delta)$} as $\Soc_{\Lat({}_RQ^{\delta})}^k \circ c \circ \Rad_{\beth(z)}^k$, a composition of lattice isomorphisms (\cref{lem:soc_and_rad_are_inverse_on_beams}, \cref{lem:soc_and_rad_are_inverse_in_lat})). Therefore, $\tilde{c}$ is an isomorphism between $\beth(z)$ and $\Lat({}_RQ^{\delta})$.
\end{proof}

\section{Summary}

By what we have shown, every modular noetherian right $\ell$-group $G$ with strong order unit decomposes as a lattice-theoretic product $G = \prod_{i = 1}^k \beth(z_i)$, where $\{z_1,\ldots,z_k \} = X(\Ccal(G^-))$ (\cref{thm:G_is_product_of_beams}).

We have shown that a beam $\beth(z)$ with $\delta_z = \deg(z) \geq 4$ can furthermore be coordinatized by a a lattice of the form $\Lat({}_{R_z}Q_z^{\delta_z})$ where $Q_z$ is a dvf, dependent on $z$, with respective dvr $R_z$ (\cref{thm:large_enough_beams_are_coordinatizable}).

We give an outline how to derive a partial group-theoretic description of $G$ by these methods. A more detailed determination of the automorphism groups of $\Lat(R,\delta_z)$, $\Lat({}_RQ^{\delta_z})$ can be found in \cite{dietzel_dissertation}.

Let $\pi_z$ be a uniformizer of $R_z$ and set $R_{z,n} = R_z / \pi_z^nR_z$. Then $[\Phi_n(z),e] \cong L(R_{z,n},\delta_z)$ by \cref{prop:piecewise_coordinatization}.

As $\Phi_n(z) = \Rad_{\beth(z)}^n(e)$, any automorphism $\varphi \in \mathrm{Aut}(\beth^-(z))$ restricts to a series of automorphisms $(\varphi_n)_{n \geq 1}$ of all intervals $[\Phi_n(z),e] \cong L(R_{z,n},\delta_z)$. By Camillo's result \cite[Corollary 6.3.]{camillo}, these automorphisms come from a series of bijections $f_n : R_{z,n}^{\delta_z} \to R_{z,n}^{\delta_z}$, that are $\alpha_n$-semilinear with respect to some ring automorphisms $\alpha_n \in \mathrm{Aut}(R_{z,n})$.

Two $\alpha$-semilinear bijections $f_n,g_n: R_{z,n}^{\delta_z} \to R_{z,n}^{\delta_z}$ induce the same element of $\mathrm{Aut}(L(R_{z,n},\delta_z))$ if and only if there is a unit $\gamma \in R_{z,n}^{\times}$, such that $g_n = \gamma \cdot f_n$. This implies that
\[
\mathrm{Aut}(L(R_{z,n},\delta_z)) \cong \left( \mathrm{GL}(R_{z,n},\delta_z) \rtimes \mathrm{Aut}(R_{z,n}) \right) / R_{z,n}^{\times} =: \mathrm{P}\Gamma\mathrm{L}(R_{z,n},\delta_z).
\]
Where $\mathrm{GL}(R,\delta_z) = \mathrm{Aut}({}_RR^{\delta_z})$, the automorphism group of a free $R$-module of rank $\delta_z$. Taking inverse limits, one proves that
\[
\mathrm{Aut}(\Lat(R_z,\delta_z)) \cong \left( \mathrm{GL}(R_z,\delta_z) \rtimes \mathrm{Aut}(R_z)) \right) / R_z^{\times} =: \mathrm{P} \Gamma \mathrm{L}(R_z,\delta_z).
\]
Now, each element $f \in \Gamma \mathrm{L}(R_z,\delta_z)$ can uniquely be extended to a semilinear automorphism of ${}_{R_z}Q_z^{\delta_z}$. If $f: {}_{R_z}R_z^{\delta_z} \to {}_{R_z}R_z^{\delta_z}$ is $\alpha$-semilinear, the respective extension to $Q_z^{\delta_z}$ is given by $\overline{f}(\pi^{-n} \cdot x) = \alpha(\pi)^{-n} \cdot f(x)$, where $n \geq 0$ and $x \in R_z^{\delta_z}$. We furthermore note that there is only one way of extending automorphisms of $\Lat(R_z,\delta_z)$ to $\Lat({}_{R_z}Q_z^{\delta_z})$, as (lattice-theoretical) socles must be respected by automorphisms.

Therefore, we have an embedding
\[
\mathrm{P} \Gamma \mathrm{L}(R_z,\delta_z) \leq \mathrm{P} \Gamma \mathrm{L}({}_{R_z}Q_z^{\delta_z}) := \Gamma \mathrm{L}({}_{R_z}Q_z^{\delta_z}) / R_z^{\times}.
\]
This is exactly the embedding of the stabilizer of $R_z^{\delta_z}$ under the action of $\mathrm{P} \Gamma \mathrm{L}({}_{R_z}Q_z^{\delta_z})$ on $\Lat({}_{R_z}Q_z^{\delta_z})$. As $\mathrm{GL}(Q_z,\delta_z)$ acts transitively on $\Lat({}_{R_z}Q_z^{\delta_z})$, every automorphism of $\Lat({}_{R_z}Q_z^{\delta_z})$ is induced by the composition of an element of $\Gamma \mathrm{L}(R_z,\delta_z)$ and an element of $\mathrm{GL}(Q_z,\delta_z)$, which proves that $\mathrm{Aut}(\Lat({}_{R_z}Q_z^{\delta_z})) \cong \mathrm{P} \Gamma \mathrm{L}({}_{R_z}Q_z^{\delta_z})$.

We have demonstrated at the end of \cref{sec:the_distributive_scaffold} that if $G_I = \prod_{i \in I} \beth(z_i) $ is an isotypical component, then $G_I$ is a sublattice of $G$ that is fixed setwise under right-multiplication by elements of $G_I$, so it is a right $\ell$-group on its own. By the rigidity for beams (\cref{thm:rigidity_of_beams}), we conclude that $G_I$ can be identified with a subgroup of the wreath product $\mathrm{Sym}(I) \wr \mathrm{Aut}(\beth(z_i))$ ($i \in I$ arbitrary), that is a complement to the subgroup $\mathrm{Sym}(I) \wr \mathrm{Aut}(\beth(z_i)^-)$.

We conclude that an isotypical component $G_I$ of a beam $\beth(z_i)$ with $\deg(z_i) \geq 4$ can be identified with a complement of the subgroup
\[
\mathrm{Sym}(I) \wr \mathrm{P}\Gamma \mathrm{L}(R_I,\delta_z) \leq \mathrm{Sym}(I) \wr \mathrm{P}\Gamma \mathrm{L}({}_{R_I}{Q_I}^{\delta_z}),
\]
where $Q_I$ is a dvf with dvr $R_I$ that is only dependent on the component $G_I$.

It is not known to the author if there is a nice description for the isotypical components of a beam $\beth(z_i)$ with $\deg(z_i) = 3$, as the coordinatization methods used in this article do not work anymore in this case. All examples that are known to the author are in fact coordinatizable but it seems that we are far from knowing if there are any non-coordinatizable beams of dimension $3$. This is somewhat reminiscent of the long-standing open geometrical problem if a finite transitive plane is desarguesian; this may or may not give an impression of the hardness of this question\ldots

\section*{Acknowledgements and funding}

The author of this article is recently granted a Feodor Lynen fellowship by the Alexander Humboldt Foundation, which enables him to do his research in the hospitable and inspiring environment of the research group \emph{Algebra \& Analysis} at the \emph{Vrije Universiteit Brussel}. This work was partially supported by the project OZR3762 of Vrije Universiteit Brussel.

\bibliography{ortho}

\begin{thebibliography}{CMW04}

\bibitem[BS72]{brieskorn_saito}
Egbert Brieskorn and Kyoji Saito.
\newblock Artin-{G}ruppen und {C}oxeter-{G}ruppen.
\newblock {\em Inventiones mathematicae}, 17:245--271, 1972.

\bibitem[Cam84]{camillo}
V.P. Camillo.
\newblock Inducing lattice maps by semilinear isomorphisms.
\newblock {\em Rocky Mountain J. Math.}, 14(2):475--486, 06 1984.

\bibitem[CG14]{chouraqui_godelle}
Fabienne Chouraqui and Eddy Godelle.
\newblock Finite quotients of groups of {I}-type.
\newblock {\em Advances in {M}athematics}, 258:46--68, 2014.

\bibitem[Cho10]{chouraqui}
Fabienne Chouraqui.
\newblock Garside groups and {Y}ang–{B}axter equation.
\newblock {\em Communications in Algebra}, 38:4441--4460, 2010.

\bibitem[CMW04]{charney}
Ruth Charney, John Meier, and Kim Whittlesey.
\newblock Bestvina's normal form complex and the homology of {G}arside groups.
\newblock {\em Geometriae Dedicata}, 105:171--188, 2004.

\bibitem[Deh15]{Garside_Foundations}
Patrick Dehornoy.
\newblock {\em Foundations of Garside Theory}, volume~22 of {\em EMS Tracts in
  Mathematics}.
\newblock European Mathematical Society, 2015.

\bibitem[Die19]{dietzel}
Carsten Dietzel.
\newblock A right-invariant lattice-order on groups of paraunitary matrices.
\newblock {\em Journal of Algebra}, 524, 04 2019.

\bibitem[Die21]{dietzel_dissertation}
Carsten Dietzel.
\newblock {\em Desarguesian and geometric right $\ell$-groups}.
\newblock PhD thesis, Universit\"at Stuttgart, 2021.

\bibitem[DP99]{dehornoy_paris}
Patrick Dehornoy and Luis Paris.
\newblock Gaussian groups and {G}arside groups, two generalisations of {A}rtin
  groups.
\newblock {\em Proceedings of the London Mathematical Society}, 79:569--604,
  1999.

\bibitem[DR22]{RD}
Carsten Dietzel and Wolfgang Rump.
\newblock The paraunitary group of a von {N}eumann algebra.
\newblock {\em Bulletin of the London Mathematical Society}, 51, 2022.

\bibitem[Gar69]{garside_braid}
Frank~Arthur Garside.
\newblock {The braid group and other groups}.
\newblock {\em The Quarterly Journal of Mathematics}, 20(1):235--254, 01 1969.

\bibitem[Gr{\"a}11]{Graetzer-Lattice}
George Gr{\"a}tzer.
\newblock {\em Lattice Theory: Foundation}.
\newblock Birkh{\"a}user Basel, 2011.

\bibitem[Gub11]{gubareni}
Nadiya Gubareni.
\newblock Valuation and discrete valuation rings.
\newblock {\em Scientific Research of the Institute of Mathematics and Computer
  Science}, 10(1):61--70, 2011.

\bibitem[Ina48]{inaba}
Eizi Inaba.
\newblock On primary lattices.
\newblock {\em Journal of Faculty of Science, Hokkaido University. Series I.
  Mathematics}, 11(2):39--107, 1948.

\bibitem[JM69]{monk_jonsson}
Bjarni Jónsson and George~S. Monk.
\newblock Representations of primary arguesian lattices.
\newblock {\em Pacific J. Math.}, 30(1):95--139, 1969.

\bibitem[KM96]{kopytov}
V.M. Kopytov and Y.M. Medvedev.
\newblock {\em Right-ordered groups}.
\newblock Siberian School of Algebra and Logic. Plenum Publishing Corporation,
  1996.

\bibitem[Mae58]{maeda}
Fumitomo Maeda.
\newblock {\em Kontinuierliche Geometrien}, volume~95 of {\em Grundlehren der
  Mathematischen Wissenschaften}.
\newblock Springer-Verlag Berlin G\"ottingen Heidelberg, 1958.

\bibitem[Pic01]{picantin}
Matthieu Picantin.
\newblock The center of thin {G}aussian groups.
\newblock {\em Journal of Algebra}, 245(1):92--122, 2001.

\bibitem[Rum08]{self_similarity}
Wolfgang Rump.
\newblock L-algebras, self-similarity, and l-groups.
\newblock {\em Journal of Algebra}, 320:2328--2348, 2008.

\bibitem[Rum15]{Rump-Geometric-Garside}
Wolfgang Rump.
\newblock Right $\ell$-groups, geometric {G}arside groups, and solutions of the
  quantum {Y}ang–{B}axter equation.
\newblock {\em Journal of Algebra}, 439, 2015.

\bibitem[Rum17]{rump_garside}
Wolfgang Rump.
\newblock Decomposition of {G}arside groups and self-similar {L}-algebras.
\newblock {\em Journal of Algebra}, 485:118--141, 2017.

\bibitem[Rum20]{rump_duality}
Wolfgang Rump.
\newblock L-algebras with duality and the structure group of a set-theoretic
  solution to the {Y}ang-{B}axter equation.
\newblock {\em Journal of Pure and Applied Algebra}, 224(8):106314, 2020.

\bibitem[Vai92]{Paraunitary2}
P.P. Vaidyanathanm.
\newblock {\em Multirate Systems And Filter Banks}.
\newblock Prentice Hall Signal Processing Series. Prentice Hall, Inc., 1992.

\end{thebibliography}

\bibliographystyle{halpha}

\end{document}